\documentclass{amsart}
\usepackage{amsmath,amscd,amssymb,amsthm}
\usepackage{amsfonts}
\usepackage{mathabx}
\usepackage[all]{xy}
\usepackage{mathtools}
\usepackage{enumerate}

\usepackage{hyperref}

\usepackage{adjustbox}

\usepackage[usenames, dvipsnames]{color}

\definecolor{NTNUblue}{RGB}{0,80,158}
\definecolor{NTNUbluesupport}{RGB}{62,98,138}
\definecolor{NTNUorange}{RGB}{239,129,20}

\newcommand{\into}{\hookrightarrow}

\newcommand{\xto}{\xrightarrow}

\newcommand{\F}{\mathbb{F}}

\newcommand{\Q}{\mathbb{Q}}
\newcommand{\R}{\mathbb{R}}
\newcommand{\Z}{\mathbb{Z}}

\newcommand{\BBB}{\mathbf{B}}

\newcommand{\DDD}{\mathbf{D}}

\newcommand{\DDDw}{\widetilde{\DDD}}

\newcommand{\EEE}{\mathbf{E}}

\newcommand{\SSS}{\mathbf{S}}

\newcommand{\TTT}{\mathbf{T}}

\newcommand{\XXX}{\mathbf{X}}

\newcommand{\BII}{\BBB^{\mathrm{II}}}

\newcommand{\kII}{\kappa^{\mathrm{II}}}

\newcommand{\PsiII}{\Psi^{\mathrm{II}}}

\newcommand{\SII}{\SSS^{\mathrm{II}}}

\newcommand{\DII}{\DDD^{\mathrm{II}}}

\newcommand{\EII}{\EEE^{\mathrm{II}}}

\newcommand{\TII}{\TTT^{\mathrm{II}}}

\newcommand{\TIIw}{\widetilde{\TII}}

\newcommand{\TIIt}{T^{\mathrm{II}}(\F_2)}

\newcommand{\WIIt}{W^{\mathrm{II}}(\F_2)}

\newcommand{\tauII}{\tau^{\mathrm{II}}}

\newcommand{\rrII}{\rr^{\mathrm{II}}}

\newcommand{\brrII}{\brr^{\mathrm{II}}}

\newcommand{\BIII}{\BBB^{\mathrm{III}}}

\newcommand{\kIII}{\kappa^{\mathrm{III}}}

\newcommand{\PsiIII}{\Psi^{\mathrm{III}}}

\newcommand{\SIII}{\SSS^{\mathrm{III}}}

\newcommand{\DIII}{\DDD^{\mathrm{III}}}

\newcommand{\EIII}{\EEE^{\mathrm{III}}}

\newcommand{\TIII}{\TTT^{\mathrm{III}}}

\newcommand{\TIIIw}{\widetilde{\TIII}}

\newcommand{\XIII}{\XXX^{\mathrm{III}}}

\newcommand{\VIIIt}{V^{\mathrm{III}}(\F_2)}

\newcommand{\WIIIt}{W^{\mathrm{III}}(\F_2)}

\newcommand{\tauIII}{\tau^{\mathrm{III}}}

\newcommand{\rrIII}{\rr^{\mathrm{III}}}

\newcommand{\brrIII}{\brr^{\mathrm{III}}}

\newcommand{\Ah}{\mathcal{A}}

\newcommand{\Ch}{\mathcal{C}}

\newcommand{\Zh}{\mathcal{Z}}

\newcommand{\Ext}{\mathrm{Ext}}

\newcommand{\Hom}{\mathrm{Hom}}

\newcommand{\uHom}{\underline{\Hom}}

\newcommand{\gExt}{\underline{\Ext}}

\newcommand{\gHom}{\underline{\Hom}}

\newcommand{\op}{\mathrm{op}}

\newcommand{\HH}{\mathrm{HH}}

\newcommand{\Imm}{\mathrm{Im}\,}

\newcommand{\Ker}{\mathrm{Ker}\,}

\newcommand{\bbb}{\bullet}

\newcommand{\Cb}{\Ch^\bbb}

\newcommand{\Hb}{H^\bbb}

\newcommand{\dee}{\partial}

\newcommand{\dd}{\delta}

\newcommand{\One}{\mathbf{1}}

\newcommand{\Jw}{\widetilde{J}}

\newcommand{\gag}{\gamma_G}

\newcommand{\ot}{\otimes}

\newcommand{\bU}{\overline{U}}

\newcommand{\rr}{\rho} 

\newcommand{\brr}{\overline{\rho}}

\newcommand{\Tlt}{T_4^l(\F_2)} 

\newcommand{\TTlt}{T_5^l(\F_2)} 

\newcommand{\Wlt}{W_4^l(\F_2)} 

\newcommand{\WWlt}{W_5^l(\F_2)}

\newcommand{\Trt}{T_4^r(\F_2)} 

\newcommand{\TTrt}{T_5^r(\F_2)} 

\newcommand{\Wrt}{W_4^r(\F_2)} 

\newcommand{\WWrt}{W_5^r(\F_2)}

\newcommand{\taul}{\tau^l} 

\newcommand{\taur}{\tau^r} 

\theoremstyle{plain}

\newtheorem{theorem}{Theorem}[section]

\newtheorem{prop}[theorem]{Proposition}
\newtheorem{proposition}[theorem]{Proposition}
\newtheorem{lemma}[theorem]{Lemma}
\newtheorem{cor}[theorem]{Corollary}
\newtheorem{corollary}[theorem]{Corollary}

\theoremstyle{definition}
\newtheorem{definition}[theorem]{Definition}
\newtheorem{defn}[theorem]{Definition}
\newtheorem{example}[theorem]{Example}

\newtheorem{notn}[theorem]{Notation}
\newtheorem{convention}[theorem]{Convention}
\theoremstyle{remark}

\newtheorem{remark}[theorem]{Remark}

\begin{document}

\title{$A_3$-formality for pro-$2$ Demushkin groups} 

\author{Ambrus P\'al}
\address{Mathematical Institute, E\"{o}tv\"{o}s Lor\'{a}nd University, H-1117 Budapest, Hungary} 
\email{ambrus.pal@ttk.elte.hu}

\author{Gereon Quick}
\address{Department of Mathematical Sciences, NTNU, NO-7491 Trondheim, Norway}
\email{gereon.quick@ntnu.no}
\thanks{Both authors were partially supported by RCN Project No.\,313472 {\it Equations in Motivic Homotopy}, 
and the project \emph{Pure Mathematics in Norway} funded by the Trond Mohn Foundation.}


\begin{abstract} 
We study a weak form of formality for differential graded algebras, called $A_3$-formality, 
and show that the differential graded $\F_2$-algebras of continuous cochains of all pro-$2$ Demushkin groups are $A_3$-formal. 
We prove this result by an explicit computation of the Benson--Krause--Schwede canonical class 
using the classification of pro-$2$ Demushkin groups by Demushkin, Serre, and Labute.  
Compared to the case of odd primes, the new idea is to interpret the data of the canonical class 
as defining systems of higher Massey products. 
\end{abstract}
\subjclass{20J06, 12G05, 16E40, 20E18, 55S30.} 

\maketitle


\section{Introduction}

Let $F$ be a field and let $G_F$ denote its absolute Galois group. 
Let $\Cb(G_F,\F_2)$ denote the differential graded algebra of inhomogeneous continuous cochains 
of $G_F$ with coefficients in the constant discrete $G_F$-module $\F_2$. 
In \cite{HW}, Hopkins and Wickelgren showed that, for $F$ a local or global field, 
all triple Massey products of elements in $H^1(G_F,\F_2)$ vanish whenever they are defined. 
Since triple Massey products are the first obstruction to formality, 
Hopkins--Wickelgren therefore asked in \cite[Question 1.4]{HW} 
whether $\Cb(G_F,\F_2)$ is $A_{\infty}$-formal, 
i.e., the higher multiplication maps $m_n$ on its minimal $A_{\infty}$-model are trivial for all $n \ge 3$. 
However, Positselski showed in \cite[Section 9.11]{PoMoscow} and \cite[\S 6]{Po} 
that $\Cb(G_F,\F_2)$ is not $A_{\infty}$-formal in general for  local fields. 
%
Then Harpaz--Wittenberg, in the appendix to Guillot--Min\'a\v c--Topaz \cite[Example A.15]{GMT}, 
showed that the second obstruction to formality is not trivial for $F=\Q$, 
i.e., not all fourfold Massey products are defined when the neighbouring cup-products vanish. 
See also the work of Merkurjev--Scavia in \cite[Theorem 1.6]{MS1} and \cite[Theorem 1.3]{MS2023} 
for other examples. 
For the absolute Galois groups of local fields, or more generally, for Demushkin groups, 
we may then ask the weaker question whether the differential graded algebra $\Cb(G,\F_2)$ is $A_3$-formal, 
i.e., whether the first higher multiplication map $m_3$ on its minimal model is trivial. 
$A_3$-formality is much stronger than the vanishing of triple Massey products. 

\subsection{Demushkin groups and the main result}
The purpose of this paper is to study $A_3$-formality for 
$\Cb(G,\F_2)$ for pro-$2$ Demushkin groups.

\begin{defn}\label{def:Demushkin}
Let $p$ be a prime number and let $G$ be a pro-$p$ group. 
Then $G$ is called a \emph{Demushkin group} if 
$\dim_{\F_p}H^1(G,\F_p) < \infty$, 
$\dim_{\F_p}H^2(G,\F_p) = 1$, 
and the cup-product $H^1(G,\F_p) \times H^1(G,\F_p) \to H^2(G,\F_p)$ is a non-degenerate bilinear form. 
\end{defn}

Demushkin groups are Poincar\'e groups of dimension two, 
and arise, for example, as pro-$p$ completions of fundamental
groups of compact surfaces. 
They also play a key role in number theory since the maximal pro-$p$ quotients of absolute Galois groups of local fields that contain a primitive $p$-th root of unity are Demushkin groups or trivial (see \cite{Demushkin, Demushkin2}, \cite{SerreDem}, \cite{Labute}, and Section \ref{subsec:classification}). 
Demushkin groups are fundamental building blocks of the class of elementary type pro-$p$ groups in the sense of Efrat. 
%
The only finite Demushkin group is $\Z/2$ and $\Cb(\Z/2,\F_2)$ is known to be formal. 
We therefore consider from now on infinite Demushkin groups. 
Our main result in this paper is the following theorem, 
see Theorems \ref{thm:torsion_free_pro-2_Demushkin_groups_are_A_3-formal}, 
\ref{thm:type_II_pro-2_Demushkin_groups_are_A_3-formal}, 
and \ref{thm:type_III+IV_general_kappa_3_is_trivial}: 

\begin{theorem}\label{thm:Demushkin_groups_A3_formality_intro}
Let $G$ be a pro-$2$ Demushkin group.   
Then $\Cb(G,\F_2)$ is $A_3$-formal. 
\end{theorem}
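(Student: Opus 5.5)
We plan to use the Benson--Krause--Schwede criterion: a dga $A$ over a field, with cohomology $\Hb = H^\bbb(A)$, is $A_3$-formal if and only if its canonical class $\gamma_A \in \HH^{3,-1}(\Hb,\Hb)$ vanishes. This class is the Hochschild class of $m_3$ on any minimal model. Since $G$ is a Poincar\'e group of dimension two, $\Hb = H^\bbb(G,\F_2)$ is concentrated in degrees $0,1,2$. Moreover, $H^1$ is finite dimensional with a nondegenerate cup-product pairing into the one-dimensional $H^2$. Hence $\gamma$ is represented by a cocycle $m_3\colon (H^1)^{\ot 3}\to H^2$; the other components are forced to vanish for degree reasons or by strict unitality. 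The task is then to show that this trilinear form is a Hochschild coboundary, i.e.\ that there is a bilinear $f\colon H^1\ot H^1\to H^1$ with
\[
m_3(a,b,c) = a\cup f(b,c) + f(a\cup b,c)+ f(a,b\cup c) + f(a,b)\cup c .
\]
The middle terms vanish, since $f$ is only nonzero on $H^1\ot H^1$ and products land in $H^2$. So we need $m_3(a,b,c)=a f(b,c)+f(a,b)c$.

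First we compute $m_3$ explicitly. We choose a basis $\chi_1,\dots,\chi_n$ of $H^1$ dual to the generators $x_1,\dots,x_n$ of $G$, using Labute's classification. The relator is $x_1^{2+2^f}[x_1,x_2][x_3,x_4]\cdots$ in type I, $x_1^2x_2^{2^f}[x_2,x_3]\cdots$ in type II, and the odd-rank types III/IV are similar. We build a minimal model by choosing cochain representatives $\chi_i$ in $\Cb(G,\F_2)$ and a section $H^2\to C^2$. We then choose, for each pair $(i,j)$, a cochain $a_{ij}$ with $\dd a_{ij}=\chi_i\chi_j - c_{ij}\omega$, where $\omega$ represents the generator of $H^2$. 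Kadeishvili's formula gives $m_3(\chi_i,\chi_j,\chi_k)$ as the class of $a_{ij}\chi_k+\chi_i a_{jk}$. The key observation is that the $a_{ij}$ form (part of) a defining system for Massey products. The value of $m_3$ can therefore be read off from the coefficients of the relator in the mod-$2$ Magnus/Zassenhaus expansion, namely the coefficients of degree-three monomials $X_iX_jX_k$ of $r-1$. When $c_{ij}\ne0$, the squares $x_1^2$ and the $2$-power terms contribute degree-three terms, e.g.\ from $x_1^{2}$ and $x_1^4$ in the type-I relator when $f=1$. These are exactly what distinguishes $p=2$ from odd $p$. In the torsion-free case with $f\ge2$ the relator is in the third Zassenhaus term modulo commutators, so I expect the computation to give $m_3$ built only from the commutator structure.

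Second, for each type we solve the linear equation for $f$. We write $f(\chi_i,\chi_j)=\sum_k f_{ij}^k\chi_k$ and use nondegeneracy. The condition $\chi_i f(\chi_j,\chi_k)+f(\chi_i,\chi_j)\chi_k = m_3(\chi_i,\chi_j,\chi_k)$ then becomes a linear system over $\F_2$ in the unknowns $f_{ij}^k$. Since the cup-product matrix is symplectic-like in a standard basis, we can try $f$ supported on pairs involving the "special" generators $x_1,x_2$ (and $x_3$ in types III/IV). For example, in type II we would take $f(\chi_1,\chi_1)$ to be a multiple of the dual of $\chi_2$ to absorb the terms from $x_1^2$. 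We then check the remaining equations case by case.

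The main obstacle is the first step: pinning down $m_3$ precisely in the non-torsion-free types II, III and IV, where $H^1$ has nontrivial Bockstein ($\chi_1^2\neq 0$). In those cases the relevant defining systems involve squares and $2$-power relators, which contribute genuinely new terms. My plan there is to compute, via Dwyer's correspondence between defining systems and upper-triangular unipotent representations $G\to U_4(\F_2)$, which representations extend; this turns the value of $m_3$ into a concrete check on the relator in $U_4(\F_2)$. If a single choice of $f$ fails for some types, the fallback is to change the representatives of $H^1$ (a gauge transformation, which changes $m_3$ by a coboundary) before solving.
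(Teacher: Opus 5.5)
Your framework matches the paper's: the canonical class criterion together with Dwyer's theorem. But the step you flag as the main obstacle is where the proposal breaks down, and the tool you propose for it cannot work for $p=2$.

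\textbf{Your formula for $m_3$ is not well defined.} Take $\delta a_{ij}=\chi_i\cup\chi_j+c_{ij}\tilde\omega$. Then $\delta(a_{ij}\chi_k+\chi_i a_{jk})=c_{ij}\,\tilde\omega\cup\chi_k+c_{jk}\,\chi_i\cup\tilde\omega$. This is in general a nonzero cochain whenever an adjacent cup product is nonzero, so $[a_{ij}\chi_k+\chi_i a_{jk}]$ has no meaning there. The correct Kadeishvili formula contains the extra terms $f_2((\chi_i\chi_j)\ot\chi_k)+f_2(\chi_i\ot(\chi_j\chi_k))$, with $f_2$ now defined on $H^2\ot H^1$ and $H^1\ot H^2$. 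These terms are not Massey product data at all. The paper avoids them by using Koszulity of $\Hb(G,\F_2)$ (Min\'a\v c--Pasini--Quadrelli--T\^an) to pass to $K_3^3=(R\ot H^1)\cap(H^1\ot R)$. There $\kappa_3(a,b,c)=[f_1(a)f_2(b,c)+f_2(a,b)f_1(c)]$ is an honest cocycle.

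\textbf{Even on $K_3^3$, triple Massey products do not determine $\kappa_3$ for $p=2$.} Neither lifting problems $G\to U_4(\F_2)$ nor cubic Magnus coefficients of the relator suffice. A $U_4$-lifting problem evaluates $\kappa_3$ only on a decomposable tensor $a\ot b\ot c$ with $ab=bc=0$. For $p=2$ such tensors need not span $K_3^3$. Concretely, take $G=\langle x_1,x_2\mid x_1^{q}[x_1,x_2]\rangle$ with $q=2^f$, $f\ge 2$.
\begin{itemize}
\item The differential $\Hom(R,H^1)\to\Hom(K_3^3,H^2)$ is zero here, so you must show $\kappa_3$ vanishes on each of $\chi_1^{\ot3}$, $\chi_2^{\ot3}$, $T_1=\chi_2\ot\chi_1\ot\chi_1+\chi_1\ot\chi_2\ot\chi_1+\chi_1\ot\chi_1\ot\chi_2$, and its mirror $T_2$.
\item Since $v\cup w\neq 0$ for linearly independent $v,w$, the only decomposable elements of $K_3^3$ are $v\ot v\ot v$.
\item These span only $\chi_1^{\ot3}$, $\chi_2^{\ot3}$ and $T_1+T_2$, so no $U_4$ computation can yield $\kappa_3(T_1)=0$.
\end{itemize}

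\textbf{The missing idea.} This is the paper's main new point for $p=2$. On such non-decomposable basis elements, read the cochains $f_2(\cdot)$ entering $\kappa_3$ as parts of defining systems for \emph{higher} Massey products; $T_1$, for instance, becomes a fivefold product, i.e.\ a lifting problem for $G\to U_6(\F_2)$. Vanishing is then decided by explicitly constructing, or obstructing, lifts $G\to U_n(\F_2)$ with $n$ up to $8$. This needs compatibility results: quotient isomorphisms of the form $U_3(\F_2)\cong T/W$ and $U_4(\F_2)\cong T/W$ inside larger unipotent groups. They guarantee that the relevant entries of the large representations agree with the fixed choice of $f_2$. Combined with an explicit computation of part of the image of $\partial$ on suitable bases of $R$ and $K_3^3$, this shows $\kappa_3$ is a coboundary, type by type.

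\textbf{Further problems.}
\begin{itemize}
\item Your second step, solving for $f$, is not carried out.
\item Changing representatives cannot help, since the class is independent of all choices.
\item Mod $2$, $(1+X_1)^2=1+X_1^2$ and $(1+X_1)^4=1+X_1^4$, so these factors contribute no cubic Magnus terms.
\item The classification is mislabelled. Type I is $x_1^{2^f}[x_1,x_2]\cdots$ with $f\ge 2$ or $f=\infty$, and type III is $x_1^{2+2^f}[x_1,x_2]\cdots$. Type II has odd rank, while types III and IV have even rank.
\end{itemize}
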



The example of Harpaz--Wittenberg in \cite[Example A.15]{GMT} and Proposition \ref{prop:4tuple_products_are_defined} 
imply that $\Cb(G_{\Q},\F_2)$ is not $A_3$-formal. 
However, 
by the work of Demushkin, Serre, and Labute, see Examples \ref{example:realizable_Demushkin_group_two_generators_type_I_and_III}
and \ref{example:realizable_Demushkin_group_three_gen}, 
Theorem \ref{thm:Demushkin_groups_A3_formality_intro} 
has the following consequence. 
For a field $F$, let $G_F(2)$ denote the Galois group of the maximal pro-$2$ Galois extension of $F$. 
Since $G_{\R}(2) = \Z/2$, we get the following result: 

\begin{cor}\label{cor:local_global_does_not_hold_for_Q_and_A3_formality_intro}
For every place $\ell$ of $\Q$,  
$\Cb(G_{\Q_{\ell}}(2),\F_2)$ is $A_3$-formal, 
whereas $\Cb(G_{\Q},\F_2)$ is not $A_3$-formal. 
\end{cor}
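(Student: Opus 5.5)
The corollary has two halves, and I will treat them separately.

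\emph{Local part.} First I treat the finite places. For a prime $\ell$, the group $G_{\Q_\ell}(2)$ is either a pro-$2$ Demushkin group or a group whose cochain algebra is easy to handle. For $\ell = 2$, the field $\Q_2$ contains $\mu_2 = \{\pm 1\}$, so by the results of Demushkin, Serre and Labute recalled in Section \ref{subsec:classification} (Examples \ref{example:realizable_Demushkin_group_two_generators_type_I_and_III} and \ref{example:realizable_Demushkin_group_three_gen}), $G_{\Q_2}(2)$ is a Demushkin group on three generators. For odd $\ell$, $G_{\Q_\ell}(2)$ is the pro-$2$ Demushkin group on two generators $\sigma,\tau$ with the single relation $\sigma^{\ell-1}\ldots$, that is, $\tau\sigma\tau^{-1}=\sigma^{\ell}$. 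This is again covered by the cited examples. In both cases Theorem \ref{thm:Demushkin_groups_A3_formality_intro} applies directly. For the infinite place, $G_{\R}(2)=\Z/2$, and $\Cb(\Z/2,\F_2)$ is formal, as noted in the introduction. Formality implies $A_3$-formality, since the minimal model then has $m_n=0$ for all $n\ge 3$.

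\emph{Global part.} Here I invoke Proposition \ref{prop:4tuple_products_are_defined}. I expect it to say that if $m_3$ on the minimal model is trivial, equivalently if the Benson--Krause--Schwede canonical class vanishes, then every fourfold Massey product $\langle a,b,c,d\rangle$ with $ab=bc=cd=0$ is defined. The Harpaz--Wittenberg example \cite[Example A.15]{GMT} exhibits classes in $H^1(G_{\Q},\F_2)$ with vanishing neighbouring cup products for which the fourfold Massey product is not defined. By the contrapositive of the proposition, $\Cb(G_{\Q},\F_2)$ is not $A_3$-formal.

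The main point needing care is matching the precise hypotheses of Proposition \ref{prop:4tuple_products_are_defined} with the data of the Harpaz--Wittenberg example, for instance whether the proposition requires only the vanishing of the neighbouring cup products or something stronger. The rest is a bookkeeping application of the main theorem together with the classification of $G_{\Q_\ell}(2)$.
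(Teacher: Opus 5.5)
Your proposal is correct and follows the paper's own argument. You treat the finite places by recognising $G_{\Q_\ell}(2)$ as a pro-$2$ Demushkin group via Examples~\ref{example:realizable_Demushkin_group_two_generators_type_I_and_III} and~\ref{example:realizable_Demushkin_group_three_gen} and then applying Theorem~\ref{thm:Demushkin_groups_A3_formality_intro}; the real place via $G_{\R}(2)=\Z/2$ and the formality of $\Cb(\Z/2,\F_2)$; and $\Q$ by combining the Harpaz--Wittenberg example with Proposition~\ref{prop:4tuple_products_are_defined}, whose only hypothesis is the vanishing of the neighbouring cup products, so the matching you worried about is immediate (your loosely written presentation for odd $\ell$ is harmless, since the cited example supplies the relation $x_1^q[x_1,x_2]=1$).
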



Before we outline the proof of  Theorem \ref{thm:Demushkin_groups_A3_formality_intro} 
in Section \ref{subsec:intro_outline} and comment on the new ideas used compared to the case of odd primes in \cite{PQ3}, 
we first summarise the now complete characterisation of which Demushkin groups are $A_3$-formal 
and will then describe the relation of our work to the Massey vanishing conjecture.

Let $p$ be a prime number, and let $G$ be a pro-$p$ Demushkin group 
with $d \coloneqq \dim_{\F_p}H^1(G,\F_p)$.  
Let $G^{\mathrm{ab}}$ denote the abelianisation of $G$. 
Then, by for example \cite[Section 3.9 on page 232]{NSW}, 
either $G^{\mathrm{ab}} \cong \Z_p^d$ or 
$G^{\mathrm{ab}} \cong \Z_p/p^f\Z_p \times \Z_p^{d-1}$ with $f\ge 1$. 
In the former case $q \coloneqq 0$, and in the latter case we set $q \coloneqq p^f$. 
The numbers $d$ and $q$ are invariants of $G$. 
Theorem \ref{thm:Demushkin_groups_A3_formality_intro} 
and \cite[Theorem 1.2]{PQ3} then combine to the following result. 

\begin{theorem}\label{thm:Demushkin_groups_A3_formality_all_p_summary}
Let $p$ be a prime number, and let $G$ be a pro-$p$ Demushkin group with invariants $d\ge 2$ and $q$. 
For $q \ne 3$, $\Cb(G,\F_p)$ is $A_3$-formal.  
For $q=p=3$, $\Cb(G,\F_3)$ is not $A_3$-formal. 
\end{theorem}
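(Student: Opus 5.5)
This theorem is a summary statement, so I would prove it by splitting on the prime $p$ and combining the main result of this paper with the odd-prime result of \cite{PQ3}.

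\emph{Case $p$ odd.} Here $q$ is either $0$ or a power $p^f$ with $f \ge 1$, so $q \neq 2$. I would invoke \cite[Theorem 1.2]{PQ3}, which handles Demushkin groups at odd primes. As I read it, that theorem says the following: for $p$ odd, $\Cb(G,\F_p)$ is $A_3$-formal if and only if $q \neq 3$. Equivalently, it says the Benson--Krause--Schwede canonical class in $\HH^{3,-1}$ of $H^*(G,\F_p)$ vanishes exactly when $q \neq 3$. This yields both assertions of the statement for odd $p$:
\begin{itemize}
\item when $q = p = 3$, $\Cb(G,\F_3)$ is not $A_3$-formal;
\item when $q \neq 3$, $\Cb(G,\F_p)$ is $A_3$-formal.
\end{itemize}
The only thing to verify is that the hypotheses there match those here: $d \ge 2$ and $G$ infinite. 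Note that $d \ge 2$ is automatic for an infinite Demushkin group, because the cup product pairing is non-degenerate and alternating up to the $p=2$ subtlety.

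\emph{Case $p = 2$.} Here $q \in \{0, 2, 4, \dots\}$, so $q \neq 3$ holds automatically and the $q = p = 3$ clause is vacuous. What must be shown is that $\Cb(G,\F_2)$ is $A_3$-formal for every pro-$2$ Demushkin group with $d \ge 2$. This is exactly Theorem~\ref{thm:Demushkin_groups_A3_formality_intro}. The hypothesis $d \ge 2$ excludes the finite group $\Z/2$, which has $d = 1$; that case is formal in any event.

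The substantive work therefore lies entirely in Theorem~\ref{thm:Demushkin_groups_A3_formality_intro}, and in particular in the type II and type III/IV cases of the Labute classification. There one has to show that the canonical class is a Hochschild coboundary by writing down explicit defining systems. Granting that theorem and \cite{PQ3}, the present statement is a direct case combination with no further obstacle. The one point needing care is to check that the invariant $q$ as defined here agrees with the normalisation used in \cite{PQ3}.
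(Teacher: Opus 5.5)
Your proposal is correct and is exactly the paper's argument: the statement is obtained by combining Theorem~\ref{thm:Demushkin_groups_A3_formality_intro} for $p=2$ with \cite[Theorem 1.2]{PQ3} for odd $p$. For $p=2$ one has $q\in\{0,2,4,\dots\}$, so $q\neq 3$ automatically and the $q=p=3$ clause is vacuous. One small correction to your closing commentary: the type I case in Section~\ref{sec:torsion-free_p=2_Demushkin} also requires real work (higher Massey products in $U_5$ and $U_6$), not just types II--IV, but this does not affect the proof of the summary statement.
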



\subsection{Relation to the Massey vanishing conjecture of Min\'a\v c--T\^an}
Min\'a\v c and T\^an conjectured in \cite[Conjecture 1.6]{MT2} that, for every field $F$ and prime $p$, 
$G_F$ {\em satisfies $n$-Massey vanishing} with respect to $p$, i.e., 
all $n$-fold Massey products of elements in $H^1(G_F,\F_p)$ vanish whenever they are defined. 
%
By the work of Matzri \cite{Matzri}, Efrat--Matzri \cite{EM17} and Min\'a\v c--T\^an \cite{MT1}, 
all fields satisfy triple Massey vanishing with respect to all primes. 
For a more recent, independent proof, see \cite[Section 6.2]{Lam_etal}. 
In \cite{HarpazWittenberg}, Harpaz--Wittenberg showed that number fields satisfy $n$-Massey vanishing with respect to all primes. 
More recently, Merkurjev--Scavia proved in \cite{MS2} that all fields satisfy fourfold Massey vanishing with respect to $p=2$. 
Other cases of the conjecture have been proven in \cite{PQ}, \cite{PSz} and \cite{Quadrelli}. 
The vanishing of Massey products has concrete consequences for the structure of the Zassenhaus filtration of an absolute Galois group 
and thereby led to new examples of profinite groups which are not absolute Galois groups of a field (see for example \cite{MT1, MT2}). 
%

In \cite[Definition 4.5]{MT0bis}, Min\'a\v c and T\^an also formulate the following related property. 
Let $G$ be a profinite group and $p$ be a prime number. %
Then $G$ is said to have the 
{\em cup-defining $n$-fold Massey product property} (with respect to $\F_p$) 
if for every $\chi_1, \ldots, \chi_n \in H^1(G,\F_p)$ with 
$0 = \chi_1 \cup \chi_2 = \chi_2 \cup \chi_3 = \cdots = \chi_{n-1} \cup \chi_n$ 
the $n$-fold Massey product $\langle \chi_1, \ldots, \chi_n \rangle$ is defined.  
For $n\ge 4$, this is a non-trivial condition, 
and, in \cite[Remark 4.4]{MT0bis}, Min\'a\v c--T\^an show that not all pro-$p$ groups 
have the cup-defining $n$-fold Massey product property. 
In \cite[Question 4.2]{MT0bis}, Min\'a\v c--T\^an ask whether every  
Galois group of a maximal $p$-extension of a field $F$ containing a primitive $p$-th root of unity 
has the cup-defining $n$-fold Massey product property with respect to $\F_p$ 
(see also \cite[Section 8]{MTE}). 
Moreover, in \cite[Proposition 4.1]{MT0bis}, Min\'a\v c--T\^an prove that pro-$p$ Demushkin groups 
have the cup-defining $n$-fold Massey product property with respect to $\F_p$. 
Together with their work in \cite{MT2}, this implies that pro-$p$ Demushkin groups 
have the following stronger property. 

We say that a profinite group $G$ satisfies {\em strong} $n$-Massey vanishing with respect to $p$ 
if for every $\chi_1, \ldots, \chi_n \in H^1(G,\F_p)$ with 
$0 = \chi_1 \cup \chi_2  
= \cdots = \chi_{n-1} \cup \chi_n$ 
the $n$-fold Massey product $\langle \chi_1, \ldots, \chi_n \rangle$ is {\em defined and  vanishes}. 
For $n\ge 4$, this is a strictly stronger condition than $n$-Massey vanishing. 
By the work of Min\'a\v c--T\^an in \cite[Proposition 4.1]{MT0bis} and \cite[Theorem 4.3]{MT1}, 
pro-$p$ Demushkin groups satisfy strong $n$-Massey vanishing with respect to $p$ and all $n \ge 3$.   
P\'al--Szab\'o gave an independent proof that pro-$p$ Demushkin groups satisfy strong $n$-Massey vanishing for all $n \ge 3$ 
in \cite[Theorem 3.5]{PSz}. 
Moreover, by \cite[Theorem 1]{MMRT}, the absolute Galois groups of number fields which do not contain a primitive $p$-th root of unity 
satisfy strong $n$-Massey vanishing with respect to $p$ for all $n\ge 3$.  
Strong vanishing of triple Massey products is a necessary condition for the $A_3$-formality of $\Cb(G,\F_p)$. 
In fact, $A_3$-formality implies triple Massey vanishing and the cup-defining fourfold Massey product property (see Proposition \ref{prop:4tuple_products_are_defined}). 
We note, however, that $A_3$-formality is a significant strengthening of the vanishing of the Massey product obstructions  
since $A_3$-formality requires that a specific element in the triple Massey product of elements in $\Hb(G, \F_p)$ vanishes 
and that defining systems of triple Massey products can be chosen compatibly. 
Moreover, 
\cite[Theorem 6.12]{PQ3} demonstrates that strong vanishing of Massey products is only a necessary but not a sufficient condition for $A_3$-formality.


\subsection{Outline of the proof}\label{subsec:intro_outline}
We now give a brief outline of the proof of  Theorem \ref{thm:Demushkin_groups_A3_formality_intro}. 
While $n$-Massey vanishing for Demushkin groups is a direct consequence of the non-degeneracy of the cup product, 
showing $A_3$-formality is much more involved. 
By the work of Kadeishvili, the differential graded algebra $\Cb(G,\F_2)$ is $A_3$-formal if and 
only if a certain {\em canonical class} $\gamma_G$ in the Hochschild cohomology group $\HH^{3,-1}(\Hb(G,\F_2))$ is zero. 
The canonical class of a differential graded algebra has also been studied by Benson--Krause--Schwede in \cite{BKS} 
in the context of the realizability of modules over Tate cohomology. 
The strategy for proving Theorem \ref{thm:Demushkin_groups_A3_formality_intro} is then 
to show that $\gamma_G$ vanishes in $\HH^{3,-1}(\Hb(G,\F_2))$ for all pro-$2$ Demushkin groups. 
The task of computing the class $\gamma_G$ in $\HH^{3,-1}(\Hb(G,\F_2))$ is simplified by 
the work of Min\'a\v c--Pasini--Quadrelli--T\^an in \cite[Theorem 5.2]{MPQT} who showed that 
the graded $\F_2$-algebra $\Hb(G,\F_2)$ for a pro-$2$ Demushkin group is Koszul. 
A second key tool is Dwyer's Theorem on unipotent representations and Massey products in group cohomology. 
A third key ingredient is the classification of pro-$2$ Demushkin groups by Demushkin, Serre, and Labute. 
A Demushkin group has a finite number of generators subject to a single relation. 
There are four different types of relations, each leading to a specific type of pro-$2$ Demushkin group. 
For each type, we compute the canonical class in a separate section. 
In each case, we first determine a basis for the relevant spaces $R$ and $K_3^3$ in the Koszul complex of $\Hb(G,\F_2)$. 
Then we compute a sufficiently large part of the differential in the complex which computes the Hochschild cohomology of $\Hb(G,\F_2)$. 
Next, we construct the map $\kappa_3 \colon K_3^3(\Hb(G,\F_2)) \to H^2(G,\F_2)$ which represents the canonical class. 
Finally, we compute the canonical class of $G$ for each case. 

The key new idea for $p=2$ compared to the argument for odd primes in \cite{PQ3} 
is that we interpret $\kappa_3$ as higher Massey products. 
In \cite{PQ3}, we computed triple Massey products in $H^1(G,\F_p)$ via Dwyer's Theorem.
Since there are enough relations in $K_3^3(\Hb(G,\F_p))$ for an odd prime $p$, 
we could then determine all values of $\kappa_3$. 
For $p=2$, however, there are not enough relations. 
Hence we use a different strategy and interpret the data in $K_3^3(\Hb(G,\F_2))$ as defining systems 
in certain $n$-fold Massey products for $n=3,\ldots,8$,  
and then we compute the specific values of the corresponding element in $H^2(G,\F_2)$. 
After adding signs, the new method also works for $p$ odd, and hence we can use the methods in Section \ref{sec:torsion-free_p=2_Demushkin} 
to provide a new computation of the canonical class of a pro-$p$ Demushkin group for odd primes. 
%

%
%

\subsection{Contents} 
From Section \ref{sec:HH_A3_MP} onward, unless stated otherwise, all vector spaces and algebras are over $\F_2$. 
In Section \ref{subsec:HH}, we recall the definition of graded Hochschild cohomology groups of a graded algebra. 
In Section \ref{subsec:Koszul_algebras}, we recall the definition of Koszul algebras. 
In Section \ref{subsec:A3_MP}, 
we define $A_3$-formality for differential graded algebras and recall Massey products. 
Then we construct the canonical class, 
first for a general differential graded algebra in  Section \ref{subsec:canonical_class}, 
and then for a differential graded algebra whose cohomology algebra is Koszul in Section \ref{sec:canonical_class_Koszul}. 
In Section \ref{subsec:group_coh}, 
we recall continuous group cohomology, 
and in Section \ref{subsec:Dwyer}, 
we recall Dwyer's Theorem. 
In Section \ref{subsec:classification}, 
we recall the classification of pro-$2$ Demushkin groups of Demushkin, Serre, and Labute, 
and we determine the algebra structure on $\Hb(G,\F_2)$ for pro-$2$ Demushkin groups. 
In Sections \ref{sec:torsion-free_p=2_Demushkin}, 
\ref{sec:type_II}, and \ref{sec:type_III},  
we compute the canonical class of pro-$2$ Demushkin groups for the three cases 
we need to consider according to the classification. 

\subsection{Acknowledgements} 
We thank Jan Min\'a\v c for helpful comments.


\section{Hochschild cohomology, $A_3$-formality, and Massey products}\label{sec:HH_A3_MP}

In this section, we recall the background for the computation of the canonical class of a differential graded algebra. 
%


\subsection{Graded Hochschild cohomology}\label{subsec:HH}

Let $A$ be a graded unital $\F_2$-algebra. 
We recall that the bar resolution
$B(A)$ of $A$ is the non-negative chain complex
of free graded $A$-bimodules
given by $B_n(A) \coloneqq A^{\otimes n+2}$ for $n \geq 0$.
The differential 
$d_n \colon B_n(A) \to B_{n-1}(A)$
is given by
\begin{align}\label{eq:diff_bar_complex}
a_0 \otimes \cdots \otimes a_{n+1} 
\mapsto \sum_{i = 0}^n 
a_0 \otimes \cdots \otimes 
a_i a_{i+1} \otimes \cdots \otimes a_{n+1}.
\end{align}
We write $A^e \coloneqq A \ot A^{\op}$, where $A^{\op}$ denotes the opposite algebra. 
Note that $A^{\otimes n+2} \cong A^e \otimes A^{\otimes n}$
as a graded $A$-bimodule, 
and hence $B(A)$ indeed consists of free modules. 

\begin{proposition}\label{bar-acyclic}
The bar resolution $B(A)$ is a free resolution
of $A$ as a graded $A$-bimodule.
\end{proposition}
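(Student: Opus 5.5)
We need to check three things: that $B(A)$ is a complex of graded $A$-bimodule maps, that each term is free, and that the augmented complex $B(A)\to A\to 0$ is exact, with augmentation $\mu\colon B_0(A)=A\otimes A\to A$ given by multiplication.

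\textbf{Bimodule maps and freeness.} Each $d_n$ is a graded $A$-bimodule map. The outer factors $a_0$ and $a_{n+1}$ are only ever multiplied on their inner side, so left multiplication on $a_0$ and right multiplication on $a_{n+1}$ commute with $d_n$. Each $d_n$ has degree zero because multiplication in $A$ preserves the grading. Freeness is already noted in the text: $A^{\otimes n+2}\cong A^e\otimes A^{\otimes n}$, and over $\F_2$ the graded vector space $A^{\otimes n}$ has a homogeneous basis.

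\textbf{Complex.} To see $d_{n-1}d_n=0$ and $\mu d_1=0$, expand $d_{n-1}d_n$ as a sum over pairs of positions. Merging positions $i,i+1$ and then $j,j+1$ with $j\ge i+1$ gives the same term as merging $j+1,j+2$ first and then $i,i+1$. The case of adjacent merges is handled by associativity. So the terms cancel in pairs, which works over $\F_2$ without signs. The identity $\mu d_1=0$ amounts to $a_0a_1a_2+a_0a_1a_2=0$.

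\textbf{Exactness.} This is the main point. I would construct a contracting homotopy that is only left $A$-linear (equivalently, $\F_2$-linear), since exactness does not require bimodule maps. Define $s_{-1}\colon A\to A\otimes A$ by $a\mapsto 1\otimes a$, and $s_n\colon B_n(A)\to B_{n+1}(A)$ by
\[
a_0\otimes\cdots\otimes a_{n+1}\mapsto 1\otimes a_0\otimes\cdots\otimes a_{n+1}.
\]
These are homogeneous of degree zero because $1$ has degree zero. In $d_{n+1}s_n$, the $i=0$ term returns the original element, and the remaining terms give $s_{n-1}d_n$ (with the appropriate index shift). Hence
\[
d_{n+1}s_n+s_{n-1}d_n=\mathrm{id}\quad\text{for } n\ge 1,
\]
and similarly $d_1s_0+s_{-1}\mu=\mathrm{id}$ and $\mu s_{-1}=\mathrm{id}$. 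All signs disappear in characteristic $2$. Therefore the identity on the augmented complex is null-homotopic, so the complex is acyclic and $B(A)\to A$ is a free resolution of graded $A$-bimodules.

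The only delicate step is the index bookkeeping in the homotopy identity. It is routine, and it requires that $A$ be unital, which is assumed.
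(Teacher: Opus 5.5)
Your proof is correct and follows the paper's argument: the paper also extends $B(A)$ by $A$ in degree $-1$ via $\mu$ and uses the same contracting homotopy $a_0\otimes\cdots\otimes a_{n+1}\mapsto 1\otimes a_0\otimes\cdots\otimes a_{n+1}$, and you additionally spell out the routine checks that the paper leaves implicit (bimodule maps, freeness, $d^2=0$). One harmless slip: this homotopy is right $A$-linear but not left $A$-linear, since $s_n(ba_0\otimes\cdots)=1\otimes ba_0\otimes\cdots\neq b\otimes a_0\otimes\cdots$, and one-sided linearity is not ``equivalent'' to $\F_2$-linearity; acyclicity only needs an $\F_2$-linear homotopy, so the argument is unaffected.
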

\begin{proof}
It suffices to show that 
the extended complex $\widetilde{B}(A)$ is acyclic,  
where $\widetilde{B}(A)$  
is extended from $B(A)$ by adjoining 
$\widetilde{B}_{-1}(A) := A$ 
in degree $-1$ via the multiplication 
map $\mu \colon A \otimes A \to A$. 
The map
$h \colon \widetilde{B}(A) \to \widetilde{B}(A)$ of degree $1$ given by
\[
a_0 \otimes \cdots \otimes a_{n+1}
\mapsto 1 \otimes a_0 \otimes \cdots \otimes a_{n+1}
\]
is a contracting homotopy, i.e., $dh + hd = 1$, since 
\[
dh(a_0 \otimes \cdots \otimes a_{n+1})
= a_0 \otimes \cdots \otimes a_{n+1}
- hd(a_0 \otimes \cdots \otimes a_n). \qedhere
\]
\end{proof}

\begin{definition}
Let $M$ and $N$ be graded $A$-bimodules. 
We define $\gHom_A(M, N)$ as the graded $\F_2$-vector space
with degree $s$ component given by
$A$-linear graded maps $f \colon M \to N[s]$, where $N[s]$ is the graded $A$-module given by $N[s]^n = N^{s+n}$.
\end{definition}

\begin{definition}
Let $M$ be a graded $A$-bimodule. 
We define the Hochschild cohomology $\HH^{n,\bullet}(A,M)$
as the $n$th cohomology of the cochain complex
\[
\gHom_{A^e}(B(A),M)
\]
of graded $\F_2$-vector spaces with differential $\delta^n (f) = f \circ d_n$ where $d_n$ is the differential of $B(A)$ in \eqref{eq:diff_bar_complex}. 
When $M=A$ we will write
$\HH(A) \coloneqq \HH(A, A)$. 
\end{definition}

We note that the groups $\HH^{\bullet, \bullet}(A,M)$ are equipped with a cohomological grading, and an internal grading induced by the grading of $A$ and $M$.
We can describe $\HH^{n,s}(A,M)$ more concretely as follows. 
Using the natural contracting isomorphism
\[
\gHom_{A^e}(A^e \otimes A^{\otimes n}, M) \cong 
    \gHom_{\F_2}(A^{\otimes n}, M)
\]
we see that $\HH^{n,\bullet}(A, M)$ is isomorphic to the $n$th cohomology of the complex
\begin{align}
\label{eq:HH_reduced_complex}
\cdots \to \gHom_{\F_2}(A^{\otimes n-1}, M) \xto{\partial}
\gHom_{\F_2}(A^{\otimes n}, M) 
\xto{\partial} \gHom_{\F_2}(A^{\otimes n+1}, M) \to \cdots,
\end{align}
where the differentials are given by
\begin{align*}
\partial(f)(a_1 \otimes \cdots \otimes a_{n+1}) = 
    &  
   ~~  a_1f(a_2 \otimes \cdots \otimes a_{n+1})
     + \sum_{i=1}^{n} 
        f(a_1 \otimes \cdots \otimes a_i a_{i+1}
        \otimes \cdots \otimes a_{n+1})\\
    & ~~ + 
    f(a_1 \otimes \cdots \otimes a_{n})a_{n+1}.
\end{align*}


\begin{remark}\label{rem:HH_and_Ext}
By Proposition~\ref{bar-acyclic},
we see that
$\HH(A, M)$ computes the graded Ext
modules $\gExt_{A^e}(A, M)$. 
In particular,
we can compute $\HH(A, M)$
using any free resolution of $A$ as
a graded $A$-bimodule. 
\end{remark}



\subsection{Koszul algebras}\label{subsec:Koszul_algebras}

Let $V$ denote a finite-dimensional $\F_2$-vector space, 
and let $T(V)$ denote its graded tensor algebra over $\F_2$.  
For $R \subseteq V \otimes V$, 
let $(R)$ denote the two-sided ideal in $T(V)$ 
generated by $R$. 
We recall from \cite{ppqa} that a graded algebra 
of the form $T(V)/(R)$ is called a {\em quadratic algebra}. 
%
%
For any quadratic algebra $A$, 
we can define the following chain complex of free graded $A$-bimodules. 

\begin{definition}\label{koszulcomplex}
Let $A = T(V)/(R)$ be a quadratic algebra. 
Let $K^0_0 = \F_2$, $K^1_1 = V$, 
and, for $n\geq 2$, let  
\[
K^n_n \coloneqq \bigcap_{i=1}^{n-1} V^{\otimes i-1} \otimes R \otimes V^{\otimes n-i-1} \subseteq V^{\otimes n}.
\]
The Koszul complex $K(A^e, A)$ of $A$ is defined
as the non-negative chain complex of graded $A$-bimodules with
\[
K_n(A^e, A) \coloneqq A \otimes K^n_n \otimes A,
\]
and differential $d_n$ induced by the one in the bar resolution $B(A)$, 
i.e., 
\[
d_n \colon a \otimes v_1 \otimes \cdots \otimes v_n \otimes b
\mapsto
av_1 \otimes v_2 \otimes \cdots \otimes v_n \otimes b
+ a \otimes v_1 \otimes \cdots \otimes v_n b.
\]
\end{definition}

Note that for the differential $d_n$ in the Koszul complex, 
the middle terms of the bar construction differential 
in \eqref{eq:diff_bar_complex} vanish. 
This is because each product \(v_i v_{i+1}\)
in a middle term has its factors \(v_i\), \(v_{i+1}\) in the space of relations $R$, 
so the product \(v_i v_{i+1}\) vanishes in $A$, 
which is where the product in the expression 
of the differential is taking place. 


\begin{definition}
A quadratic algebra $A$ is called {\em Koszul}
if its Koszul complex $K(A^e, A)$ is
a resolution of $A$ as a graded $A$-bimodule,
i.e., if $H_n(K(A^e, A)) = 0$ for $n > 0$
and $H_0(A^e, A) = A$.
\end{definition}


Let $A$ be a Koszul algebra. 
Then the natural inclusion $K(A^e, A) \into B(A)$ 
is a quasi-isomorphism. 
Hence we can compute
$\HH(A)$ as the cohomology of
the complex $\gHom_{A^e}(K(A^e, A), A)$.
We first observe that we have
\[
K(A^e, A)_n = A \otimes K^n_n \otimes A \cong A^e \otimes K^n_n 
\]
as graded $A$-bimodules. 
Using the contracting isomorphism
\[
\gHom_{A^e} (A^e \otimes K^n_n, A) \cong \gHom_{\F_2}(K^n_n, A)
\]
of graded vector spaces 
we see that $\HH(A)$ can be computed as the cohomology
of the following complex of graded vector spaces:
\[
\cdots \to \gHom_{\F_2}(K^{n-1}_{n-1}, A) 
\xrightarrow{\partial^{n-1}} \gHom_{\F_2}(K^n_n, A)
\xrightarrow{\partial^{n}} \gHom_{\F_2}(K^{n+1}_{n+1}, A)
\to \cdots 
\]
where the differential $\partial^n$
is given by
\[
\partial^n(f)(v_1 \otimes \cdots \otimes v_{n+1}) 
    = v_1f(v_2 \otimes \cdots \otimes v_{n+1}) + 
    f(v_1 \otimes \cdots \otimes v_n)v_{n+1}.
\]
In particular, the group $\HH^{3,-1}(A)$ is thus isomorphic to the cohomology of the complex  
\begin{align}
\label{eq:Koszul_complex_3,-1}
\Hom_{\F_2}(K^2_2, A^1) 
    \xrightarrow{\partial^2} \Hom_{\F_2}(K^3_3, A^2)
    \xrightarrow{\partial^3} \Hom_{\F_2}(K^4_4, A^3). 
\end{align}


\subsection{$A_3$-formality and Massey products}\label{subsec:A3_MP}

We now recall the definition of an $A_{\infty}$-algebra.  
For an introduction to the theory of $A_{\infty}$-algebras  
we refer to \cite{Keller}. 

\begin{definition}\label{def:Ainfty_algebra}
Let $\Ah = \oplus_{i \ge 0} \Ah^i$ be a non-negatively graded $\F_2$-vector space with $\Ah^0=\F_2$. 
Then $\Ah$ is called an \emph{$A_{\infty}$-algebra over $\F_2$} if, for all $i \ge 1$, 
there are graded $\F_2$-linear maps  
$m_i \colon \Ah^{\otimes i} \to \Ah[2-i]$ 
such that, for all $n \ge 1$,  
\begin{align*}
\sum_{r+s+t=n} m_{n-s+1}(\One^{\ot r} \ot m_s \ot \One^{\ot t}) = 0  
\end{align*}
as maps $\Ah^{\otimes n} \to \Ah$, 
where $\One$ denotes the identity map of $\Ah$. 
\end{definition} 

\begin{remark}
For $n=1,2,3$, the maps $m_n$ satisfy the following identities: 
first, $m_1m_1=0$, i.e., $(\Ah,m_1)$ is a cochain complex;  
second, $m_1m_2  = m_2(m_1 \otimes \One + \One \otimes m_1)$ 
i.e., $m_1$ is a graded derivation with respect to the multiplication $m_2$; 
and 
\begin{equation*}
 m_2(\One \otimes m_2 + m_2 \otimes \One) 
=  m_1m_3 + m_3(m_1 \otimes \One \otimes \One + \One \otimes m_1 \otimes \One + \One \otimes \One \otimes m_1), 
\end{equation*}
i.e., $m_2$ is associative up to homotopy. 
\end{remark}

\begin{example}
Every graded $\F_2$-algebra is an $A_{\infty}$-algebra with trivial $m_1$ and $m_n$ for $n \ge 3$.    
Every differential graded algebra $(\Cb, \delta, \cup)$ over $\F_2$ is an $A_3$-algebra with $m_1= \delta$, $m_2 = \cup$, and $m_n=0$ for all $n \ge 3$. 
\end{example}


Let $(\Cb, \delta, \cup)$ be a differential graded algebra (DGA) over $\F_2$ with cohomology algebra $\Hb$. 
%
By the work of Kadeishvili \cite{Kadeishvili82, Kadeishvili88} (see also 
\cite{Keller}, and \cite{Merkulov}), 
one can equip $\Hb$ with the structure of an $A_{\infty}$-algebra $(\Hb,\{m_n\}_{n\ge 1})$ such that $m_1=0$ together with a quasi-isomorphism of $A_{\infty}$-algebras 
$(\Hb,\{m_n\}) \xto{\simeq} (\Cb, \delta, \cup)$. 
The $A_{\infty}$-algebra $(\Hb,\{m_n\})$ is called a {\em minimal model} of $(\Cb, \delta, \cup)$.  
Since any two minimal models of $\Cb$ are isomorphic as $A_{\infty}$-algebras, 
we speak of {\em the} minimal model from now on. 
A DGA is called {\em $A_{\infty}$-formal}, or {\em formal} as an $A_{\infty}$-algebra, if its minimal model can be chosen such that $m_n = 0$ for all $n \ge 3$. 
%
%
We now consider the following weaker notion.  
We refer to \cite{PQ3} for a more detailed discussion. 

\begin{definition}\label{def:A3formal}
Let $\Cb$ be a DGA. 
We say that $\Cb$ is {\em $A_3$-formal} if its minimal model can be chosen such that $m_3 = 0$.  
\end{definition}


For a simple but non-trivial example of an $A_3$-formal DGA we refer to \cite{DQ}. 
The notion of an $A_{\infty}$-algebra is closely related to Massey products. 
We recall the definition of Massey products for degree one classes next.   

\begin{defn}\label{def:Massey_product} 
Let $(\Cb, \delta, \cup)$  be a DGA over $\F_2$ with cohomology algebra $\Hb$.  
For an integer $n\geq2$, 
let $a_1,a_2,\ldots,a_n$ be cohomology classes in $H^1$. 
A \emph{defining system} for the $n$-fold Massey product of $a_1,a_2,\ldots,a_n$ is a set $\{a_{ij}\}$ of elements of $\Ch^1$ for $1\leq i<j\leq n+1$ and $(i,j)\neq(1,n+1)$ such that
\[
\delta(a_{i,j})=\sum_{k=i+1}^{j-1} a_{i,k}\cup a_{k,j}
\]
and, for each $i$,  the class $a_i$ is represented by $a_{i,i+1}$. 
We say that the $n$-fold Massey product of $a_1,a_2,\ldots,a_n$ is \emph{defined} if there exists a defining system. 
The $n$-fold Massey product $\langle a_1,a_2,\ldots,a_n\rangle_{\{a_{i,j}\}}$ of $a_1,a_2,\ldots,a_n$ with respect to the defining system $\{a_{i,j}\}$ is the cohomology class of
\[
\sum_{k=2}^n a_{1,k}\cup a_{k,n+1} 
\]
in $H^2$. 
Let $\langle a_1,a_2,\ldots,a_n\rangle$ denote the subset of $H^2$ consisting of the $n$-fold Massey products of $a_1,a_2,\ldots,a_n$ with respect to all defining systems. 
The set $\langle a_1,a_2,\ldots,a_n\rangle$ is called the \emph{$n$-fold Massey product} of $a_1,a_2,\ldots,a_n$. 
We say that the $n$-fold Massey product of $a_1,a_2,\ldots,a_n$ {\it vanishes} if the set $\langle a_1,a_2,\ldots,a_n\rangle \subseteq H^2$ contains zero.  
\end{defn}


\begin{example}\label{example:triple_Massey_product}
For $n=2$, the Massey product $\langle a_1, a_2\rangle$ is just the cup product $a_1 \cup a_2$. 
Let $a_1, a_2,a_3 \in H^1$ 
such that $a_1 \cup a_2 = 0$ and $a_2 \cup a_3 = 0$. 
Then the {\it triple Massey product} $\langle a_1, a_2, a_3 \rangle$ is defined.  
\end{example}


We recall the following special case from \cite[Theorem C]{BMM}. 
Let $a,b,c \in \Hb$ be cohomology classes such that $a \cup b = 0$ and $b \cup c = 0$. 
Then $m_3(a \ot b \ot c) \in \langle a,b,c \rangle$. 
This implies the following two well-known facts (see for example \cite[Corollary 2.15 and Proposition 2.16]{PQ3}).

\begin{proposition}\label{prop:A3formal_Massey}
Let $\Cb$ be a DGA with cohomology algebra $\Hb$.   
Let $a,b,c \in H^1$ be cohomology classes such that $a \cup b = 0$ and $b \cup c = 0$. 
Assume that $\Cb$ is $A_3$-formal. 
Then the triple Massey product $\langle a,b,c \rangle$ contains zero. 
\end{proposition}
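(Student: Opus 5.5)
By Definition \ref{def:A3formal}, $A_3$-formality means we may choose a minimal model $(\Hb,\{m_n\})$ of $\Cb$ with $m_3=0$. The plan is to combine this with the special case of \cite[Theorem C]{BMM} recalled just above the statement: whenever $a\cup b=0$ and $b\cup c=0$, the value $m_3(a\ot b\ot c)$ lies in the Massey product set $\langle a,b,c\rangle$.

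The first step is to fix the model. We take $(\Hb,\{m_n\})$ with $m_1=0$ and $m_3=0$, together with an $A_\infty$-quasi-isomorphism $(\Hb,\{m_n\})\xto{\simeq}(\Cb,\delta,\cup)$. Since minimal models are unique up to $A_\infty$-isomorphism, this is a legitimate minimal model. Moreover $m_2$ then agrees with the cup product on $\Hb$, because the first component of the quasi-isomorphism induces the identity on cohomology and is multiplicative up to homotopy.

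The second step checks the hypotheses of \cite[Theorem C]{BMM}. The vanishing conditions $a\cup b=0$ and $b\cup c=0$ are exactly those required to define $\langle a,b,c\rangle$ (Example \ref{example:triple_Massey_product}), and they are given. Applying the theorem to this particular minimal model yields
\[
0=m_3(a\ot b\ot c)\in\langle a,b,c\rangle ,
\]
so the triple Massey product contains zero, i.e.\ it vanishes in the sense of Definition \ref{def:Massey_product}.

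The one point needing care is whether the theorem of \cite{BMM} holds for \emph{every} minimal model with a quasi-isomorphism to $\Cb$, including the one with $m_3=0$, or only for a model produced by a specific construction. The plan is to use the fact that the statement of \cite[Theorem C]{BMM} is formulated for an arbitrary minimal model. If that were not the case, I would argue directly as follows. Let $f_1,f_2$ be the first two components of the $A_\infty$-morphism. The $A_\infty$-morphism equations in arities two and three give
\[
\delta f_2(a\ot b)=f_1(a)\cup f_1(b)+f_1(m_2(a\ot b)) .
\]
Since $m_2(a\ot b)=a\cup b=0$, the cochains $f_1(a),f_1(b),f_1(c)$ together with $f_2(a\ot b)$ and $f_2(b\ot c)$ form a defining system for $\langle a,b,c\rangle$. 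The arity-three equation then identifies the resulting Massey product, up to a coboundary, with $f_1(m_3(a\ot b\ot c))$. That class is zero when $m_3=0$.
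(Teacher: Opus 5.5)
Your proposal is correct and matches the paper, which deduces the statement directly from the special case of \cite[Theorem C]{BMM}. That special case is applied to a minimal model with $m_3=0$, whose quasi-isomorphism to $\Cb$ has first component inducing the identity on cohomology, giving $0=m_3(a\ot b\ot c)\in\langle a,b,c\rangle$. Your fallback computation with the arity-two and arity-three $A_\infty$-morphism equations is also correct, since the terms $f_2(m_2(a\ot b)\ot c)$ and $f_2(a\ot m_2(b\ot c))$ drop out because $a\cup b=0=b\cup c$; it is a self-contained proof of that special case of Theorem C for an arbitrary minimal model.
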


\begin{proposition}\label{prop:4tuple_products_are_defined}
Let $\Cb$ be a DGA with cohomology algebra $\Hb$.   
Let $a_1, a_2, a_3, a_4 \in H^1$ be classes such that the neighbouring cup-products vanish, i.e., $a_1 \cup a_2 = 0$, $a_2 \cup a_3 = 0$, and $a_3 \cup a_4 = 0$.
If $\Cb$ is $A_3$-formal, then the fourfold Massey product $\langle a_1,a_2,a_3,a_4 \rangle$ is defined. 
\end{proposition}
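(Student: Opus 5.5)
We prove Proposition \ref{prop:4tuple_products_are_defined} by taking the minimal model with $m_3=0$ and using the quasi-isomorphism to $\Cb$ to build a defining system directly. The only real input is the $A_\infty$-morphism identity at arity three, which we use to show that the natural candidate for $a_{1,4}+a_{2,5}$-type obstructions vanishes.

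By $A_3$-formality we may fix a minimal model $(\Hb,\{m_n\})$ with $m_1=0$, $m_3=0$, and an $A_\infty$-quasi-isomorphism $f=\{f_n\}\colon(\Hb,\{m_n\})\to(\Cb,\delta,\cup)$. Here $f_1$ is a cycle-selecting map inducing the identity on cohomology, and $f_n\colon(\Hb)^{\otimes n}\to\Cb$ has degree $1-n$. Over $\F_2$ all signs disappear. The morphism equations in arities $2$ and $3$ then read
\begin{align*}
\delta f_2(x\ot y) &= f_1(x)\cup f_1(y) + f_1(m_2(x\ot y)),\\
\delta f_3(x\ot y\ot z) &= f_1(x)\cup f_2(y\ot z) + f_2(x\ot y)\cup f_1(z) + f_2(m_2(x\ot y)\ot z) + f_2(x\ot m_2(y\ot z)) + f_1(m_3(x\ot y\ot z)).
\end{align*}

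Now apply these to $a_1,\dots,a_4$. We set $a_{i,i+1}=f_1(a_i)$ and $a_{i,i+2}=f_2(a_i\ot a_{i+1})$. Since $m_2(a_i\ot a_{i+1})=a_i\cup a_{i+1}=0$, the arity-two equation gives $\delta a_{i,i+2}=a_{i,i+1}\cup a_{i+1,i+2}$. For the entries $a_{1,4}$ and $a_{2,5}$ we set $a_{i,i+3}=f_3(a_i\ot a_{i+1}\ot a_{i+2})$ for $i=1,2$. In the arity-three equation, the two $f_2$ terms involving $m_2$ vanish because neighbouring cup products are zero, and the last term vanishes because $m_3=0$. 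What remains is
\[
\delta a_{i,i+3}=a_{i,i+1}\cup a_{i+1,i+3}+a_{i,i+2}\cup a_{i+2,i+3},
\]
which is exactly the defining-system condition of Definition \ref{def:Massey_product}. The degrees are correct, since $f_2$ lowers degree by $1$ and $f_3$ by $2$, so all $a_{i,j}$ lie in $\Ch^1$. These entries therefore form a defining system, and $\langle a_1,a_2,a_3,a_4\rangle$ is defined.

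The only point needing care is the form of the $A_\infty$-morphism equation. We should check that, for a DGA target with $m_n=0$ for $n\ge3$, the arity-three relation has exactly the terms above and that no terms involving $f_1(m_3)$ of other shapes appear. This is standard (see \cite{Keller}). The hypothesis $m_3=0$ is essential: without it, $f_1(m_3(a_i\ot a_{i+1}\ot a_{i+2}))$ would be a possibly nontrivial cocycle, which is precisely the triple-Massey obstruction. The same computation with the arity-three equation also recovers Proposition \ref{prop:A3formal_Massey}.
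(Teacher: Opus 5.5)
Your proof is correct, and it is essentially the argument the paper relies on by citation (it deduces the statement from \cite[Theorem C]{BMM} via \cite[Proposition 2.16]{PQ3}): once $m_3=0$, the triple-Massey representatives built from $f_1,f_2$ are exact with primitives $f_3(a_1\ot a_2\ot a_3)$ and $f_3(a_2\ot a_3\ot a_4)$, and the shared entry $a_{2,4}=f_2(a_2\ot a_3)$ glues them into a fourfold defining system, which you read off directly from the arity-two and arity-three $A_\infty$-morphism equations. The one tacit normalization, that $f_1$ induces the identity on cohomology so that $m_2$ is the cup product, is harmless: you can transport the minimal model along the automorphism of $\Hb$ induced by $f_1$, and this keeps $m_3=0$.
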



\subsection{The canonical class}\label{subsec:canonical_class}

Let $(\Cb, \delta, \cup)$ be a DGA with cohomology algebra $\Hb$. 
Let $(\Hb,\{m_n\})$ be the minimal model of $\Cb$ as an $A_{\infty}$-algebra. 
We note that we can construct the map $m_3 \colon (\Hb)^{\ot 3} \to \Hb[-1]$ 
as follows (see for example \cite[Section 5]{BKS}, \cite{Merkulov}, \cite{PQ3}).  
We choose an $\F_2$-linear graded map $f_1 \colon \Hb \to \Ker \delta$ which induces the identity when taking cohomology.  
Since $f_1$ is multiplicative on cohomology, we can find a graded $\F_2$-linear map  
$f_2 \colon \Hb \otimes \Hb \to \Cb$ of degree $-1$ satisfying  
\begin{align*}
\delta(f_2(a \ot b)) = f_1(a \cup b) + f_1(a) \cup f_1(b). 
\end{align*}
Now we define a graded $\F_2$-linear map $\Phi_3 \colon (\Hb)^{\otimes 3} \to \Cb[-1]$ by 
\begin{align*}
\Phi_3(a \ot b \ot c) \coloneqq 
f_1(a) f_2(b \ot c) 
+ f_2(a \ot b) f_1(c) + f_2((a b) \ot c + a \ot (b c)) 
\end{align*}
for all homogeneous elements $a,b,c \in \Hb$ where we write $xy$ for the product $x \cup y$ to shorten the notation.  
We check that $\Phi_3$ has image in the cocycles of $\Cb$, 
and hence $\Phi_3$ induces a graded map $[\Phi_3] \colon (\Hb)^{\ot 3} \to \Hb[-1]$. 
We set $m_3 := [\Phi_3]$. 
By \cite[Proposition 5.4]{BKS}, $m_3$ is a cocycle in the complex \eqref{eq:HH_reduced_complex}.   
By \cite[Corollary 5.7]{BKS}, 
the corresponding Hochschild cohomology class  $[m_3] \in \HH^{3,-1}(\Hb)$ is independent of the choice of $f_1$ and $f_2$,   
and it is called the {\em canonical class} of $\Cb$ following Benson--Krause--Schwede who studied this class as an obstruction to the realizability of modules over Tate cohomology in \cite{BKS}.   
We note that the canonical class is also sometimes called the \emph{universal triple Massey product}, see for example \cite{Muro}. 
The following result is a modified version of Kadeishvili's theorem \cite{Kadeishvili88} (see also \cite[Theorem 3.9]{PQ3}, \cite[Theorem 2.7]{DQ}, and 
\cite[Theorem 4.7]{ST}). 

\begin{theorem}\label{thm:canonical_A3formal}
Let $\Cb$ be a DGA 
with canonical class $[m_3] \in \HH^{3,-1}(\Hb)$. 
Then $\Cb$ is $A_3$-formal if and only if $[m_3]=0$. 
\end{theorem}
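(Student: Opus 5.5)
The plan is to prove both directions by comparing minimal models through gauge transformations, that is, through $A_\infty$-isomorphisms of $\Hb$ whose first component is the identity.

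For the forward direction, suppose $\Cb$ is $A_3$-formal. Then some minimal model $(\Hb,\{m_n'\})$ has $m_3'=0$. The construction via $f_1,f_2$ produces another minimal model $(\Hb,\{m_n\})$, and by uniqueness of minimal models the two are related by an $A_\infty$-isomorphism $g$. One can arrange $g_1=\mathrm{id}$, since both quasi-isomorphisms to $\Cb$ induce the identity on cohomology. The $A_\infty$-morphism equation in arity $3$ then reads
\[
m_3' - m_3 = m_2(\One\ot g_2)+m_2(g_2\ot \One)+g_2(m_2\ot\One)+g_2(\One\ot m_2),
\]
which says exactly that $m_3'-m_3=\partial(g_2)$ in the complex \eqref{eq:HH_reduced_complex}, with $g_2\in\gHom(\Hb^{\ot2},\Hb)$ of internal degree $-1$. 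So $[m_3]=[m_3']=0$. Alternatively, one can take a quasi-isomorphism $(\Hb,\{m'_n\})\to\Cb$ and read off $f_1:=F_1$ and $f_2:=F_2$ from its components. The arity-$3$ morphism equation then shows that $\Phi_3$ is a coboundary in $\Cb$ up to terms coming from $m_3'$. Combined with the independence statement \cite[Corollary 5.7]{BKS}, this again gives $[m_3]=0$.

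For the converse, suppose $[m_3]=\partial h$ with $h\colon \Hb\ot\Hb\to\Hb[-1]$. The approach is to modify the choice of $f_2$ to $f_2':=f_2+f_1\circ h$. Since $f_1h$ takes values in cocycles, we have $\delta f_2'=\delta f_2$, so $f_2'$ is still admissible. Substituting $f_2'$ into $\Phi_3$ gives
\[
\Phi_3'=\Phi_3+f_1(a)f_1h(b\ot c)+f_1h(a\ot b)f_1(c)+f_1h(ab\ot c+a\ot bc).
\]
In cohomology the added terms give $\partial h(a\ot b\ot c)$, so $[\Phi_3']=m_3+\partial h=0$. It then remains to extend $(f_1,f_2')$ to a full $A_\infty$-quasi-isomorphism $(\Hb,\{m_n\})\to\Cb$ with $m_3=[\Phi_3']=0$. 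This is Kadeishvili's inductive construction: at each arity $n\ge4$, choose $m_n$ as the cohomology class of the obstruction cocycle and $f_n$ as a primitive of its correction. The construction requires only that the already chosen $f_1,f_2$ and $m_3$ satisfy the lower identities, and the arity-$3$ identity holds because $\Phi_3'$ is the cocycle defining $m_3$ and we may pick $f_3$ with $\delta f_3=\Phi_3'-f_1m_3$. The resulting minimal model has $m_3=0$, so $\Cb$ is $A_3$-formal.

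The main obstacle is the bookkeeping in Kadeishvili's inductive step, namely verifying that the arity-$n$ obstruction is a cocycle given the arbitrary choice at arities $\le3$. I would cite \cite{Kadeishvili88} and \cite{Merkulov} for this step rather than redo it, noting that the standard proof permits an arbitrary admissible choice of $f_2$ and of the representative $f_3$. Over $\F_2$ no signs intervene, which simplifies the checks.
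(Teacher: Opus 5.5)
Your proposal is correct and is essentially the standard Kadeishvili-type argument that the paper invokes by citation rather than reproving (\cite{Kadeishvili88}, \cite[Theorem 3.9]{PQ3}). In the forward direction, the arity-$3$ $A_\infty$-morphism identity shows that changing the minimal model changes $m_3$ by $\partial g_2$. Conversely, replacing $f_2$ by $f_2+f_1h$ and rerunning Kadeishvili's construction produces a minimal model with $m_3=0$.

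One point should be made explicit. You assert that the quasi-isomorphism from an arbitrary minimal model with $m_3'=0$ induces the identity on $\Hb$, and the paper's definition of minimal model does not build this in. It can always be arranged by conjugating the structure maps with the induced algebra automorphism of $\Hb$, which preserves $m_3'=0$.
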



\subsection{Canonical class for Koszul cohomology algebras}\label{sec:canonical_class_Koszul}

Let $(\Cb, \delta, \cup)$ be a differential graded algebra over $\F_2$ with cohomology algebra $\Hb$. 
We now assume that $\Hb$ is a Koszul algebra. 
In this case, the canonical class of $\Cb$ can be constructed in a simpler way as follows. 
Let $R \subset H^1 \ot H^1$ denote the relations such that $\Hb = T(H^1)/(R)$.  
Let $\Zh^1 = \ker \dd \subset \Ch^1$ denote the cocycles in degree one. 
Let $f_1 \colon K_1^1(\Hb) = H^1 \to \Zh^1$ be an $\F_2$-linear map which induces the identity when taking cohomology. 
Let $f_2 \colon K_2^2(\Hb) = R \to \Ch^1$ be an $\F_2$-linear map such that $\dd f_2(a,b) = f_1(a) \cup f_1(b)$. 
We define the $\F_2$-linear map 
\begin{align}\label{eq:def_of_Psi3_construction}
\Psi_3(a,b,c) \coloneqq f_1(a) \cup f_2(b,c) + f_2(a,b) \cup f_1(c) \colon K_3^3(\Hb) \to \Zh^2
\end{align}
where, by slight abuse of notation, we allow $(a,b)$, $(b,c)$, and $(a,b,c)$ to denote a sum of tensors in $R$ and $K_3^3(\Hb)$ respectively. 
By a direct computation, we check that $\Psi_3$ has image in the cocycles of $\Ch^2$. 
Hence $\Psi_3$ induces an $\F_2$-linear map 
\begin{align*}
\kappa_3(a,b,c)  = f_1(a) \cup f_2(b,c) + f_2(a,b) \cup f_1(c) \colon K_3^3(\Hb) \to H^2.
\end{align*}
In fact, the map $\kappa_3$ 
represents the canonical class of $\Cb$:  

\begin{proposition}\label{prop:kappa3_is_canonical_class}
The class of $\kappa_3$ in the cohomology of the complex \eqref{eq:Koszul_complex_3,-1}  
is the image of the canonical class of $\Cb$ under the automorphism of $\HH^{3,-1}(\Hb)$ 
induced by the inclusion $K^3_3(\Hb) \into (H^1)^{\ot 3}$. 
In particular, the differential graded algebra $\Cb$ is $A_3$-formal if and only if $[\kappa_3]=0$. 
\end{proposition}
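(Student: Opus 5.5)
The plan is to compare the two cochain complexes computing $\HH^{3,-1}(\Hb)$: the reduced bar complex \eqref{eq:HH_reduced_complex} and the Koszul complex \eqref{eq:Koszul_complex_3,-1}. Since $\Hb$ is Koszul, the inclusion $K(A^e,A)\into B(A)$ is a quasi-isomorphism of free resolutions of $A=\Hb$. The induced restriction map $\gHom_{\F_2}((H^{\bbb})^{\ot n},\Hb)\to \gHom_{\F_2}(K^n_n,\Hb)$ is therefore a quasi-isomorphism. Concretely, it sends a Hochschild cochain $\phi$ to its restriction to $K^n_n\subseteq (H^1)^{\ot n}\subseteq (\Hb)^{\ot n}$. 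This is the isomorphism on $\HH^{3,-1}$ in the statement. It then suffices to show that the restriction of a representative $m_3=[\Phi_3]$ of the canonical class to $K^3_3$ is cohomologous to $\kappa_3$, or equal to it for suitable choices. The final assertion will then follow from Theorem \ref{thm:canonical_A3formal}.

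To get equality, I would exploit the freedom in choosing $f_1,f_2$ in Section \ref{subsec:canonical_class}, using \cite[Corollary 5.7]{BKS}.
\begin{enumerate}[(1)]
\item Extend the given $f_1\colon H^1\to\Zh^1$ to a graded map $\Hb\to\Ker\delta$ inducing the identity. Its values in degree $\ge2$ are irrelevant for this step.
\item Define $f_2$ on $H^1\ot H^1$. Choose a complement $S$ of $R$ in $H^1\ot H^1$. On $R$, take the given Koszul $f_2$. The required identity $\delta f_2(a\ot b)=f_1(ab)+f_1(a)f_1(b)$ reduces to $\delta f_2=f_1(a)f_1(b)$ there, because $ab=0$ in $\Hb$ for $a\ot b\in R$ (by linearity for sums of tensors). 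On $S$, choose any valid $f_2$.
\item Extend $f_2$ arbitrarily, but validly, to the other bidegrees.
\end{enumerate}

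Next I would evaluate $\Phi_3$ on an element $\sum a\ot b\ot c\in K^3_3\subseteq R\ot H^1\cap H^1\ot R$. The last term $f_2((ab)\ot c+a\ot(bc))$ vanishes. The reason is that $\sum (ab)\ot c$ is the image of an element of $R\ot H^1$ under multiplication in the first two factors, hence is $0$ in $H^2\ot H^1$; similarly $\sum a\ot (bc)=0$. Linearity of $f_2$ then kills that term. Also $\sum f_1(a)f_2(b\ot c)$ and $\sum f_2(a\ot b)f_1(c)$ only involve $f_2$ on $R$, where it agrees with the Koszul choice. Here I must be careful: individual $b\ot c$ need not lie in $R$. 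So I will write the element of $K^3_3$ as $\sum_i a_i\ot r_i$ with $r_i\in R$, and as $\sum_j s_j\ot c_j$ with $s_j\in R$. The two sums are then interpreted exactly as in the abuse of notation in \eqref{eq:def_of_Psi3_construction}, and by linearity of $f_2$ on $H^1\ot H^1$ these agree with the termwise sums. This gives $\Phi_3|_{K^3_3}=\Psi_3$, hence $m_3|_{K^3_3}=\kappa_3$.

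The main obstacle is the identification of the comparison map. The quasi-isomorphism in Remark \ref{rem:HH_and_Ext} comes from a chain map of resolutions $K(A^e,A)\to B(A)$, and I need it to be literally the inclusion compatible with the contracting isomorphisms. I would check that $A\ot K^n_n\ot A\subseteq A\ot A^{\ot n}\ot A$ commutes with the differentials: the middle bar terms vanish, as noted after Definition \ref{koszulcomplex}. A chain map between free resolutions lifting the identity of $A$ induces the canonical isomorphism on $\gExt_{A^e}(A,A)$. Under contraction, this chain map becomes restriction of cochains to $K^n_n$. Bigrading is preserved since $K^3_3$ sits in internal degree $3$ and $m_3$ has degree $-1$.
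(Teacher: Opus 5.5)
Your proof is correct, but it takes a different route from the paper. The paper does no computation here: it cites \cite[Proposition 4.12]{PQ3} for the identification of $[\kappa_3]$ with the image of the canonical class, and then appeals to Theorem \ref{thm:canonical_A3formal}.

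You instead prove the cited identification directly over $\F_2$, in two steps.
\begin{enumerate}
\item You note that the Koszul complex sits inside the bar resolution as a subcomplex. After contraction, the comparison isomorphism on $\HH^{3,-1}(\Hb)$ is therefore restriction of Hochschild cochains along $K^3_3 \subseteq (H^1)^{\ot 3}$.
\item Using the independence of choices from \cite[Corollary 5.7]{BKS}, you pick the full maps $f_1, f_2$ of Section \ref{subsec:canonical_class} so that they extend the Koszul data. Then the term $f_2((ab)\ot c + a\ot(bc))$ vanishes on $K^3_3$ by linearity. Hence $\Phi_3|_{K^3_3} = \Psi_3$ on the nose, not merely up to a coboundary.
\end{enumerate}

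The citation buys brevity. Your route is self-contained and only uses what Section \ref{subsec:canonical_class} already recalls from \cite{BKS}. It also gives two facts for free, since $\Psi_3$ and $\kappa_3$ are restrictions of $\Phi_3$ and $m_3$ under a chain map:
\begin{itemize}
\item $\Psi_3$ takes values in cocycles, which the paper asserts only ``by a direct computation'';
\item $\kappa_3$ is a cocycle in \eqref{eq:Koszul_complex_3,-1}, even when $H^3 \neq 0$.
\end{itemize}

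Your rewriting of an element of $K^3_3$ as $\sum a_i \ot r_i$ and as $\sum s_j \ot c_j$ with $r_i, s_j \in R$ is exactly what makes the abuse of notation in \eqref{eq:def_of_Psi3_construction} rigorous.
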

\begin{proof}
This follows from \cite[Proposition 4.12]{PQ3} and Theorem \ref{thm:canonical_A3formal}. 
\end{proof}


\section{Dwyer's theorem and Demushkin groups}\label{sec:Dwyer_and_Demushkin}

We now specialise to differential graded algebras which arise from continuous group cohomology of profinite groups. 

\subsection{Continuous group cohomology and the canonical class}\label{subsec:group_coh}

Let $G$ be a profinite group, and let $G^n$ denote the $n$-fold direct product of $G$ with itself. 
Let $\Ch^n(G,\F_2)$ denote the $\F_2$-vector space of continuous functions $G^n \to \F_2$ 
with respect to the discrete topology on $\F_2$ and the profinite topology on $G$. 
Following \cite[\S 2.2]{Se} the differential $\delta \colon \Ch^n(G,\F_2) \to \Ch^{n+1}(G,\F_2)$, which is defined by 
\begin{align*}
(\delta \varphi)(g_1, \ldots, g_{n+1}) = & ~~ \varphi(g_2, \ldots,g_{n+1}) 
 + \sum_{i=1}^n \varphi(g_1, \ldots, g_ig_{i+1}, \ldots, g_{n+1}) \\
&  +  \varphi(g_1,\ldots,g_n), 
\end{align*}
turns $\Cb(G,\F_2)$ into a cochain complex whose cohomology $\Hb(G,\F_2)$ is the continuous cohomology of $G$ with coefficients in the trivial $G$-module $\F_2$.  
In particular, $H^1(G,\F_2)$ is the group of continuous group homomorphisms $G \to \F_2$. 
%
For every $\varphi \in \Ch^i(G,\F_2)$ and $\psi \in \Ch^j(G,\F_2)$, 
we define $\varphi \cup \psi \in \Ch^{i+j}(G,\F_2)$ by the formula: 
\begin{align*}
(\varphi \cup \psi)(g_1,\ldots,g_{i+j}) = 
\varphi(g_1,\ldots,g_i) \cdot \psi(g_{i+1},\ldots,g_{i+j}).  
\end{align*}
This induces the cup product on cohomology which turns $\Hb(G,\F_2)$ into a graded  
$\F_2$-algebra. 

\begin{defn}
Let $G$ be a pro-$2$ group.  
We say that $G$ is {\em $A_3$-formal}  
if the differential graded $\F_2$-algebra $\Cb(G,\F_2)$ given by continuous cochains is $A_3$-formal.  
We write $\gag$ for the canonical class $\gamma_{\Cb(G,\F_2)}$ in $\HH^{3,-1}(\Hb(G,\F_2))$ and call it the \emph{canonical class of $G$}.  
%
\end{defn}


\subsection{Dwyer's theorem on Massey products}\label{subsec:Dwyer}

We now recall from \cite[Theorem 2.4]{Dwyer} that the definition and vanishing of Massey products 
can be characterised as follows. 
Let $U_n(\F_2)$ denote the group of all upper triangular unipotent $(n\times n)$-matrices with coefficients in $\F_2$. 
Let $Z_n(\F_2)$ denote the center of $U_n(\F_2)$, 
i.e., the subgroup of all matrices in $U_n(\F_2)$ with all off-diagonal entries being $0$ except at position $(1,n)$. 
We write $\bU_n(\F_2) \coloneqq U_n(\F_2)/Z_n(\F_2)$. 

\begin{notn} 
We let $e_{ij} \colon U_n(\F_2) \to \F_2$ denote the projection to the $(i,j)$-coordinate. 
\end{notn}
The following result was proven by Dwyer in \cite[Theorem 2.4]{Dwyer}. 

\begin{theorem}[Dwyer]\label{thm:Dwyer}
Let $G$ be a profinite group. 
Let $a_1, \ldots, a_n \in H^1(G,\F_2)$. 
There is a one-to-one correspondence $M \leftrightarrow \brr_M$ between defining systems $M$ for $\langle a_1, \ldots, a_n \rangle$ 
and continuous group homomorphisms $\brr_M \colon G \to \bU_{n+1}(\F_2)$ such that 
$e_{i,i+1} \circ (\brr_M) = a_i$ for $i=1, \ldots, n$, 
where we identify $a_i$ with the corresponding continuous group homomorphism $G \to \F_2$. 
The correspondence is given by sending a defining system $M=\{a_{i,j}\}$ to the 
continuous group homomorphism $\brr_M \colon G \to \bU_{n+1}(\F_2)$ 
such that $e_{i,j} \circ \brr = a_{i,j}$ for $1 \le i < j \le n+1$. 
Moreover, the element $\langle a_1, \ldots, a_n \rangle_M \in H^2(G,\F_2)$ vanishes if and only if 
there is a continuous group homomorphism $\brr_M \colon G \to \bU_{n+1}(\F_2)$ making the following diagram commutative 
\begin{align*}
\xymatrix{
 & & & \ar@{.>}[dl]_-{\rr_M} G \ar[d]^{\brr_M} & \\
0 \ar[r] & Z_{n+1}(\F_2) \ar[r] & U_{n+1}(\F_2) \ar[r] & \bU_{n+1}(\F_2) \ar[r] & 0. 
}
\end{align*}
\end{theorem}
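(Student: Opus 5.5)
We prove Dwyer's theorem by writing a homomorphism into $U_{n+1}(\F_2)$ coordinatewise and comparing the homomorphism condition with the defining-system equations.

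\emph{Step 1: homomorphisms into $\bU_{n+1}(\F_2)$ and defining systems.} A continuous map $\brr \colon G \to \bU_{n+1}(\F_2)$ is determined by the continuous functions $a_{i,j} \coloneqq e_{i,j}\circ\brr \colon G \to \F_2$ for $1\le i<j\le n+1$, $(i,j)\ne(1,n+1)$. These coordinates are well defined on the quotient by $Z_{n+1}(\F_2)$, since that subgroup only changes the $(1,n+1)$ entry. Conversely, any family of such continuous functions defines a continuous map. For unipotent upper triangular matrices we have
\[
(AB)_{i,j}=A_{i,j}+B_{i,j}+\sum_{k=i+1}^{j-1}A_{i,k}B_{k,j}.
\]
Hence $\brr$ is a homomorphism if and only if, for each such $(i,j)$ and all $g,h\in G$,
\[
a_{i,j}(gh)=a_{i,j}(g)+a_{i,j}(h)+\sum_{k=i+1}^{j-1} a_{i,k}(g)\,a_{k,j}(h).
\]
By the formula for $\delta$ on $1$-cochains in Section \ref{subsec:group_coh}, $(\delta\varphi)(g,h)=\varphi(h)+\varphi(gh)+\varphi(g)$. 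So the condition above says exactly $\delta(a_{i,j})=\sum_k a_{i,k}\cup a_{k,j}$ (we are in characteristic $2$, so signs do not matter). For $j=i+1$ the sum is empty, so $a_{i,i+1}$ is a homomorphism. The condition $e_{i,i+1}\circ\brr=a_i$ then says that the cocycle $a_{i,i+1}$ is the homomorphism $a_i$; since $H^1(G,\F_2)=\Hom(G,\F_2)$, this is the same as $a_{i,i+1}$ representing $a_i$.

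There is one normalisation to make explicit. The correspondence is literally a bijection when defining systems take $a_{i,i+1}$ equal to the homomorphism $a_i$. This is automatic here, because a $1$-cocycle representing a class in $H^1$ is unique: there are no nonzero $1$-coboundaries for trivial coefficients, as $\delta$ of a $0$-cochain vanishes. So the map $M\mapsto\brr_M$ is the asserted one-to-one correspondence.

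\emph{Step 2: the vanishing criterion.} Suppose a lift $\rr_M$ of $\brr_M$ exists. Set $a_{1,n+1}\coloneqq e_{1,n+1}\circ\rr_M$. The same matrix identity at position $(1,n+1)$ gives $\delta(a_{1,n+1})=\sum_{k=2}^{n}a_{1,k}\cup a_{k,n+1}$. So the cocycle representing $\langle a_1,\ldots,a_n\rangle_M$ is a coboundary and the class vanishes. Conversely, if that sum equals $\delta(c)$ for a continuous $1$-cochain $c$, define $\rr_M$ by the entries $a_{i,j}$ together with $c$ at position $(1,n+1)$. This map is continuous, it lifts $\brr_M$, and by the same computation it is a homomorphism.

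There is no real obstacle. The only care needed is the sign and ordering convention in the matrix product, so that $\sum a_{i,k}(g)a_{k,j}(h)$ matches $(a_{i,k}\cup a_{k,j})(g,h)$ rather than the reversed product; the given cup product formula makes these agree.
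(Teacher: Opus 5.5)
Your proof is correct. The paper gives no argument of its own here; it simply cites Dwyer's Theorem 2.4. Your entrywise comparison is exactly the standard argument behind that citation, specialised to continuous $\F_2$-valued cochains where no signs appear: you match the homomorphism condition $\brr(gh)=\brr(g)\brr(h)$ with the equations $\delta(a_{i,j})=\sum_k a_{i,k}\cup a_{k,j}$, and, at position $(1,n+1)$, with the condition that the Massey product cocycle is a coboundary. Your remark that $1$-coboundaries vanish for trivial coefficients is what makes the correspondence an honest bijection, since it forces ``$a_{i,i+1}$ represents $a_i$'' to mean $a_{i,i+1}=a_i$.
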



\begin{example}\label{example:unipotent3_and_cup_product}
Let $\chi_1, \chi_2 \in H^1(G,\F_2)$. 
Then we have $\chi_1 \cup \chi_2 = 0$ if and only if there is a continuous group homomorphism 
$\rr \colon G \to U_3(\F_2)$ such that 
$e_{1,2} \circ \rr =  \chi_1$ and $e_{2,3} \circ \rr =  \chi_2$.  
In particular, the continuous map $\eta \coloneqq e_{1,3} \circ \rr \colon G \to \F_2$ is a cochain in $\Ch^1(G,\F_2)$ 
such that $\delta \eta =  \chi_1 \cup \chi_2$. 
\end{example}


\subsection{The classification of pro-$2$ Demushkin groups following Demushkin, Serre, and Labute}\label{subsec:classification} 
By the work of Demushkin \cite{Demushkin, Demushkin2}, Serre \cite{SerreDem}, and Labute \cite{Labute}, 
Demushkin groups are completely classified and pro-$2$ Demushkin groups can be classified as follows. 
Let $G$ be a pro-$2$ Demushkin group. 
%
For elements $x,y \in G$, 
let $[x,y] = x^{-1}y^{-1}xy$ denote the commutator. 
In the following, we use the convention $2^{\infty} \coloneqq 0$. 
Let $d \coloneqq \dim_{\F_2}H^1(G,\F_2)$, which is a finite number by definition of a Demushkin group. 
Then $G$ is generated by elements $x_1,\ldots,x_d$ 
subject to a single relation $r = r(x_1,\ldots,x_d)$. 
The number $d$ and the relation $r$ satisfy one of the following characterisations: 
\begin{itemize}
\item[(I)] Demushkin\cite{Demushkin2}: 
$d \ge 2$ is even, $f \in \{2,3, \ldots\} \cup \{\infty\}$, and $r$ is of the form 
\begin{align}\label{eq:type_I_relation_general}
x_1^{2^f} [ x_1,x_2] [x_3,x_4] \cdots [x_{d-1}, x_d] = 1; 
\end{align}

\item[(II)] Serre \cite{SerreDem}: 
$d \ge 3$ is odd, $f \in \{2,3, \ldots\} \cup \{\infty\}$, and $r$ is of the form 
\begin{align}\label{eq:type_II_relation_general}
x_1^2 x_2^{2^f} [x_2,x_3]  \cdots [x_{d-1},x_d] = 1; 
\end{align}

\item[(III)] Labute \cite{Labute}: 
$d \ge 2$ is even, $f \in \{2,3, \ldots\} \cup \{\infty\}$, and $r$ is of the form 
\begin{align}\label{eq:type_III_relation_general}
x_1^{2 + 2^f} [x_1,x_2]  \cdots [x_{d-1},x_d] = 1; 
\end{align}

\item[(IV)] Labute \cite{Labute}:  
$d \ge 4$ is even, $f \in \{2,3, \ldots\}$, and $r$ is of the form  
    \begin{align}\label{eq:type_IV_relation_general}
x_1^{2} [x_1,x_2] x_3^{2^f} [x_3,x_4] \cdots [x_{d-1},x_d] = 1.
\end{align}  
\end{itemize}


\begin{example}\label{example:realizable_Demushkin_group_two_generators_type_I_and_III}
Let $\ell$ be an odd prime number. 
Let $G_{\Q_{\ell}}(2)$ be the maximal pro-$2$ quotient of the absolute Galois group of $\Q_{\ell}$, 
and let $q = 2^f$ be the maximum power of $2$ such that $\Q_{\ell}$ contains a $q$-th root of unity. 
By \cite[Proposition 7.5.9]{NSW}, 
$G_{\Q_{\ell}}(2)$ is a pro-$2$ group with generators $x_1$, $x_2$ subject to the relation $1=x_1^q[x_1,x_2]$. 
This group is of type I if $q \ge 4$, and it is of type III if $q = 2$.   
\end{example}


\begin{example}\label{example:realizable_Demushkin_group_three_gen}
%
Let $\Q_2$ denote the field of $2$-adic numbers, 
and let $K$ be a finite extension of $\Q_2$ of degree $n$. 
%
%
Let $q = 2^f$ denote the highest power of $2$ such that $K$ contains a primitive $q$-th root of unity. 
Assume $q \ne 1$. 
Let $G$ be the Galois group of the maximal pro-$2$ extension of $K$. 
Then, by \cite[Th\'eor\`eme 4.2]{SerreDem}, $G$ is a pro-$2$ Demushkin group with $d = n+2$. 
By \cite{Demushkin2} and \cite[Corollaire 4.3]{SerreDem}, if $q\ne 2$, then $G$ is defined by relation \eqref{eq:type_I_relation_general}. 
By \cite[Corollaire 4.4]{SerreDem}, if $q = 2$ and $n$ is odd, 
then $G$ is defined by relation \eqref{eq:type_II_relation_general}. 
In particular, for $K = \Q_2$, $G$ is the pro-$2$ Demushkin group $G$ with generators $x_1,x_2,x_3$ 
subject to the single relation $1 = x_1^2x_2^4 [ x_2,x_3]$. 
\end{example}


\begin{example}\label{example:realizable_Demushkin_group_four_gen}
We recall the following example provided by Labute in \cite[\S 5]{Labute}.  
Let $\Q_2$ denote the field of $2$-adic numbers, 
and let $K = \Q_2(\sqrt{-2})$.  
Let $G$ be the maximal pro-$2$ quotient of the absolute Galois group of $K$. 
By \cite[Example below Theorem 9 on page 132]{Labute}, 
$G$ is defined by relation \eqref{eq:type_III_relation_general} with $d=4$, 
i.e., $G$ is generated by $x_1,x_2,x_3,x_4$ subject to the relation 
$x_1^{2+4}[x_1,x_2][x_3,x_4] = 1$. 
For other examples of this type, we refer to \cite[Chapter 7.5, page 417]{NSW}. 
\end{example}


\begin{example}\label{example:non-realizable_Demushkin_group_three_gen}
Let $G$ be the pro-$2$ group with generators $x_1,x_2,x_3$ subject to the single relation $1 = x_1^2 [ x_2,x_3]$. 
According to \cite[Remark 5.5]{JacobWare}, it is not known whether this group arises as the maximal pro-$2$ quotient of an absolute Galois group.  
\end{example}


In the following proofs and constructions we will frequently use the following observation, often without explicitly mentioning it.  

\begin{notn}
For matrices $M$ and $N$ in $U_n(\F_2)$, we write $[M,N] \coloneqq M^{-1}N^{-1}MN$. 
\end{notn}


\begin{lemma}\label{lemma:matrix_relations}
Let $G$ be a pro-$2$ group with generators $x_1,\ldots,x_d$ subject to the single relation 
$r(x_1,\ldots,x_d)=1$. 
Let $M_1, \ldots, M_d \in U_n(\F_2)$ and let $I_n$ denote the $(n\times n)$-identity matrix. 
If 
\begin{align}\label{eq:matrix_commutator_relation}
r(M_1,\ldots, M_d) = I_n 
\end{align} 
in $\bU_n(\F_2)$, 
then the assignment $\brr \colon x_i \mapsto M_i$ for $x_1,\ldots,x_d$ 
defines a continuous group homomorphism $\brr \colon G \to \bU_n(\F_2)$. 
Moreover, $\brr$ can be lifted to a continuous group homomorphism $\rr \colon G \to U_n(\F_2)$ 
if and only if \eqref{eq:matrix_commutator_relation} holds in $U_n(\F_2)$.  
\end{lemma}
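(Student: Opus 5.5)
The plan is to use the presentation of $G$ as a pro-$2$ group. Let $F$ denote the free pro-$2$ group on $x_1,\ldots,x_d$, and let $N$ be the closed normal subgroup of $F$ topologically generated by the relator $r$, so that $G = F/N$. The group $\bU_n(\F_2)$ is a finite $2$-group, since its order divides $|U_n(\F_2)| = 2^{n(n-1)/2}$; the same holds for $U_n(\F_2)$. Hence, by the universal property of free pro-$2$ groups, any choice of images $M_1,\ldots,M_d$ determines a unique continuous homomorphism $\phi \colon F \to \bU_n(\F_2)$ with $x_i \mapsto M_i$. Continuity is automatic here: the target is discrete and finite, and the kernel of $\phi$ is open of $2$-power index.

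For the first claim, note that $\phi(r) = r(M_1,\ldots,M_d)$, because $\phi$ is a homomorphism and $r$ is a word in the $x_i$. If this equals $I_n$ in $\bU_n(\F_2)$, then $r \in \ker\phi$. Since $\ker \phi$ is a closed normal subgroup, it contains $N$. Therefore $\phi$ factors uniquely through a continuous homomorphism $\brr \colon G \to \bU_n(\F_2)$, and $\brr$ sends the image of $x_i$ to $M_i$.

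For the second claim, apply the same argument to $U_n(\F_2)$, lifting each $M_i$ to a matrix $\widetilde M_i \in U_n(\F_2)$. The subtle point is what "\eqref{eq:matrix_commutator_relation} holds in $U_n(\F_2)$" means, since the $M_i$ are only given modulo $Z_n(\F_2)$. I would interpret it as: $r(\widetilde M_1,\ldots,\widetilde M_d) = I_n$ for some choice of lifts $\widetilde M_i$.
\begin{itemize}
\item If such lifts exist, the universal property together with the factoring argument above gives a continuous homomorphism $\rr \colon G \to U_n(\F_2)$. Composing $\rr$ with the projection to $\bU_n(\F_2)$ agrees with $\brr$ on the generators, so it equals $\brr$.
\item Conversely, if $\rr$ lifts $\brr$, set $\widetilde M_i \coloneqq \rr(x_i)$. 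These are lifts of the $M_i$, and $r(\widetilde M_1,\ldots,\widetilde M_d) = \rr(r) = I_n$, because $r = 1$ in $G$.
\end{itemize}

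The main point to check is this interpretation of the lifting condition. When $r$ is a product of commutators, changing the lifts by central elements does not change $r(\widetilde M_1,\ldots,\widetilde M_d)$, since central factors cancel inside each commutator. Powers such as $x_1^{2^f}$, however, pick up a central factor $z^{2^f}$. This factor is trivial anyway, because $Z_n(\F_2) \cong \F_2$ and $f \ge 1$. So the condition is in fact independent of the chosen lifts for all the relations \eqref{eq:type_I_relation_general}--\eqref{eq:type_IV_relation_general}. The rest of the argument is routine.
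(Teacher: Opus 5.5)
Your proof is correct and follows the paper's one-line argument: the universal property of the free pro-$2$ group, together with the finiteness of $U_n(\F_2)$ and $\bU_n(\F_2)$, simply written out in detail. The statement already fixes $M_i\in U_n(\F_2)$, so the ``only if'' direction for these fixed matrices is exactly your lift-independence observation. That observation holds because all exponent sums of $r$ are even, as they must be for a minimal presentation since then $r\in F^2[F,F]$, and because $Z_n(\F_2)\cong\F_2$; this is a point the paper's proof leaves implicit.
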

\begin{proof}
This follows directly from the defining relation for $G$ and the fact that $\bU_n(\F_2)$ and $U_n(\F_2)$ are finite $2$-groups. 
\end{proof}


\begin{lemma}\label{lemma:relations_in_H2_all_groups}
Let $G$ be a pro-$2$ Demushkin group with generators $x_1, \ldots, x_d$.  
We let $\chi_i \colon G \to \F_2$ denote the group homomorphism determined by $\chi_i(x_j) = \delta_{ij}$, where $\delta_{ij}$ denotes the Kronecker symbol. 
For each $i$, we consider $\chi_i$ as an element in $H^1(G,\F_2)$. 
In $H^2(G,\F_2)$, we have the following relations: 
\begin{itemize}
\item If $G$ is defined by relation \eqref{eq:type_I_relation_general}, then 
$\chi_1 \cup \chi_2 \ne 0$, $\chi_1 \cup \chi_2 = \chi_{2i-1} \cup \chi_{2i} = \chi_{2i} \cup \chi_{2i-1}$ for $1 \le i \le d/2$, 
and $\chi_i \cup \chi_j = 0$ for all other cases.   
We have $H^2 = \F_2 \langle \chi_1 \cup \chi_2 \rangle$. 
\item If $G$ is defined by relation \eqref{eq:type_II_relation_general}, then 
 $\chi_1 \cup \chi_1 \ne 0$, $\chi_1 \cup \chi_1 = \chi_{2i} \cup \chi_{2i+1} =  \chi_{2i+1} \cup \chi_{2i}$ for $1 \le i \le (d-1)/2$, 
and $\chi_j \cup \chi_k = 0$ for all other cases.   
We have $H^2 = \F_2 \langle \chi_1 \cup \chi_1 \rangle$. 
\item If $G$ is defined either by relation \eqref{eq:type_III_relation_general} or by relation \eqref{eq:type_IV_relation_general}, then 
$\chi_1 \cup \chi_1 \ne 0$, $\chi_1 \cup \chi_1 = \chi_{2i-1} \cup \chi_{2i} =  \chi_{2i} \cup \chi_{2i-1}$ for $1 \le i \le d/2$, 
and $\chi_j \cup \chi_k = 0$ for all other cases.  
We have $H^2 = \F_2 \langle \chi_1 \cup \chi_1 \rangle$. 
\end{itemize}
\end{lemma}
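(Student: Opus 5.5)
The plan is to compute all cup products $\chi_j \cup \chi_k$ by pairing $H^2(G,\F_2)$ against the defining relation. I will use the standard description of cup products in a one-relator pro-$2$ group, and realise it concretely through Dwyer's Theorem \ref{thm:Dwyer} in the form of Example \ref{example:unipotent3_and_cup_product}.

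\textbf{Criterion.} For $\chi_j,\chi_k \in H^1(G,\F_2)$, the product $\chi_j\cup\chi_k$ vanishes if and only if there is a homomorphism $\rr\colon G\to U_3(\F_2)$ with $e_{12}\circ\rr=\chi_j$ and $e_{23}\circ\rr=\chi_k$. By Lemma \ref{lemma:matrix_relations}, such a $\rr$ exists if and only if some choice of matrices $M_i\in U_3(\F_2)$ satisfies $r(M_1,\ldots,M_d)=I_3$ in $U_3(\F_2)$. These matrices must have $(1,2)$-entry $\chi_j(x_i)$ and $(2,3)$-entry $\chi_k(x_i)$, with the $(1,3)$-entries free.

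Since $Z_3(\F_2)$ is central, the $(1,3)$-entry of $r(M_1,\ldots,M_d)$ does not depend on the free $(1,3)$-entries. To see this, note that changing $M_i$ by a central element $z$ multiplies $r(M)$ by $z^{e_i}$, where $e_i$ is the exponent sum of $x_i$ in $r$. In every type, each $e_i$ is even: the commutators contribute $0$, and the powers are $2$, $2^f$ or $2+2^f$. Since $z^2=1$, the change is trivial. So the obstruction is a well-defined bilinear form $\beta(\chi_j,\chi_k)\in\F_2$, and $\chi_j\cup\chi_k=0$ if and only if $\beta(\chi_j,\chi_k)=0$. Moreover, the map $\chi_j\cup\chi_k\mapsto\beta(\chi_j,\chi_k)$ is injective on the span of these products. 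Because $\dim H^2=1$ by the definition of a Demushkin group, it identifies $H^2\cong\F_2$, and the cup product becomes $\beta$.

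\textbf{Computation of $\beta$.} I will use the standard identities in $U_3(\F_2)$:
\begin{itemize}
\item the $(1,3)$-entry of $[M,N]$ is $m_{12}n_{23}+n_{12}m_{23}$;
\item the $(1,3)$-entry of $M^2$ is $m_{12}m_{23}$;
\item $M^4=I$, so $M^{2^f}=I$ for $f\ge 2$, including the convention $2^\infty=0$;
\item $M^{2+2^f}=M^2$ for $f\ge 2$.
\end{itemize}
All factors in $r$ are central or have trivial $(1,2)$ and $(2,3)$ images after squaring, so the $(1,3)$-entries add. This gives:
\begin{itemize}
\item Type I: $\beta(\chi_j,\chi_k)=\sum_{i}\bigl(\chi_j(x_{2i-1})\chi_k(x_{2i})+\chi_j(x_{2i})\chi_k(x_{2i-1})\bigr)$.
\item Type II: add $\chi_j(x_1)\chi_k(x_1)$ to the commutator terms for the pairs $(x_2,x_3),\ldots,(x_{d-1},x_d)$.
\item Types III and IV: add $\chi_j(x_1)\chi_k(x_1)$ to the commutator terms for the pairs $(x_1,x_2),\ldots$. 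In type IV, the term $x_3^{2^f}$ contributes nothing.
\end{itemize}

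\textbf{Conclusion.} Evaluating $\beta$ on basis pairs gives exactly the listed nonvanishing products. In type I these are $\chi_{2i-1}\cup\chi_{2i}$ and $\chi_{2i}\cup\chi_{2i-1}$. Type II adds $\chi_1\cup\chi_1$ and uses the shifted pairs $(2i,2i+1)$. Types III and IV add $\chi_1\cup\chi_1$. All nonvanishing products equal the generator, and all other products are $0$, as stated. The generator statements $H^2=\F_2\langle\cdot\rangle$ follow because $\dim H^2=1$ and the named product is nonzero.

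The main obstacle is rigorously justifying that the $(1,3)$-entry defect of the relation computes the cup product, and not just its vanishing. The vanishing criterion together with $\dim H^2=1$ suffices for this: the products with $\beta=1$ are nonzero, and hence equal the unique nonzero element.
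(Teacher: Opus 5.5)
Your proof is correct. It uses the same criterion as the paper: Dwyer's Theorem \ref{thm:Dwyer} via Example \ref{example:unipotent3_and_cup_product}, together with Lemma \ref{lemma:matrix_relations}. The difference lies in how the argument is organized.

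The paper works case by case with the explicit matrices $A_{10}$, $A_{01}$, $A_{11}$. It gives one non-liftable homomorphism to $\bU_3(\F_2)$ for the generator, explicit lifts for each vanishing product, and lifts realising sums such as $(\chi_1+\chi_{2i})\cup(\chi_2+\chi_{2i-1})=0$. The equalities between nonzero products then follow from bilinearity of the cup product, without invoking $\dim H^2=1$.

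You instead compute the whole obstruction form $\beta$ at once, from the $(1,3)$-entries of squares and commutators in $U_3(\F_2)$. You then get the equalities from $\dim H^2=1$, which is legitimately available because $G$ is assumed Demushkin.

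Your exponent-sum argument is a real gain. It shows that non-liftability for one choice of $(1,3)$-entries rules out every lift. The paper's Lemma \ref{lemma:matrix_relations} takes this for granted, and it holds here only because all exponent sums are even and the centre is $\F_2$. The paper's route, in turn, produces the explicit homomorphisms $\rr_{i,j}$ and $\rr_{i,i}$ that are reused later to define $f_2$.

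One small point: your sentence about the map $\chi_j\cup\chi_k\mapsto\beta(\chi_j,\chi_k)$ being ``injective on the span of these products'' is imprecise and not needed. As your final paragraph says, the criterion on basis pairs plus $\dim H^2=1$ already gives $\chi_j\cup\chi_k=\beta(\chi_j,\chi_k)$ times the generator.
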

\begin{proof}
First, we assume that $G$ is defined by relation \eqref{eq:type_I_relation_general}. 
We write $q \coloneqq 2^f$. 
Let $A_{10}
= \begin{psmallmatrix} 
1 & 1 & 0  \\
0 & 1 & 0  \\
0 &  0 & 1  \\
\end{psmallmatrix}$
and 
$A_{01}
= \begin{psmallmatrix} 
1 & 0 & 0  \\
0 & 1 & 1  \\
0 &  0 & 1  \\
\end{psmallmatrix}$ 
be matrices in $U_3(\F_2)$. 
We have $A^{2^n}_{10} = A^{2^n}_{01} = I_3$ in $U_3(\F_2)$ for all $n\ge 1$.  
Since $[A_{10},A_{01}] = 
\begin{psmallmatrix} 
1 & 0 & 1  \\
0 & 1 & 0  \\
0 &  0 & 1  \\
\end{psmallmatrix}$,  
we have $A^{q}_{10}[A_{10},A_{01}] = I_3$ in $\bU_3(\F_2)$ 
but $A^q_{10}[A_{10},A_{01}] \ne I_3$ in $U_3(\F_2)$.  
By Lemma \ref{lemma:matrix_relations}, 
the assignment $x_{1} \mapsto A_{10}$, 
$x_2 \mapsto A_{01}$, 
and $x_j \mapsto I_3$ for $j \ne 1,2$ 
defines a continuous homomorphism $G \to \bU_3(\F_2)$ 
such that $e_{1,2} \circ \rr = \chi_1$, 
and  
$e_{2,3} \circ \rr = \chi_2$ 
which does not lift to a continuous homomorphism $G \to U_3(\F_2)$. 
By Dwyer's Theorem \ref{thm:Dwyer} and Example \ref{example:unipotent3_and_cup_product},  
this implies that $\chi_{1} \cup \chi_{2} \ne 0$ in $H^2(G,\F_2)$. 
Now assume that $i \ne j$ and $(i,j) \ne (2k-1,2k), (2k,2k-1)$. 
Since $[M,I_3] = [I_3,M] = I_3$ for every $M\in U_3(\F_2)$, 
the assignment $x_i \mapsto A_{10}$, 
$x_{j} \mapsto A_{01}$, 
and $x_s \mapsto I_3$ for $s \ne i, j$ 
defines, by Lemma \ref{lemma:matrix_relations},  
a continuous homomorphism $\rr_{i,j} \colon G \to U_3(\F_2)$ 
such that $e_{1,2} \circ \rr_{i,j}  = \chi_i$, 
and  
$e_{2,3} \circ \rr_{i,j}  = \chi_j$. 
By Dwyer's Theorem \ref{thm:Dwyer} and Example \ref{example:unipotent3_and_cup_product},  
this implies 
$\chi_i \cup \chi_j = 0$ in $H^2(G,\F_2)$. 
Now let $A_{11}$ denote the matrix 
$A_{11} = \begin{psmallmatrix} 
1 & 1 & 0  \\
0 & 1 & 1  \\
0 &  0 & 1  \\
\end{psmallmatrix}$ 
in  $U_3(\F_2)$.  
We have 
$A_{1,1}^n = 
\begin{psmallmatrix} 
1 & n & \binom{n}{2} \\
0 & 1 & n  \\
0 &  0 & 1  
\end{psmallmatrix}$ 
with $\binom{n}{2} = 0$ for $n=0,1$,   
and hence 
$A_{11}^{q} = 
\begin{psmallmatrix} 
1 & 0 & 0  \\
0 & 1 & 0  \\
0 &  0 & 1  \\
\end{psmallmatrix}$ 
in $U_3(\F_2)$ since either $q=0$ or $q =2^f$ with $f\ge 2$.  
Hence, by Lemma \ref{lemma:matrix_relations}, 
the assignment $x_i \mapsto A_{11}$, 
and $x_j \mapsto I_3$ for $j \ne i$ 
defines a continuous homomorphism $\rr_{i,i}  \colon G \to U_3(\F_2)$ 
such that $e_{1,2} \circ \rr_{i,i}  = \chi_i = e_{2,3} \circ \rr_{i,i}$. 
By Dwyer's Theorem \ref{thm:Dwyer} and Example \ref{example:unipotent3_and_cup_product},  
this implies 
$\chi_i \cup \chi_i = 0$ in $H^2(G,\F_2)$.
Since $A_{10}^q[A_{10},A_{01}][A_{01},A_{10}]=I_3$ in $U_3(\F_2)$, 
the assignment $x_1 \mapsto A_{10}$, 
$x_2 \mapsto A_{01}$, 
$x_{2i-1} \mapsto A_{01}$, 
$x_{2i} \mapsto A_{10}$,  
and $x_j \mapsto I_3$ else 
defines a continuous homomorphism $\rr \colon G \to U_3(\F_2)$ 
such that $e_{1,2} \circ \rr = \chi_1 + \chi_{2i}$, 
and  
$e_{2,3} \circ \rr = \chi_{2} + \chi_{2i-1}$. 
By Dwyer's Theorem \ref{thm:Dwyer} and Example \ref{example:unipotent3_and_cup_product},  
this implies 
$(\chi_1 + \chi_{2i}) \cup (\chi_2 + \chi_{2i-1}) = 0$ in $H^2(G,\F_2)$. 
Since we already know $\chi_1 \cup \chi_{2i-1} = 0$ 
$\chi_{2i} \cup \chi_2 = 0$, 
this implies 
$\chi_1 \cup \chi_2 = \chi_{2i} \cup \chi_{2i-1}$  in $H^2(G,\F_2)$. 
By switching the assignments for $x_1$, $x_2$, $x_{2i-1}$, and $x_{2i}$ 
accordingly we get the remaining relation. 
This finishes the proof of the first case. 

%
%

Now we assume that $G$ is defined by relation \eqref{eq:type_II_relation_general}. 
%
Since $A_{11}^2 = 
\begin{psmallmatrix} 
1 & 0 & 1  \\
0 & 1 & 0  \\
0 &  0 & 1  \\
\end{psmallmatrix}$,  
we have $A_{11}^2 = I_3$ in $\bU_3(\F_2)$ but $A_{11}^2 \ne I_3$ in $U_3(\F_2)$. 
By Lemma \ref{lemma:matrix_relations}, 
the assignment $x_1 \mapsto A_{11}$ and $x_j \mapsto I_3$ for $j \ne 1$ 
defines a continuous homomorphism $G \to \bU_3(\F_2)$ 
which does not lift to a continuous homomorphism $G \to U_3(\F_2)$. 
By Dwyer's Theorem \ref{thm:Dwyer} and Example \ref{example:unipotent3_and_cup_product}, 
this implies that $\chi_1 \cup \chi_1 \ne 0$ in $H^2(G,\F_2)$. 
We have $A^{2^n}_{10} = A^{2^n}_{01} = I_3$ for all $n\ge 1$.  
Assume $(j,k) \ne (1,1), (2i,2i+1), (2i+1,2i)$. 
By Lemma \ref{lemma:matrix_relations}, 
the assignment $x_j \mapsto A_{10}$, 
$x_{k} \mapsto A_{01}$, 
and $x_r \mapsto I_3$ for $r \ne j,k$ 
defines a continuous homomorphism $\rr \colon G \to U_3(\F_2)$ 
such that $e_{1,2} \circ \rr = \chi_{j}$, 
and  
$e_{2,3} \circ \rr = \chi_{k}$. 
By Dwyer's Theorem \ref{thm:Dwyer} and Example \ref{example:unipotent3_and_cup_product}, 
this implies 
$\chi_j \cup \chi_k = 0$ in $H^2(G,\F_2)$. 
Since $[A_{10},A_{01}] = 
\begin{psmallmatrix} 
1 & 0 & 1  \\
0 & 1 & 0  \\
0 &  0 & 1  \\
\end{psmallmatrix}$,  
we have $A^2_{11}[A_{10},A_{01}] = I_3$ in $U_3(\F_2)$.  
By Lemma \ref{lemma:matrix_relations}, 
the assignment $x_1 \mapsto A_{11}$, 
$x_{2i} \mapsto A_{10}$, 
$x_{2i+1} \mapsto A_{01}$, 
and $x_j \mapsto I_3$ for $j \ne 1, 2i,2i+1$ 
defines a continuous homomorphism $\rr \colon G \to U_3(\F_2)$ 
such that $e_{1,2} \circ \rr = \chi_1 + \chi_{2i}$, 
and  
$e_{2,3} \circ \rr = \chi_1 + \chi_{2i+1}$. 
By Dwyer's Theorem \ref{thm:Dwyer} and Example \ref{example:unipotent3_and_cup_product}, 
this implies 
$(\chi_1 + \chi_{2i}) \cup (\chi_1 + \chi_{2i+1}) = 0$ in $H^2(G,\F_2)$. 
Since we already know that $\chi_1 \cup \chi_{2i+1} = \chi_{2i} \cup \chi_1 = 0$ in $H^2(G,\F_2)$, 
this implies $\chi_1 \cup \chi_1 + \chi_{2i} \cup \chi_{2i+1} = 0$ in $H^2(G,\F_2)$. 
By switching the assignments for $x_{2i}$ and $x_{2i+1}$, 
we get the assertion $\chi_1 \cup \chi_1 + \chi_{2i+1} \cup \chi_{2i} = 0$ in $H^2(G,\F_2)$, too. 
This finishes the second case.  

%
%

The assertion for relations \eqref{eq:type_III_relation_general} and \eqref{eq:type_IV_relation_general},
follows as for relation \eqref{eq:type_II_relation_general} 
by replacing $x_{2i}$ and $x_{2i+1}$ 
with $x_{2i-1}$ and $x_{2i}$ respectively. 
See also Section \ref{subsec:construction_kappa_type_III}. 
Here we note that every matrix $M \in U_3(\F_2)$ we used  
satisfies $M^4 = I_3$ in $U_3(\F_2)$. 
Hence an additional factor $M^{2^f}$ in a matrix relation of the form  \eqref{eq:matrix_commutator_relation},  
corresponding to the additional factor $x_1^{2^f}$ or $x_3^{2^f}$ in the defining relation for $G$,  
does not impose an obstruction for defining a continuous group homomorphism $G \to U_3(\F_2)$. 
\end{proof}



\section{Pro-$2$ Demushkin groups of type I}\label{sec:torsion-free_p=2_Demushkin}

The goal of this section is to prove the following theorem. 

\begin{theorem}\label{thm:torsion_free_pro-2_Demushkin_groups_are_A_3-formal}
Let $d \ge 2$ be an even number, 
and let $q = 2^f$ with $f\ge 2$, or $q = 0$. 
%
Let $G$ be the pro-$2$ group with minimal set of generators $\{x_1,\ldots,x_d\}$ 
satisfying the single relation 
\begin{align*}
1 = x_1^q [ x_1,x_2] [x_3,x_4] \cdots [x_{d-1}, x_d].
\end{align*}
Then the canonical class of $G$ is trivial, and $G$ is $A_3$-formal. 
\end{theorem}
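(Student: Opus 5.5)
By Proposition \ref{prop:kappa3_is_canonical_class} and the Koszulity of $\Hb(G,\F_2)$ due to Min\'a\v c--Pasini--Quadrelli--T\^an, it suffices to show that the map $\kappa_3 \colon K_3^3(\Hb) \to H^2(G,\F_2)$ is a coboundary in the complex \eqref{eq:Koszul_complex_3,-1}. I will in fact try to choose $f_1$ and $f_2$ so that $\kappa_3$ is identically zero.

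The first step is to describe $R$ and $K_3^3$ explicitly using Lemma \ref{lemma:relations_in_H2_all_groups}. With $V=H^1$ and basis $\chi_1,\dots,\chi_d$, $R$ is spanned by:
\begin{itemize}
\item $\chi_i\ot\chi_j$ for all pairs $(i,j)$ not of the form $(2k-1,2k)$ or $(2k,2k-1)$, including all $i=j$;
\item the differences $\chi_1\ot\chi_2+\chi_{2k-1}\ot\chi_{2k}$ and $\chi_1\ot\chi_2+\chi_{2k}\ot\chi_{2k-1}$.
\end{itemize}
So $R$ has codimension one in $V\ot V$. Then $K_3^3=(R\ot V)\cap(V\ot R)$, and I would write down a spanning set. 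It consists of pure tensors $\chi_i\ot\chi_j\ot\chi_k$ with both adjacent pairs ``orthogonal'', together with mixed sums in which a pair like $\chi_{2k-1}\ot\chi_{2k}$ is paired against $\chi_1\ot\chi_2$ on both sides. Since $H^2$ is one-dimensional, generated by $\chi_1\cup\chi_2$, it suffices to compute $\kappa_3$ on each basis element of $K_3^3$.

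Next I construct $f_1,f_2$ via Dwyer's theorem. I take $f_1(\chi_i)=\chi_i$ as cochains. For a pure tensor $\chi_i\ot\chi_j\in R$, I take $f_2(\chi_i\ot\chi_j)=e_{1,3}\circ\rr_{i,j}$, where $\rr_{i,j}\colon G\to U_3(\F_2)$ are the lifts exhibited in the proof of Lemma \ref{lemma:relations_in_H2_all_groups}. For the mixed relations I use the homomorphism built there with $x_1\mapsto A_{10}$, $x_2\mapsto A_{01}$, $x_{2k-1}\mapsto A_{01}$, $x_{2k}\mapsto A_{10}$, and subtract the already chosen values on the cross terms. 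This uses the choice $\chi_i\cup\chi_j=\delta(e_{1,3}\circ\rr)$ from Example \ref{example:unipotent3_and_cup_product}.

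The key reinterpretation is then the following. For a basis element $(a,b,c)\in K_3^3$, the cochain $\Psi_3(a,b,c)=f_1(a)f_2(b,c)+f_2(a,b)f_1(c)$ is precisely the Massey product cochain attached to a defining system. For pure tensors, $\kappa_3(\chi_i\ot\chi_j\ot\chi_k)$ is the triple Massey product $\langle\chi_i,\chi_j,\chi_k\rangle$ with defining system $\{\chi_i,\chi_j,\chi_k,f_2(\chi_i,\chi_j),f_2(\chi_j,\chi_k)\}$. By Dwyer's theorem, this vanishes exactly when the induced $\brr\colon G\to\bU_4(\F_2)$ lifts to $U_4(\F_2)$. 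By Lemma \ref{lemma:matrix_relations}, that is checked by substituting explicit $4\times4$ matrices into $x_1^q[x_1,x_2]\cdots[x_{d-1},x_d]$ and computing the $(1,4)$ entry.

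For the mixed basis elements, which involve sums over several pairs $(2k-1,2k)$, a single triple product does not capture the cochain. Instead I would assemble the sum into a single defining system of a longer Massey product, of length up to about $8$, by concatenating the relevant characters along a chain. $\kappa_3$ then becomes the value of that product, which I again test by a lift to $U_{n+1}(\F_2)$ computed with the relation. Where a value comes out nonzero, I would adjust $f_2$ by elements of $\Hom(R,H^1)$; this changes $\kappa_3$ by $\partial^2$ of that adjustment, which keeps the class fixed while allowing me to kill the remaining values.

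I expect the main obstacle to be this last step: finding the right chains of characters and matrices for the mixed basis elements of $K_3^3$, and computing the $(1,n+1)$ entries of $x_1^q[x_1,x_2]\cdots$ in $U_{n+1}(\F_2)$. The cases $q=0$ and $q\ge4$ have to be handled uniformly here; this should work because the matrices involved have order dividing $4$, so the factor $x_1^q$ disappears, except possibly in larger unipotent groups where care is needed. I would first try to make every such lift exist, so that $\kappa_3=0$ outright. Failing that, I would exhibit a $g\in\Hom(K_2^2,H^1)$ with $\partial^2 g=\kappa_3$, using the non-degeneracy of the cup product to solve for $g$.
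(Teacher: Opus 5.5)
Your architecture is the paper's: Koszulity, an explicit basis of $K_3^3$, $f_2$ built from Dwyer-type lifts, and evaluation of $\kappa_3$ on basis elements as (generalized) Massey products tested by lifting to $U_{n+1}(\F_2)$. Your $f_2$ in fact coincides with the paper's, once your recipe for the mixed relations is repaired for $k=1$ (as written it assigns two matrices to $x_1$; e.g.\ $x_1,x_2\mapsto A_{11}$ works). The reason is that all these cochains vanish on $x_1,\dots,x_d$, and a cochain in $\Ch^1$ with prescribed coboundary is determined by its values on the generators, since two such differ by a continuous homomorphism $G\to\F_2$. Applied entry by entry, the same remark guarantees that an explicitly built $\rr\colon G\to U_{n+1}(\F_2)$ really lifts the $\brr$ of \emph{your} defining system. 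Your sketch skips this; the paper handles it with its compatibility propositions.

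The genuine gap is the last step. For $d\ge 4$ your first plan cannot succeed. For this $f_2$ one has $\kappa_3(\chi_{1,1,2}+\chi_{1,2i,2i-1})\neq 0$ for $i\in\Jw/2$: the assignment $x_1\mapsto E_{2,3}E_{1,2}$, $x_2\mapsto E_{3,5}$, $x_{2i-1}\mapsto E_{4,5}$, $x_{2i}\mapsto E_{2,4}$, and $x_j\mapsto I_5$ otherwise, evaluates the defining relation to $E_{1,5}$. So the corresponding $\brr\colon G\to\bU_5(\F_2)$ does not lift. Likewise $\kappa_3(\chi_{2,2,1}+\chi_{2,2i-1,2i})\neq 0$ (Proposition~\ref{prop:kappa_3_on_DDDw}). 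Proving $\kappa_3\in\Imm\partial^2$ is therefore the real content, and ``using the non-degeneracy of the cup product to solve for $g$'' does not provide it. Non-degeneracy holds for every Demushkin group, including those with $p=q=3$, whose canonical class is nonzero (Theorem~\ref{thm:Demushkin_groups_A3_formality_all_p_summary}). Moreover $\partial^2$ is far from surjective: by symmetry of the cup product, $\partial\sigma$ vanishes on $\TTT$ and on every $\chi_i\ot\chi_i\ot\chi_i$, for all $\sigma$.

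What is missing is an explicit description of enough of $\Imm\partial^2$, together with the matching Massey computations. The paper evaluates $\partial\sigma$ for $\sigma$ supported on the relations $\chi_{2i-1}\ot\chi_{2i}+\chi_{2i}\ot\chi_{2i-1}$ and $\chi_1\ot\chi_2+\chi_{2i}\ot\chi_{2i-1}$ (Lemma~\ref{lemma:image_of_partial_sigmas_DDDw}). It deduces that a $\kappa$ vanishing on $\SSS$, $\DDD$, $\TTT$ is a coboundary provided it takes a value independent of $i\in\Jw/2$ on each of the families $\chi_{2,2,1}+\chi_{2,2i-1,2i}$, $\chi_{2,1,1}+\chi_{2i-1,2i,1}$, $\chi_{1,2,2}+\chi_{2i,2i-1,2}$, $\chi_{1,1,2}+\chi_{1,2i,2i-1}$. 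For these families its coboundaries only produce the sums over $i$ of the duals. It then checks these equalities by further lifts and non-lifts. For example, it rewrites $\chi_{2,1,1}+\chi_{2i-1,2i,1}$ modulo elements of $\TTT$ and $\DDD$ as $\chi_{1,1,2}+\chi_{1,2,1}+\chi_{2i,2i-1,1}$ and tests the latter in $U_6(\F_2)$.

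If you want to make your non-degeneracy idea precise instead, use the following identity for $z\in K_3^3$: $\partial\sigma(z)=\langle\sigma,(\beta\ot 1)(z+\tau z)\rangle$. Here $\beta\colon H^1\to(H^1)^*$, $\beta(a)(b)=a\cup b\in H^2\cong\F_2$, is the symmetric non-degenerate pairing; $\tau(a\ot b\ot c)=c\ot a\ot b$, so that $z,\tau z\in H^1\ot R$; and $\langle\sigma,\phi\ot r\rangle=\phi(\sigma(r))$. Hence $[\kappa_3]=0$ if and only if $\kappa_3$ vanishes on the cyclically invariant elements of $K_3^3$. Either way, the decisive vanishing must still be established by explicit lifts on specific elements, and that verification is exactly what your proposal leaves open.
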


For the rest of this section, we let $G$ be the pro-$2$ group as in Theorem \ref{thm:torsion_free_pro-2_Demushkin_groups_are_A_3-formal}. 
%
%
Let $\Cb = \Cb(G,\F_2)$ denote the complex of continuous inhomogeneous cochains, let $\Zh^\bbb$ denote the cocycles, 
and let $\Hb = \Hb(G,\F_2)$ denote the corresponding cohomology algebra. 
We consider $\Ch^n$ and $\Zh^n$ as $\F_2$-vector spaces with addition and multiplication defined pointwise. 
%
We let $\chi_i \colon G \to \F_2$ denote the group homomorphism determined by $\chi_i(x_j) = \delta_{ij}$, where $\delta_{ij}$ denotes the Kronecker symbol. 
The collection $\{ \chi_1,\ldots, \chi_d \}$ forms a basis of $H^1$. 
%
We write $J \coloneqq \{1, \ldots, d\}$, $J/2 \coloneqq \{1, \ldots, d/2 \}$, 
and we set $\Jw/2 \coloneqq (J/2) \setminus \{1\} = \{2, \ldots, d/2\}$.  
%
In Section \ref{subsec:Koszul_complex_p=2_trosionfree}, 
we determine the complex which computes the Hochschild cohomology of $\Hb$. 
In Section \ref{subsec:Koszul_complex_p=2_trosionfree_differential}, 
we determine the effect of the corresponding differential. 
In Section \ref{subsec:construction_kappa}, we give a concrete construction of a map which represents the canonical class of $G$. 
In Section \ref{sec:computing_kappa3_torsionfree}, we finish the proof of Theorem \ref{thm:torsion_free_pro-2_Demushkin_groups_are_A_3-formal}. 
%



\subsection{The Koszul--Hochschild complex}\label{subsec:Koszul_complex_p=2_trosionfree}

By \cite[Theorem 5.2]{MPQT}, 
the graded $\F_2$-cohomology algebra of $G$ is a Koszul algebra. 
In this section, we determine a basis for the $\F_2$-vector spaces $R$ such that $\Hb = T(H^1)/R$
and of $K^3_3(\Hb)$. 


\begin{proposition}\label{prop:type_I_basis_2_for_Demushkin_even_generators}
The $\F_2$-vector subspace $R \subset H^1 \ot H^1$ such that $\Hb = T(H^1)/(R)$ 
admits the set $\BBB_2$ as a basis where  
\begin{align*}
\BBB_2 \coloneqq \{ & \chi_i \ot \chi_j ~ \text{for all} ~ (i,j) \notin \{(2k-1,2k), (2k,2k-1) :  k \in J/2 \}, \\
& \chi_1 \ot \chi_2 + \chi_{2i} \ot \chi_{2i-1} ~ \text{for} ~ i \in J/2, 
\chi_{2i-1} \ot \chi_{2i} + \chi_{2i} \ot \chi_{2i-1} ~ \text{for} ~ i \in \Jw/2  \}. 
\end{align*}
\end{proposition}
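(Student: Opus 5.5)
Since $\Hb$ is Koszul by \cite[Theorem 5.2]{MPQT}, and in particular quadratic, it is generated by $H^1$ with relations in degree $2$. So $R$ is exactly the kernel of the cup product map $\mu \colon H^1 \ot H^1 \to H^2$, $\chi_i \ot \chi_j \mapsto \chi_i \cup \chi_j$. The plan is to check two things: that every element of $\BBB_2$ lies in $\Ker \mu$, and that $\BBB_2$ is linearly independent with $|\BBB_2| = \dim \Ker \mu$.

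For membership I use Lemma \ref{lemma:relations_in_H2_all_groups} for relation \eqref{eq:type_I_relation_general}.
\begin{itemize}
\item For $(i,j)$ not of the form $(2k-1,2k)$ or $(2k,2k-1)$, the lemma gives $\chi_i \cup \chi_j = 0$. This includes the diagonal terms $i=j$.
\item The lemma also gives $\chi_1 \cup \chi_2 = \chi_{2i} \cup \chi_{2i-1}$ for all $i \in J/2$. Working over $\F_2$, this yields $\mu(\chi_1\ot\chi_2 + \chi_{2i}\ot\chi_{2i-1}) = 0$.
\item Likewise $\chi_{2i-1}\cup\chi_{2i} = \chi_1\cup\chi_2 = \chi_{2i}\cup\chi_{2i-1}$ gives $\mu(\chi_{2i-1}\ot\chi_{2i}+\chi_{2i}\ot\chi_{2i-1})=0$.
\end{itemize}

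For the count, $\mu$ is surjective because $H^2 = \F_2\langle \chi_1\cup\chi_2\rangle$ with $\chi_1\cup\chi_2\neq 0$. Hence $\dim R = d^2 - 1$. The set $\BBB_2$ consists of three families:
\begin{itemize}
\item $d^2 - d$ pure tensors $\chi_i\ot\chi_j$;
\item $d/2$ elements of the form $\chi_1\ot\chi_2+\chi_{2i}\ot\chi_{2i-1}$;
\item $d/2-1$ elements of the form $\chi_{2i-1}\ot\chi_{2i}+\chi_{2i}\ot\chi_{2i-1}$.
\end{itemize}
The total is $d^2-1$.

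For linear independence, the pure tensors use basis vectors disjoint from the $d$ "special" tensors $\chi_{2k-1}\ot\chi_{2k}$ and $\chi_{2k}\ot\chi_{2k-1}$. It therefore suffices to show the remaining $d-1$ elements are independent in the $d$-dimensional span of the special tensors. Write $e_k = \chi_{2k-1}\ot\chi_{2k}$ and $e'_k = \chi_{2k}\ot\chi_{2k-1}$. The elements are then $e_1+e'_i$ for $i\in J/2$ (for $i=1$ this is $e_1+e'_1$) and $e_i+e'_i$ for $i\ge2$.
\begin{itemize}
\item For $i \geq 2$, $e'_i$ occurs only in $e_1+e'_i$ and in $e_i+e'_i$, while $e_i$ occurs only in $e_i+e'_i$. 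So the coefficient of each $e_i+e'_i$, and then of each $e_1+e'_i$, must vanish in any vanishing combination.
\item What remains is $e_1+e'_1$, which is nonzero.
\end{itemize}

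The main obstacle is nothing deep. It is only the care needed to identify $R$ with $\Ker\mu$ via the Koszul (quadratic) presentation, and to keep the index bookkeeping straight, in particular the $i=1$ overlap between the second and third families.
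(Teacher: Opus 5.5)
Your proof is correct and follows the same route as the paper: you show that the elements of $\BBB_2$ lie in $R=\Ker(H^1\ot H^1\to H^2)$ using Lemma~\ref{lemma:relations_in_H2_all_groups}, and you match the count $\#\BBB_2=d^2-1=\dim_{\F_2}R$. You also spell out the surjectivity of the cup product and the linear independence of $\BBB_2$, both of which the paper's one-line proof leaves implicit.
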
 
\begin{proof}
This follows from the description of the cup product 
on $\Hb(G,\F_2)$ in Lemma \ref{lemma:relations_in_H2_all_groups}.  
Note that we have $\# \BBB_2 = d(d-1) + 2(d/2) - 1 = d^2 - 1$ as required for a basis 
by definition of a Demushkin group. 
\end{proof}



\begin{proposition}\label{prop:type_I_basis_for_Demushkin_even_generators}
The set 
$\BBB_3 \coloneqq \SSS \cup \DDD \cup \DDDw \cup \TTT$  
is a basis for the $\F_2$-vector space $K^3_3(\Hb)$ 
where the sets $\SSS$ (single terms), $\DDD$, $\DDDw$ (double sums), and $\TTT$ (triple sums) are 
\begin{align*}
\SSS & = \left\{ \chi_i \ot \chi_j \ot \chi_k 
~ \text{with}~ \begin{cases}
i, k \ne j+1 &  \text{if} ~ j ~ \text{is odd} \\
i, k \ne j-1 &  \text{if} ~ j ~ \text{is even}; 
\end{cases} 
\right\}, 
\end{align*}
\begin{align*}
\DDD = & ~ \{ \chi_k \ot (\chi_{2i-1} \ot \chi_{2i} + \chi_{2i} \ot \chi_{2i-1}), \\
& ~ ~(\chi_{2i-1} \ot \chi_{2i} + \chi_{2i} \ot \chi_{2i-1}) \ot \chi_k ~ \text{for} ~ i \in J/2, ~ k \ne 2i-1, 2i, \\
& ~ ~ \chi_k \ot (\chi_{1} \ot \chi_{2} + \chi_{2i} \ot \chi_{2i-1}), \\
& ~ ~(\chi_{1} \ot \chi_{2} + \chi_{2i} \ot \chi_{2i-1}) \ot \chi_k ~ \text{for} ~ i \in \Jw/2, ~ k \ne 1, 2, 2i-1, 2i
\} 
\end{align*}
\begin{align*}
\DDDw = & ~ \{ \chi_{1} \ot (\chi_{1} \ot \chi_{2} + \chi_{2i} \ot \chi_{2i-1}), 
\chi_{2i} \ot (\chi_{1} \ot \chi_{2} + \chi_{2i} \ot \chi_{2i-1}), \\
& ~ \chi_{2} \ot (\chi_{2} \ot \chi_{1} + \chi_{2i-1} \ot \chi_{2i}), 
\chi_{2i-1} \ot (\chi_{2} \ot \chi_{1} + \chi_{2i-1} \ot \chi_{2i}), \\
& ~(\chi_{2} \ot \chi_{1} + \chi_{2i-1} \ot \chi_{2i}) \ot \chi_1, 
(\chi_{2} \ot \chi_{1} + \chi_{2i-1} \ot \chi_{2i}) \ot \chi_{2i},  \\ 
& ~ (\chi_{1} \ot \chi_{2} + \chi_{2i} \ot \chi_{2i-1}) \ot \chi_2,  
(\chi_{1} \ot \chi_{2} + \chi_{2i} \ot \chi_{2i-1}) \ot \chi_{2i-1}  ~ \text{for} ~ i \in \Jw/2
\}.
\end{align*}
\begin{align*}
\TTT = & ~ \{ \chi_{2i} \ot \chi_{2i-1} \ot \chi_{2i-1} + \chi_{2i-1} \ot \chi_{2i} \ot \chi_{2i-1} + \chi_{2i-1} \ot \chi_{2i-1} \ot \chi_{2i}, \\
& ~ ~\chi_{2i-1} \ot \chi_{2i} \ot \chi_{2i} + \chi_{2i} \ot \chi_{2i-1} \ot \chi_{2i} + \chi_{2i} \ot \chi_{2i} \ot \chi_{2i-1} ~ \text{for} ~ i \in J/2 
\}. 
\end{align*}
\end{proposition}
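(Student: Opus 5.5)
The plan has three parts: check that every element of $\BBB_3$ lies in $K^3_3(\Hb) = (R \ot H^1) \cap (H^1 \ot R)$; show that $\BBB_3$ is linearly independent; and show that $\BBB_3$ has the right size. For the size, I would use the Koszul property from \cite[Theorem 5.2]{MPQT} to get $\dim K^3_3$ from the Hilbert series. Since $\Hb$ has Hilbert series $h(t)=1+dt+t^2$, its Koszul dual has Hilbert series $1/h(-t) = 1/(1-dt+t^2)$. Hence $\dim K^n_n = c_n$ with $c_0=1$, $c_1=d$, $c_2=d^2-1$ (consistent with Proposition \ref{prop:type_I_basis_2_for_Demushkin_even_generators}) and $c_3 = d c_2 - c_1 = d^3-2d$. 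Alternatively, one can compute $\dim K^3_3 = 2\dim(R\ot V) - \dim(R\ot V + V\ot R)$ directly, but the Koszul count is cleaner. Either way, it suffices to exhibit $d^3-2d$ linearly independent elements of the intersection.

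\emph{Membership.} Let $P$ be the set of "paired" ordered indices $(2k-1,2k)$ and $(2k,2k-1)$; then $\chi_i\ot\chi_j\in R$ exactly when $(i,j)\notin P$. A single tensor $\chi_i\ot\chi_j\ot\chi_k$ lies in the intersection iff neither $(i,j)$ nor $(j,k)$ is in $P$, which is the condition defining $\SSS$. For the elements of $\DDD$, one factor is a basis element of $R$ and the other product of $\chi_k$ with the pair tensors is a single term with unpaired indices, since $k$ avoids the relevant indices. For $\DDDw$, e.g.\ $\chi_1\ot(\chi_1\ot\chi_2+\chi_{2i}\ot\chi_{2i-1})$, I would split it as $(\chi_1\ot\chi_1)\ot\chi_2 + (\chi_1\ot\chi_{2i})\ot\chi_{2i-1}$, both in $R\ot V$ since $(1,1),(1,2i)\notin P$ for $i\ge 2$; the other seven are handled symmetrically. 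For $\TTT$, I would group the three terms as $(\chi_{2i}\ot\chi_{2i-1}+\chi_{2i-1}\ot\chi_{2i})\ot\chi_{2i-1} + (\chi_{2i-1}\ot\chi_{2i-1})\ot\chi_{2i}$. The first bracket lies in $R$ when $i\ge2$. For $i=1$ it does not, since $\chi_1\ot\chi_2+\chi_2\ot\chi_1$ is not listed in $\BBB_2$; this is the point needing care. It is in fact the case $i=1$ of the generator $\chi_1\ot\chi_2+\chi_{2i}\ot\chi_{2i-1}$, so it lies in $R$, and the grouping goes through for $i=1$ as well. The right-hand grouping is analogous.

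\emph{Independence and counting.} I would order monomials and note that each element of $\DDD$, $\DDDw$, $\TTT$ contains at least one monomial with a paired adjacent pair, and such monomials never occur in $\SSS$. Modulo the span of $\SSS$, it then remains to check independence of the remaining elements via their "paired" monomials. One chooses for each element a leading monomial (for instance, for $\DDD$ the monomial $\chi_k\ot\chi_{2i-1}\ot\chi_{2i}$ or $\chi_k\ot\chi_{2i}\ot\chi_{2i-1}$) and verifies that these are distinct and unitriangular with respect to a suitable order. Finally I count: $\#\SSS = d\cdot(d-1)^2$, since for each $j$ one forbidden partner is excluded on each side, and $\#\DDD$, $\#\DDDw = 8(d/2-1)$, $\#\TTT=d$ follow from the index ranges. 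I expect the main obstacle to be getting this count to equal exactly $d^3-2d$ and proving triangularity. The sets $\DDD$ and $\DDDw$ overlap in spirit, with $\DDDw$ covering exactly the $k$ excluded in $\DDD$, and an off-by-one in the $i\in J/2$ versus $\Jw/2$ ranges would break the count. I would first test the argument by explicit computation for $d=2$ and $d=4$, then do the general bookkeeping.
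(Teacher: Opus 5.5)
Your proposal is correct and takes essentially the same route as the paper. The paper defers to \cite{PQ3} for this case, and in types II and III it compares $\#\BBB_3$ with $\dim K^3_3(\Hb)=d^3-2d$ and then eliminates coefficients; you do the same, except that you get $d^3-2d$ from the Koszul Hilbert series instead of citing \cite{PQ3}.

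The two steps you flagged both go through. With $\#\DDD=2(d-2)^2$ the count closes:
\[
d(d-1)^2+2(d-2)^2+4(d-2)+d=d^3-2d.
\]
Triangularity is also easy. The doubly paired monomials $\chi_{2i-1}\ot\chi_{2i}\ot\chi_{2i-1}$ and $\chi_{2i}\ot\chi_{2i-1}\ot\chi_{2i}$ each occur in exactly one element of $\TTT$, so the $\TTT$ coefficients vanish first. The remaining elements have the form $\chi_k\ot r$ or $r\ot\chi_k$ with $r\in R$, and their monomial supports are disjoint, so you can treat each fixed outer index $k$ separately. For each such $k$ there are exactly $d-2$ tensors $r$, and they are independent. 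The monomial order has to depend on $k$, though: for $k=2$ the elements $\chi_2\ot(\chi_{2i-1}\ot\chi_{2i}+\chi_{2i}\ot\chi_{2i-1})$ and $\chi_2\ot(\chi_2\ot\chi_1+\chi_{2i-1}\ot\chi_{2i})$ would otherwise share a leading monomial.
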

\begin{proof}
Proposition \ref{prop:type_I_basis_2_for_Demushkin_even_generators} 
shows that $R$ has the same form as for pro-$p$ Demushkin groups for an odd prime $p$. 
Since $K^3_3(\Hb)$ only depends on the quadratic relations defining $\Hb$, 
the same arguments as in \cite[Lemma 7.2]{PQ3} 
then prove the assertion that $\BBB_3$ is a basis of $K^3_3(\Hb)$.  
(Note that we explain the argument in the other cases below.)  
\end{proof}


\subsection{The differential in the Koszul--Hochschild complex}\label{subsec:Koszul_complex_p=2_trosionfree_differential}

Now, we analyse the differential 
$\partial \colon \Hom_{\F_2}(K_2^2(\Hb),H^1)  \to \Hom_{\F_2}(K^3_3(\Hb),H^2)$ 
in the complex \eqref{eq:Koszul_complex_3,-1} 
which computes the Hochschild cohomology of $\Hb$.  
%
For any $\sigma \in \Hom_{\F_2}(K_2^2(\Hb), H^1)$, 
$\partial \sigma$ vanishes on elements in $\TTT$ 
and on elements in the set $\Delta_{\SSS} \coloneq \{\chi_i \ot \chi_i \ot \chi_i : i \in J\} \subseteq \SSS$. 
%
In particular, for $d=2$, the differential is trivial. 
For $d \ge 4$, however, $\partial$ is non-trivial as we now show. 
%
First, we note that it is easy to see that for every element $(\chi_a,\chi_b,\chi_c)$ in $\SSS \setminus \Delta_{\SSS} \cup \DDD$, 
there is a $\sigma \in \BBB^2_2$ such that $\partial \sigma (\chi_a,\chi_b,\chi_c) \ne 0$. 
The same $\partial \sigma$ may be non-trivial on several elements of $\BBB^3_3$ simultaneously. 
Since we will show in Section \ref{sec:computing_kappa3_torsionfree} 
that the map which represents the canonical class of $G$ vanishes on the sets $\SSS$ and $\DDD$, 
we only compute the effect on the set $\DDDw$.


\begin{notn}\label{notn:chi_ijk}
We will often write $\chi_{i,j}$ for $\chi_i \ot \chi_j \in H^1 \ot H^1$ 
and $\chi_{i,j,k}$ for $\chi_i \ot \chi_j \ot \chi_k \in H^1 \ot H^1 \ot H^1$ 
when it makes formulas easier to fit into the text. 
\end{notn}


\begin{notn}\label{notn:sigma}
Let $\chi_{i,j} \in \BBB_2$, 
i.e., $(i,j) \ne (2k-1,2k), (2k,2k-1)$. 
For $s \in J$, we write $\sigma^s_{i,j}$ for the unique $\F_2$-linear map $R \to H^1$ 
which sends $\chi_{i,j}$ to $\chi_s$ and 
every other element of $\BBB_2$ to $0$. 
For $\chi_1 \ot \chi_2 + \chi_{2i} \ot \chi_{2i-1} \in \BBB_2$, $i \in J/2$, and $s \in J$, 
we write  $\sigma^s_{1,2'+'2i,2i-1}$ for the unique $\F_2$-linear map $R \to H^1$ 
which sends $\chi_1 \ot \chi_2 + \chi_{2i} \ot \chi_{2i-1}$ to $\chi_s$ and 
every other element of $\BBB_2$ to $0$. 
Similarly, for $\chi_{2i-1} \ot \chi_{2i} + \chi_{2i} \ot \chi_{2i-1} \in \BBB_2$, 
we write  $\sigma^s_{2i-1,2i'+'2i,2i-1}$ for the unique $\F_2$-linear map $R \to H^1$ 
which sends $\chi_{2i-1} \ot \chi_{2i}  + \chi_{2i} \ot \chi_{2i-1}$ to $\chi_s$ and 
every other element of $\BBB_2$ to $0$. 
\end{notn} 


\begin{notn}\label{notn:dual_notation_type_I}
For an element $(\chi_a,\chi_b,\chi_c) \in \BBB_3$, 
we write $(\chi_a,\chi_b,\chi_c)^*$ 
for the unique 
$\F_2$-linear map in $\Hom_{\F_2}(K^3_3(\Hb),H^2)$ such that 
$(\chi_a,\chi_b,\chi_c)^*(\chi_a,\chi_b,\chi_c)\ne 0$ 
and 
$(\chi_a,\chi_b,\chi_c)^*$ sends all other elements in $\BBB_3$ to $0$. 
By slight abuse of terminology, using the isomorphism
$\Hom_{\F_2}(K^3_3(\Hb),H^2) \cong \Hom_{\F_2}(K^3_3(\Hb),\F_2)$ induced by the isomorphism $H^2 \cong \F_2$, 
we refer to $(\chi_a,\chi_b,\chi_c)^*$ as the {\em dual of $(\chi_a,\chi_b,\chi_c)$}. 
We write $\BBB_3^*$ for the set of $(\chi_a,\chi_b,\chi_c)^*$ for all elements in $\BBB_3$.  
\end{notn} 


\begin{lemma}\label{lemma:image_of_partial_sigmas_DDDw}
Assume $d \ge 4$ and let $i \in \Jw/2$. 
Then 
\begin{align*}
(\chi_{2,1,2i} + \chi_{2i-1,2i,2i})^* & =  \partial\sigma^{2i-1}_{2i-1,2i'+'2i,2i-1}, \\
(\chi_{2i-1,2,1} + \chi_{2i-1,2i-1,2i})^* & =  \partial\sigma^{2i}_{2i-1,2i'+'2i,2i-1}, \\
(\chi_{2i,1,2} + \chi_{2i,2i,2i-1})^* & =  \partial\sigma^{2i-1}_{2i-1,2i'+'2i,2i-1} + \partial\sigma^{2i-1}_{1,2'+'2i,2i-1}, \\
\text{and} ~ 
(\chi_{1,2,2i-1} + \chi_{2i,2i-1,2i-1})^* & =  \partial\sigma^{2i}_{2i-1,2i'+'2i,2i-1} + \partial\sigma^{2i}_{1,2'+'2i,2i-1}. 
\end{align*}
Moreover, we have  
\begin{align*}
\sum_{j \in \Jw/2} (\chi_{2,2,1} + \chi_{2,2j-1,2j})^*  & =  \partial\sigma^1_{1,2'+'2,1}, 
\sum_{j \in \Jw/2} (\chi_{2,1,1} + \chi_{2j-1,2j,1})^*  = \partial\sigma^2_{1,2'+'2,1},  \\
\sum_{j \in \Jw/2} (\chi_{1,2,2} + \chi_{2j,2j-1,2})^* & =  \partial\sigma^1_{1,2'+'2,1} + \sum_{j \in \Jw/2} \partial\sigma^1_{1,2'+'2j,2j-1}, 
\end{align*}
and 
\begin{align*}
\sum_{j \in \Jw/2} (\chi_{1,1,2} + \chi_{1,2j,2j-1})^* & =  \partial\sigma^2_{1,2'+'2,1} + \sum_{j \in \Jw/2} \partial\sigma^2_{1,2'+'2j,2j-1}. 
\end{align*}
\end{lemma}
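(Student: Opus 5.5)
Each identity is an equality of linear maps on $K^3_3(\Hb)$, so I will verify it by evaluating both sides on every element of the basis $\BBB_3 = \SSS \cup \DDD \cup \DDDw \cup \TTT$ of Proposition \ref{prop:type_I_basis_for_Demushkin_even_generators}. On the left I need the element of $\BBB_3$ actually named: for instance $\chi_{2,1,2i} + \chi_{2i-1,2i,2i}$ is the element $(\chi_{2}\ot\chi_{1} + \chi_{2i-1}\ot\chi_{2i}) \ot \chi_{2i}$ of $\DDDw$. Likewise $\chi_{2i-1,2,1}+\chi_{2i-1,2i-1,2i}$, $\chi_{2i,1,2}+\chi_{2i,2i,2i-1}$ and $\chi_{1,2,2i-1}+\chi_{2i,2i-1,2i-1}$ are elements of $\DDDw$. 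So the left-hand side is a dual basis vector, or a sum of such.

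The tool is the formula $\partial\sigma(v_1\ot v_2\ot v_3) = v_1\cup \sigma(v_2\ot v_3) + \sigma(v_1 \ot v_2)\cup v_3$ from Section \ref{sec:canonical_class_Koszul}, extended linearly to sums of tensors. For each basis element $w$ of $K^3_3$, I write $w$ as a sum of tensors $\chi_a\ot\chi_b\ot\chi_c$. To apply $\sigma$ to the pairs $\chi_b\ot\chi_c$ and $\chi_a\ot\chi_b$, I first regroup the sum: the terms $\sum \chi_a \ot (\ldots)$ with $(\ldots)\in R$ and the terms $\sum (\ldots)\ot\chi_c$ with $(\ldots)\in R$ must be expanded in the basis $\BBB_2$ of Proposition \ref{prop:type_I_basis_2_for_Demushkin_even_generators}. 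The key observation is that $\sigma^s_{\beta}$ is nonzero only on the basis vector $\beta \in \BBB_2$. Hence $\partial\sigma^s_\beta(w)$ only sees those $w$ whose left or right $R$-decomposition involves $\beta$.

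Take $\beta = \chi_{2i-1}\ot\chi_{2i}+\chi_{2i}\ot\chi_{2i-1}$ with $i\in\Jw/2$. The only basis elements whose decompositions contain $\beta$ with nonzero coefficient are certain elements of $\DDD$, $\DDDw$ and $\TTT$. For $\DDD$ these are $\chi_k\ot\beta$ and $\beta\ot\chi_k$. For $\DDDw$, the $R$-decomposition of, say, $\chi_2\ot\chi_1+\chi_{2i-1}\ot\chi_{2i}$ uses the identity
\[
\chi_{2,1}+\chi_{2i-1,2i} = (\chi_{1,2}+\chi_{2,1}) + (\chi_{1,2}+\chi_{2i,2i-1}) + (\chi_{2i-1,2i}+\chi_{2i,2i-1}).
\]
This involves $\beta$ together with the $i=1$ element $\chi_{1,2}+\chi_{2,1}$. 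For each such $w$ I then compute the contribution $\chi_a\cup\chi_s$ or $\chi_s\cup\chi_c$ using Lemma \ref{lemma:relations_in_H2_all_groups}: the only nonzero products are $\chi_{2k-1}\cup\chi_{2k}=\chi_{2k}\cup\chi_{2k-1}$. With $s=2i-1$ this forces the outer factor to be $\chi_{2i}$, which kills everything except the single listed $\DDDw$ element. For the $\TTT$ elements, the contributions come in pairs and cancel mod $2$, consistent with the paper's remark that $\partial$ vanishes on $\TTT$.

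The third and fourth identities work the same way, except that $\partial\sigma^{2i-1}_{1,2'+'2i,2i-1}$ also hits the element from the first identity. Adding the $\beta$-term cancels that extra hit. The "moreover" identities concern $\beta=\chi_{1,2}+\chi_{2,1}$, which appears in the decomposition of the $\DDDw$ elements for every $j\in\Jw/2$ simultaneously; this produces the sums over $j$.

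The main obstacle is bookkeeping rather than ideas. I must correctly expand each $\DDDw$ and $\TTT$ element on both sides in the basis $\BBB_2$, and check that no element of $\SSS$ or $\DDD$ accidentally picks up a nonzero value. I would organise this as a table indexed by $\beta\in\BBB_2$, listing which $w\in\BBB_3$ involve $\beta$ on the left or right, and read off all eight identities from it.
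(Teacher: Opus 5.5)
Your proposal is correct and follows essentially the same route as the paper: evaluate each $\partial\sigma$ on the basis $\BBB_3$ by expanding the left and right $R$-factors of each basis element in $\BBB_2$ (via \eqref{eq:relation_2,1+2i-1,2i}), read off the cup products from Lemma \ref{lemma:relations_in_H2_all_groups}, and note that the contributions on $\TTT$ cancel in pairs. The paper carries this out explicitly only for $\partial\sigma^{2i}_{2i-1,2i'+'2i,2i-1}$ and $\partial\sigma^1_{1,2'+'2,1}$, and leaves the remaining identities as similar computations, which is the same bookkeeping you describe.
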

\begin{proof}
%
First, we consider $\partial\sigma^{2i}_{2i-1,2i'+'2i,2i-1}$ 
and compute  
\begin{align*}
& \partial\sigma^{2i}_{2i-1,2i'+'2i,2i-1}(\chi_{2i-1,2,1} + \chi_{2i-1,2i-1,2i}) \\
= & ~~ \chi_{2i-1} \cup \sigma^{2i}_{2i-1,2i'+'2i,2i-1}(\chi_{2,1} + \chi_{2i-1,2i})\\ 
& ~~ + \sigma^{2i}_{2i-1,2i'+'2i,2i-1}(\chi_{2i-1,2}) \cup \chi_1  + \sigma^{2i}_{2i-1,2i'+'2i,2i-1}(\chi_{2i-1,2i-1}) \cup \chi_{2i}. 
\end{align*}
By definition of $\sigma^{2i}_{2i-1,2i+2i,2i-1}$,  
we know that  $\sigma^{2i}_{2i-1,2i+2i,2i-1}(\chi_{2i-1,2}) = 0$ 
and $\sigma^{2i}_{2i-1,2i+2i,2i-1}(\chi_{2i-1,2i-1}) = 0$. 
Moreover, we have 
\begin{align}\label{eq:relation_2,1+2i-1,2i}
\chi_{2,1} + \chi_{2i-1,2i} =  (\chi_{1,2} + \chi_{2,1}) + (\chi_{2i-1,2i} + \chi_{2i,2i-1}) + (\chi_{1,2} + \chi_{2i,2i-1}).  
\end{align}
By definition of $\sigma^{2i}_{2i-1,2i+2i,2i-1}$,   
we have 
$\sigma^{2i}_{2i-1,2i'+'2i,2i-1}(\chi_{1,2} + \chi_{2,1}) = 0$ 
and $\sigma^{2i}_{2i-1,2i'+'2i,2i-1}(\chi_{1,2} + \chi_{2i,2i-1}) = 0$, 
and hence 
\begin{align*}
 & \partial\sigma^{2i}_{2i-1,2i'+'2i,2i-1}(\chi_{2i-1,2,1} + \chi_{2i-1,2i-1,2i}) \\
= &~~  \chi_{2i-1} \cup \sigma^{2i}_{2i-1,2i'+'2i,2i-1}(\chi_{2i-1,2i} + \chi_{2i,2i-1}) 
= \chi_{2i-1} \cup \chi_{2i} \ne 0.
\end{align*} 
It remains to show that $\partial\sigma^{2i}_{2i-1,2i'+'2i,2i-1}$ vanishes on all other elements of $\BBB_3$. 
%
%
For $\chi_{a,b,c} \in \SSS$, we have $\sigma^{2i}_{2i-1,2i'+'2i,2i-1}(\chi_{a,b}) = 0 = \sigma^{2i}_{2i-1,2i'+'2i,2i-1}(\chi_{b,c})$ 
and hence $\partial\sigma^{2i}_{2i-1,2i'+'2i,2i-1}(\chi_{a,b,c}) = 0$. 
Now let $b \in J/2$ and $a \in J$ with $a \ne 2b-1, 2b$. 
Then 
\begin{align*}
 & \partial\sigma^{2i}_{2i-1,2i'+'2i,2i-1} (\chi_{a,2b-1,2b} + \chi_{a,2b,2b-1}) \\
= & ~~~  \chi_{a} \cup \sigma^{2i}_{2i-1,2i'+'2i,2i-1} (\chi_{2b-1,2b} + \chi_{2b,2b-1}) \\
& + \sigma^{2i}_{2i-1,2i'+'2i,2i-1} (\chi_{a,2b-1}) \cup \chi_{2b} + \sigma^{2i}_{2i-1,2i'+'2i,2i-1} (\chi_{a,2b}) \cup \chi_{2b-1}. 
\end{align*} 
By definition of $\sigma^{2i}_{2i-1,2i'+'2i,2i-1}$, 
the second and third summands are zero, 
and we have $\sigma^{2i}_{2i-1,2i'+'2i,2i-1}(\chi_{2b-1,2b} + \chi_{2b,2b-1}) \ne 0$ 
if and only if $b=i$ by Lemma \ref{lemma:relations_in_H2_all_groups}.  
However, 
 the cup-product $\chi_a \cup \chi_{2i}$ is non-zero 
if and only if $a=2i-1$. 
But we have $a \ne 2b-1$ 
by assumption. 
This shows $\partial\sigma^{2i}_{2i-1,2i'+'2i,2i-1} (\chi_{a,2b-1,2b} + \chi_{a,2b,2b-1}) = 0$. 
Similarly, we get $\partial\sigma^{2i}_{2i-1,2i'+'2i,2i-1} (\chi_{2b-1,2b,a} + \chi_{2b,2b-1,a}) = 0$. 
We can check that $\partial\sigma^{2i}_{2i-1,2i'+'2i,2i-1}$ vanishes on the remaining elements of $\DDD$. 
For another example of an element in $\DDDw$, we compute, for $b \in \Jw/2$, 
\begin{align*}
& \partial\sigma^{2i}_{2i-1,2i'+'2i,2i-1} (\chi_{2,1,1} + \chi_{2b-1,2b,1}) \\
= & ~~ \chi_2 \cup \sigma^{2i}_{2i-1,2i'+'2i,2i-1}(\chi_{1,1}) + \chi_{2b-1} \cup \sigma^{2i}_{2i-1,2i'+'2i,2i-1}(\chi_{2b,1}) \\
 & ~~ + \sigma^{2i}_{2i-1,2i'+'2i,2i-1}(\chi_{2,1} + \chi_{2b-1,2b}) \cup \chi_1 \\
= & ~~  \sigma^{2i}_{2i-1,2i'+'2i,2i-1}(\chi_{2b-1,2b} + \chi_{2b,2b-1})  \cup \chi_1
\end{align*} 
where we use again \eqref{eq:relation_2,1+2i-1,2i}. 
The last cup product is non-zero if and only if $b=i$ and $i=1$ by Lemma \ref{lemma:relations_in_H2_all_groups}. 
However, we have $b \ge 2$ by assumption. 
%
Similar computations show that $\partial\sigma^{2i}_{2i-1,2i'+'2i,2i-1}$ vanishes on the remaining elements in $\DDDw$ as well. 
Since $\partial \sigma$ vanishes on $\TTT$ for all $\sigma$, 
this proves the assertion for $\partial\sigma^{2i}_{2i-1,2i'+'2i,2i-1}$. 


Second, we consider $\partial\sigma^1_{1,2'+'2,1}$. 
Let $j \in \Jw/2$. 
By definition of $\sigma^1_{1,2'+'2,1}$, 
we have $\sigma^1_{1,2'+'2,1}(\chi_{2,2}) = 0$ and $\sigma^1_{1,2'+'2,1}(\chi_{2,2j-1}) = 0$. 
Hence we get 
\begin{align*}
& \partial\sigma^1_{1,2'+'2,1} (\chi_{2,2,1} + \chi_{2,2j-1,2j}) \\
= & ~~ \chi_2 \cup \sigma^1_{1,2'+'2,1}(\chi_{2,1} + \chi_{2j-1,2j}) 
 + \sigma^1_{1,2'+'2,1}(\chi_{2,2}) \cup \chi_{1} + \sigma^1_{1,2'+'2,1}(\chi_{2,2j-1}) \cup \chi_{2j} \\
= & ~~ \chi_2 \cup \sigma^1_{1,2'+'2,1}(\chi_{2,1} + \chi_{2j-1,2j}). 
\end{align*}
Now we use again  \eqref{eq:relation_2,1+2i-1,2i}. 
We have 
$\sigma^1_{1,2'+'2,1} (\chi_{2j-1,2j} + \chi_{2j,2j-1}) = 0$ 
and $\sigma^1_{1,2'+'2,1} (\chi_{1,2} + \chi_{2j,2j-1}) = 0$ 
since $j \ge 2$ by assumption. 
Thus, for every $j \in \Jw/2$, 
\begin{align*}
\partial\sigma^1_{1,2'+'2,1} (\chi_{2,2,1} + \chi_{2,2j-1,2j})  = \chi_2 \cup \sigma^1_{1,2'+'2,1}(\chi_{1,2} + \chi_{2,1}) = \chi_2 \cup \chi_1 \ne 0.
\end{align*} 
%
It remains to show that $\partial\sigma^1_{1,2'+'2,1}$ vanishes on all other elements of $\BBB^3_3$. 
For $\chi_{a,b,c} \in \SSS$, we have $\sigma^1_{1,2'+'2,1}(\chi_{a,b}) = 0 = \sigma^1_{1,2'+'2,1}(\chi_{b,c})$ 
and hence $\partial\sigma^1_{1,2'+'2,1}(\chi_{a,b,c}) = 0$. 
Now let $b \in J/2$ and $a \in J$ with $a \ne 2b-1, 2b$. 
Then 
\begin{align*}
 & \partial\sigma^1_{1,2'+'2,1} (\chi_{a,2b-1,2b} + \chi_{a,2b,2b-1}) \\
= & ~~~  \chi_{a} \cup \sigma^1_{1,2'+'2,1} (\chi_{2b-1,2b} + \chi_{2b,2b-1}) \\
& + \sigma^1_{1,2'+'2,1} (\chi_{a,2b-1}) \cup \chi_{2b} + \sigma^1_{1,2'+'2,1} (\chi_{a,2b}) \cup \chi_{2b-1}. 
\end{align*} 
By definition of $\sigma^1_{1,2'+'2,1}$, 
the second and third summands are zero, 
and we have $\sigma^1_{1,2'+'2,1} (\chi_{2b-1,2b} + \chi_{2b,2b-1}) \ne 0$ 
if and only if $b=1$. 
However, the cup-product $\chi_a \cup \chi_1$ is non-zero 
if and only if $a=2$ by Lemma \ref{lemma:relations_in_H2_all_groups}. 
But we have $a \ne 2b = 2$ by assumption. 
This shows $\partial\sigma^1_{1,2'+'2,1} (\chi_{a,2b-1,2b} + \chi_{a,2b,2b-1}) = 0$. 
Similarly, we get $\partial\sigma^1_{1,2'+'2,1} (\chi_{2b-1,2b,a} + \chi_{2b,2b-1,a}) = 0$. 
Now we assume $b \in \Jw/2$. 
Then $\partial\sigma^1_{1,2'+'2,1}$ sends both 
$\chi_{a,1,2} + \chi_{a,2b,2b-1}$ and $\chi_{1,2,a} + \chi_{2b,2b-1,a}$ 
to zero, 
since $\sigma^1_{1,2'+'2,1}(\chi_{1,2} + \chi_{2b,2b-1}) = 0$ 
for $b \ge 2$. 
It remains to show that $\partial\sigma^1_{1,2'+'2,1}$ vanishes on the other elements in $\DDDw$. 
%
%
By relation \eqref{eq:relation_2,1+2i-1,2i}, 
we get $\sigma^1_{1,2'+'2,1} (\chi_{2,1} + \chi_{2b-1,2b}) = \chi_1$. 
However, since $\chi_1 \cup \chi_1 = \chi_1 \cup \chi_{2b} = \chi_{2b-1} \cup \chi_1 = 0$, 
we conclude that $\partial\sigma^1_{1,2'+'2,1}$ vanishes on 
the elements 
$(\chi_{2,1,1} + \chi_{2b-1,2b,1})$, 
$(\chi_{2,1,2b} + \chi_{2b-1,2b,2b})$, 
and $(\chi_{2b-1,2,1} + \chi_{2b-1, 2b-1,2b})$ in $\DDDw$. 
Since $\sigma^1_{1,2'+'2,1} (\chi_{1,2} + \chi_{2b,2b-1}) = 0$ for $b\ge 2$, 
we get that  $\partial\sigma^1_{1,2'+'2,1}$ also sends  
the elements 
$\chi_{1,1,2} + \chi_{1, 2b,2b-1}$, 
$\chi_{1,2,2} + \chi_{2b,2b-1,2}$, 
$\chi_{2b,1,2} + \chi_{2b, 2b,2b-1}$, and
$\chi_{1,2,2b-1} + \chi_{2b,2b-1,2b-1}$  
in $\DDDw$ to zero. 
This proves that $\partial\sigma^1_{1,2'+'2,1}$ is non-zero exactly on all elements of the form 
$(\chi_{2,2,1} + \chi_{2,2j-1,2j})^*$ for all $j \in \Jw/2$. 
This proves the claim for $\partial\sigma^1_{1,2'+'2,1}$. 
The remaining assertions of the lemma follow from similar computations. 
\end{proof}


For the next proposition, we recall that every map $\kappa \in \Hom_{\F_2}(K^3_3(\Hb), H^2)$ is a cocycle in 
the complex $(\uHom_{\F_2}(K_{\bbb}^{\bbb}(\Hb), \Hb[-1]), \dee)$ which computes the Hochschild cohomology group 
$\HH^{3,-1}(\Hb)$ since $H^3=0$. 

\begin{proposition}\label{prop:image_of_partial_sigmas_DDDw}
Let $\DDDw^*_{\Sigma} \coloneqq \DDDw^*_1 \oplus \DDDw^*_2$ denote the subspace of $\Hom_{\F_2}(K^3_3(\Hb), H^2)$ 
given by the sum of 
\begin{align*}
\DDDw^*_1 \coloneqq \mathrm{Span}_{\F_2}\{ &  (\chi_{2,1,2i} + \chi_{2i-1,2i,2i})^*,  (\chi_{2i-1,2,1} + \chi_{2i-1,2i-1,2i})^*,  \\
& (\chi_{2i,1,2} + \chi_{2i,2i,2i-1})^*, (\chi_{1,2,2i-1} + \chi_{2i,2i-1,2i-1})^*: ~ i \in \Jw/2 \} 
\end{align*}
and
\begin{align*}
\DDDw^*_2 \coloneqq \mathrm{Span}_{\F_2} \{ &  \sum_{j \in \Jw/2} (\chi_{2,2,1} + \chi_{2,2j-1,2j})^*,  \sum_{j \in \Jw/2} (\chi_{2,1,1} + \chi_{2j-1,2j,1})^*,  \\
& \sum_{j \in \Jw/2} (\chi_{1,2,2} + \chi_{2j,2j-1,2})^*, \sum_{j \in \Jw/2} (\chi_{1,1,2} + \chi_{1,2j,2j-1})^* \}. 
\end{align*}
Then $\DDDw^*_{\Sigma} \subset \Imm(\partial)$. 
In particular, if $\kappa \in \Hom_{\F_2}(K^3_3(\Hb), H^2)$ sends $\SSS$, $\DDD$, and $\TTT$ to zero,  
and has the same values on $(\chi_{2,2,1} + \chi_{2,2i-1,2i})$, 
$(\chi_{2,1,1} + \chi_{2i-1,2i,1})$, 
$(\chi_{1,2,2} + \chi_{2i,2i-1,2})$, 
and 
$(\chi_{1,1,2} + \chi_{1,2i,2i-1})$, 
respectively, 
for all $i \in \Jw/2$, 
then $\kappa$ is a coboundary and $[\kappa] = 0$ in $\HH^{3,-1}(\Hb)$. 
\end{proposition}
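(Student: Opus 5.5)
The first assertion, $\DDDw^*_{\Sigma} \subset \Imm(\partial)$, follows from Lemma~\ref{lemma:image_of_partial_sigmas_DDDw}. I will check it generator by generator. For each $i \in \Jw/2$, the four duals spanning $\DDDw^*_1$ are written in the lemma as $\partial$ of explicit elements. Two of them come directly, as $\partial\sigma^{2i-1}_{2i-1,2i'+'2i,2i-1}$ and $\partial\sigma^{2i}_{2i-1,2i'+'2i,2i-1}$. The other two are sums of those with $\partial\sigma^{2i-1}_{1,2'+'2i,2i-1}$ and $\partial\sigma^{2i}_{1,2'+'2i,2i-1}$ respectively. Likewise, the four sums over $j \in \Jw/2$ spanning $\DDDw^*_2$ are $\partial$ of the listed linear combinations of the $\sigma$'s. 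Since $\partial$ is $\F_2$-linear, its image is a subspace, so it contains the span of all these generators. This span is $\DDDw^*_1 + \DDDw^*_2$; the sum is direct because the generators involve disjoint sets of dual basis elements of $\BBB_3^*$.

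For the second assertion, let $\kappa$ vanish on $\SSS$, $\DDD$ and $\TTT$. Write $\kappa$ in the dual basis $\BBB_3^*$ of Notation~\ref{notn:dual_notation_type_I}, using $H^2 \cong \F_2$. Only duals of elements of $\DDDw$ can occur, so
\[
\kappa = \sum_{x \in \DDDw} \kappa(x)\, x^*.
\]
I will split $\DDDw$ into two families.

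The first family, for each $i \in \Jw/2$, consists of the four elements $\chi_{2,1,2i} + \chi_{2i-1,2i,2i}$, $\chi_{2i-1,2,1} + \chi_{2i-1,2i-1,2i}$, $\chi_{2i,1,2} + \chi_{2i,2i,2i-1}$ and $\chi_{1,2,2i-1} + \chi_{2i,2i-1,2i-1}$. Their duals lie in $\DDDw^*_1$ whatever the values of $\kappa$ on them, so this part of $\kappa$ lies in $\Imm(\partial)$.

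The second family consists of the four remaining types, $\chi_{2,2,1} + \chi_{2,2i-1,2i}$, $\chi_{2,1,1} + \chi_{2i-1,2i,1}$, $\chi_{1,2,2} + \chi_{2i,2i-1,2}$ and $\chi_{1,1,2} + \chi_{1,2i,2i-1}$, each indexed by $i \in \Jw/2$. By hypothesis $\kappa$ takes a constant value $c_1, c_2, c_3, c_4 \in \F_2$ on each type, independent of $i$. The corresponding part of $\kappa$ is therefore
\[
c_1\sum_{j}(\chi_{2,2,1}+\chi_{2,2j-1,2j})^* + c_2\sum_j(\chi_{2,1,1}+\chi_{2j-1,2j,1})^* + c_3\sum_j(\chi_{1,2,2}+\chi_{2j,2j-1,2})^* + c_4\sum_j(\chi_{1,1,2}+\chi_{1,2j,2j-1})^*,
\]
which lies in $\DDDw^*_2$.

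Combining the two families, $\kappa \in \DDDw^*_{\Sigma} \subset \Imm(\partial)$. Hence $\kappa$ is a coboundary in the complex \eqref{eq:Koszul_complex_3,-1}. Since $H^3 = 0$, every element of $\Hom_{\F_2}(K^3_3(\Hb),H^2)$ is a cocycle, so $\kappa$ defines a class, and $[\kappa] = 0$ in $\HH^{3,-1}(\Hb)$.

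The only delicate point is the bookkeeping: one must confirm that the two families listed above exhaust $\DDDw$ as given in Proposition~\ref{prop:type_I_basis_for_Demushkin_even_generators}. Comparing the eight entries of $\DDDw$, with $\chi_{2i}\ot\chi_{2i-1}$ and $\chi_{2i-1}\ot\chi_{2i}$ placed in the appropriate positions, against the eight types above shows this. The constancy hypothesis is exactly what is needed, because $\DDDw^*_2$ contains only the full sums over $j$ and not the individual duals.
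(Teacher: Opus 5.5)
Your proof is correct and takes the same route as the paper, whose proof simply states that the proposition follows directly from Lemma~\ref{lemma:image_of_partial_sigmas_DDDw}. You additionally spell out the bookkeeping the paper leaves implicit: the eight types in $\DDDw$ split into the two families, and constancy in $i$ places the second family's contribution in $\DDDw^*_2$.
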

\begin{proof} 
This follows directly from Lemma \ref{lemma:image_of_partial_sigmas_DDDw}. 
\end{proof}

\begin{corollary}\label{cor:image_of_partial_sigmas_DDDw}
Let $\kappa \in \Hom_{\F_2}(K^3_3(\Hb), H^2)$, 
and assume that $\kappa$ sends $\SSS$, $\DDD$, and $\TTT$ to zero,  
and has the same values on $(\chi_{2,2,1} + \chi_{2,2i-1,2i})$, 
$(\chi_{2,1,1} + \chi_{2i-1,2i,1})$, 
$(\chi_{1,2,2} + \chi_{2i,2i-1,2})$, 
and 
$(\chi_{1,1,2} + \chi_{1,2i,2i-1})$, 
respectively.  
Then $\kappa$ is a coboundary and $[\kappa] = 0$ in $\HH^{3,-1}(\Hb)$. 
\end{corollary}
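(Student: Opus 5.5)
The plan is to expand $\kappa$ in the dual basis $\BBB_3^*$ and show that what survives lies in $\DDDw^*_{\Sigma}$, which Proposition \ref{prop:image_of_partial_sigmas_DDDw} places inside $\Imm(\partial)$. Since $H^2 \cong \F_2$, every $\kappa \in \Hom_{\F_2}(K^3_3(\Hb),H^2)$ can be written uniquely as $\kappa = \sum_{v \in \BBB_3} \kappa(v)\, v^*$, where we identify $\kappa(v)$ with an element of $\F_2$. By hypothesis, $\kappa$ vanishes on $\SSS$, $\DDD$ and $\TTT$, so only the terms with $v \in \DDDw$ remain.

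Next we sort the elements of $\DDDw$ into the eight families indexed by $i \in \Jw/2$ that appear in Lemma \ref{lemma:image_of_partial_sigmas_DDDw}. Four of these families span $\DDDw^*_1$:
\[
(\chi_{2,1,2i} + \chi_{2i-1,2i,2i}),\quad (\chi_{2i-1,2,1} + \chi_{2i-1,2i-1,2i}),\quad (\chi_{2i,1,2} + \chi_{2i,2i,2i-1}),\quad (\chi_{1,2,2i-1} + \chi_{2i,2i-1,2i-1}).
\]
Every dual of an element in these families is, by the first part of the lemma, individually of the form $\partial\sigma$ for an explicit $\sigma$. Hence the corresponding part of $\kappa$ is a coboundary no matter what values $\kappa$ takes there.

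For the other four families we use the hypothesis that $\kappa$ is constant in $i$. For instance, suppose $\kappa(\chi_{2,2,1} + \chi_{2,2i-1,2i}) = c_1$ for every $i \in \Jw/2$. Then the contribution of this family is $c_1 \sum_{j \in \Jw/2} (\chi_{2,2,1} + \chi_{2,2j-1,2j})^* = c_1\,\partial\sigma^1_{1,2'+'2,1}$. The families $(\chi_{2,1,1} + \chi_{2i-1,2i,1})$, $(\chi_{1,2,2} + \chi_{2i,2i-1,2})$ and $(\chi_{1,1,2} + \chi_{1,2i,2i-1})$ are handled the same way, using the second part of Lemma \ref{lemma:image_of_partial_sigmas_DDDw}.

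Adding everything up, $\kappa$ is an $\F_2$-linear combination of elements of $\DDDw^*_1$ and $\DDDw^*_2$. It therefore lies in $\DDDw^*_{\Sigma} \subset \Imm(\partial)$. Because $H^3 = 0$, $\kappa$ is automatically a cocycle, so $[\kappa]=0$ in $\HH^{3,-1}(\Hb)$.

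The one point that needs checking is that the four families listed in Proposition \ref{prop:image_of_partial_sigmas_DDDw} together with these eight families really exhaust $\DDDw$, i.e. that $\DDDw$ is exactly the union of the eight families. This follows by comparing with the explicit list in Proposition \ref{prop:type_I_basis_for_Demushkin_even_generators} after rewriting the tensors there in the $\chi_{a,b,c}$ notation. The case $d=2$ is trivial, since then $\DDDw = \emptyset$ and $\kappa = 0$.
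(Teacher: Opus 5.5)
Your proposal is correct and follows essentially the same route as the paper, which derives the corollary from Proposition \ref{prop:image_of_partial_sigmas_DDDw}, itself a direct consequence of Lemma \ref{lemma:image_of_partial_sigmas_DDDw}. You make the dual-basis bookkeeping explicit: the four families spanning $\DDDw^*_1$ are coboundaries individually, and the constancy-in-$i$ hypothesis turns the other four families into multiples of the summed elements of $\DDDw^*_2$. Your check that $\DDDw$ is exactly the union of these eight families is also correct.
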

\begin{proof} 
This follows directly from Proposition \ref{prop:image_of_partial_sigmas_DDDw}.
\end{proof}


\subsection{Construction of the canonical class}\label{subsec:construction_kappa}

We will now construct the canonical class of $G$.  
Let $\Hom(G,\F_2)$ denote the $\F_2$-vector space of continuous group homomorphisms. 
We choose $f_1$ to be the identity $\Hom(G,\F_2) = H^1 \to  \Zh^1 = \Hom(G,\F_2)$ and omit it from the notation. 
We construct an $\F_2$-linear map $f_2 \colon R \to \Ch^1$  by defining it on each element of $\BBB_2$ and then extend it $\F_2$-linearly.

Let $A_{10}
= \begin{psmallmatrix} 
1 & 1 & 0  \\
0 & 1 & 0  \\
0 &  0 & 1  \\
\end{psmallmatrix}$,  
$A_{01}
= \begin{psmallmatrix} 
1 & 0 & 0  \\
0 & 1 & 1  \\
0 &  0 & 1  \\
\end{psmallmatrix}$, 
and 
$A_{11}
= \begin{psmallmatrix} 
1 & 1 & 0  \\
0 & 1 & 1  \\
0 &  0 & 1  \\
\end{psmallmatrix}$  
denote the matrices used in the proof of Lemma \ref{lemma:relations_in_H2_all_groups}.  
Let $i \ne j$ and $(i,j) \ne (2k-1,2k), (2k,2k-1)$. 
As in the proof of Lemma \ref{lemma:relations_in_H2_all_groups},  
we define the continuous homomorphism $\rr_{i,j} \colon G \to U_3(\F_2)$ 
by the assignment $x_i \mapsto A_{10}$, 
$x_{j} \mapsto A_{01}$, 
and $x_s \mapsto I_3$ for $s \ne i, j$. 
We define the continuous map $f_2(\chi_i \ot \chi_j) \colon G \to \F_2$  
by setting $f_2(\chi_i \ot \chi_j) \coloneqq  e_{1,3} \circ \rr_{i,j}$. 
%
We define the continuous homomorphism $\rr_{i,i} \colon G \to U_3(\F_2)$ 
by the assignment $x_i \mapsto A_{11}$, 
and $x_j \mapsto I_3$ for $j \ne i$. 
We define the continuous map $f_2(\chi_i \ot \chi_i) \colon G \to \F_2$  
by setting $f_2(\chi_i \ot \chi_i) \coloneqq  e_{1,3} \circ \rr_{i,i}$. 
Now let $i\in J/2$. 
We will define a continuous map $\eta_{2i-1,2i'+'2i,2i-1} \colon G \to \F_2$ 
such that $\delta \eta_{2i-1,2i'+'2i,2i-1} = \chi_{2i-1} \cup \chi_{2i} + \chi_{2i} \cup \chi_{2i-1}$. 
To do so, we use Dwyer's Theorem \ref{thm:Dwyer} and the following notation. 

\begin{notn}\label{notn:matrix_group_hom}
By slight abuse of notation, we will often write a continuous homomorphism $\rr \colon G \to U_n(\F_2)$ 
as an $(n \times n)$-matrix with $(i,j)$-entry the map $e_{i,j} \circ \rr$. 
For example, for $n=4$, we write 
$\rr = \begin{psmallmatrix}
 1 & a_{1,2} & a_{1,3}  & a_{1,4}  \\
  & 1& a_{2,3}  & a_{2,4}    \\
  & & 1 & a_{3,4}  \\
  & & & 1 
\end{psmallmatrix}$ 
where each $a_{i,j} \colon G \to \F_2$ is a continuous map, 
and $a_{1,2}$, $a_{2,3}$, and $a_{3,4}$ are group homomorphisms.  
\end{notn}


\begin{notn}\label{notation:E_matrices}
Let $n\ge 3$. 
For $1 \le s < t \le n$, 
we let $E_{s,t}$ denote the matrix in $U_n(\F_2)$ with only non-zero entry above the diagonal in row $s$ and column $t$. 
We have $(E_{s,t})^q = I_n$ in $U_n(\F_2)$ for each pair $(s,t)$. 
We note that it will be clear from the context which $n$ we consider and we therefore omit it from notation for the matrices $E_{s,t}$. 
\end{notn}


To construct the cochain $\eta_{2i-1,2i'+'2i,2i-1}$ we use 
Dwyer's Theorem \ref{thm:Dwyer}. 
We will define a continuous homomorphism $\rr_{2i-1,2i'+'2i,2i-1} \colon G \to U_4(\F_2)$ 
which lifts the continuous homomorphism
$\brr_{2i-1,2i'+'2i,2i-1}  \colon G \to \bU_4(\F_2)$ given by 
$\brr_{2i-1,2i'+'2i,2i-1} = 
\begin{psmallmatrix}
 1 & \chi_{2i-1} & \chi_{2i} &  * \\
  & 1& 0 & \chi_{2i}  \\
  & & 1 &  \chi_{2i-1} \\
  & & & 1 
\end{psmallmatrix}$.  
To construct the desired lift, 
consider the matrices 
\begin{align*}
A \coloneqq  
\begin{psmallmatrix} 
1 & 1 & 0 & 0  \\
0 & 1 & 0 & 0  \\
0 & 0 & 1 & 1 \\
0 & 0 & 0 & 1 \\
\end{psmallmatrix}, ~ \text{and} ~ 
B \coloneqq
\begin{psmallmatrix} 
1 & 0 & 1 & 0  \\
0 & 1 & 0 & 1  \\
0 & 0 & 1 & 0 \\
0 & 0 & 0 & 1 \\
\end{psmallmatrix}.
\end{align*}
We have $A^q = I_4$ and $[A,B] = I_4$ in $U_4(\F_2)$. 
Thus, by Lemma \ref{lemma:matrix_relations}, 
we can define $\rr_{2i-1,2i'+'2i,2i-1}$  
by the assignment 
$x_{2i-1} \mapsto A$,   
$x_{2i} \mapsto B$,  
and $x_s \mapsto I_4$ for $s \ne 2i-1, 2i$. 
We define the desired continuous map $G \to \F_2$  
by setting 
\[
f_2(\chi_{2i-1} \ot \chi_{2i} + \chi_{2i} \ot \chi_{2i-1}) \coloneqq  e_{1,4} \circ \rr_{2i-1,2i'+'2i,2i-1}.
\] 
Similarly, for $i\in \Jw/2$, 
we construct a continuous homomorphism $\rr_{1,2'+'2i,2i-1} \colon G \to U_4(\F_2)$ 
which lifts the continuous homomorphism
$\brr_{1,2'+'2i,2i-1}  \colon G \to \bU_4(\F_2)$ given by 
$\brr_{1,2'+'2i,2i-1} = 
\begin{psmallmatrix}
 1 & \chi_{1} & \chi_{2i} &  * \\
  & 1& 0 & \chi_{2}  \\
  & & 1 &  \chi_{2i-1} \\
  & & & 1 
\end{psmallmatrix}$. 
We have $[E_{1,2}, E_{2,4}] = [E_{3,4}, E_{1,3}] = E_{1,4}$
and hence $[E_{1,2}, E_{2,4}] [E_{3,4}, E_{1,3}] = I_4$ in $U_4(\F_2)$. 
Thus, by Lemma \ref{lemma:matrix_relations}, 
we can define  $\rr_{1,2'+'2i,2i-1} \colon G \to U_4(\F_2)$ 
by the assignment 
$x_1 \mapsto E_{1,2}$, 
$x_2 \mapsto E_{2,4}$, 
$x_{2i-1} \mapsto E_{3,4}$, 
$x_{2i} \mapsto E_{1,3}$, 
and $x_s \mapsto I_4$ for $s \ne 1, 2, 2i-1, 2i$. 
We then define the desired continuous map $G \to \F_2$  
by  
\[
f_2(\chi_{1} \ot \chi_{2} + \chi_{2i} \ot \chi_{2i-1}) \coloneqq  e_{1,4} \circ \rr_{1,2'+'2i,2i-1}.
\] 
We now define the map $f_2 \colon R \to \Ch^1$ on all of $R$ by extending it $\F_2$-linearly from $\BBB_2$ to $R$. 
We then deduce from Proposition \ref{prop:kappa3_is_canonical_class}: 

\begin{prop}\label{prop:canonical_class_of_Demushkin_group_construction} 
We define the map $\Psi_3 \colon K_3^3(\Hb) \to \Zh^2$ by  
\begin{align}\label{eq:def_of_Psi3_Demushkin}
\Psi_3(\chi_a, \chi_b, \chi_c) =  \chi_a \cup f_2(\chi_b, \chi_c) + f_2(\chi_a, \chi_b) \cup \chi_c. 
\end{align}
Taking the cohomology class of $\Psi_3$ defines an $\F_2$-linear map $\kappa_3 \colon K^3_3(\Hb) \to H^2$. 
The map $\kappa_3$ is a cocycle in the complex $(\uHom_{\F_2}(K_{\bbb}^{\bbb}(\Hb), \Hb[-1]), \dee)$, 
and the class of $\kappa_3$ in $\HH^{3,-1}(\Hb)$ is the canonical class of $G$. \qed
\end{prop}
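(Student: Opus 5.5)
The statement to prove is Proposition \ref{prop:canonical_class_of_Demushkin_group_construction}: the map $\kappa_3$ built from $\Psi_3$ is a Hochschild cocycle, and its class is the canonical class of $G$. The plan is to check that $f_1=\mathrm{id}$ and the $f_2$ just constructed satisfy the hypotheses of the construction in Section \ref{sec:canonical_class_Koszul}, and then invoke Proposition \ref{prop:kappa3_is_canonical_class}. Since $\Hb$ is Koszul by \cite[Theorem 5.2]{MPQT}, that construction applies. Two hypotheses must be verified: $f_1$ lands in cocycles and induces the identity on $H^1$, and $\delta f_2(a,b)=f_1(a)\cup f_1(b)$ for all $(a,b)\in R$.

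The first condition is immediate, because $\Zh^1=\Hom(G,\F_2)=H^1$ (there are no nonzero $1$-coboundaries with trivial coefficients in degree $0$ to $1$ here). For the second, linearity of $\delta$ and of the cup product reduce the check to the basis $\BBB_2$ of Proposition \ref{prop:type_I_basis_2_for_Demushkin_even_generators}.

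Each basis element is a homomorphism $\rr\colon G\to U_n(\F_2)$ with $n=3$ or $4$, and it is well defined by Lemma \ref{lemma:matrix_relations} using the matrix identities already recorded. We then read off the identity $\delta(e_{1,n}\circ\rr)=\sum_k (e_{1,k}\circ\rr)\cup(e_{k,n}\circ\rr)$, which is the upper-right entry of the homomorphism condition $\rr(gh)=\rr(g)\rr(h)$, as in Theorem \ref{thm:Dwyer} and Example \ref{example:unipotent3_and_cup_product}. The cases are as follows.
\begin{itemize}
\item For $\rr_{i,j}$ and $\rr_{i,i}$ this gives $\delta f_2(\chi_i\ot\chi_j)=\chi_i\cup\chi_j$.
\item For $\rr_{2i-1,2i'+'2i,2i-1}$ we need the entries of $\rr$ on generators. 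Here $e_{1,2}\circ\rr=\chi_{2i-1}$, $e_{1,3}\circ\rr=\chi_{2i}$, $e_{2,4}\circ\rr=\chi_{2i}$, $e_{3,4}\circ\rr=\chi_{2i-1}$ and $e_{2,3}\circ\rr=0$. Each entry is a homomorphism because the corresponding $(2\times2)$-block subquotients are abelian. The sum therefore becomes $\chi_{2i-1}\cup\chi_{2i}+\chi_{2i}\cup\chi_{2i-1}$.
\item For $\rr_{1,2'+'2i,2i-1}$ the same computation gives $\chi_1\cup\chi_2+\chi_{2i}\cup\chi_{2i-1}$.
\end{itemize}

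The only point needing care, and the expected main obstacle, is confirming that these entries of $\rr$ are genuinely the stated homomorphisms on all of $G$ and not merely on generators. One must check that $e_{2,3}\circ\rr$ vanishes identically and that the $(1,3)$ and $(2,4)$ entries are additive. The plan for this is to use that the images of the generators lie in the subgroup with $(2,3)$-entry zero, whose projections to these entries are group homomorphisms. Once this is done, $\Psi_3$ takes values in $\Zh^2$ by the general computation of Section \ref{sec:canonical_class_Koszul}. The map $\kappa_3$ is a cocycle since $H^3=0$, and Proposition \ref{prop:kappa3_is_canonical_class} identifies its class with the canonical class, via the identification of $\HH^{3,-1}(\Hb)$ through $K^3_3(\Hb)\into(H^1)^{\ot3}$.
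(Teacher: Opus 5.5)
Your proposal is correct and follows the paper's route. The paper states this proposition with an immediate \qed, deducing it from Proposition~\ref{prop:kappa3_is_canonical_class} (with $\Hb$ Koszul and $H^3=0$) and leaving implicit the check that $f_1=\mathrm{id}$ and the lifts $\rr$ give $\delta f_2(a,b)=a\cup b$ on $\BBB_2$; you carry out that check correctly, including the point that $e_{2,3}\circ\rr\equiv 0$ makes the $(1,3)$- and $(2,4)$-entries homomorphisms on all of $G$.
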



\subsection{Two compatibility results}\label{subsec:compatibility}

For the proof of Theorem \ref{thm:torsion_free_pro-2_Demushkin_groups_are_A_3-formal} 
in Section \ref{sec:computing_kappa3_torsionfree} we need the compatibility statements of 
Propositions \ref{prop:f2_comparison_3to4} and \ref{prop:f2_comparison_3to5} we prove below. 
To formulate and prove these results, we need the following lemmas in linear algebra. 

\begin{lemma}\label{lemma:matrix_subgroup_isos_3to4_left} 
Let $\Tlt$ be the subset of $U_4(\F_2)$ consisting of matrices of the form 
$\begin{psmallmatrix} 
1 & x & v & z  \\
0 & 1 & w & y  \\
0 &  0 & 1  & 0 \\
0 & 0 & 0 & 1
\end{psmallmatrix}$  
and let $\Wlt$ denote the subset of matrices of the form 
$\begin{psmallmatrix} 
1 & 0 & v & 0  \\
0 & 1 & w & 0  \\
0 &  0 & 1  & 0 \\
0 & 0 & 0 & 1
\end{psmallmatrix}$. 
Then $\Tlt$ 
is a subgroup of $U_4(\F_2)$, 
and $\Wlt$ is a normal subgroup of $\Tlt$. 
Moreover, 
the map $\taul \colon U_3(\F_2) \to \Tlt/\Wlt$ 
defined by 
$\taul \colon \begin{psmallmatrix} 
1 & x &  z  \\
0 & 1 & y  \\
0 &  0 & 1 
\end{psmallmatrix}
\mapsto 
\begin{psmallmatrix} 
1 & x & * & z  \\
0 & 1 & * & y  \\
0 &  0 & 1  & 0 \\
0 & 0 & 0 & 1
\end{psmallmatrix}$  
is an isomorphism of groups. 
\end{lemma}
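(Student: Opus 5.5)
The plan is a direct matrix computation in $U_4(\F_2)$, organised around block decompositions. Order the basis as $(1,2,3,4)$. A matrix in $\Tlt$ has the form
\[
M(x,y,z,v,w) = \begin{psmallmatrix} 1 & x & v & z \\ 0 & 1 & w & y \\ 0 & 0 & 1 & 0 \\ 0 & 0 & 0 & 1 \end{psmallmatrix}.
\]
This is exactly the set of matrices in $U_4(\F_2)$ whose third and fourth rows are those of $I_4$, and whose $(3,4)$-entry in particular vanishes. I would check closure by multiplying two such matrices. The last two rows of a product $MN$ are rows $3,4$ of $N$, since rows $3,4$ of $M$ are standard basis vectors. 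So the product again has rows $3,4$ equal to those of $I_4$ and lies in $\Tlt$.

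For inverses, I would use that $U_4(\F_2)$ is finite, so a nonempty subset closed under multiplication is a subgroup. I expect the explicit product formula to be
\[
M(x,y,z,v,w)\,M(x',y',z',v',w') = M\bigl(x+x',\; y+y',\; z+z'+xy',\; v+v'+xw',\; w+w'\bigr).
\]
This formula does all the remaining work.

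Normality of $\Wlt$ is next. $\Wlt$ consists of the matrices $M(0,0,0,v,w)$. It is closed under products by the formula above, so it is a subgroup. For normality I would note that the map
\[
\pi \colon \Tlt \to U_3(\F_2), \qquad M(x,y,z,v,w) \mapsto \begin{psmallmatrix} 1 & x & z \\ 0 & 1 & y \\ 0 & 0 & 1 \end{psmallmatrix},
\]
is a homomorphism. This amounts to deleting the third row and column, and it is compatible with products precisely because of the formula: the $(1,4)$-entry of the product is $z+z'+xy'$, which matches the $(1,3)$-entry of the $3\times 3$ product. The kernel of $\pi$ is $\{x=y=z=0\} = \Wlt$, so $\Wlt$ is normal.

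Finally, $\pi$ is surjective, because $x,y,z$ can be chosen freely. Hence $\pi$ induces an isomorphism $\Tlt/\Wlt \cong U_3(\F_2)$, and $\taul$ is by construction its inverse: the starred entries $v,w$ are exactly the coordinates of $\Wlt$, so the coset is well defined regardless of how the stars are filled. So $\taul$ is a well-defined group isomorphism.

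The only step needing care is the $(1,4)$-entry in the product formula, namely that the correction term is $xy'$ with no contribution from $v$ or $w'$. This holds because the $(3,4)$-entry vanishes, so the cross term $v\cdot 0$ drops out. Everything else is bookkeeping.
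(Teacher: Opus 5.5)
Your proof is correct. It rests on the same product formula as the paper's proof, but you organise the rest of the argument differently.

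The paper proceeds by explicit computation:
\begin{itemize}
\item it derives closure of $\Tlt$ from that formula;
\item it writes down the inverse $M(x,y,z,v,w)^{-1}=M(x,y,z+xy,v+xw,w)$;
\item it proves normality of $\Wlt$ by conjugating $M(0,0,0,v',w')$, which gives $M(0,0,0,v'+xw',w')$;
\item it concludes that $\taul$ is an isomorphism from $\#U_3(\F_2)=\#(\Tlt/\Wlt)$ together with the fact that $\taul$ is multiplicative.
\end{itemize}

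You instead introduce the row-and-column-deletion map $\pi\colon \Tlt\to U_3(\F_2)$ and observe that it is a surjective homomorphism with kernel $\Wlt$. The first isomorphism theorem then gives everything at once:
\begin{itemize}
\item normality of $\Wlt$ needs no conjugation computation;
\item well-definedness of $\taul$ (independence of the starred entries) holds because the cosets of $\ker\pi$ are exactly the fibres of $\pi$;
\item bijectivity holds because $\taul$ is the inverse of the induced isomorphism $\Tlt/\Wlt\to U_3(\F_2)$.
\end{itemize}

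Your route is shorter and makes explicit two points the paper leaves tacit: that $\taul$ is well defined, and that it is injective, which the cardinality argument also needs. The paper's route, in exchange, is fully explicit, and it is this explicit computation that the paper later cites as the template for the analogous lemmas.
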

\begin{proof}
The following computation shows that $\Tlt$ is closed under the group operation:  
\begin{align}\label{eq:computation_Tlt_is_closed}
\begin{psmallmatrix} 
1 & x_1 & v_1 & z_1  \\
0 & 1 & w_1 & y_1  \\
0 &  0 & 1  & 0 \\
0 & 0 & 0 & 1
\end{psmallmatrix}
\cdot 
\begin{psmallmatrix} 
1 & x_2 & v_2 & z_2  \\
0 & 1 & w_2 & y_2  \\
0 &  0 & 1  & 0 \\
0 & 0 & 0 & 1
\end{psmallmatrix}
=
\begin{psmallmatrix} 
1 & x_1+x_2 & v_1 + v_2 + x_1w_2 & z_1+z_2+x_1y_2  \\
0 & 1 & w_1 + w_2 & y_1+y_2  \\
0 &  0 & 1  & 0 \\
0 & 0 & 0 & 1
\end{psmallmatrix}.
\end{align}
Since $U_4(\F_2)$ is a finite group, this implies that $\Tlt$ is a subgroup. 
We note  that the inverse of 
$\begin{psmallmatrix} 
1 & x & v & z  \\
0 & 1 & w & y  \\
0 &  0 & 1  & 0 \\
0 & 0 & 0 & 1
\end{psmallmatrix}$ in $\Tlt$ 
is given by 
$\begin{psmallmatrix} 
1 & x & v+xw & z+xy  \\
0 & 1 & w & y  \\
0 &  0 & 1  & 0 \\
0 & 0 & 0 & 1
\end{psmallmatrix}$. 
We also see from \eqref{eq:computation_Tlt_is_closed} that $\Wlt$ is a subgroup of $\Tlt$. 
To show that $\Wlt$ is a normal subgroup of $\Tlt$, we observe 
\begin{align*}
\begin{psmallmatrix} 
1 & x & v & z  \\
0 & 1 & w & y  \\
0 &  0 & 1  & 0 \\
0 & 0 & 0 & 1
\end{psmallmatrix}
\cdot 
\begin{psmallmatrix} 
1 & 0 & v' & 0  \\
0 & 1 & w' & 0  \\
0 &  0 & 1  & 0 \\
0 & 0 & 0 & 1
\end{psmallmatrix}
\cdot 
\begin{psmallmatrix} 
1 & x & v+xw & z+xy  \\
0 & 1 & w & y  \\
0 &  0 & 1  & 0 \\
0 & 0 & 0 & 1
\end{psmallmatrix} 
& = 
\begin{psmallmatrix} 
1 & x & v + v' + xw' & z  \\
0 & 1 & w + w' & y \\
0 &  0 & 1  & 0 \\
0 & 0 & 0 & 1
\end{psmallmatrix}
\cdot
\begin{psmallmatrix} 
1 & x & v+xw & z+xy  \\
0 & 1 & w & y  \\
0 &  0 & 1  & 0 \\
0 & 0 & 0 & 1
\end{psmallmatrix} \\ 
 & =  
 \begin{psmallmatrix} 
1 & 0 & v' + xw' & 0  \\
0 & 1 & w' & 0 \\
0 &  0 & 1  & 0 \\
0 & 0 & 0 & 1
\end{psmallmatrix} \in \Wlt.
\end{align*}
Since $\#U_3(\F_2) = \#(\Tlt/\Wlt)$, to show that $\taul$ is an isomorphism, it suffices to show that it is a homomorphism which 
follows from \eqref{eq:computation_Tlt_is_closed} and 
\begin{align}\label{eq:computation_U3_is_like_Tlt}
\begin{psmallmatrix} 
1 & x_1 &  z_1  \\
0 & 1 & y_1 \\
0 &  0 & 1 
\end{psmallmatrix} 
\cdot 
 \begin{psmallmatrix} 
1 & x_2 &  z_2  \\
0 & 1 & y_2  \\
0 &  0 & 1 
\end{psmallmatrix} 
= 
 \begin{psmallmatrix} 
1 & x_1+x_2 &  z_1+z_2 +x_1y_2  \\
0 & 1 & y_1+y_2  \\
0 &  0 & 1 
\end{psmallmatrix}
\end{align}
in $U_3(\F_2)$. 
%
\end{proof}


\begin{lemma}\label{lemma:matrix_subgroup_isos_3to4_right} 
Let $\Trt$ be the subset of $U_4(\F_2)$ consisting of matrices of the form 
$\begin{psmallmatrix} 
1 & 0 & x & z  \\
0 & 1 & w & v  \\
0 &  0 & 1  & y \\
0 & 0 & 0 & 1
\end{psmallmatrix}$  
and let $\Wrt$ denote the subset of matrices of the form 
$\begin{psmallmatrix} 
1 & 0 & 0 & 0  \\
0 & 1 & w & v  \\
0 &  0 & 1  & 0 \\
0 & 0 & 0 & 1
\end{psmallmatrix}$. 
Then $\Trt$ 
is a subgroup of $U_4(\F_2)$, 
and $\Wrt$ is a normal subgroup of $\Trt$. 
Moreover, 
the map $\taur \colon U_3(\F_2) \to \Trt/\Wrt$ 
defined by 
$\taur \colon \begin{psmallmatrix} 
1 & x &  z  \\
0 & 1 & y  \\
0 &  0 & 1 
\end{psmallmatrix}
\mapsto 
\begin{psmallmatrix} 
1 & 0 & x & z  \\
0 & 1 & * & *  \\
0 &  0 & 1  & y \\
0 & 0 & 0 & 1
\end{psmallmatrix}$  
is an isomorphism of groups. 
\end{lemma}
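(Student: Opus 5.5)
We follow the proof of Lemma \ref{lemma:matrix_subgroup_isos_3to4_left}, mirrored. First we show that $\Trt$ is closed under multiplication. Multiplying two matrices of the given form, with entries $(x_k,z_k,w_k,v_k,y_k)$ for $k=1,2$, gives
\begin{align*}
\begin{psmallmatrix}
1 & 0 & x_1+x_2 & z_1+z_2+x_1y_2 \\
0 & 1 & w_1+w_2 & v_1+v_2+w_1y_2 \\
0 & 0 & 1 & y_1+y_2 \\
0 & 0 & 0 & 1
\end{psmallmatrix}.
\end{align*}
The $(1,2)$-entry stays $0$ because the only contribution to it is $0\cdot 1 + 1\cdot 0$. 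For the $(1,4)$-entry, the only cross term is $x_1y_2$, since the $(1,2)$-entry of the first factor vanishes. Since $U_4(\F_2)$ is finite, closure under multiplication already shows that $\Trt$ is a subgroup. Setting $x_k=z_k=y_k=0$ in the same formula shows that $\Wrt$ is closed, hence is a subgroup. For later use we also record the inverse in $\Trt$: the inverse of the matrix with entries $(x,z,w,v,y)$ is the matrix with entries $(x,\,z+xy,\,w,\,v+wy,\,y)$.

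For normality, we conjugate an element of $\Wrt$ with entries $(w',v')$ by an arbitrary element of $\Trt$ and use the product formula twice. The result has entries $x=0$, $z=0$ and $y=0$, with $(2,3)$-entry $w'$ and $(2,4)$-entry $v'$ plus a correction term of the form $w'y$. This lies in $\Wrt$. A cleaner way to see normality is to note that $\Wrt$ is the kernel of the map $\Trt \to U_3(\F_2)$ given by $(x,z,w,v,y)\mapsto \begin{psmallmatrix} 1&x&z\\0&1&y\\0&0&1\end{psmallmatrix}$. The product formula above, compared with \eqref{eq:computation_U3_is_like_Tlt}, shows that this map is a homomorphism: the $(1,3)$-entry of the product in $U_3(\F_2)$ is $z_1+z_2+x_1y_2$, which matches the $(1,4)$-entry computed above. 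This map is surjective, with kernel exactly $\Wrt$.

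Consequently $\Trt/\Wrt \cong U_3(\F_2)$, and $\taur$ is precisely the inverse of the induced isomorphism. Explicitly, $\taur$ is well defined because the $*$-entries are exactly the $\Wrt$-coordinates, and it is a homomorphism by the matching of the $(1,4)$- and $(1,3)$-entries. Bijectivity can also be checked directly from the equality $\#U_3(\F_2)=8=2^5/2^2=\#(\Trt/\Wrt)$, as in the left-hand case. The only point that needs care is checking the cross terms in the product, in particular that the $(1,2)$-entry remains zero. This is routine.
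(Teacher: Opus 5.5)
Your proposal is correct and takes essentially the same route as the paper, which just mirrors the direct computation of Lemma \ref{lemma:matrix_subgroup_isos_3to4_left}: product formula, finiteness for the subgroup property, conjugation for normality, and the homomorphism check plus a cardinality count for $\taur$. Your product formula, inverse and conjugation result (with $(2,4)$-entry $v'+w'y$) all check out, and your observation that $\Wrt$ is the kernel of the surjective homomorphism $\Trt \to U_3(\F_2)$ reading off the $(1,3)$-, $(1,4)$- and $(3,4)$-entries neatly gives normality, well-definedness of $\taur$ and bijectivity at once.
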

\begin{proof}
This follows from an analogous but symmetric computation as in the proof of Lemma \ref{lemma:matrix_subgroup_isos_3to4_left}. 
\end{proof}


\begin{proposition}\label{prop:f2_comparison_3to4}
Assume $\chi_i \ot \chi_j \in R$.  
Let $f_2(\chi_i \ot \chi_j) \colon G \to \F_2$ be defined as in Section \ref{subsec:construction_kappa}. 
Let $\rr_4^l \colon G \to \Tlt$ be a continuous group homomorphism 
satisfying $\rr_4^l(x_i) = E_{1,2}$ and $\rr_4^l(x_j) = E_{2,4}$, and $\rr_4^l(x_k) = I_4$ for $k\ne i,j$. 
Let  $\rr_4^r \colon G \to \Trt$ be a continuous group homomorphism 
satisfying $\rr_4^r(x_i) = E_{1,3}$ and $\rr_4^r(x_j) = E_{3,4}$, and $\rr_4^r(x_k) = I_4$ for $k\ne i,j$. 
Then 
\begin{align*}
e_{1,4} \circ \rr_4^l = f_2(\chi_i \ot \chi_j) = e_{1,4} \circ \rr_4^r 
\end{align*} 
as continuous maps $G \to \F_2$. 
\end{proposition}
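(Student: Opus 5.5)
The plan is to compose $\rr_4^l$ with the quotient map $\Tlt \to \Tlt/\Wlt$ and then with the inverse of the isomorphism $\taul$ of Lemma \ref{lemma:matrix_subgroup_isos_3to4_left}. This gives a continuous homomorphism
\[
\rr' \coloneqq (\taul)^{-1}\circ \pi \circ \rr_4^l \colon G \to U_3(\F_2).
\]
By the definition of $\taul$, the entries $(1,2)$, $(2,3)$, $(1,3)$ of $\rr'(g)$ are the entries $(1,2)$, $(2,4)$, $(1,4)$ of $\rr_4^l(g)$. In other words, $e_{1,2}\circ\rr' = e_{1,2}\circ \rr_4^l$, $e_{2,3}\circ \rr' = e_{2,4}\circ\rr_4^l$, and $e_{1,3}\circ\rr' = e_{1,4}\circ \rr_4^l$.

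Next I evaluate $\rr'$ on the generators. The images of $E_{1,2}$ and $E_{2,4}$ (as elements of $\Tlt$) under $(\taul)^{-1}\circ\pi$ are $A_{10}$ and $A_{01}$, and $I_4$ maps to $I_3$. So $\rr'(x_i)=A_{10}$, $\rr'(x_j)=A_{01}$, and $\rr'(x_k)=I_3$ for $k\ne i,j$. Since a continuous homomorphism on $G$ is determined by its values on the topological generators $x_1,\dots,x_d$, $\rr'$ coincides with the homomorphism $\rr_{i,j}$ used in Section \ref{subsec:construction_kappa}. Hence
\[
e_{1,4}\circ \rr_4^l = e_{1,3}\circ \rr_{i,j} = f_2(\chi_i\ot\chi_j).
\]
The right-hand equality is proved in the same way, using $\taur$ from Lemma \ref{lemma:matrix_subgroup_isos_3to4_right}. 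There the entries $(1,3)$, $(3,4)$, $(1,4)$ of $\Trt$ correspond to $(1,2)$, $(2,3)$, $(1,3)$ of $U_3(\F_2)$, and $E_{1,3}$, $E_{3,4}$ map to $A_{10}$, $A_{01}$.

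The case $i=j$ needs a separate reading of the hypothesis. There "$\rr_4^l(x_i)=E_{1,2}$ and $=E_{2,4}$" can only mean $\rr_4^l(x_i)=E_{1,2}E_{2,4}$, or some element of $\Tlt$ with entries $(1,2)$ and $(2,4)$ equal to $1$ and entry $(1,4)$ equal to $0$. Its class in $\Tlt/\Wlt$ is $\taul(A_{11})$, so $\rr'=\rr_{i,i}$. I would note this interpretation explicitly and check that entry $(1,4)$ is $0$ for the chosen matrix.

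The main point to verify carefully is that $\taul$ really identifies the $(1,4)$-entry of $\Tlt$ with the $(1,3)$-entry of $U_3(\F_2)$ compatibly with multiplication. This holds because the $(1,4)$ product formula in \eqref{eq:computation_Tlt_is_closed} involves only $x,y,z$ and not $v,w$, matching \eqref{eq:computation_U3_is_like_Tlt}. Beyond that, everything is bookkeeping on generators.
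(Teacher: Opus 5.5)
Your argument is correct and is the paper's own proof, which just invokes the isomorphisms $\taul$ and $\taur$ of Lemmas \ref{lemma:matrix_subgroup_isos_3to4_left} and \ref{lemma:matrix_subgroup_isos_3to4_right}; you additionally spell out that the $(1,2)$, $(2,4)$, $(1,4)$ entries are constant on $\Wlt$-cosets, and that a continuous homomorphism on $G$ is determined by its values on $x_1,\ldots,x_d$. One correction to your aside on $i=j$: $E_{1,2}E_{2,4}$ has $(1,4)$-entry equal to $1$, so its class in $\Tlt/\Wlt$ is $\taul(A_{11}^{-1})$ and it would give $f_2(\chi_i\ot\chi_i)+\chi_i$; the matrix you want is $E_{2,4}E_{1,2}$ (and $E_{3,4}E_{1,3}$ on the right), which your $(1,4)=0$ check would catch.
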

\begin{proof}
The assertion for $\rr_4^l$ follows from Lemma \ref{lemma:matrix_subgroup_isos_3to4_left}, 
and the assertion for $\rr_4^r$ follows from Lemma \ref{lemma:matrix_subgroup_isos_3to4_right}. 
\end{proof}


We also need to consider the case of matrices in $U_5(\F_2)$. 

\begin{lemma}\label{lemma:matrix_subgroup_isos_3to5_left} 
Let $\TTlt$ be the subset of $U_5(\F_2)$ consisting of matrices of the form 
$\begin{psmallmatrix} 
1 & t & x & u &  z  \\
0 & 1 & 0 & v & 0   \\
0 &  0 & 1 & w & y \\
0 &  0 & 0 & 1 & 0 \\
0 & 0 & 0 & 0 & 1
\end{psmallmatrix}$  
and let $\WWlt$ denote the subset of matrices of the form 
$\begin{psmallmatrix} 
1 & t & 0 & u &  0 \\
0 & 1 & 0 & v & 0   \\
0 &  0 & 1 & w & 0 \\
0 &  0 & 0 & 1 & 0 \\
0 & 0 & 0 & 0 & 1
\end{psmallmatrix}$. 
Then $\TTlt$ 
is a subgroup of $U_5(\F_2)$, 
and $\WWlt$ is a normal subgroup of $\TTlt$. 
Moreover, 
the map $\taul_5 \colon U_3(\F_2) \to \TTlt/\WWlt$ 
defined by 
$\tau \colon \begin{psmallmatrix} 
1 & x &  z  \\
0 & 1 & y  \\
0 &  0 & 1 
\end{psmallmatrix}
\mapsto 
\begin{psmallmatrix} 
1 & * & x & * &  z  \\
0 & 1 & 0 & * & 0   \\
0 &  0 & 1 & * & y \\
0 &  0 & 0 & 1 & 0 \\
0 & 0 & 0 & 0 & 1
\end{psmallmatrix}$   
is an isomorphism of groups. 
\end{lemma}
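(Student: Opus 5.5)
I will follow the pattern of the proof of Lemma \ref{lemma:matrix_subgroup_isos_3to4_left}: an explicit matrix product, then normality by conjugation, then a cardinality count plus the homomorphism property.

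\emph{Closure.} Write a general element of $\TTlt$ as $M=I_5+tE_{1,2}+xE_{1,3}+uE_{1,4}+zE_{1,5}+vE_{2,4}+wE_{3,4}+yE_{3,5}$. The only nonzero products $E_{a,b}E_{c,e}$ among the entries $(1,2),(1,3),(1,4),(1,5),(2,4),(3,4),(3,5)$ are those with $b=c$:
\begin{align*}
E_{1,2}E_{2,4}=E_{1,4},\quad E_{1,3}E_{3,4}=E_{1,4},\quad E_{1,3}E_{3,5}=E_{1,5}.
\end{align*}
So $M_1M_2$ has entries
\begin{align*}
(t_1+t_2,\; x_1+x_2,\; u_1+u_2+t_1v_2+x_1w_2,\; z_1+z_2+x_1y_2,\; v_1+v_2,\; w_1+w_2,\; y_1+y_2),
\end{align*}
which is again of the required shape. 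Since $U_5(\F_2)$ is finite, $\TTlt$ is a subgroup. Setting $x=z=y=0$ in this formula shows that $\WWlt$ is closed under multiplication, hence is also a subgroup.

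\emph{Normality.} There are two ways to do this.
\begin{itemize}
\item[(a)] Observe that the map $\pi\colon\TTlt\to U_3(\F_2)$ recording the entries $(x,z,y)$ in positions $(1,3),(1,5),(3,5)$ is a homomorphism. This follows by comparing the formula above with \eqref{eq:computation_U3_is_like_Tlt}, since the $(x,z,y)$-components multiply exactly as in $U_3(\F_2)$.
\item[(b)] Conjugate directly, as in the $4\times 4$ case, after writing down the inverse explicitly.
\end{itemize}
I will use (a). Its kernel is the set of elements with $x=z=y=0$, which is precisely $\WWlt$, so $\WWlt$ is normal in $\TTlt$.

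\emph{The isomorphism.} Since $\pi$ is surjective, it induces an isomorphism $\TTlt/\WWlt\cong U_3(\F_2)$. Its inverse is exactly $\taul_5$, so $\taul_5$ is well defined and an isomorphism. Alternatively, one can check that $\taul_5$ is a homomorphism via \eqref{eq:computation_U3_is_like_Tlt} and use
\begin{align*}
\#\TTlt/\#\WWlt=2^7/2^4=8=\#U_3(\F_2).
\end{align*}

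The only step requiring care is verifying the list of nonzero products $E_{a,b}E_{c,e}$. In particular, one must confirm that no product lands outside the allowed positions: for instance, $E_{2,4}$ and $E_{3,4}$ cannot be followed by anything, because there are no entries in row $4$. Everything else is routine bookkeeping.
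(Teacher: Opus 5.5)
Your proposal is correct and follows essentially the paper's route: the paper simply refers to the direct computation of Lemma \ref{lemma:matrix_subgroup_isos_3to4_left}, and your multiplication formula is exactly that computation. Your one departure is identifying $\WWlt$ as the kernel of the projection onto the entries in positions $(1,3),(1,5),(3,5)$; this gives normality and the isomorphism at once, so you avoid the explicit inverse-and-conjugation step done in the $4\times 4$ case.
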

\begin{proof}
This follows from a direct computation as in the proof of Lemma \ref{lemma:matrix_subgroup_isos_3to4_left}. 
\end{proof}


\begin{lemma}\label{lemma:matrix_subgroup_isos_3to5_right} 
Let $\TTrt$ be the subset of $U_5(\F_2)$ consisting of matrices of the form 
$\begin{psmallmatrix} 
1 & 0 & x & 0 &  z  \\
0 & 1 & w & v & u   \\
0 &  0 & 1 & 0 & y \\
0 &  0 & 0 & 1 & t \\
0 & 0 & 0 & 0 & 1
\end{psmallmatrix}$  
and let $\WWrt$ denote the subset of matrices of the form 
$\begin{psmallmatrix} 
1 & 0 & 0 & 0 &  0  \\
0 & 1 & w & v & u   \\
0 &  0 & 1 & 0 & 0 \\
0 &  0 & 0 & 1 & t \\
0 & 0 & 0 & 0 & 1
\end{psmallmatrix}$. 
Then $\TTrt$ 
is a subgroup of $U_5(\F_2)$, 
and $\WWrt$ is a normal subgroup of $\TTrt$. 
Moreover, 
the map $\taur_5 \colon U_3(\F_2) \to \TTrt/\WWrt$ 
defined by 
$\tau \colon \begin{psmallmatrix} 
1 & x &  z  \\
0 & 1 & y  \\
0 &  0 & 1 
\end{psmallmatrix}
\mapsto 
\begin{psmallmatrix} 
1 & 0 & x & 0 &  z  \\
0 & 1 & * & * & *   \\
0 &  0 & 1 & 0 & y \\
0 &  0 & 0 & 1 & * \\
0 & 0 & 0 & 0 & 1
\end{psmallmatrix}$   
is an isomorphism of groups. 
\end{lemma}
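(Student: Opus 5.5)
The plan is to argue exactly as in the proof of Lemma \ref{lemma:matrix_subgroup_isos_3to4_left}, but organised around the projection onto rows and columns $1,3,5$. I will check that this projection is a homomorphism and that its kernel is $\WWrt$. That gives normality and the isomorphism in one stroke.

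First, I multiply two general elements of $\TTrt$, one with entries $(x_1,z_1,w_1,v_1,u_1,y_1,t_1)$ and one with entries $(x_2,z_2,w_2,v_2,u_2,y_2,t_2)$, in the positions $(1,3),(1,5),(2,3),(2,4),(2,5),(3,5),(4,5)$ respectively. A direct computation of the entries $\sum_k a_{ik}b_{kj}$ should give the following.
\begin{itemize}
\item The entries $(1,2)$, $(1,4)$ and $(3,4)$ stay $0$. For instance, the $(1,4)$ entry is $a_{12}b_{24}+a_{13}b_{34}=0$, because $a_{12}=0$ and $b_{34}=0$.
\item The entries in positions $(1,3)$, $(2,3)$, $(2,4)$, $(3,5)$ and $(4,5)$ simply add.
\item The $(1,5)$ entry is $z_1+z_2+x_1y_2$, with no contribution from $t_2$ since $a_{14}=0$.
\item The $(2,5)$ entry is $u_1+u_2+w_1y_2+v_1t_2$.
\end{itemize}
Thus $\TTrt$ is closed under multiplication. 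Since $U_5(\F_2)$ is finite, $\TTrt$ is a subgroup.

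Next, I define $\pi\colon \TTrt\to U_3(\F_2)$ by keeping only the entries $x$, $z$, $y$ in positions $(1,3)$, $(1,5)$, $(3,5)$, that is, by restricting to rows and columns $1,3,5$. Comparing the product formulas above with \eqref{eq:computation_U3_is_like_Tlt} shows that $\pi$ is a group homomorphism. This works precisely because the $(1,5)$ entry picks up only the correction $x_1y_2$ and nothing from $v,w,u,t$.

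The map $\pi$ is clearly surjective, and its kernel consists of the matrices with $x=y=z=0$, which is exactly $\WWrt$. Hence $\WWrt$ is a normal subgroup, being a kernel, and $\pi$ induces an isomorphism $\TTrt/\WWrt\cong U_3(\F_2)$. The map $\taur_5$ is by construction the inverse of this induced isomorphism: the starred entries are irrelevant modulo $\WWrt$, so $\taur_5$ is well defined, and it is an isomorphism. As a sanity check, the orders match, since $\#\TTrt/\#\WWrt=2^7/2^4=8=\#U_3(\F_2)$.

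The only step needing care is the $(1,5)$ entry. One must make sure that the zero entries in positions $(1,2)$, $(1,4)$ and $(3,4)$ kill every cross term except $x_1y_2$. Everything else is routine bookkeeping, symmetric to Lemma \ref{lemma:matrix_subgroup_isos_3to5_left}.
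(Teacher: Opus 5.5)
Your proof is correct and is essentially the paper's argument. The paper simply points to the direct product computation of Lemma \ref{lemma:matrix_subgroup_isos_3to4_left}, and your product formulas, in particular the $(1,5)$ entry $z_1+z_2+x_1y_2$, are exactly that computation. The only difference is organisational: you package the computation as the projection onto rows and columns $1,3,5$, a surjective homomorphism with kernel $\WWrt$. This gives normality and the isomorphism at once, instead of the explicit inverse, conjugation and order-count steps used in the model proof.
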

\begin{proof}
This follows from a direct computation as in the proof of Lemma \ref{lemma:matrix_subgroup_isos_3to4_left}. 
\end{proof}


\begin{proposition}\label{prop:f2_comparison_3to5}
Assume $\chi_i \ot \chi_j \in R$.  
Let $f_2(\chi_i \ot \chi_j) \colon G \to \F_2$ be defined as in Section \ref{subsec:construction_kappa}. 
Let $\rr_5^l \colon G \to \TTlt$ be a continuous group homomorphism 
satisfying $\rr_5^l(x_i) = E_{1,3}$ and $\rr_5^l(x_j) = E_{3,5}$, and $\rr_5^l(x_k) = I_5$ for $k\ne i,j$. 
Let  $\rr_5^r \colon G \to \TTrt$ be a continuous group homomorphism 
satisfying $\rr_5^r(x_i) = E_{1,3}$ and $\rr_5^r(x_j) = E_{3,5}$, and $\rr_5^r(x_k) = I_5$ for $k\ne i,j$. 
Then 
\begin{align*}
e_{1,5} \circ \rr_5^l = f_2(\chi_i \ot \chi_j) = e_{1,5} \circ \rr_5^r 
\end{align*} 
as continuous maps $G \to \F_2$. 
\end{proposition}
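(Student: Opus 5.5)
The plan is to mirror the proof of Proposition \ref{prop:f2_comparison_3to4}: compose $\rr_5^l$ and $\rr_5^r$ with the quotient maps $\TTlt \to \TTlt/\WWlt$ and $\TTrt \to \TTrt/\WWrt$. Then use the isomorphisms $\taul_5$ and $\taur_5$ of Lemmas \ref{lemma:matrix_subgroup_isos_3to5_left} and \ref{lemma:matrix_subgroup_isos_3to5_right} to identify the resulting homomorphisms $G \to U_3(\F_2)$ with $\rr_{i,j}$ (or $\rr_{i,i}$ when $i=j$). Since $f_2(\chi_i\ot\chi_j)$ was defined as $e_{1,3}\circ\rr_{i,j}$, the claim reduces to the fact that these isomorphisms carry the $(1,3)$-entry of a $3\times 3$ matrix to the $(1,5)$-entry of a $5\times 5$ matrix.

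First, I check that $e_{1,5}$ descends to the quotient, i.e.\ it is constant on $\WWlt$-cosets. The definition of $\taul_5$ places $z$ at position $(1,5)$ and leaves only the starred entries undetermined. So I must verify that multiplying an element of $\TTlt$ on either side by an element of $\WWlt$ does not change the $(1,5)$-entry. Writing out the product as in \eqref{eq:computation_Tlt_is_closed}, the $(1,5)$-entry of $MW$ is $z + t\cdot 0 + x\cdot 0 + u\cdot 0 = z$, since $W$ has zeros in column $5$ off the diagonal. The $(1,5)$-entry of $WM$ is $z$ plus $t$ times the $(2,5)$-entry of $M$, which is $0$. The same check works for $\TTrt$ and $\WWrt$: there $W$ has first row trivial, and the $(1,5)$-entry of $MW$ picks up $x\cdot 0 + 0\cdot t$. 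Consequently $e_{1,5}\circ \rr_5^l = e_{1,3}\circ (\taul_5)^{-1}\circ \pi\circ \rr_5^l$, and likewise on the right.

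Next, I identify $(\taul_5)^{-1}\circ\pi\circ\rr_5^l$ with $\rr_{i,j}$. Both are continuous homomorphisms $G\to U_3(\F_2)$, so it suffices to compare them on the generators $x_k$, because a homomorphism from the pro-$2$ group $G$ to a finite group is determined by the images of the generators. For $i\neq j$, we have $E_{1,3}\in\TTlt$, which maps under $(\taul_5)^{-1}$ to the matrix with $x=1$, i.e.\ $A_{10}$, and $E_{3,5}$ maps to $A_{01}$. The identity is sent to $I_3$. This is exactly the assignment defining $\rr_{i,j}$. The right-hand case is identical using $\taur_5$.

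The only delicate point is the case $i=j$, where $\chi_i\ot\chi_i\in R$ and $f_2$ was defined via $A_{11}$. In that case the hypothesis should be read as $\rr_5^l(x_i)=E_{1,3}E_{3,5}$ (respectively a matrix with both entries $(1,3)$ and $(3,5)$ equal to $1$), which maps to $A_{11}$. I expect this bookkeeping, together with the coset-invariance of $e_{1,5}$, to be the main thing to get right. Both are short direct matrix computations, analogous to those in the proof of Lemma \ref{lemma:matrix_subgroup_isos_3to4_left}.
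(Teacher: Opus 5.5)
For $i\neq j$ your argument is correct and is the paper's proof: the paper simply invokes Lemmas \ref{lemma:matrix_subgroup_isos_3to5_left} and \ref{lemma:matrix_subgroup_isos_3to5_right}, and you have written out the two checks this needs. These are that $e_{1,5}$ is constant on cosets of $\WWlt$ (resp.\ of the corresponding right-hand subgroup), and that the induced homomorphism $G\to U_3(\F_2)$ agrees with $\rr_{i,j}$ on the generators.

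Your paragraph on the case $i=j$ contains an error. As stated, that case is vacuous: the hypotheses would force $E_{1,3}=\rr_5^l(x_i)=E_{3,5}$, so no such $\rr_5^l$ exists and there is nothing to reinterpret.

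The variant the paper actually uses later, for example to identify an entry with $f_2(\chi_{2i-1}\ot\chi_{2i-1})$, does need an $i=j$ version. Your proposed reading $\rr_5^l(x_i)=E_{1,3}E_{3,5}$ is wrong for that purpose. This product also has $(1,5)$-entry equal to $1$. Under $(\taul_5)^{-1}$ it therefore maps to $A_{10}A_{01}$, which equals $A_{11}$ times the central matrix $E_{1,3}\in U_3(\F_2)$, not to $A_{11}$. The induced homomorphism is then $g\mapsto \rr_{i,i}(g)\,E_{1,3}^{\chi_i(g)}$. You would get $e_{1,5}\circ\rr_5^l=f_2(\chi_i\ot\chi_i)+\chi_i$, which differs from $f_2(\chi_i\ot\chi_i)$ by the cocycle $\chi_i$. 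Exact equality of cochains is what is needed when these entries are used as entries of defining systems in Dwyer's theorem.

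The correct choice is the matrix $E^{3,5}_{1,3}$, whose only non-zero off-diagonal entries sit at $(1,3)$ and $(3,5)$. It lies in both $\TTlt$ and $\TTrt$ and does map to $A_{11}$.
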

\begin{proof}
The assertion for $\rr_5^l$ follows from Lemma \ref{lemma:matrix_subgroup_isos_3to5_left}, 
and the assertion for $\rr_5^r$ follows from Lemma \ref{lemma:matrix_subgroup_isos_3to5_right}. 
\end{proof}


\subsection{Computation of the canonical class}\label{sec:computing_kappa3_torsionfree}

We will now compute the values of $\kappa_3$. 
To do so, we will make frequent use of the following notation. 


%
\begin{notn}\label{notation:Bs} 
For $i=1,2,3$ and $\varepsilon_i \in \F_2$, 
we define the matrix  
\begin{align*}
B_{\varepsilon_1, \varepsilon_2, \varepsilon_3} = 
\begin{psmallmatrix} 
1 & \varepsilon_1 & 0 & 0 \\
0 & 1 & \varepsilon_2 & 0 \\
0 &  0 & 1 & \varepsilon_3\\
0 & 0 & 0 & 1 
\end{psmallmatrix} 
\in U_4(\F_2). 
\end{align*}
Note that a direct computation shows $B_{\varepsilon_1, \varepsilon_2, \varepsilon_3}^q = I_4$ in $U_4(\F_2)$. 
\end{notn}


\begin{convention}\label{convention:embedding}
We embed $U_n(\F_2)$ as a subgroup in $U_{n+1}(\F_2)$ either by 
sending a matrix $A \in U_n(\F_2)$ to 
$\begin{psmallmatrix} 
1 & 0  \\
0 & A 
\end{psmallmatrix} 
\in U_{n+1}(\F_2)$,  
or to 
 $\begin{psmallmatrix} 
A & 0  \\
0 & 1
\end{psmallmatrix} 
\in U_{n+1}(\F_2)$,  
where the $0$s denote rows and columns with entry $0$ respectively. 
\end{convention}



\begin{proposition}\label{prop:kappa_3_vanishes_on_S_D_T}
Let $\kappa_3 \colon K_3^3(\Hb) \to H^2$ be the map defined in Proposition \ref{prop:canonical_class_of_Demushkin_group_construction}. 
The map $\kappa_3$ vanishes on all elements in the sets $\SSS$, $\DDD$ and $\TTT$.  
\end{proposition}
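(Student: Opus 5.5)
The plan is to read each value $\kappa_3(\chi_a,\chi_b,\chi_c)$ as a (fourfold-type) Massey product value attached to an explicit defining system. Then, via Dwyer's Theorem \ref{thm:Dwyer}, it suffices to exhibit a genuine homomorphism $\rr\colon G\to U_4(\F_2)$ whose off-corner entries are $\chi_a$, $\chi_b$, $\chi_c$, $f_2(\chi_a,\chi_b)$ and $f_2(\chi_b,\chi_c)$. Concretely, for a basis element $\chi_{a,b,c}\in\SSS$, the pairs $\chi_{a,b}$ and $\chi_{b,c}$ lie in $\BBB_2$. The cochain $\Psi_3(\chi_{a,b,c})=\chi_a\cup f_2(\chi_{b,c})+f_2(\chi_{a,b})\cup\chi_c$ is then exactly the Massey cocycle of the defining system $\{a_{12}=\chi_a,a_{23}=\chi_b,a_{34}=\chi_c,a_{13}=f_2(\chi_{a,b}),a_{24}=f_2(\chi_{b,c})\}$. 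Its class vanishes if and only if the associated $\brr\colon G\to\bU_4(\F_2)$ lifts to $U_4(\F_2)$.

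To produce the lifts, I will assign to each generator $x_s$ a matrix of the form $B_{\varepsilon_1,\varepsilon_2,\varepsilon_3}$ (Notation \ref{notation:Bs}), possibly multiplied by elementary matrices $E_{s,t}$, with $\varepsilon$'s recording $\chi_a(x_s),\chi_b(x_s),\chi_c(x_s)$. I will then check the defining relation $x_1^q[x_1,x_2]\cdots[x_{d-1},x_d]$ in $U_4(\F_2)$ using Lemma \ref{lemma:matrix_relations} and $B^q=I_4$. The $\SSS$ condition guarantees that no generator pairs with its symplectic partner in adjacent positions $(a,b)$ or $(b,c)$, so every commutator $[M_{2k-1},M_{2k}]$ is trivial or central and cancels.

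The key point is that the $(1,3)$ and $(2,4)$ entries of this lift must coincide with the previously chosen $f_2$'s, not just some cochains with the right coboundary. For this I will apply Proposition \ref{prop:f2_comparison_3to4}: projecting $\rr$ to the upper-left block lands in $\Tlt$ modulo $\Wlt$, and the lower-right block lands in $\Trt$ modulo $\Wrt$ after relabelling via Convention \ref{convention:embedding}. So $e_{1,3}\circ\rr=f_2(\chi_{a,b})$ and $e_{2,4}\circ\rr=f_2(\chi_{b,c})$ provided the generators are sent to the same $3\times3$ blocks as in the construction of $f_2$ in Section \ref{subsec:construction_kappa}. Degenerate cases need separate block choices: $a=b$ or $b=c$ uses $A_{11}$ blocks, and $a=b=c$ uses $B_{1,1,1}$.

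For $\DDD$ and $\TTT$, the same strategy applies, but $f_2$ of a sum relation $\chi_{2i-1,2i}+\chi_{2i,2i-1}$ or $\chi_{1,2}+\chi_{2i,2i-1}$ was built from a $U_4$-representation. So the relevant defining system lives in a larger unipotent group, $U_5$ for $\DDD$ and up to $U_6$ for the triple sums in $\TTT$. There, $\kappa_3$ becomes a sum of entries forming a higher Massey product whose defining system is assembled from $\rr_{2i-1,2i'+'2i,2i-1}$ or $\rr_{1,2'+'2i,2i-1}$ together with $\chi_k$. I will write explicit lifts $G\to U_5(\F_2)$ (respectively larger) built from the matrices $A,B,E_{s,t}$ and verify the relation. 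I will then use Proposition \ref{prop:f2_comparison_3to5} and Lemmas \ref{lemma:matrix_subgroup_isos_3to5_left} and \ref{lemma:matrix_subgroup_isos_3to5_right} to identify the corner sub-blocks with the chosen $f_2$ values, so that the top-right entry's coboundary equals $\Psi_3$.

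The main obstacle is the $\TTT$ elements and the $\DDD$ elements involving $\chi_1\ot\chi_2+\chi_{2i}\ot\chi_{2i-1}$. There one must find a single representation whose sub-blocks reproduce both the $U_4$-constructed $f_2$ values and the $U_3$ ones simultaneously, while satisfying $x_1^q[x_1,x_2]\cdots=1$ exactly in the big unipotent group. I would first try placing the $A,B$ blocks of the $U_4$ constructions diagonally and adding single $E_{s,t}$ entries to encode $\chi_k$, then correct any leftover central commutator by modifying an image in the kernel of all projections used in the comparison lemmas.
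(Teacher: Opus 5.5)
Your strategy is the paper's. You read $\Psi_3$ on each basis element as the corner cocycle of a defining system, then show it is a coboundary via Dwyer's Theorem~\ref{thm:Dwyer} by exhibiting an honest lift to $U_4(\F_2)$, $U_5(\F_2)$ or $U_6(\F_2)$ whose sub-block entries reproduce the chosen $f_2$'s.

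For $\SSS$ your uniform assignment $x_s\mapsto B_{\chi_a(x_s),\chi_b(x_s),\chi_c(x_s)}$ is correct. It even covers $a=c$, which the paper's case list skips.
\begin{itemize}
\item The commutators are actually trivial, not merely central. For $M=I_4+X$ and $N=I_4+Y$ with $X,Y$ supported on the superdiagonal, $MN=I_4+X+Y+XY$, and $XY$ is supported on the second superdiagonal. So $MN=NM$ exactly when the $(1,3)$ and $(2,4)$ entries of $XY$ and $YX$ agree, which is what the $\SSS$ condition gives. Also $B^4=I_4$.
\item No comparison proposition is needed here. Projection onto a contiguous diagonal $3\times 3$ block is a homomorphism $U_4(\F_2)\to U_3(\F_2)$, and it carries your lift to $\rr_{a,b}$ resp.\ $\rr_{b,c}$. (The lift does not land in $\Tlt$, which requires a zero $(3,4)$ entry.)
\end{itemize}

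The gap is in $\DDD$ and $\TTT$: there you stop exactly where the content of the proof lies. The lifts are promised but not given, so as written the proposal proves only the $\SSS$ case. Your fallback of correcting a leftover central commutator cannot work.
\begin{itemize}
\item Every non-corner entry of $\brr$ is prescribed. So any change of the images $\rr(x_s)$ that keeps $\brr$ fixed is by central elements.
\item Central elements change neither commutators nor the even power $\rr(x_1)^q$.
\item Hence the value of the relation word in $Z_n(\F_2)$ is independent of the lift, and it vanishes iff $\kappa_3$ vanishes on that element. A failure could not be repaired; this is what happens on $\DDDw$ in Proposition~\ref{prop:kappa_3_on_DDDw}.
\end{itemize}

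What rescues the argument is that your diagonal overlay already works, with no correction. These are the paper's lifts:
\begin{itemize}
\item For $\chi_k\ot(\chi_{2i-1}\ot\chi_{2i}+\chi_{2i}\ot\chi_{2i-1})$: send $x_k\mapsto E_{1,2}$, $x_{2i-1}\mapsto E^{4,5}_{2,3}$, $x_{2i}\mapsto E^{3,5}_{2,4}$. The last two commute.
\item For $\chi_k\ot(\chi_1\ot\chi_2+\chi_{2i}\ot\chi_{2i-1})$: send $x_k\mapsto E_{1,2}$, $x_1\mapsto E_{2,3}$, $x_2\mapsto E_{3,5}$, $x_{2i-1}\mapsto E_{4,5}$, $x_{2i}\mapsto E_{2,4}$. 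Here $[E_{2,3},E_{3,5}]=[E_{4,5},E_{2,4}]=E_{2,5}$, and the two cancel.
\item For the right-hand elements of $\DDD$: put the same $U_4(\F_2)$-blocks in the upper-left corner and send $x_k\mapsto E_{4,5}$.
\item For $\TTT$: send $x_{2i-1}\mapsto E_{(1,2);(2,5)}^{(3,4);(4,6)}$ and $x_{2i}\mapsto E_{(1,3);(2,4)}^{(5,6)}$ in $U_6(\F_2)$. These commute and have fourth power $I_6$. Swap the two images for the other element of $\TTT$.
\end{itemize}
All generator images have order dividing $4$, so the factor $x_1^q$ causes no trouble.

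The simultaneous matching of sub-blocks that you flagged as the main obstacle is automatic. Work up the diagonals. If $\delta(e_{s,t}\circ\rr)=\delta f_2(\cdot)$ as cochains, the difference is a continuous homomorphism $G\to\F_2$. So the two agree as soon as they agree on $x_1,\ldots,x_d$, and in all these lifts both vanish on the generators.
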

\begin{proof}
Let $\Psi_3$ denote the cocycle 
defined in \eqref{eq:def_of_Psi3_Demushkin}. 
We prove the assertion by constructing cochains $\vartheta \in \Ch^1$ such that $\delta \vartheta = \Psi_3$ for the elements in each of the sets 
using Dwyer's Theorem \ref{thm:Dwyer}. 
We consider each set separately. 
We begin with elements in $\SSS$. 
By definition of $K^3_3(\Hb)$, if $\chi_a \ot \chi_b \ot \chi_c \in H^1 \ot H^1 \ot H^1$ is an element in $\SSS$, 
then the triple Massey product $\langle \chi_a, \chi_b, \chi_c \rangle$ is defined. 
Moreover, $\{\chi_a, \chi_b, \chi_c,\ f_2(\chi_a \ot \chi_b), f_2(\chi_b\ot \chi_c)\}$ is a defining system, 
and $\kappa_3(\chi_a,\chi_b,\chi_c)$ is the class of this defining system.   
Thus, by Dwyer's Theorem \ref{thm:Dwyer}, 
$\kappa_3(\chi_a,\chi_b,\chi_c) = 0$ 
if and only if the continuous group homomorphism 
$\brr \colon G \to \bU_4(\F_2)$ given by 
$\brr = \begin{psmallmatrix}
 1 & \chi_a & f_2(\chi_a \ot \chi_b) &  * \\
  & 1& \chi_b & f_2(\chi_b \ot \chi_c)   \\
  & & 1 &  \chi_c \\
  & & & 1 
\end{psmallmatrix}$ 
lifts to a continuous group homomorphism 
$\rr \colon G \to U_4(\F_2)$. 
If a lift $\rr$ exists, then the continuous map $\vartheta \coloneqq e_{1,4} \circ \rr \colon G \to \F_2$ 
is a cochain such that $\delta \vartheta = \Psi_3 (\chi_a \ot \chi_b \ot \chi_c)$. 
Thus, to prove $\kappa_3(\chi_a,\chi_b,\chi_c) = 0$ for an element in $\SSS$, 
it suffices to construct the corresponding continuous homomorphism $\rr$. 

For elements of the form $\chi_i \ot \chi_i \ot \chi_i$, 
we define the continuous group homomorphism $\rr^{\SSS}_{i,i,i} \colon G \to U_4(\F_2)$ 
by setting $\rr^{\SSS}_{i,i,i}(x_i) \coloneqq B_{1,1,1}$ and $\rr^{\SSS}_{i,i,i}(x_j) \coloneqq I_4$ for $j \ne i$.  
%
%
For elements of the form $\chi_i \ot \chi_i \ot \chi_j$ with $i\ne j$ and $j \ne i-1$ if $i$ is even, 
we define the continuous group homomorphism $\rr^{\SSS}_{i,i,j} \colon G \to U_4(\F_2)$ 
by setting 
\begin{align*}
\rr^{\SSS}_{i,i,j}(x_i) \coloneqq B_{1,1,0}, ~ \rr^{\SSS}_{i,i,j}(x_j) \coloneqq B_{0,0,1}, ~ 
\text{and} ~\rr^{\SSS}_{i,i,j}(x_k) \coloneqq I_4 ~ \text{for} ~ k \ne i,j.
\end{align*}
%
%
For elements of the form $\chi_i \ot \chi_j \ot \chi_j$ with $i\ne j$ and $i \ne j-1$ if $j$ is even, 
we define the continuous group homomorphism $\rr^{\SSS}_{i,j,j} \colon G \to U_4(\F_2)$ 
by setting 
\begin{align*}
\rr^{\SSS}_{i,j,j}(x_i) \coloneqq B_{1,0,0}, ~ \rr^{\SSS}_{i,j,j}(x_j) \coloneqq B_{0,1,1}, 
~ \text{and} ~ \rr^{\SSS}_{i,j,j}(x_k) \coloneqq I_4 ~ \text{for} ~ k \ne i,j. 
\end{align*}
%
%
For elements of the form $\chi_i \ot \chi_j \ot \chi_k$ with $i,k\ne j$, $i \ne j-1$ if $j$ is even and $k \ne j+1$ if $j$ is odd, 
we define a continuous group homomorphism $\rr^{\SSS}_{i,j,k} \colon G \to U_4(\F_2)$ 
by setting 
\begin{align*}
\rr^{\SSS}_{i,j,k}(x_i) \coloneqq B_{1,0,0}, ~ 
\rr^{\SSS}_{i,j,k}(x_j) \coloneqq B_{0,1,0}, ~ 
\rr^{\SSS}_{i,j,k}(x_k) \coloneqq B_{0,0,1},  
\end{align*}
and $\rr^{\SSS}_{i,j,k}(x_l) \coloneqq I_4$ for $l \ne i,j,k$.  
This finishes the proof for the set $\SSS$. 


Next we consider elements in $\DDD$. 
First, let $i\in J/2$ and $k \ne 2i-1, 2i$, 
and we consider an element of the form $\chi_k \ot (\chi_{2i-1} \ot \chi_{2i} + \chi_{2i} \ot \chi_{2i-1})$. 
%
%
While $\chi_k \ot (\chi_{2i-1} \ot \chi_{2i} + \chi_{2i} \ot \chi_{2i-1})$ does not correspond to a triple Massey product, 
the fourfold Massey product $\langle \chi_k, \chi_{2i-1}, 0, \chi_{2i-1} \rangle$ is defined, 
and by Dwyer's Theorem \ref{thm:Dwyer}, 
the continuous homomorphism 
$\brr^{\DDD}_{k,2i-1,2i} \colon G \to \bU_5(\F_2)$ given by 
\begin{align*}
\brr^{\DDD}_{k,2i-1,2i} = 
\begin{psmallmatrix}
 1 &  \chi_k & f_2(\chi_k \ot \chi_{2i-1})  & f_2(\chi_k \ot \chi_{2i}) &  * \\
  & 1& \chi_{2i-1} & \chi_{2i} &  f_2(\chi_{2i-1} \ot \chi_{2i} + \chi_{2i} \ot \chi_{2i-1}) \\
  & & 1 & 0 &  \chi_{2i} \\
  & &  & 1 &  \chi_{2i-1} \\   
  & & & & 1 
\end{psmallmatrix}
\end{align*} 
corresponds to a defining system. 
Moreover, we have $\kappa_3(\chi_k \ot (\chi_{2i-1} \ot \chi_{2i} + \chi_{2i} \ot \chi_{2i-1}))=0$ 
if and only if $\brr^{\DDD}_{k,2i-1,2i}$ 
lifts to a continuous homomorphism 
$\rr^{\DDD}_{k,2i-1,2i} \colon G \to U_5(\F_2)$. 
To construct $\rr^{\DDD}_{k,2i-1,2i}$, 
we consider the matrices 
\begin{align*}
A \coloneqq 
\begin{psmallmatrix}
 1 & 0 & 0  & 0 &  0 \\
  & 1& 1 & 0& 0 \\
  & & 1 & 0 & 0 \\
  & &  & 1 &  1 \\   
  & & & & 1 
\end{psmallmatrix}, ~ 
B \coloneqq 
\begin{psmallmatrix}
 1 & 0 & 0 & 0 & 0 \\
  & 1& 0 & 1 & 0 \\
  & & 1 & 0 &  1 \\
  & &  & 1 &  0 \\   
  & & & & 1 
\end{psmallmatrix} 
\end{align*} 
in $U_5(\F_2)$. 
We have $[A,B] = I_5$. 
Thus, by Lemma \ref{lemma:matrix_relations}, 
we can define a continuous group homomorphism $\rr^{\DDD}_{k,2i-1,2i} \colon G \to U_5(\F_2)$ 
by the assignment 
\begin{align*}
x_k \mapsto E_{1,2}, ~ 
 x_{2i-1}  \mapsto A, ~ 
x_{2i} \mapsto B, 
\end{align*}
and $\rr^{\DDD}_{k,2i-1,2i}(x_j) = I_5$ for $j \ne k, 2i-1, 2i$. 
We use Proposition \ref{prop:f2_comparison_3to4} 
to identify $e_{1,4} \circ \rr^{\DDD}_{k,2i-1,2i}$ with $f_2(\chi_k \ot \chi_{2i})$. 
We define the continuous map $\vartheta^{\DDD}_{k,2i-1,2i} \colon G \to \F_2$ 
by $e_{1,5} \circ \rr^{\DDD}_{k,2i-1,2i}$ 
and get 
\begin{align*}
\delta \vartheta^{\DDD}_{k,2i-1,2i} = \Psi_3 (\chi_k \ot (\chi_{2i-1} \ot \chi_{2i} + \chi_{2i} \ot \chi_{2i-1})). 
\end{align*}
%

We prove the other cases by specifying the corresponding homomorphisms $\brr$ and 
constructing the required lifts $\rr$. 
For elements of the form $(\chi_{2i-1} \ot \chi_{2i} + \chi_{2i} \ot \chi_{2i-1}) \ot \chi_k$, 
we need to show that the continuous homomorphism 
$\brr^{\DDD}_{2i-1,2i,k}\colon G \to \bU_5(\F_2)$ given by 
\begin{align*}
\brr^{\DDD}_{2i-1,2i,k} = 
\begin{psmallmatrix}
 1 &  \chi_{2i-1} & \chi_{2i}  &  f_2(\chi_{2i-1} \ot \chi_{2i} + \chi_{2i} \ot \chi_{2i-1}) &  * \\
  & 1& 0 & \chi_{2i} &  f_2(\chi_{2i} \ot \chi_k)  \\
  & & 1 &  \chi_{2i-1} & f_2(\chi_{2i-1} \ot \chi_k) \\
  & &  & 1 &  \chi_k \\   
  & & & & 1 
\end{psmallmatrix}
\end{align*} 
lifts to a continuous homomorphism 
$\rr^{\DDD}_{2i-1,2i,k} \colon G \to U_5(\F_2)$. 
Consider the matrices 
\begin{align*}
C \coloneqq 
\begin{psmallmatrix}
 1 & 1 & 0 & 0 & 0 \\
  & 1& 0 & 0 & 0 \\
  & & 1 & 1 &  0 \\
  & &  & 1 &  0 \\   
  & & & & 1 
\end{psmallmatrix}, ~ 
D \coloneqq 
\begin{psmallmatrix}
 1 & 0 & 1  & 0 &  0 \\
  & 1& 0 & 1& 0 \\
  & & 1 & 0 & 0 \\
  & &  & 1 &  0 \\   
  & & & & 1 
\end{psmallmatrix} 
\end{align*} 
in $U_5(\F_2)$. 
We have $[C,D] = I_5$. 
By Lemma \ref{lemma:matrix_relations}, 
we can define a continuous group homomorphism $\rr^{\DDD}_{2i-1,2i,k} \colon G \to U_5(\F_2)$ 
by the assignment 
\begin{align*}
x_k \mapsto E_{4,5}, ~ 
 x_{2i-1}  \mapsto C, 
x_{2i}\mapsto D, 
\end{align*}
and $\rr^{\DDD}_{2i-1,2i,k}(x_j) = I_5$ for $j \ne k, 2i-1, 2i$. 
We use Proposition \ref{prop:f2_comparison_3to4} 
to identify $e_{2,5} \circ \rr^{\DDD}_{2i-1,2i,k}$ with $f_2(\chi_{2i} \ot \chi_k)$. 
%

Second, let $i \in \Jw/2$ and $k \ne 1,2,2i-1,2i$.  
For elements of the form $\chi_k \ot (\chi_{1} \ot \chi_{2} + \chi_{2i} \ot \chi_{2i-1})$, 
we need to show that the continuous homomorphism 
$\brr^{\DDD}_{k,1,2+2i,2i-1} \colon G \to \bU_5(\F_2)$ given by 
\begin{align*}
\brr^{\DDD}_{k,1,2+2i,2i-1} = 
\begin{psmallmatrix}
 1 &  \chi_k & f_2(\chi_k \ot \chi_1)  & f_2(\chi_k \ot \chi_{2i}) &  * \\
  & 1& \chi_1 & \chi_{2i} &  f_2(\chi_1 \ot \chi_2 + \chi_{2i} \ot \chi_{2i-1}) \\
  & & 1 & 0 &  \chi_2 \\
  & &  & 1 &  \chi_{2i-1} \\   
  & & & & 1 
\end{psmallmatrix}
\end{align*} 
lifts to a continuous homomorphism 
$\rr^{\DDD}_{k,1,2+2i,2i-1} \colon G \to U_5(\F_2)$. 
We have the relation $[E_{2,3}, E_{3,5}] [E_{4,5},E_{2,4}] = I_5$. 
(Note, however, that $[E_{2,3}, E_{3,5}]  = [E_{4,5},E_{2,4}]= E_{2,5} \ne I_5$.) 
By Lemma \ref{lemma:matrix_relations}, 
we can define a continuous group homomorphism $\rr^{\DDD}_{k,1,2+2i,2i-1} \colon G \to U_5(\F_2)$ 
by the assignment 
\begin{align*}
x_k \mapsto E_{1,2}, ~
x_1 \mapsto E_{2,3}, ~ 
x_2 \mapsto E_{3,5}, ~ 
 x_{2i-1}  \mapsto E_{4,5}, ~  
x_{2i} \mapsto E_{2,4}, 
\end{align*}
and $\rr^{\DDD}_{k,1,2+2i,2i-1}(x_j) = I_5$ for $j \ne k, 1,2,2i-1, 2i$. 
We use Proposition \ref{prop:f2_comparison_3to4} 
to identify $e_{1,4} \circ \rr^{\DDD}_{k,1,2+2i,2i-1}$ with $f_2(\chi_k \ot \chi_{2i})$. 
%
%
For elements of the form $(\chi_{1} \ot \chi_{2} + \chi_{2i} \ot \chi_{2i-1}) \ot \chi_k$, 
we need to show that the continuous homomorphism 
$\brr^{\DDD}_{1,2+2i,2i-1,k}\colon G \to \bU_5(\F_2)$ given by 
\begin{align*}
\brr^{\DDD}_{1,2+2i,2i-1,k} = 
\begin{psmallmatrix}
 1 &  \chi_1 & \chi_{2i} &  f_2(\chi_1 \ot \chi_2 + \chi_{2i} \ot \chi_{2i-1}) &  * \\
  & 1& 0 & \chi_{2} & f_2(\chi_{2} \ot \chi_k) \\
  & & 1 & \chi_{2i-1} & f_2(\chi_{2i-1} \ot \chi_k)  \\
  & &  & 1 &  \chi_{k} \\   
  & & & & 1 
\end{psmallmatrix}
\end{align*} 
lifts to a continuous homomorphism 
$\rr^{\DDD}_{1,2+2i,2i-1,k} \colon G \to U_5(\F_2)$. 
We have the relation $[E_{1,2}, E_{2,4}] [E_{3,4},E_{1,3}] = I_5$. 
(Note, however, that $[E_{1,2}, E_{2,4}]  = [E_{3,4},E_{1,3}] = E_{1,4} \ne I_5$.) 
By Lemma \ref{lemma:matrix_relations}, 
we can define a continuous group homomorphism $\rr^{\DDD}_{1,2+2i,2i-1,k} \colon G \to U_5(\F_2)$ 
by the assignment 
\begin{align*}
x_k  \mapsto E_{4,5}, ~
x_1 \mapsto E_{1,2}, ~ 
x_2 \mapsto E_{2,4}, ~
 x_{2i-1} \mapsto E_{3,4}, ~ 
x_{2i}\mapsto E_{1,3}, 
\end{align*}
and $\rr^{\DDD}_{1,2+2i,2i-1,k}(x_j) = I_5$ for $j \ne k, 1,2,2i-1, 2i$. 
We use Proposition \ref{prop:f2_comparison_3to4} 
to identify $e_{2,5} \circ \rr^{\DDD}_{1,2+2i,2i-1,k}$ with $f_2(\chi_{2} \ot \chi_k)$. 
This proves the assertion for  $\DDD$. 


Finally, we compute $\kappa_3$ on elements in $\TTT$. 
For $i\in J/2$, we consider $\chi_{2i-1,2i-1,2i} +  \chi_{2i-1,2i,2i-1} + \chi_{2i,2i-1,2i-1}$. 
By Dwyer's Theorem \ref{thm:Dwyer}, 
the continuous group homomorphism 
$\brr^{\TTT}_{2i-1} \colon G \to \bU_6(\F_2)$ given by 
\begin{align*}
\brr^{\TTT}_{2i-1}  = \begin{psmallmatrix}
 1 &  \chi_{2i-1} & \chi_{2i}  & f_2(\chi_{2i-1} \ot \chi_{2i} + \chi_{2i} \ot \chi_{2i-1}) & f_2(\chi_{2i-1} \ot \chi_{2i-1}) & * \\
  & 1& 0 & \chi_{2i} &  \chi_{2i-1} & f_2(\chi_{2i-1} \ot \chi_{2i} + \chi_{2i} \ot \chi_{2i-1})  \\
  & & 1 & \chi_{2i-1} & 0 & f_2(\chi_{2i-1} \ot \chi_{2i-1})  \\
  & & & 1 & 0 & \chi_{2i-1} \\  
    & & &  & 1 &  \chi_{2i} \\   
  & & & & & 1 
\end{psmallmatrix}
\end{align*} 
corresponds to a defining system 
for the fivefold Massey product $\langle \chi_{2i-1}, 0, 0,0, \chi_{2i-1} \rangle$. 
Consider the matrices 
\begin{align*}
S \coloneqq 
\begin{psmallmatrix}
 1 &  1 & 0 & 0& 0 & 0 \\
  & 1& 0 & 0 & 1 & 0  \\
  & & 1 & 1 & 0 & 0 \\
  & & & 1 & 0 & 1 \\  
    & & &  & 1 & 0 \\   
  & & & & & 1 
\end{psmallmatrix}, ~ 
T \coloneqq 
\begin{psmallmatrix}
 1 &  0 & 1 & 0& 0 & 0 \\
  & 1& 0 & 1 & 0 & 0  \\
  & & 1 & 0 & 0 & 0 \\
  & & & 1 & 0 & 0 \\  
    & & &  & 1 & 1 \\   
  & & & & & 1 
\end{psmallmatrix}, 
\end{align*}
in $U_6(\F_2)$. 
We have $S^q=I_6$, $T^q= I_6$, and $[S,T] = I_6$ in $U_6(\F_2)$. 
Thus, by Lemma \ref{lemma:matrix_relations}, 
we can define continuous group homomorphisms $\rr^{\TTT}_{2i-1} \colon G \to U_6(\F_2)$ 
by the assignment  
%
$x_{2i-1} \mapsto S$,  
$x_{2i} \mapsto T$, 
and $\rr^{\TTT}_{2i-1}(x_j) = I_6$ for $j \ne 2i-1, 2i$. 
Then $\rr^{\TTT}_{2i-1}$ lifts $\brr^{\TTT}_{2i-1}$. 
This implies 
$\kappa_3 (\chi_{2i-1,2i-1,2i} +  \chi_{2i-1,2i,2i-1} + \chi_{2i,2i-1,2i-1}) = 0$. 
Here, we use Convention \ref{convention:embedding} and 
Proposition \ref{prop:f2_comparison_3to4} 
to identify $e_{3,6} \circ \rr^{\TTT}_{2i-1}$ 
and 
Proposition \ref{prop:f2_comparison_3to5} 
to identify $e_{1,5} \circ \rr^{\TTT}_{2i-1}$ 
with $f_2(\chi_{2i-1} \ot \chi_{2i-1})$. 
By switching the roles of $\chi_{2i-1}$ and $\chi_{2i}$, 
we define $\rr^{\TTT}_{2i} \colon G \to U_6(\F_2)$ 
by the assignment 
$x_{2i-1} \mapsto T$,  
$x_{2i} \mapsto S$, 
and $\rr^{\TTT}_{2i}(x_j) = I_6$ for $j \ne 2i-1, 2i$. 
The existence of $\rr^{\TTT}_{2i}$ implies 
$\kappa_3 (\chi_{2i,2i,2i-1} +  \chi_{2i,2i-1,2i} + \chi_{2i-1,2i,2i}) = 0$. 
%
%
This proves the assertion for the set $\TTT$ 
and  finishes the proof. 
\end{proof}


For $d \ge 4$, however, $\kappa_3$ is a non-trivial map 
as the proof of the following proposition shows. 
 
\begin{proposition}\label{prop:kappa_3_on_DDDw}
For all $i,j \in \Jw/2$, we have 
\begin{align*}
\kappa_3(\chi_{1,1,2} + \chi_{1,2i,2i-1}) & = \kappa_3(\chi_{1,1,2} + \chi_{1,2j,2j-1}), \\
\kappa_3(\chi_{1,2,2} + \chi_{2i,2i-1,2}) & = \kappa_3(\chi_{1,2,2} + \chi_{2j,2j-1,2}), \\
\kappa_3(\chi_{2,1,1} + \chi_{2i-1,2i,1}) & = \kappa_3(\chi_{2,1,1} + \chi_{2j-1,2j,1}), ~ \text{and} \\
\kappa_3(\chi_{2,2,1} + \chi_{2,2i-1,2i}) & = \kappa_3(\chi_{2,2,1} + \chi_{2,2j-1,2j}). 
\end{align*} 
\end{proposition}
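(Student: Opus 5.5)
Each of the four identities is proved the same way; we describe the plan for the first, $\kappa_3(\chi_{1,1,2} + \chi_{1,2i,2i-1}) = \kappa_3(\chi_{1,1,2} + \chi_{1,2j,2j-1})$. The others follow by the symmetric arguments, using Lemmas \ref{lemma:matrix_subgroup_isos_3to4_right} and \ref{lemma:matrix_subgroup_isos_3to5_right} in place of their left versions.

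Since $\kappa_3$ is linear and $H^2 \cong \F_2$, it suffices to show that $\kappa_3$ vanishes on the difference
\[
(\chi_{1,1,2} + \chi_{1,2i,2i-1}) + (\chi_{1,1,2} + \chi_{1,2j,2j-1}) = \chi_1 \ot (\chi_{2i} \ot \chi_{2i-1} + \chi_{2j} \ot \chi_{2j-1}).
\]
Note that $\chi_{2i}\ot\chi_{2i-1}+\chi_{2j}\ot\chi_{2j-1} = (\chi_{1,2}+\chi_{2i,2i-1})+(\chi_{1,2}+\chi_{2j,2j-1}) \in R$. By definition,
\[
\Psi_3 \;=\; \chi_1 \cup \bigl(f_2(\chi_1\ot\chi_2+\chi_{2i}\ot\chi_{2i-1}) + f_2(\chi_1\ot\chi_2+\chi_{2j}\ot\chi_{2j-1})\bigr) + f_2(\chi_1\ot\chi_{2i})\cup\chi_{2i-1} + f_2(\chi_1\ot\chi_{2j})\cup\chi_{2j-1}.
\]
(The term $f_2(\chi_1 \ot \chi_1)\cup\chi_2$ appears twice and cancels.) As in the proof of Proposition \ref{prop:kappa_3_vanishes_on_S_D_T}, I would read this as the value of a defining system for a Massey product of size $4$ or $5$, and show via Dwyer's Theorem \ref{thm:Dwyer} that the associated $\brr\colon G\to\bU_5(\F_2)$ (or $\bU_6$) lifts.

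Concretely, the defining system has $\chi_1$ in the first superdiagonal slot and the four characters $\chi_{2i},\chi_{2j}$ and $\chi_{2i-1},\chi_{2j-1}$ in the middle. One complication is that the difference of the two chosen $f_2$-values on $R$ is not obviously of the form $e_{1,\ast}\circ\rr$ for a single representation; it is a sum of the corner entries of $\rr_{1,2'+'2i,2i-1}$ and $\rr_{1,2'+'2j,2j-1}$. I would therefore build a $\rr\colon G\to U_6(\F_2)$ in which a $4\times 4$ block realizes $\chi_{2i}\ot\chi_{2i-1}+\chi_{2j}\ot\chi_{2j-1}$, with $x_1\mapsto E_{1,2}$ sitting on top. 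Lemma \ref{lemma:matrix_relations} then requires checking the single relation
\[
\rr(x_1)^q[\rr(x_1),\rr(x_2)][\rr(x_3),\rr(x_4)]\cdots = I.
\]
The natural attempt is to send $x_{2i}\mapsto E_{2,3}$, $x_{2i-1}\mapsto E_{3,5}$, $x_{2j}\mapsto E_{2,4}$, $x_{2j-1}\mapsto E_{4,5}$, so that $[\rr(x_{2i-1}),\rr(x_{2i})]$ and $[\rr(x_{2j-1}),\rr(x_{2j})]$ both equal $E_{2,5}$ and cancel, while $x_2\mapsto I$. The corner cochains would then be identified with the chosen $f_2$'s by Propositions \ref{prop:f2_comparison_3to4} and \ref{prop:f2_comparison_3to5}. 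If necessary, I would add an extra row/column carrying $\chi_1,\chi_2$ so that both $f_2(\chi_1\ot\chi_2+\cdots)$ appear as genuine entries, which is what forces the passage to $U_6(\F_2)$.

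The main obstacle is the identification step. The entry $e_{2,5}\circ\rr$ must agree with the specific cochain $f_2(\chi_1\ot\chi_2+\chi_{2i}\ot\chi_{2i-1})+f_2(\chi_1\ot\chi_2+\chi_{2j}\ot\chi_{2j-1})$ defined through $\rr_{1,2'+'2i,2i-1}$ and $\rr_{1,2'+'2j,2j-1}$, and not merely differ from it by a cocycle. Cocycles of degree one are homomorphisms, and cupping $\chi_1$ with a homomorphism could change the class. I would handle this either by proving a compatibility lemma analogous to Lemma \ref{lemma:matrix_subgroup_isos_3to4_left}, namely that the sum of two corner functions is the corner of a block-sum representation, or by computing the possible homomorphism discrepancy explicitly and checking that its cup product with $\chi_1$ vanishes. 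The latter reduces to showing that the discrepancy has no $\chi_2$-component, which one sees by evaluating both cochains at $x_2$.
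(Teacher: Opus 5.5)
Your proposal is correct, but it follows a different route from the paper.

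\textbf{The paper's route.} The paper never works with the difference element. It computes each value separately:
\begin{itemize}
\item it shows $\kappa_3(\chi_{1,1,2}+\chi_{1,2i,2i-1})\neq 0$ and $\kappa_3(\chi_{2,2,1}+\chi_{2,2i-1,2i})\neq 0$ by exhibiting homomorphisms into $\bU_5(\F_2)$ and $\bU_6(\F_2)$ that do not lift;
\item it shows the other two families vanish by constructing lifts. For the third family it first rewrites the element using $\TTT$ and $\DDD$, then builds a $6\times 6$ lift.
\end{itemize}
It then concludes because $H^2\cong\F_2$.

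\textbf{Your route.} You kill the difference $\chi_1\ot(\chi_{2i}\ot\chi_{2i-1}+\chi_{2j}\ot\chi_{2j-1})\in K^3_3$ directly. Your $5\times 5$ assignment $x_1\mapsto E_{1,2}$, $x_{2i}\mapsto E_{2,3}$, $x_{2i-1}\mapsto E_{3,5}$, $x_{2j}\mapsto E_{2,4}$, $x_{2j-1}\mapsto E_{4,5}$ already works. The two commutators both equal $E_{2,5}$ and cancel, so the passage to $U_6(\F_2)$ you hedge about is never needed.

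\textbf{The identification step.} Your resolution is sound, and in fact stronger than you state. The discrepancy $h$ is a continuous homomorphism. It vanishes on every generator, not just on $x_2$. The reason is that every matrix used to define $f_2$ on $\BBB_2$, and every matrix in your lift, has zero in the relevant corner. Hence $h=0$ exactly.

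This is the principle that a $1$-cochain with prescribed coboundary is determined by its values on $x_1,\dots,x_d$. The same principle underlies Propositions \ref{prop:f2_comparison_3to4} and \ref{prop:f2_comparison_3to5}, and it lets you bypass them.

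\textbf{The third and fourth identities.} Calling them ``symmetric'' is a little loose. There the relevant $f_2$-value is $f_2(\chi_{2i-1,2i}+\chi_{2j-1,2j})$, which expands into four basis values rather than two. The same generator evaluation still handles it. For the third identity, the assignment $x_{2i-1}\mapsto E_{1,2}$, $x_{2j-1}\mapsto E_{1,3}$, $x_{2i}\mapsto E_{2,4}$, $x_{2j}\mapsto E_{3,4}$, $x_1\mapsto E_{4,5}$ gives the required lift.

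\textbf{What each approach buys.} Yours is uniform: it uses only liftability and elementary matrices, with no non-liftability arguments and no detour through $\TTT$ and $\DDD$. The paper's yields the actual values, which show $\kappa_3\neq 0$ for $d\ge 4$. So the triviality of the canonical class genuinely relies on the coboundaries of Lemma \ref{lemma:image_of_partial_sigmas_DDDw}. That extra information is not needed for this proposition.
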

\begin{proof}
The claimed relations follow from the symmetry of the roles of the $\chi_i$. 
However, we also give a more concrete proof by computing the respective values of $\kappa_3$ 
using Dwyer's Theorem \ref{thm:Dwyer} similar to the arguments in the proof of Proposition \ref{prop:kappa_3_vanishes_on_S_D_T}.  
For the first relation, 
we show that the continuous homomorphism 
$\brr_1 \colon G \to \bU_5(\F_2)$ given by 
\begin{align*}
\brr_1 = 
\begin{psmallmatrix}
 1 & \chi_1 & f_2(\chi_1 \ot \chi_{1})  & f_2(\chi_1 \ot \chi_{2i}) &  * \\
  & 1& \chi_{1} & \chi_{2i} &  f_2(\chi_1 \ot \chi_2 + \chi_{2i} \ot \chi_{2i-1}) \\
  & & 1 & 0 &  \chi_2 \\
  & &  & 1 &  \chi_{2i-1} \\   
  & & & & 1 
\end{psmallmatrix}
\end{align*} 
does not lift to a continuous homomorphism 
$G \to U_5(\F_2)$. 
Let 
$A \coloneqq 
\begin{psmallmatrix}
 1 &  1 & 0 & 0 &  0 \\
  & 1& 1 & 0& 0  \\
  & & 1 & 0 & 0\\
  & & & 1 &  0 \\  
    & & &  & 1 
\end{psmallmatrix}$  
in $U_5(\F_2)$. 
Then we have $A^q=I_5$ and $[A,E_{3,5}][E_{4,5}, E_{2,4}] = I_5$ in $\bU_5(\F_2)$, 
but $[A,E_{3,5}][E_{4,5}, E_{2,4}]  = E_{1,5} \ne I_5$ in $U_5(\F_2)$. 
Thus, we can define a continuous group homomorphism $\brr_1 \colon G \to \bU_5(\F_2)$ 
by the assignment 
\begin{align*}
x_1 \mapsto A, ~ 
x_2 \mapsto E_{3,5}, ~
 x_{2i-1} \mapsto E_{4,5}, ~ 
x_{2i}\mapsto E_{2,4}, 
\end{align*}
and $\brr_1(x_j) \coloneqq I_5$ for $j \ne 1,2,2i-1, 2i$,  
but $\brr_1$ does not lift to a 
continuous group homomorphism $G \to U_5(\F_2)$. 
Note that we use Proposition \ref{prop:f2_comparison_3to4} 
to identify $e_{1,4} \circ \brr_1$ with $f_2(\chi_1 \ot \chi_{2i})$. 
This proves $\kappa_3(\chi_{1,1,2} + \chi_{1,2i,2i-1}) \ne 0$ for all $i \in \Jw/2$. 
This implies the first relation 
since there is a unique non-zero element in $H^2 \cong \F_2$. 
%

For the second relation, 
we show that the continuous group homomorphism 
$\brr_2 \colon G \to \bU_5(\F_2)$ given by 
\begin{align*}
\brr_2 = 
\begin{psmallmatrix}
 1 & \chi_1 & \chi_{2i}  & f_2(\chi_1 \ot \chi_2 + \chi_{2i} \ot \chi_{2i-1}) &  * \\
  & 1& 0 & \chi_{2} &  f_2(\chi_2 \ot \chi_2) \\
  & & 1 & \chi_{2i-1} & f_2(\chi_{2i-1} \ot \chi_2) \\
  & &  & 1 &  \chi_2 \\   
  & & & & 1 
\end{psmallmatrix}
\end{align*} 
lifts to a continuous homomorphism 
$\rr_2 \colon G \to U_5(\F_2)$. 
Let 
$B \coloneqq 
\begin{psmallmatrix}
 1 &  0 & 0 & 0 & 0 \\
  & 1& 0 & 1 & 0  \\
  & & 1 & 0 & 0\\
  & & & 1 & 1 \\  
    & & &  & 1 
\end{psmallmatrix}$  
in $U_5(\F_2)$. 
Then we have $[E_{1,2},B][E_{3,4}, E_{1,3}] = I_5$ in $U_5(\F_2)$. 
Note, however, that $[E_{1,2},B] = [E_{3,4}, E_{1,3}] = E_{1,4} \ne I_5$. 
%
Thus, we can define a continuous group homomorphism $\rr_2 \colon G \to \bU_5(\F_2)$ 
by the assignment 
\begin{align*}
x_1 \mapsto E_{1,2}, ~ 
x_2 \mapsto B, ~
 x_{2i-1} \mapsto E_{3,4}, ~ 
x_{2i}\mapsto E_{1,3}, 
\end{align*}
and $\rr_2(x_j) \coloneqq I_5$ for $j \ne 1,2,2i-1, 2i$,  
which lifts $\brr_2$.  
Here we use Proposition \ref{prop:f2_comparison_3to4} 
to identify $e_{2,5} \circ \brr_2$ and $e_{2,5} \circ \rr_2$ with $f_2(\chi_2 \ot \chi_{2})$. 
This proves $\kappa_3(\chi_{1,2,2} + \chi_{2i,2i-1,2}) = 0$ for all $i \in \Jw/2$. 


For the third relation, we note that in $H^1 \ot H^1 \ot H^1$ we have 
\begin{align*}
& (\chi_{2,1} + \chi_{2i-1,2i}) \ot \chi_1 \\
= & ~~ [(\chi_{1,2} + \chi_{2,1}) + (\chi_{2i-1,2i} + \chi_{2i,2i-1}) + (\chi_{1,2} + \chi_{2i,2i-1})] \ot \chi_1. 
\end{align*}
Since $\chi_{1,2,1} + \chi_{2,1,1}$ and $\chi_{1,2,1} + \chi_{2i,2i-1,1}$
 are not in $K_3^3(\Hb)$, 
we add $\chi_{1,1,2}$ twice to the sum and get 
\begin{align*}
 (\chi_{2,1} + \chi_{2i-1,2i}) \ot \chi_1 & = (\chi_{1,1,2} + \chi_{1,2,1} + \chi_{2,1,1}) \\
&~~   + ((\chi_{2i-1,2i} + \chi_{2i,2i-1}) \ot \chi_1) + (\chi_{1,1,2} + \chi_{1,2,1} + \chi_{2i,2i-1,1}). 
\end{align*}
Here we note that the latter is not a linear combination of elements in $\BBB_3$, 
and that $\kappa_3$ is defined independently of our choice of basis for $K_3^3(\Hb)$. 
%
We know $\kappa_3(\chi_{1,1,2} + \chi_{1,2,1} + \chi_{2,1,1}) = 0$ by the computation for elements in $\TTT$, 
and $\kappa_3((\chi_{2i-1,2i} + \chi_{2i,2i-1}) \ot \chi_1) = 0$ by the computation for elements in $\DDD$. 
Thus, $\kappa_3(\chi_{2,1,1} + \chi_{2i-1,2i,1}) = \kappa_3(\chi_{1,1,2} + \chi_{1,2,1} + \chi_{2i,2i-1,1})$,  
and it suffices to compute the latter. 
%
%
To compute $\kappa_3(\chi_{1,1,2} + \chi_{1,2,1} + \chi_{2i,2i-1,1})$, 
we show that the continuous homomorphism 
$\brr_3 \colon G \to \bU_6(\F_2)$ given by 
\begin{align*}
\brr_3 = 
\begin{psmallmatrix}
 1 & \chi_{2i} & \chi_1 &  f_2(\chi_1 \ot \chi_2 + \chi_{2i} \ot \chi_{2i-1}) & f_2(\chi_1 \ot \chi_{1}) &  * \\
  & 1 & 0 & \chi_{2i-1} & 0 &  f_2(\chi_{2i-1} \ot \chi_1) \\
  & & 1 & \chi_2 &  \chi_1 &  f_2(\chi_1 \ot \chi_2 + \chi_{2} \ot \chi_{1}) \\
  & &  & 1 & 0 & \chi_{1} \\  
  & &  &  & 1 & \chi_{2} \\  
  & & & & & 1 
\end{psmallmatrix}
\end{align*} 
lifts to a continuous homomorphism 
$\rr_3 \colon G \to U_6(\F_2)$. 
Let 
$M \coloneqq 
\begin{psmallmatrix}
 1 & 0 & 1 & 0 &  0 & 0\\
  & 1 & 0 & 0 & 0 & 0 \\
  & & 1 & 1 & 1 & 0 \\
  & & & 1 &  0 & 1 \\ 
  &  & & & 1 &  0 \\  
   & & & &  & 1  
\end{psmallmatrix}$  
$N \coloneqq 
\begin{psmallmatrix}
 1 & 0 & 0 & 0 &  0 & 0\\
  & 1 & 0 & 0 & 0 & 0 \\
  & & 1 & 1 & 0 & 0 \\
  & & & 1 &  0 & 1 \\ 
  &  & & & 1 &  0 \\  
   & & & &  & 1  
\end{psmallmatrix}$  
in $U_6(\F_2)$. 
%
Then $M^q= I_6$ and $[M,N] [E_{2,4}, E_{1,2}] = I_6$ in $U_6(\F_2)$. 
(Note that $[M,N]  = [E_{2,4}, E_{1,2}] = E_{1,4}$ in $U_6(\F_2)$.) 
Thus, we can define a continuous group homomorphism $\rr_3 \colon G \to U_6(\F_2)$ 
by the assignment 
\begin{align*}
x_1 \mapsto M, ~ 
x_2 \mapsto N, ~
 x_{2i-1} \mapsto E_{2,4}, ~ 
x_{2i}\mapsto E_{1,2}, 
\end{align*}
and $\rr_3(x_j) \coloneqq I_6$ for $j \ne 1,2,2i-1, 2i$, 
which lifts $\brr_3$. 
Here we use Proposition \ref{prop:f2_comparison_3to5} 
to identify $e_{1,4} \circ \brr_3$ and $e_{1,4} \circ \rr_3$ with $f_2(\chi_1 \ot \chi_{1})$, 
and $e_{2,5} \circ \brr_3$ and $e_{2,5} \circ \rr_3$ with $f_2(\chi_{2i-1} \ot \chi_{1})$. 
This proves 
\[
\kappa_3(\chi_{2,1,1} + \chi_{2i-1,2i,1})  = \kappa_3(\chi_{1,1,2} + \chi_{1,2,1} + \chi_{2i,2i-1,1}) =  0 ~ \text{for all} ~ i \in \Jw/2,
\]
and thus the third relation. 
%


For the fourth relation, we note that in $H^1 \ot H^1 \ot H^1$ we have 
\begin{align*}
& \chi_2 \ot (\chi_{2,1} + \chi_{2i-1,2i}) \\
= & ~~ \chi_2 \ot [(\chi_{1,2} + \chi_{2,1}) + (\chi_{2i-1,2i} + \chi_{2i,2i-1}) + (\chi_{1,2} + \chi_{2i,2i-1})]. 
\end{align*}
Since $\chi_{2,1,2} + \chi_{2,2,1}$ and $\chi_{2,1,2} + \chi_{2,2i,2i-1}$ 
are not in $K_3^3(\Hb)$, 
we add $\chi_{1,2,2}$ twice to the sum and get 
\begin{align*}
& ~~ \chi_2 \ot (\chi_{2,1} + \chi_{2i-1,2i}) \\
= &  (\chi_{1,2,2} + \chi_{2,1,2} + \chi_{2,2,1})  + (\chi_2 \ot (\chi_{2i-1,2i} + \chi_{2i,2i-1})) + (\chi_{1,2,2} + \chi_{2,1,2} + \chi_{2,2i,2i-1}). 
\end{align*}
We know $\kappa_3(\chi_{1,2,2} + \chi_{2,1,2} + \chi_{2,2,1}) = 0$ by the computation for elements in $\TTT$, 
and $\kappa_3(\chi_2 \ot (\chi_{2i-1,2i} + \chi_{2i,2i-1})) = 0$ by the computation for elements in $\DDD$. 
Thus, $\kappa_3(\chi_{2,2,1} + \chi_{2,2i-1,2i}) = \kappa_3(\chi_{1,2,2} + \chi_{2,1,2} + \chi_{2,2i,2i-1})$,  
and it suffices to compute the latter. 
To compute $\kappa_3(\chi_{1,2,2} + \chi_{2,1,2} + \chi_{2,2i,2i-1})$, 
we show that the continuous homomorphism 
$\brr_4 \colon G \to \bU_6(\F_2)$ given by 
\begin{align*}
\brr_4 = 
\begin{psmallmatrix}
 1 & \chi_1 & \chi_2 &  f_2(\chi_1 \ot \chi_2 + \chi_{2} \ot \chi_{1}) & f_2(\chi_2 \ot \chi_{2i}) &  * \\
  & 1 & 0 & \chi_2 & 0 &  f_2(\chi_2 \ot \chi_2) \\
  & & 1 & \chi_1 &  \chi_{2i} &  f_2(\chi_1 \ot \chi_2 + \chi_{2i} \ot \chi_{2i-1}) \\
  & &  & 1 & 0 & \chi_{2} \\  
  & &  &  & 1 & \chi_{2i-1} \\  
  & & & & & 1 
\end{psmallmatrix}
\end{align*} 
does not lift to a continuous homomorphism 
$G \to U_6(\F_2)$. 
Let 
$P \coloneqq 
\begin{psmallmatrix}
 1 & 1 & 0 & 0 &  0 & 0\\
  & 1 & 0 & 0 & 0 & 0 \\
  & & 1 & 1 & 0 & 0 \\
  & & & 1 &  0 & 0 \\ 
  &  & & & 1 &  0 \\  
   & & & &  & 1  
\end{psmallmatrix}$  
$Q \coloneqq 
\begin{psmallmatrix}
 1 & 0 & 1 & 0 &  0 & 0\\
  & 1 & 0 & 1 & 0 & 0 \\
  & & 1 & 0 & 0 & 0 \\
  & & & 1 &  0 & 1 \\ 
  &  & & & 1 &  0 \\  
   & & & &  & 1  
\end{psmallmatrix}$  
in $U_6(\F_2)$. 
%
Then $P^q = I_6$ and $[P, Q] [E_{5,6}, E_{3,5}] = I_6$ in $\bU_6(\F_2)$, 
but $[P, Q] [E_{5,6}, E_{3,5}] = E_{1,6} \ne I_6$ in $U_6(\F_2)$. 
Thus, we can define a continuous group homomorphism $\brr_4 \colon G \to \bU_6(\F_2)$ 
by the assignment 
\begin{align*}
x_1 \mapsto P, ~ 
x_2 \mapsto Q, ~
 x_{2i-1} \mapsto E_{5,6}, ~ 
x_{2i}\mapsto E_{3,5}, 
\end{align*}
and $\brr_4(x_j) \coloneqq I_6$ for $j \ne 1,2,2i-1, 2i$,  
but $\brr_4$ does not lift to a 
continuous group homomorphism $G \to U_6(\F_2)$. 
Here we use Proposition \ref{prop:f2_comparison_3to5} 
to identify $e_{1,4} \circ \brr_4$ and $e_{1,4} \circ \rr_4$ with $f_2(\chi_2 \ot \chi_{2i})$, 
and $e_{2,5} \circ \brr_4$ and $e_{2,5} \circ \rr_4$ with $f_2(\chi_{2} \ot \chi_{2})$. 
This proves 
\[
\kappa_3(\chi_{2,2,1} + \chi_{2,2i-1,2i}) = \kappa_3(\chi_{1,2,2} + \chi_{2,1,2} + \chi_{2,2i,2i-1}) \ne  0 ~ \text{for all} ~ i \in \Jw/2,
\]
and thus the fourth relation, 
%
since there is a unique non-zero element in $H^2$. 
\end{proof}

\begin{proof}[Proof of Theorem \ref{thm:torsion_free_pro-2_Demushkin_groups_are_A_3-formal}] 
The theorem now follows from 
Corollary \ref{cor:image_of_partial_sigmas_DDDw},
and  
Propositions 
\ref{prop:kappa_3_vanishes_on_S_D_T},  
and \ref{prop:kappa_3_on_DDDw}. 
\end{proof}

%

\section{Demushkin groups of type II}\label{sec:type_II}

The goal of this section is to prove the following theorem. 

\begin{theorem}\label{thm:type_II_pro-2_Demushkin_groups_are_A_3-formal}
Let $d \ge 3$ be an odd number, and let $G$ be the pro-$2$ group generated by elements $x_1, \ldots, x_d$ with the single relation 
$x_1^2 x_2^{2^f} [x_2,x_3]  \cdots [x_{d-1},x_d] = 1$ with $f\ge 2$ or $f= \infty$ 
with the convention $2^{\infty} \coloneqq 0$.  
Then the canonical class of $G$ is trivial, and $G$ is $A_3$-formal. 
\end{theorem}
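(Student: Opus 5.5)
We follow the strategy of Section \ref{sec:torsion-free_p=2_Demushkin}, adapted to the relation $x_1^2 x_2^{2^f}[x_2,x_3]\cdots[x_{d-1},x_d]$. By Lemma \ref{lemma:relations_in_H2_all_groups}, the relations $R$ have the basis $\BII_2$ consisting of the following elements:
\begin{itemize}
\item[(a)] $\chi_j\ot\chi_k$ for all $(j,k)\ne(1,1),(2i,2i+1),(2i+1,2i)$;
\item[(b)] $\chi_1\ot\chi_1+\chi_{2i}\ot\chi_{2i+1}$ for $1\le i\le (d-1)/2$;
\item[(c)] $\chi_{2i}\ot\chi_{2i+1}+\chi_{2i+1}\ot\chi_{2i}$ for $1\le i\le (d-1)/2$.
\end{itemize}
The count is $d^2-1$, as required. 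Using the Koszulity result \cite[Theorem 5.2]{MPQT}, we then write down a basis $\BII_3$ of $K^3_3(\Hb)$, split as $\SII\cup\DII\cup\TII$ together with an exceptional set $\TIIw$. The single terms $\SII$ are the $\chi_{a,b,c}$ with $\chi_{a,b},\chi_{b,c}$ both pure relations. The sets $\DII$ and $\TIIw$ are double sums obtained by tensoring a relation of type (b) or (c) with a $\chi_k$ on either side. The set $\TII$ consists of triple sums such as $\chi_{2i,2i+1,2i+1}+\chi_{2i+1,2i,2i+1}+\chi_{2i+1,2i+1,2i}$. The new feature is the element built from $\chi_1$ alone, for instance $\chi_{1,1,1}$ and the combinations $\chi_1\ot(\chi_{1,1}+\chi_{2i,2i+1})$ and $(\chi_{1,1}+\chi_{2i,2i+1})\ot\chi_1$. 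As in the proof of Proposition \ref{prop:type_I_basis_for_Demushkin_even_generators}, we would verify the basis by intersecting $R\ot H^1$ with $H^1\ot R$ and counting dimensions.

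Next we fix $f_1=\mathrm{id}$ and construct $f_2$ on $\BII_2$ via Dwyer's Theorem \ref{thm:Dwyer}, using explicit lifts to $U_3(\F_2)$ or $U_4(\F_2)$. For the pure terms $\chi_j\ot\chi_k$ we use $A_{10},A_{01},A_{11}$ as before; note that $x_1\mapsto A_{11}$ is no longer allowed, since $A_{11}^2\neq I_3$. For the sum of type (b) we use a $U_4$-lift modelled on the proof of Lemma \ref{lemma:relations_in_H2_all_groups}. Concretely, we send $x_1$ to a matrix whose square equals $E_{1,4}$, and $x_{2i},x_{2i+1}$ to elementary matrices whose commutator is $E_{1,4}$. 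The compatibility statements, Propositions \ref{prop:f2_comparison_3to4} and \ref{prop:f2_comparison_3to5}, hold verbatim. They only need to be checked for the matrices we assign, using that every matrix $M$ we use satisfies $M^{2^f}=I$.

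We then compute $\kappa_3$ value by value. On $\SII$, $\DII$ and $\TII$ we show $\kappa_3=0$ by exhibiting explicit lifts to $U_4$, $U_5$ and $U_6$ of the homomorphisms $\brr$ that encode the defining systems. This reinterprets each basis element as a defining system of a $3$-, $4$- or $5$-fold Massey product and checks the relation in $U_n(\F_2)$ as in Lemma \ref{lemma:matrix_relations}. The factor $x_1^2$ is the essential difference from type I: whenever $x_1$ occurs with both a superdiagonal entry $\chi_1$ and a neighbouring $\chi_1$, its image squares to a nontrivial central-ish element. This must be cancelled by commutators from some pair $x_{2i},x_{2i+1}$, which is why the double sums involving $\chi_{1,1}$ appear.

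In parallel we compute the relevant part of the Hochschild differential $\partial$ on the dual basis, in analogy with Lemma \ref{lemma:image_of_partial_sigmas_DDDw}, using the maps $\sigma^s$ dual to $\BII_2$. The aim is an analogue of Corollary \ref{cor:image_of_partial_sigmas_DDDw}: any $\kappa$ vanishing on $\SII,\DII,\TII$ and constant in $i$ on each family in $\TIIw$ is a coboundary. The main obstacle is the exceptional family, for example $\chi_1\ot(\chi_{1,1}+\chi_{2i,2i+1})$ and the triple sums containing $\chi_{1,1,1}$. Here $\kappa_3$ may be nonzero, and it is not clear a priori that its values are independent of $i$, or that the nonzero ones lie in $\Imm(\partial)$. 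The case $d=3$ is especially delicate, since the image of $\partial$ is then small. First we would exploit the symmetry $x_{2i}\leftrightarrow x_{2j}$, $x_{2i+1}\leftrightarrow x_{2j+1}$, which is an automorphism of the relation, to get independence of $i$. If some values are nonzero for $d=3$, we would compute them via $U_6$-lifts as in Proposition \ref{prop:kappa_3_on_DDDw} (with up to eightfold Massey-type systems) and search for a $\sigma$, for instance $\sigma^1_{1,1'+'2,3}$, whose coboundary matches them. Combining this with Proposition \ref{prop:kappa3_is_canonical_class} then gives the theorem.
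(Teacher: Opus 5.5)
Your overall framework (Koszul resolution, $\kappa_3$ computed from explicit unipotent lifts via Dwyer's theorem, leftover values killed by $\partial\sigma$'s) is the paper's. The proposal breaks down exactly where type II differs from type I.

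\emph{The basis of $K^3_3(\Hb)$ is wrong.} Since $\chi_1\cup\chi_1\neq 0$, we have $\chi_1\ot\chi_1\notin R$. Hence none of $\chi_1\ot\chi_1\ot\chi_1$, $\chi_1\ot(\chi_{1,1}+\chi_{2i,2i+1})$, $(\chi_{1,1}+\chi_{2i,2i+1})\ot\chi_1$ lies in $(R\ot H^1)\cap(H^1\ot R)$. The elements containing $\chi_{1,1,1}$ are triple sums such as $\chi_{1,1,1}+\chi_{1,2i+1,2i}+\chi_{2i+1,2i,1}$, so $\TIIw$ consists of triple, not double, sums. To reach $\dim K^3_3=d^3-2d$ you also need the family $\EII$ of elements $\chi_{1,1,1}+\chi_{1,2i+1,2i}+\chi_{2i+3,2i+2,1}$ linking consecutive pairs, which your proposal lacks. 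Without the correct basis you cannot say which values of $\kappa_3$ must be controlled.

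\emph{The reduction you aim for is false in type II.} You want an analogue of Corollary~\ref{cor:image_of_partial_sigmas_DDDw}: any $\kappa$ vanishing on $\SII,\DII,\TII$ and constant in $i$ on the exceptional families is a coboundary. Write $\theta\in K^3_3$ as $\sum_a\chi_a\ot\rho_a=\sum_c\lambda_c\ot\chi_c$ with $\rho_a,\lambda_c\in R$. Let $\sigma(x)=\phi(x)\chi_s$ with $\phi\in R^*$, and let $t$ be the unique index with $\chi_t\cup\chi_s\neq 0$; the cup pairing is symmetric here. Then $\partial\sigma(\theta)=\phi(\rho_t+\lambda_t)\,\chi_1\cup\chi_1$. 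If $\theta$ is cyclically invariant (coefficient of $\chi_{a,b,c}$ equals that of $\chi_{b,c,a}$), then $\rho_t=\lambda_t$ and every coboundary vanishes on $\theta$.

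A concrete example is $\theta=\chi_{1,1,1}+\chi_{1,3,2}+\chi_{3,2,1}+\chi_{2,1,3}$. It is the sum of the $i=1$ element of $\TIIw$ above and $\chi_{2,1,3}\in\SII$. Take $d=3$, where ``constant in $i$'' is vacuous and $\EII=\emptyset$. The functional $(\chi_{1,1,1}+\chi_{1,3,2}+\chi_{3,2,1})^*$ then satisfies your hypotheses but is not a coboundary. A nonzero value of $\kappa_3$ there can never be absorbed by $\Imm(\partial)$, so searching for a suitable $\sigma$ cannot succeed.

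What is actually needed is the following.
\begin{enumerate}
\item[(i)] A direct proof that $\kappa_3$ vanishes on $\chi_{1,1,1}+\chi_{1,2i+1,2i}+\chi_{2i+1,2i,1}$ and on all of $\EII$. The paper does this with $U_6(\F_2)$-lifts sending $x_1$ to a matrix $T$ with $T^2=E^{1,4}_{2,6}$. This is cancelled by $[E^{3,4}_{5,6},E^{1,3}_{2,5}]$, respectively by $[E_{5,6},E_{2,5}][E_{3,4},E_{1,3}]$. It also uses the new compatibility result, Proposition~\ref{prop:f2_comparison_4to5_type_II}, identifying the $(2,6)$-entry with $f_2(\chi_{1,1}+\chi_{2i+1,2i})$.
\item[(ii)] The fact that the duals of the other two families in $\TIIw$ are individually coboundaries, namely $\partial\sigma^{2i+1}_{2i,2i+1 '+' 2i+1,2i}$ and $\partial\sigma^{2i}_{2i,2i+1 '+' 2i+1,2i}$ (Lemma~\ref{lemma:image_of_partial_sigmas_TIIw}). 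On $\chi_{2i,1,1}+\chi_{2i,2i+1,2i}+\chi_{2i+1,2i,2i}$ the map $\kappa_3$ is in fact nonzero, which is harmless for this reason. Independence of $i$ plays no role.
\end{enumerate}
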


In this section, 
we let $G$ be a pro-$2$ group as in Theorem \ref{thm:type_II_pro-2_Demushkin_groups_are_A_3-formal}. 
We use the same notation for $\Cb$, $\Zh^{\bbb}$, $\Hb$, and $\chi_i$ as in Section \ref{sec:torsion-free_p=2_Demushkin}. 
We write again $J \coloneqq \{1, \ldots, d\}$ and now set $J/2 \coloneqq \{1, \ldots, (d-1)/2 \}$. 
%


\subsection{The Koszul--Hochschild complex -- Demushkin groups of type II}\label{subsec:Koszul_complex_p=2_type_II}

Again, by \cite[Theorem 5.2]{MPQT}, 
the graded $\F_2$-cohomology algebra of $G$ is a Koszul algebra. 
We now provide a basis for $R$ and $K_3^3(\Hb)$. 


\begin{proposition}\label{prop:type_II_basis_2_for_Demushkin_odd_generators}
The $\F_2$-vector subspace $R \subset H^1 \ot H^1$ such that $\Hb = T(H^1)/(R)$ 
admits the set $\BII_2$ as a basis where
\begin{align*}
\BII_2 \coloneqq \{ & \chi_i \ot \chi_j ~ \text{for all} ~ (i,j) \notin \{(1,1), (2r,2r+1), (2r+1,2r) : r  \in J/2 \}; \\ 
& \chi_1 \ot \chi_1 + \chi_{2i+1} \ot \chi_{2i}, \chi_{2i} \ot \chi_{2i+1} + \chi_{2i+1} \ot \chi_{2i} ~ \text{for all} ~ i \in J/2 \}.  
\end{align*}
\end{proposition}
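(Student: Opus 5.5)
The plan mirrors the proof of Proposition \ref{prop:type_I_basis_2_for_Demushkin_even_generators}. The relation space $R$ is the kernel of the cup-product map $\mu \colon H^1 \ot H^1 \to H^2$, $\chi_i \ot \chi_j \mapsto \chi_i \cup \chi_j$. This holds because $\Hb$ is Koszul by \cite[Theorem 5.2]{MPQT}, hence quadratic and generated in degree one. So it suffices to show three things: that every element of $\BII_2$ lies in $\Ker \mu$, that $\BII_2$ is linearly independent, and that $\#\BII_2 = \dim \Ker \mu$.

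For the first point, use the second case of Lemma \ref{lemma:relations_in_H2_all_groups}. It gives $\chi_j \cup \chi_k = 0$ for all $(j,k) \notin \{(1,1),(2r,2r+1),(2r+1,2r)\}$, so every single tensor in $\BII_2$ lies in $R$. The same lemma gives $\chi_1 \cup \chi_1 = \chi_{2i}\cup\chi_{2i+1} = \chi_{2i+1}\cup\chi_{2i}$. Since we work over $\F_2$, this puts $\chi_1 \ot \chi_1 + \chi_{2i+1}\ot\chi_{2i}$ and $\chi_{2i}\ot\chi_{2i+1} + \chi_{2i+1}\ot\chi_{2i}$ in $R$ for every $i \in J/2$.

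For linear independence, the single tensors are distinct standard basis vectors of $H^1 \ot H^1$. They avoid the $d$ ``special'' positions $(1,1)$, $(2r,2r+1)$, $(2r+1,2r)$, and the two-term elements are supported only on those positions. It therefore suffices to check independence of the $2\cdot\frac{d-1}{2} = d-1$ two-term elements inside the $d$-dimensional span of the special positions. Each $\chi_{2i}\ot\chi_{2i+1}$ occurs in exactly one of them, namely $\chi_{2i}\ot\chi_{2i+1}+\chi_{2i+1}\ot\chi_{2i}$. Each $\chi_{2i+1}\ot\chi_{2i}$ occurs in just two of them, and only one of those also involves $\chi_1\ot\chi_1$. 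A triangularity argument then gives independence: first eliminate the coefficients of the $\chi_{2i}\ot\chi_{2i+1}$-terms, then read off the coefficients at $\chi_{2i+1}\ot\chi_{2i}$.

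For the count, there are $d^2 - d$ single tensors and $d-1$ two-term elements, so $\#\BII_2 = d^2 - d + (d-1) = d^2-1$. On the other side, $\dim H^2 = 1$ and $\mu$ is surjective because $\chi_1\cup\chi_1 \neq 0$, so $\dim \Ker\mu = d^2 - 1$. The two numbers agree, which finishes the proof.

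I expect no real obstacle here; the only place needing care is the linear-independence bookkeeping for the two-term elements.
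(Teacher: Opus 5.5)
Your proof is correct and follows the same route as the paper, which obtains the basis directly from the cup-product relations of Lemma \ref{lemma:relations_in_H2_all_groups}; the type I analogue also makes the count $d^2-1$ explicit. You have simply written out the containment, linear-independence and dimension checks that the paper leaves implicit.
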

\begin{proof}
This follows from the description of the cup product 
on $\Hb(G,\F_2)$ in Lemma \ref{lemma:relations_in_H2_all_groups}.  
\end{proof}


\begin{proposition}\label{prop:type_II_basis_for_Demushkin_odd_generators}
The $\F_2$-vector space $K^3_3(\Hb)$ admits  the set 
$\BII_3 = \SII \cup \DII  \cup \TII \cup \TIIw  \cup \EII$ 
as a basis where 
\begin{align*}
\SII & = \left\{ \chi_i \ot \chi_j \ot \chi_k 
~ \text{with}~ \begin{cases}
i, k \ne 1 &  \text{if} ~ j =1 \\
i, k \ne 2r+1 &  \text{if} ~ j = 2r \\
i, k \ne 2r &  \text{if} ~ j = 2r+1
\end{cases} 
\right\}, 
\end{align*}
\begin{align*}
\DII = & ~ \{ \chi_j \ot (\chi_{2i} \ot \chi_{2i+1} + \chi_{2i+1} \ot \chi_{2i}) , \\ 
& ~ ~ ~ (\chi_{2i} \ot \chi_{2i+1} + \chi_{2i+1} \ot \chi_{2i}) \ot \chi_j ~ \text{for} ~ i \in J/2, ~ j \ne 2i, 2i+1,   \\
& ~ ~ ~  \chi_j \ot (\chi_{1} \ot \chi_{1} + \chi_{2i+1} \ot \chi_{2i}), ~ \text{for} ~i \in J/2, ~ j \ne 1, 2i\\
& ~ ~ ~ (\chi_{1} \ot \chi_{1} + \chi_{2i+1} \ot \chi_{2i}) \ot \chi_j ~ \text{for} ~ i \in J/2, ~ j \ne 1, 2i+1
\}, 
\end{align*}
\begin{align*}
\TII =  \{ & \chi_{2i} \ot \chi_{2i+1} \ot \chi_{2i+1} + \chi_{2i+1} \ot \chi_{2i} \ot \chi_{2i+1} + \chi_{2i+1} \ot \chi_{2i+1} \ot \chi_{2i}, \\
& ~ \chi_{2i+1} \ot \chi_{2i} \ot \chi_{2i} + \chi_{2i} \ot \chi_{2i+1} \ot \chi_{2i} + \chi_{2i} \ot \chi_{2i} \ot \chi_{2i+1} ~ \text{for} ~ i \in J/2 
\}, 
\end{align*}
\begin{align*}
\TIIw =  \{ &  \chi_{1} \ot \chi_{1} \ot \chi_{1} + \chi_1 \ot \chi_{2i+1} \ot \chi_{2i} + \chi_{2i+1} \ot \chi_{2i} \ot \chi_{1}, \\
& \chi_{2i} \ot \chi_{1} \ot \chi_{1} + \chi_{2i} \ot \chi_{2i+1} \ot \chi_{2i} + \chi_{2i+1} \ot \chi_{2i} \ot \chi_{2i}, \\
& \chi_{1} \ot \chi_{1} \ot \chi_{2i+1} + \chi_{2i+1} \ot \chi_{2i} \ot \chi_{2i+1} + \chi_{2i+1} \ot \chi_{2i+1} \ot \chi_{2i}, ~  \text{for} ~ i\in J/2 
\},
\end{align*}
\begin{align*}
\EII = & ~ \{ \chi_{1} \ot \chi_{1} \ot \chi_{1} + \chi_{1} \ot \chi_{2i+1} \ot \chi_{2i} + \chi_{2i+3} \ot \chi_{2i+2} \ot \chi_1 ~ \text{for} ~ 1\le i \le (d-3)/2  \}.
\end{align*}
\end{proposition}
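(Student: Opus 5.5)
We establish three facts: every listed element lies in $K^3_3(\Hb)=(R\ot H^1)\cap(H^1\ot R)$; the number of listed elements equals $\dim K^3_3(\Hb)$; and the listed elements are linearly independent.

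\emph{Dimension.} Since $\Hb$ is Koszul by \cite[Theorem 5.2]{MPQT} and $H^n=0$ for $n\ge 3$, the Koszul complex is exact. This gives $\dim K^3_3 = d\dim R - d\dim H^2 = d(d^2-1)-d = d^3-2d$. Equivalently, $(R\ot H^1)+(H^1\ot R)$ has codimension $d\cdot\dim H^2=d$ in $(H^1)^{\ot 3}$, so $\dim K^3_3 = 2d(d^2-1)-(d^3-d)=d^3-2d$. This can also be checked directly: the kernel of $(H^1)^{\ot 3}\to H^1\ot H^2 \oplus H^2\ot H^1$ has dimension $d^3-2d$, provided the two functionals are independent, which holds by non-degeneracy of the cup product.

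\emph{Membership.} Let $\mu\colon H^1\ot H^1\to H^2\cong\F_2$ be the cup-product pairing. By Lemma~\ref{lemma:relations_in_H2_all_groups}, $\mu(\chi_{i,j})=1$ exactly for $(i,j)\in\{(1,1),(2r,2r+1),(2r+1,2r)\}$. An element $\sum c_{abc}\chi_{a,b,c}$ lies in $K^3_3$ if and only if, for every $c$, $\sum_{a,b}c_{abc}\mu(\chi_{a,b})=0$, and, for every $a$, $\sum_{b,c}c_{abc}\mu(\chi_{b,c})=0$. I verify this set by set.
\begin{itemize}
\item For $\SII$, both adjacent pairs are non-pairing by the case conditions.
\item For $\DII$, one side is a relation of $\BII_2$ and the other side consists of non-pairing pairs, by the conditions on $j$.
\item For $\TII$ and $\TIIw$, the pairing contributions cancel in pairs for each fixed outer index. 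For example, in $\chi_{1,1,1}+\chi_{1,2i+1,2i}+\chi_{2i+1,2i,1}$, the left pairings ending in $\chi_1$ are $\mu(\chi_{1,1})+\mu(\chi_{2i+1,2i})=0$, and the right pairings starting with $\chi_1$ are $\mu(\chi_{1,1})+\mu(\chi_{2i+1,2i})=0$.
\item For $\EII$, the same cancellation holds with the index shift $2i+3,2i+2$.
\end{itemize}

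\emph{Counting.} I count $\#\SII$, $\#\DII$, $\#\TII=2m$, $\#\TIIw=3m$ and $\#\EII=m-1$, where $m=(d-1)/2$. Then I check that the total equals $d^3-2d$. $\SII$ consists of the triples $(i,j,k)$ for which $j$ has a unique partner $p(j)$ (namely $p(1)=1$, $p(2r)=2r+1$, $p(2r+1)=2r$) with $i,k\ne p(j)$. This gives $\#\SII=d(d-1)^2$. For $\DII$, the first two families give $2m(d-2)$, and the last two give $2m(d-2)$ as well. The resulting total is $d(d-1)^2+4m(d-2)+6m-1$. This is precisely the bookkeeping I expect to be error-prone: with $d=2m+1$ the total should equal $d^3-2d$, and if it does not, I would recheck the $\DII$ ranges. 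For example, $j=2i+1$ is excluded in $\chi_j\ot(\chi_{1,1}+\chi_{2i+1,2i})$ only if $\mu(\chi_{j,1})\neq 0$, and it is not; the excluded index comes from the left pairing with $\chi_1$, so that family should exclude only $j=1$. I would adjust accordingly until the count matches.

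\emph{Independence.} This is the main obstacle. I use leading monomials. Each element of $\SII$ is a single monomial that appears in no other basis element, since every other element contains a pairing pair. Each element of $\DII$ contains a monomial unique to it, such as $\chi_{j,2i,2i+1}$ or $\chi_{j,1,1}$. So it suffices to show that $\TII\cup\TIIw\cup\EII$ is independent modulo $\SII\cup\DII$.

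To do this, I project onto the span of the monomials whose outer pairs are both pairing pairs, that is, the ``doubly constrained'' triples, and check triangularity:
\begin{itemize}
\item $\TII$ is detected by $\chi_{2i+1,2i,2i+1}$ and $\chi_{2i,2i+1,2i}$.
\item $\TIIw$ is detected by $\chi_{2i,1,1}$, $\chi_{1,1,2i+1}$, and the remaining $\chi_{1,2i+1,2i}$-type term.
\item $\EII$ is detected by $\chi_{2i+3,2i+2,1}$ paired with $\chi_{1,2i+1,2i}$, which forms a chain.
\end{itemize}
The elements $\chi_{1,1,1}+\dots$ in $\TIIw$ and $\EII$ form a chain linked through $\chi_{1,2i+1,2i}$ and $\chi_{2i+1,2i,1}$, and an ordering by $i$ gives triangularity. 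Independence together with the dimension count then proves that $\BII_3$ is a basis. The same method as in \cite[Lemma 7.2]{PQ3} applies.
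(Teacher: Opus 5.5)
The gap is in the independence step, which is where the content of the proposition lies. Two of your claims there are false.

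First, not every element of $\DII$ has a monomial that occurs in no other element of $\BII_3$. Your example $\chi_{j,1,1}$ occurs in $\chi_j\ot(\chi_{1,1}+\chi_{2i+1,2i})$ for every admissible $i$, and for even $j$ also in a $\TIIw$ element. For $d\ge 5$ the element $\chi_3\ot(\chi_{1,1}+\chi_{3,2})=\chi_{3,1,1}+\chi_{3,3,2}$ has no such monomial at all:
\begin{itemize}
\item $\chi_{3,1,1}$ also occurs in $\chi_3\ot(\chi_{1,1}+\chi_{5,4})$;
\item $\chi_{3,3,2}$ occurs in $\chi_{2,3,3}+\chi_{3,2,3}+\chi_{3,3,2}\in\TII$ and in $\chi_{1,1,3}+\chi_{3,2,3}+\chi_{3,3,2}\in\TIIw$.
\end{itemize}
The same happens for $(\chi_{1,1}+\chi_{2i+1,2i})\ot\chi_{2i}$.

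Second, projecting onto ``doubly constrained'' monomials cannot detect $\TII\cup\TIIw\cup\EII$. The monomials whose two adjacent pairs both pair non-trivially are exactly the $d$ monomials $\chi_{1,1,1}$, $\chi_{2r,2r+1,2r}$, $\chi_{2r+1,2r,2r+1}$. But $\TII\cup\TIIw\cup\EII$ has $3d-4>d$ elements. The detecting monomials you actually name for $\TIIw$ and $\EII$ are not of this form, and they occur in elements of $\DII$:
\begin{itemize}
\item $\chi_{1,2i+1,2i}$ occurs in $\chi_1\ot(\chi_{2i,2i+1}+\chi_{2i+1,2i})$;
\item $\chi_{2i+3,2i+2,1}$ occurs in $(\chi_{2i+2,2i+3}+\chi_{2i+3,2i+2})\ot\chi_1$;
\item $\chi_{2i,1,1}$ occurs in $\DII$ elements, as above.
\end{itemize}
Since $\mathrm{span}(\SII\cup\DII)$ is not a coordinate subspace, a coordinate projection onto these monomials does not prove independence modulo it.

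What you need is an explicitly ordered elimination, as in the paper's proof, checking at each step that the pivot monomial is \emph{private}, i.e.\ occurs in no element not yet eliminated. One order that works, in a vanishing linear combination:
\begin{enumerate}
\item Remove $\SII$.
\item Remove $\chi_j\ot(\chi_{2i,2i+1}+\chi_{2i+1,2i})$ and $(\chi_{2i,2i+1}+\chi_{2i+1,2i})\ot\chi_j$ via the pivots $\chi_{j,2i,2i+1}$ and $\chi_{2i,2i+1,j}$.
\item Remove $\chi_j\ot(\chi_{1,1}+\chi_{2i+1,2i})$ with $j\ne 2i+1$ via $\chi_{j,2i+1,2i}$, and $(\chi_{1,1}+\chi_{2i+1,2i})\ot\chi_j$ with $j\ne 2i$ via $\chi_{2i+1,2i,j}$.
\item Remove the leftover cases $j=2i+1$ and $j=2i$ via $\chi_{2i+1,1,1}$ and $\chi_{1,1,2i}$.
\item Only now are $\chi_{2i,1,1}$ and $\chi_{1,1,2i+1}$ private to the second and third kinds of elements of $\TIIw$, so remove those. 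For $d\ge 5$ these two monomials cannot serve as pivots immediately after $\SII$.
\item Remove $\TII$ via $\chi_{2i,2i+1,2i+1}$ and $\chi_{2i,2i,2i+1}$.
\item Remove the remaining elements $\chi_{1,1,1}+\chi_{1,2i+1,2i}+\chi_{2i+1,2i,1}$ and those of $\EII$ along the chain $\chi_{3,2,1},\chi_{1,3,2},\chi_{5,4,1},\chi_{1,5,4},\dots$. This is the induction you and the paper both describe.
\end{enumerate}

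The rest of your proposal is sound, and in places more complete than the paper: you derive $\dim K^3_3(\Hb)=d^3-2d$ instead of citing it, and you verify membership, which the paper leaves implicit. A few side corrections:
\begin{itemize}
\item Your counts are right and do sum to $d^3-2d$: for $d=2m+1$ both sides equal $8m^3+12m^2+2m-1$. So no adjustment is needed.
\item The adjustment you float would be wrong. Excluding only $j=1$ in $\chi_j\ot(\chi_{1,1}+\chi_{2i+1,2i})$ fails for $j=2i$, because then the summand $\chi_{2i,2i+1}\ot\chi_{2i}$ is not in $R\ot H^1$.
\item Your ``equivalently'' dimension remark is off. $(R\ot H^1)+(H^1\ot R)$ has codimension $\dim H^3=0$ in $(H^1)^{\ot 3}$, not $d$. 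Moreover, $2d(d^2-1)-(d^3-d)=d^3-d$, not $d^3-2d$.
\end{itemize}
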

\begin{proof}
By \cite[Lemma 5.10]{PQ3}, we know $\dim_{\F_2} K^3_3(\Hb) = d^3-2d$. 
Hence it suffices to count the number of elements in $\BII_3$ and to show that the set is linearly independent. 
We have $\# \SII = (d-1) d (d-1)$ 
since we have $d$ choices for the middle index $j$ and for each choice exactly one value for $i$ and $k$ excluded. 
We have $\# \DII = 4(d-2) (d-1)/2 = 2(d-2)(d-1)$  
since we have four times $(d-1)/2$ choices for the index $i$ and for each choice exactly two values for $j$ excluded. 
%
%
We have $\# \EII = (d-3)/2$ 
since we have $(d-3)/2$ choices for the index $i$ each furnishing one element. 
We have $\# \TII  = d-1$ 
as there are $(d-1)/2$ choices for the index $i$ each furnishing two triple sums, 
and $\# \TIIw = 3(d-1)/2$ since each choice of $i$ furnishes three elements. 
%
%
Thus, in total we have 
\begin{align*}
\# \BII_3  & = \# \SII + \# \DII   + \# \TII + \#\TIIw + \#\EII \\
& = d(d-1)^2 + 2(d-2)(d-1) + (d-1) + 3(d-1)/2 + (d-3)/2 \\
& =  d^3 - 2d.
\end{align*}
It remains to show that $\BII_3$ is linearly independent. 
Assume we write the zero vector in $K_3^3(\Hb)$ as an $\F_2$-linear combination of elements in $\BII_3$. 
Since the terms in $\SII$ only occur in $\SII$, their coefficients must be zero. 
Similarly, for $i \in J/2$, 
terms of the form $\chi_{2i} \ot \chi_{1} \ot \chi_{1}$ 
and 
$\chi_{1} \ot \chi_{1} \ot \chi_{2i+1}$ 
each only occur once in sums in $\TIIw$. 
Hence the coefficients of 
$\chi_{2i} \ot \chi_{1} \ot \chi_{1} + \chi_{2i} \ot \chi_{2i+1} \ot \chi_{2i} + \chi_{2i+1} \ot \chi_{2i} \ot \chi_{2i}$ 
and $\chi_{1} \ot \chi_{1} \ot \chi_{2i+1} + \chi_{2i+1} \ot \chi_{2i} \ot \chi_{2i+1} + \chi_{2i+1} \ot \chi_{2i+1} \ot \chi_{2i}$ 
in $\TIIw$ must be zero. 
Terms of the form $\chi_{2i} \ot \chi_{2i+1} \ot \chi_{2i}$ 
and $\chi_{2i+1} \ot \chi_{2i} \ot \chi_{2i+1}$ 
only occur in sums in $\TII$ and $\TIIw$. 
We have shown that the coefficients of the corresponding sums in $\TIIw$ are zero. 
Hence the coefficients of all the terms in $\TII$ must be zero. 
For $i \in J/2$ and $j \ne 2i,2i+1$, 
terms of the form 
$\chi_{2i} \ot \chi_{2i+1} \ot \chi_{j}$ only occur once in sums in $\DII$. 
Hence the coefficient of 
$(\chi_{2i} \ot \chi_{2i+1} + \chi_{2i+1} \ot \chi_{2i}) \ot \chi_j$ 
must be zero.  
For $i \in J/2$ and $j \ne 2i,2i+1$, 
the only sum which contains the term 
$\chi_j \ot \chi_{2i} \ot \chi_{2i+1}$ 
is 
$\chi_j \ot (\chi_{2i} \ot \chi_{2i+1} + \chi_{2i+1} \ot \chi_{2i})$ 
in $\DII$. 
This implies that the coefficient of the latter must be zero. 
Similarly, for $i \in J/2$ and $j \ne 1,2i+1$, 
the only remaining sum which contains the term 
$\chi_{2i+1} \ot \chi_{2i} \ot \chi_j$ is 
$(\chi_{1} \ot \chi_{1} + \chi_{2i+1} \ot \chi_{2i}) \ot \chi_j$ in $\DII$. 
This implies that coefficient of the latter must be zero as well. 
For $i \in J/2$ and $j \ne 1,2i$, 
terms of the form 
$\chi_j \ot \chi_{2i+1} \ot \chi_{2i}$ only occur in the sum 
$\chi_j \ot (\chi_{1} \ot \chi_{1} + \chi_{2i+1} \ot \chi_{2i})$ in $\DII$. 
This implies that coefficient of the latter must be zero, too. 
This shows that the coefficients of all terms in $\DII$ must be zero. 
In particular, the coefficient of $(\chi_{2i} \ot \chi_{2i+1} + \chi_{2i+1} \ot \chi_{2i}) \ot \chi_1$ 
is zero. 
Thus, 
the term $\chi_{2i+1} \ot \chi_{2i} \ot \chi_1$ now only occurs in sums in $\TIIw$ and in $\EII$.  
Now we use induction to show that the remaining coefficients are zero as well. 
For $i=1$, 
the term $\chi_3 \ot \chi_2 \ot \chi_1$ only occurs in 
$\chi_{1} \ot \chi_{1} \ot \chi_{1} + \chi_{1} \ot \chi_{3} \ot \chi_{2} + \chi_{3} \ot \chi_{2} \ot \chi_1$ 
in $\TIIw$.  
Thus, the coefficient of this term is zero. 
Now the only remaining sum where the term 
$\chi_1 \ot \chi_3 \ot \chi_2$ occurs is the sum 
$\chi_{1} \ot \chi_{1} \ot \chi_{1} + \chi_{1} \ot \chi_{3} \ot \chi_{2} + \chi_{5} \ot \chi_{4} \ot \chi_1$ 
in $\EII$. 
Hence the coefficient of the latter is zero. 
Then the only remaining sum where the term 
$\chi_{5} \ot \chi_{4} \ot \chi_1$ occurs is the sum 
$\chi_{1} \ot \chi_{1} \ot \chi_{1} + \chi_{1} \ot \chi_{5} \ot \chi_{4} + \chi_{5} \ot \chi_{4} \ot \chi_1$ 
in $\TIIw$, 
and hence the coefficient of the latter must be zero. 
Now we proceed by induction to show that the coefficients of all terms in $\EII$ and of the remaining terms in $\TIIw$ are zero. 
This proves that all coefficients must be zero and $\BII_3$  
is linearly independent. 
\end{proof}


\subsection{The differential in the Koszul--Hochschild complex -- Demushkin groups of type II}\label{subsec:Koszul_complex_p=2_type_II_differential}

Let 
$\partial \colon \Hom_{\F_2}(K_2^2(\Hb),H^1)  \to \Hom_{\F_2}(K^3_3(\Hb),H^2)$ 
denote the differential 
in the complex \eqref{eq:Koszul_complex_3,-1} 
which computes the Hochschild cohomology of $\Hb$. 
We will now compute the effect of $\partial$. 
In the following, we use notation analogous to the one introduced in Notation \ref{notn:sigma} and \ref{notn:dual_notation_type_I} 
in Section \ref{subsec:Koszul_complex_p=2_trosionfree_differential}. 

\begin{lemma}\label{lemma:image_of_partial_sigmas_TIIw}
Let $i \in J/2$. 
Then we have the identities 
\begin{align*}
(\chi_{2i} \ot \chi_{1} \ot \chi_{1} + \chi_{2i} \ot \chi_{2i+1} \ot \chi_{2i} + \chi_{2i+1} \ot \chi_{2i} \ot \chi_{2i})^* 
=  \partial \sigma^{2i+1}_{2i,2i+1 '+' 2i+1,2i}, 
\end{align*}
and
\begin{align*}
 (\chi_{1} \ot \chi_{1} \ot \chi_{2i+1} + \chi_{2i+1} \ot \chi_{2i} \ot \chi_{2i+1} + \chi_{2i+1} \ot \chi_{2i+1} \ot \chi_{2i})^* 
=  \partial \sigma^{2i}_{2i,2i+1 '+' 2i+1,2i}
\end{align*}
in $\Hom_{\F_2}(K^3_3(\Hb),H^2)$. 
\end{lemma}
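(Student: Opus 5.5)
The plan is a direct computation in the complex \eqref{eq:Koszul_complex_3,-1}, following the proof of Lemma \ref{lemma:image_of_partial_sigmas_DDDw}. Write $\sigma \coloneqq \sigma^{2i+1}_{2i,2i+1'+'2i+1,2i}$. By definition, $\sigma$ sends the basis element $\chi_{2i,2i+1}+\chi_{2i+1,2i}$ of $\BII_2$ to $\chi_{2i+1}$ and every other element of $\BII_2$ to $0$. I will evaluate $\partial\sigma$ on each element $x$ of $\BII_3$ and show that $\partial\sigma(x)\neq 0$ exactly when $x$ is the first $\TIIw$-element in the statement. Since $H^2\cong\F_2$, this gives the first identity. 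The second identity is obtained symmetrically from $\sigma^{2i}_{2i,2i+1'+'2i+1,2i}$ by exchanging the roles of $\chi_{2i}$ and $\chi_{2i+1}$.

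The evaluation rests on one bookkeeping device. Every $x\in K^3_3(\Hb)$ lies in both $H^1\ot R$ and $R\ot H^1$, so I can write
\[
x=\sum_a \chi_a\ot r_a=\sum_c r'_c\ot \chi_c \quad\text{with } r_a,r'_c\in R,
\]
and then $\partial\sigma(x)=\sum_a \chi_a\cup\sigma(r_a)+\sum_c \sigma(r'_c)\cup\chi_c$. To evaluate $\sigma(r_a)$ and $\sigma(r'_c)$, each $r_a,r'_c$ must be expanded in the basis $\BII_2$. The cup products are then read off from Lemma \ref{lemma:relations_in_H2_all_groups}: the only nonzero products are $\chi_1\cup\chi_1=\chi_{2k}\cup\chi_{2k+1}=\chi_{2k+1}\cup\chi_{2k}$. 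For the distinguished element $\chi_{2i,1,1}+\chi_{2i,2i+1,2i}+\chi_{2i+1,2i,2i}$ the two decompositions are
\[
\chi_{2i}\ot(\chi_{1,1}+\chi_{2i+1,2i})+\chi_{2i+1}\ot\chi_{2i,2i}
\]
and
\[
\chi_{2i,1}\ot\chi_1+(\chi_{2i,2i+1}+\chi_{2i+1,2i})\ot\chi_{2i}.
\]
Only the last summand is detected by $\sigma$, so $\partial\sigma$ of this element equals $\chi_{2i+1}\cup\chi_{2i}\neq0$.

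The vanishing on the other basis elements is handled set by set.
\begin{itemize}
\item On $\SII$, every adjacent pair $\chi_{a,b}$ and $\chi_{b,c}$ is a single basis tensor different from the distinguished one, so $\sigma$ kills both and $\partial\sigma$ vanishes.
\item On $\DII$, the only case where $\sigma$ does not vanish is $\chi_j\ot(\chi_{2i,2i+1}+\chi_{2i+1,2i})$ or its mirror. This contributes $\chi_j\cup\chi_{2i+1}$ (respectively $\chi_{2i+1}\cup\chi_j$), and this product is zero because $j\neq 2i,2i+1$. The elements built on $\chi_{1,1}+\chi_{2i+1,2i}$ only involve other basis elements.
\item On $\TII$, the contributions are of the form $\chi_{2i+1}\cup\chi_{2i+1}=0$ or involve $\sigma(\chi_{2i,2i})=0$. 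For example, the first $\TII$-element decomposes as $\chi_{2i}\ot\chi_{2i+1,2i+1}+\chi_{2i+1}\ot(\chi_{2i,2i+1}+\chi_{2i+1,2i})$ on the left and symmetrically on the right, giving $\chi_{2i+1}\cup\chi_{2i+1}=0$ on each side.
\item The remaining $\TIIw$-elements, and those attached to other indices $k\neq i$, are checked the same way.
\end{itemize}

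The step I expect to be the main obstacle is the elements in which a lone tensor such as $\chi_{2i+1,2i}$ or $\chi_{2i,2i+1}$ appears, notably $\EII$ and the first type of $\TIIw$-element. There the factors $r_a$, $r'_c$ have to be regrouped, for instance using
\[
\chi_{1,1}+\chi_{1,2i+1,2i}\text{-type sums}=\chi_1\ot(\chi_{1,1}+\chi_{2i+1,2i}),
\]
before $\sigma$ can be applied. I must make sure that whenever the distinguished basis element $\chi_{2i,2i+1}+\chi_{2i+1,2i}$ appears in such a regrouping, it is cupped with some $\chi_c$ satisfying $c\notin\{2i\}$, respectively with some $\chi_a$ satisfying $a\notin\{2i\}$. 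For example, $\EII$-elements only involve $\chi_1$ and the pairs $(2k+1,2k)$ in the pattern $\chi_1\ot(\ldots)$ and $(\ldots)\ot\chi_1$, so any $\sigma$-contribution is cupped with $\chi_1$ against $\chi_{2i+1}$, which is zero. I would carry out this regrouping carefully for each of these elements.
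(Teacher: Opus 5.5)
Your proposal is correct and is essentially the paper's proof: you evaluate $\partial\sigma$ on each element of $\BII_3$ through its decompositions in $H^1\ot R$ and $R\ot H^1$, obtain $\chi_{2i+1}\cup\chi_{2i}\neq 0$ on the distinguished $\TIIw$-element, and check vanishing on $\SII$, $\DII$, $\TII$, the remaining $\TIIw$-elements and $\EII$. For $\EII$, note that $\chi_{2i,2i+1}+\chi_{2i+1,2i}$ in fact never appears in either decomposition.

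Two small corrections:
\begin{itemize}
\item On the $\TII$-element $\chi_{2i+1,2i,2i}+\chi_{2i,2i+1,2i}+\chi_{2i,2i,2i+1}$, your blanket justification for $\TII$ does not apply. Here the left and right contributions are $\chi_{2i}\cup\chi_{2i+1}$ and $\chi_{2i+1}\cup\chi_{2i}$. Each is non-zero, and they cancel by symmetry of the cup product, which is how the paper handles $\TII$.
\item For the second identity, the swap $2i\leftrightarrow 2i+1$ alone neither carries the first element to the second nor preserves $\BII_2$. Instead, just redo the computation: it gives $\chi_{2i+1}\cup\chi_{2i}\neq 0$ on the second element, $\chi_{2i}\cup\chi_{2i}=0$ on the first, and $\chi_j\cup\chi_{2i}=0$ on $\DII$ since $j\neq 2i+1$.
\end{itemize}
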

\begin{proof}
The proof is similar to the proof of Lemma \ref{lemma:image_of_partial_sigmas_DDDw}.  
To prove the first identity, 
let $i \in J/2$. 
To simplify the notation, we write $\sigma =  \sigma^{2i+1}_{2i,2i+1 '+' 2i+1,2i}$. 
First, we compute, using the definition of $\sigma$ in the final step, 
\begin{align*}
& \partial \sigma(\chi_{2i,1,1} + \chi_{2i,2i+1,2i} + \chi_{2i+1,2i,2i}) \\
= & ~ \chi_{2i} \cup \sigma(\chi_{1,1} + \chi_{2i+1,2i}) + \chi_{2i+1} \cup \sigma(\chi_{2i,2i}) \\
& ~ + \sigma(\chi_{2i,2i+1} + \chi_{2i+1,2i}) \cup \chi_{2i} + \sigma(\chi_{2i,1}) \cup \chi_1 
= \chi_{2i+1} \cup \chi_{2i} \ne 0.
\end{align*}
%
The value of $\sigma$ on other elements in $\BII_3$ can only be non-zero 
if they contain the factor $\chi_{2i,2i+1} + \chi_{2i+1,2i}$. 
This may be the case for elements in $\DII$, $\TII$, and $\TIIw$. 
As we have seen before, $\partial \sigma$ is trivial on elements in $\TII$ by symmetry. 
A similar computation as above yields 
$\partial \sigma(\chi_{1,1,2i+1} + \chi_{2i+1,2i,2i+1} + \chi_{2i+1,2i+1,2i}) = \chi_{2i+1} \cup \chi_{2i+1} = 0$. 
This proves the claim for $\TIIw$. 
The only terms we need to consider in $\DII$ are those of the form 
$\chi_j \ot (\chi_{2i} \ot \chi_{2i+1} + \chi_{2i+1} \ot \chi_{2i})$ 
and $(\chi_{2i} \ot \chi_{2i+1} + \chi_{2i+1} \ot \chi_{2i}) \ot \chi_j$. 
However, since $j \ne 2i$, 
we get 
$\partial \sigma(\chi_j \ot (\chi_{2i} \ot \chi_{2i+1} + \chi_{2i+1} \ot \chi_{2i})) = \chi_{j} \cup \chi_{2i+1} = 0$. 
Similarly, we get that $\partial \sigma$ vanishes on the other term as well. 
This completes the proof for the first identity. 
The second identity follows in a similar way. 
\end{proof}


\begin{proposition}\label{prop:image_of_partial_sigmas_TIIw}
Let  $\kappa \in \Hom_{\F_2}(K^3_3(\Hb), H^2)$. 
Assume that $\kappa$ vanishes on all elements in $\BII_3$ 
except possibly for elements of the form 
$\chi_{2i} \ot \chi_{1} \ot \chi_{1} + \chi_{2i} \ot \chi_{2i+1} \ot \chi_{2i} + \chi_{2i+1} \ot \chi_{2i} \ot \chi_{2i}$ 
and $\chi_{1} \ot \chi_{1} \ot \chi_{2i+1} + \chi_{2i+1} \ot \chi_{2i} \ot \chi_{2i+1} + \chi_{2i+1} \ot \chi_{2i+1} \ot \chi_{2i}$
in $\TIIw$. 
Then $[\kappa] = 0$ in $\HH^{3,-1}(\Hb)$. 
\end{proposition}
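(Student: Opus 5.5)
The argument is pure linear algebra in $\Hom_{\F_2}(K^3_3(\Hb),H^2)$, using Lemma \ref{lemma:image_of_partial_sigmas_TIIw}. Since $\BII_3$ is a basis of $K^3_3(\Hb)$ by Proposition \ref{prop:type_II_basis_for_Demushkin_odd_generators} and $H^2 \cong \F_2$, every $\kappa$ can be written uniquely as a sum of duals $b^*$ with $b \in \BII_3$. The coefficient of $b^*$ is the value $\kappa(b)$, identified with an element of $\F_2$.

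By hypothesis $\kappa(b)=0$ for every $b \in \BII_3$ outside the two families in $\TIIw$. For $i \in J/2$, write
\[
u_i \coloneqq \chi_{2i,1,1} + \chi_{2i,2i+1,2i} + \chi_{2i+1,2i,2i}, \qquad v_i \coloneqq \chi_{1,1,2i+1} + \chi_{2i+1,2i,2i+1} + \chi_{2i+1,2i+1,2i}.
\]
Then
\[
\kappa = \sum_{i \in J/2} \bigl( \kappa(u_i)\, u_i^* + \kappa(v_i)\, v_i^* \bigr).
\]

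Lemma \ref{lemma:image_of_partial_sigmas_TIIw} gives $u_i^* = \partial \sigma^{2i+1}_{2i,2i+1'+'2i+1,2i}$ and $v_i^* = \partial \sigma^{2i}_{2i,2i+1'+'2i+1,2i}$. Hence
\[
\kappa = \partial \Bigl( \sum_{i \in J/2} \bigl( \kappa(u_i)\, \sigma^{2i+1}_{2i,2i+1'+'2i+1,2i} + \kappa(v_i)\, \sigma^{2i}_{2i,2i+1'+'2i+1,2i} \bigr) \Bigr),
\]
so $\kappa$ lies in the image of $\partial$ in the complex \eqref{eq:Koszul_complex_3,-1}. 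As $H^3=0$, every $\kappa$ is a cocycle, and therefore $[\kappa]=0$ in $\HH^{3,-1}(\Hb)$.

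The only point that needs care is the full strength of Lemma \ref{lemma:image_of_partial_sigmas_TIIw}: it must say that $\partial\sigma$ equals the dual exactly, i.e.\ vanishes on every other element of $\BII_3$. The lemma's proof covers $\SII$, $\DII$, $\TII$ and $\TIIw$, but $\EII$ is not mentioned explicitly.

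Elements of $\EII$ involve only tensors of the form $\chi_{1,1}$, $\chi_{1,2j+1}$, $\chi_{2j+1,2j}$ and $\chi_{2j+3,2j+2}$ in each adjacent pair. I would check directly that $\sigma^{s}_{2i,2i+1'+'2i+1,2i}$ evaluated on these pairs, after rewriting them in $\BII_2$, yields either $0$ or a class whose cup product with the neighbouring $\chi$ vanishes. For example, $\chi_{2i+1,2i}$ rewrites as $(\chi_1\ot\chi_1+\chi_{2i+1}\ot\chi_{2i})+\chi_1\ot\chi_1$, and $\chi_1\ot\chi_1 \notin \BII_2$, so this rewriting needs care; for instance $\chi_{2i,2i+1}+\chi_{2i+1,2i}$ might appear. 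I expect this bookkeeping to be the main obstacle. If a nonzero contribution does appear on some element of $\EII$, I would instead modify the primitives by adding suitable $\sigma$'s of the form $\sigma^{s}_{1,1'+'2j+1,2j}$ to cancel it.
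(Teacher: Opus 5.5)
Your proof is correct and is the paper's argument: you expand $\kappa$ in the basis dual to $\BII_3$, observe that only the duals of the two $\TIIw$ families can occur, and use Lemma \ref{lemma:image_of_partial_sigmas_TIIw} to write each of these as $\partial\sigma^{2i+1}_{2i,2i+1'+'2i+1,2i}$ or $\partial\sigma^{2i}_{2i,2i+1'+'2i+1,2i}$. Your worry about $\EII$ goes away at once: $\chi_{1,1,1}+\chi_{1,2i+1,2i}+\chi_{2i+3,2i+2,1}$ splits as $\chi_1\ot(\chi_{1,1}+\chi_{2i+1,2i})+\chi_{2i+3}\ot\chi_{2i+2,1}$ and as $(\chi_{1,1}+\chi_{2i+3,2i+2})\ot\chi_1+\chi_{1,2i+1}\ot\chi_{2i}$, every factor in $R$ here is an element of $\BII_2$ other than $\chi_{2k,2k+1}+\chi_{2k+1,2k}$, so $\partial\sigma$ vanishes on $\EII$ and no correction by maps $\sigma^{s}_{1,1'+'2j+1,2j}$ is needed.
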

\begin{proof}
This follows from Lemma \ref{lemma:image_of_partial_sigmas_TIIw}.  
\end{proof}


\subsection{Construction of the canonical class -- Demushkin groups of type II}\label{subsec:construction_kappa_type_II}

We will now construct the canonical class of $G$.    
Let $\Hom(G,\F_2)$ denote the $\F_2$-vector space of continuous group homomorphisms. 
We choose $f_1$ to be the identity $H^1 \to  \Zh^1 = \Hom(G,\F_2)$ and omit it from the notation. 
%
We construct an $\F_2$-linear map $f_2 \colon R \to \Ch^1$  by defining it on each element of $\BII_2$ and then extend it $\F_2$-linearly. 

Let $A_{10}
= \begin{psmallmatrix} 
1 & 1 & 0  \\
0 & 1 & 0  \\
0 &  0 & 1  \\
\end{psmallmatrix}$,  
$A_{01}
= \begin{psmallmatrix} 
1 & 0 & 0  \\
0 & 1 & 1  \\
0 &  0 & 1  \\
\end{psmallmatrix}$, 
and 
$A_{11}
= \begin{psmallmatrix} 
1 & 1 & 0  \\
0 & 1 & 1  \\
0 &  0 & 1  \\
\end{psmallmatrix}$  
denote the matrices used in the proof of Lemma \ref{lemma:relations_in_H2_all_groups}.  
Let $i \ne j$ and $(i,j) \ne (2k,2k+1), (2k+1,2k)$. 
We define the continuous homomorphism $\rrII_{i,j} \colon G \to U_3(\F_2)$ 
by the assignment $x_i \mapsto A_{10} = E_{1,2}$, 
$x_{j} \mapsto A_{01} = E_{2,3}$, 
and $x_s \mapsto I_3$ for $s \ne i, j$. 
We define the continuous map $f_2(\chi_i \ot \chi_j) \colon G \to \F_2$  
by setting $f_2(\chi_i \ot \chi_j) \coloneqq  e_{1,3} \circ \rrII_{i,j}$. 
%
For $i \ne 1$, 
we define the continuous homomorphism $\rrII_{i,i} \colon G \to U_3(\F_2)$ 
by the assignment $x_i \mapsto A_{11}$,  
and $x_j \mapsto I_3$ for $j \ne i$. 
We define the continuous map $f_2(\chi_i \ot \chi_i) \colon G \to \F_2$  
by setting $f_2(\chi_i \ot \chi_i) \coloneqq  e_{1,3} \circ \rrII_{i,i}$. 
Now let $1 \le i \le (d-1)/2$. 
We define a continuous homomorphism $\rrII_{1,1'+'2i+1,2i} \colon G \to U_4(\F_2)$ 
which lifts the continuous homomorphism
$\brrII_{1,1'+'2i+1,2i}   \colon G \to \bU_4(\F_2)$ given by 
$\brrII_{1,1'+'2i+1,2i}  = 
\begin{psmallmatrix}
 1 & \chi_{1} & \chi_{2i+1} &  * \\
  & 1& 0 & \chi_{1}  \\
  & & 1 &  \chi_{2i} \\
  & & & 1 
\end{psmallmatrix}$.  
To construct the desired lift, 
consider the matrices 
\begin{align*}
A \coloneqq  
\begin{psmallmatrix} 
1 & 1 & 0 & 0  \\
0 & 1 & 0 & 1 \\
0 & 0 & 1 & 0 \\
0 & 0 & 0 & 1 \\
\end{psmallmatrix}, ~ 
B \coloneqq
\begin{psmallmatrix} 
1 & 0 & 0 & 0  \\
0 & 1 & 0 & 0  \\
0 & 0 & 1 & 1 \\
0 & 0 & 0 & 1 \\
\end{psmallmatrix}, 
~ \text{and} ~ 
C \coloneqq
\begin{psmallmatrix} 
1 & 0 & 1 & 0  \\
0 & 1 & 0 & 0  \\
0 & 0 & 1 & 0 \\
0 & 0 & 0 & 1 \\
\end{psmallmatrix}. 
\end{align*}
We have $A^2 = [B,C] = E_{1,4}$ and hence $A^2[B,C] = I_4$ in $U_4(\F_2)$. 
Thus, by Lemma \ref{lemma:matrix_relations}, 
we can define  $\rrII_{1,1'+'2i+1,2i} \colon G \to U_4(\F_2)$   
by the assignment 
$x_{1} \mapsto A$,   
$x_{2i} \mapsto B$,  
$x_{2i+1} \mapsto C$,  
and $x_s \mapsto I_4$ for $s \ne 1, 2i, 2i+1$. 
We define the desired continuous map $G \to \F_2$  
by setting 
\[
f_2(\chi_{1,1} + \chi_{2i+1,2i}) \coloneqq  e_{1,4} \circ \rrII_{1,1'+'2i+1,2i}. 
\] 
Next, we define a continuous homomorphism $\rrII_{2i,2i+1'+'2i+1,2i} \colon G \to U_4(\F_2)$ 
which lifts the continuous homomorphism
$\brrII_{2i,2i+1'+'2i+1,2i}  \colon G \to \bU_4(\F_2)$ given by 
$\brrII_{2i,2i+1'+'2i+1,2i} = 
\begin{psmallmatrix}
 1 & \chi_{2i} & \chi_{2i+1} &  * \\
  & 1& 0 & \chi_{2i+1}  \\
  & & 1 &  \chi_{2i} \\
  & & & 1 
\end{psmallmatrix}$.  
To construct the desired lift, 
consider the matrices 
\begin{align*}
D \coloneqq  
\begin{psmallmatrix} 
1 & 1 & 0 & 0  \\
0 & 1 & 0 & 0  \\
0 & 0 & 1 & 1 \\
0 & 0 & 0 & 1 \\
\end{psmallmatrix}, ~ \text{and} ~ 
E \coloneqq
\begin{psmallmatrix} 
1 & 0 & 1 & 0  \\
0 & 1 & 0 & 1  \\
0 & 0 & 1 & 0 \\
0 & 0 & 0 & 1 \\
\end{psmallmatrix}. 
\end{align*}
We have $[D,E] = I_4$ in $U_4(\F_2)$. 
Thus, by Lemma \ref{lemma:matrix_relations}, 
we can define $\rrII_{2i,2i+1'+'2i+1,2i}$  
by the assignment 
$x_{2i} \mapsto D$,   
$x_{2i+1} \mapsto E$,  
and $x_s \mapsto I_4$ for $s \ne 2i, 2i+1$. 
We define the desired continuous map $G \to \F_2$  
by setting 
\[
f_2(\chi_{2i} \ot \chi_{2i+1} + \chi_{2i+1} \ot \chi_{2i}) \coloneqq  e_{1,4} \circ \rrII_{2i,2i+1'+'2i+1,2i}. 
\] 
We now define the map $f_2 \colon R \to \Ch^1$ on all of $R$ by extending it $\F_2$-linearly from $\BII_2$ to $R$. 
We define the map $\PsiII_3 \colon K_3^3(\Hb) \to \Zh^2$ 
as in \eqref{eq:def_of_Psi3_construction}. 
Taking the cohomology class of $\PsiII_3$ defines an $\F_2$-linear map $\kII_3 \colon K^3_3(\Hb) \to H^2$ 
which represents the canonical class of $G$ in $\HH^{3,-1}(\Hb)$. 
%


\subsection{One more compatibility result}\label{subsec:compatibility_type_II}

For the computation of the canonical class we need another compatibility result. 

\begin{lemma}\label{lemma:matrix_subgroup_isos_4to5_type_II} 
Let $\TIIt$ be the subset of $U_5(\F_2)$ consisting of matrices of the form 
$\begin{psmallmatrix} 
1 & 0 & x & y &  t  \\
0 & 1 & v & 0 & w   \\
0 &  0 & 1 & 0 & x \\
0 &  0 & 0 & 1 & z \\
0 & 0 & 0 & 0 & 1
\end{psmallmatrix}$  
and let $\WIIt$ denote the subset of matrices of the form 
$\begin{psmallmatrix} 
1 & 0 & 0 & 0 &  0  \\
0 & 1 & v & 0 & w   \\
0 &  0 & 1 & 0 & 0 \\
0 &  0 & 0 & 1 & 0 \\
0 & 0 & 0 & 0 & 1
\end{psmallmatrix}$. 
Then $\TIIt$ 
is a subgroup of $U_5(\F_2)$, 
and $\WIIt$ is a normal subgroup of $\TIIt$. 
Moreover, 
the map $\tauII_5 \colon U_4(\F_2) \to \TIIt/\WIIt$ 
defined by 
$\tauII_5 \colon \begin{psmallmatrix} 
1 & x & y & t  \\
0 & 1 & 0 & x  \\
0 &  0 & 1 & z \\
0 &  0 & 0 & 1 \\
\end{psmallmatrix}
\mapsto 
\begin{psmallmatrix} 
1 & 0 & x & y &  t  \\
0 & 1 & * & 0 & *   \\
0 &  0 & 1 & 0 & x \\
0 &  0 & 0 & 1 & z \\
0 & 0 & 0 & 0 & 1
\end{psmallmatrix}$   
is an isomorphism of groups. 
\end{lemma}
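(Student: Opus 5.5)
The plan is to argue exactly as in the proof of Lemma \ref{lemma:matrix_subgroup_isos_3to4_left}, but to organise it through a projection homomorphism rather than by verifying normality by hand. Throughout, I read the source of $\tauII_5$ as the subset $S \subseteq U_4(\F_2)$ of matrices of the displayed shape, i.e.\ with $(2,3)$-entry $0$ and $(2,4)$-entry equal to the $(1,2)$-entry. I will check below that $S$ is a subgroup.

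The first step is a single multiplication in $\TIIt$. Write an element as $M(x,y,t,v,w,z)$ with the entries indicated in the statement. A direct computation, multiplying $(I+N_1)(I+N_2)$ and keeping only the non-zero products $N_1N_2$, should give
\begin{align*}
M(x_1,y_1,t_1,v_1,w_1,z_1)\cdot M(x_2,y_2,t_2,v_2,w_2,z_2) = M(x_1+x_2,\, y_1+y_2,\, t_1+t_2+x_1x_2+y_1z_2,\, v_1+v_2,\, w_1+w_2+v_1x_2,\, z_1+z_2).
\end{align*}
The key point is that the $(3,5)$-entry is $x_1+x_2$, equal to the new $(1,3)$-entry, so the product is again of the required shape. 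Since $U_5(\F_2)$ is finite, closure under multiplication already makes $\TIIt$ a subgroup.

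The analogous computation in $U_4(\F_2)$ for two elements of $S$ gives the entries $(x_1+x_2,\ y_1+y_2,\ t_1+t_2+x_1x_2+y_1z_2,\ z_1+z_2)$, with $(2,3)=0$ and $(2,4)=x_1+x_2$. Hence $S$ is closed under multiplication and is also a subgroup.

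Next, define $\pi \colon \TIIt \to S$ by forgetting $v$ and $w$, i.e.\ sending $M(x,y,t,v,w,z)$ to the matrix of $S$ with entries $x,y,t,z$. Comparing the two product formulas shows that $\pi$ is a group homomorphism: the entries $x,y,t,z$ of a product never involve $v$ or $w$. The map $\pi$ is visibly surjective, taking $v=w=0$, and its kernel consists exactly of the matrices with $x=y=t=z=0$, which is $\WIIt$. This gives normality of $\WIIt$ for free. The induced map $\TIIt/\WIIt \to S$ is then an isomorphism, and $\tauII_5$ is precisely its inverse, since it sends a matrix in $S$ to the class of any lift with arbitrary starred entries. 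Hence $\tauII_5$ is a well-defined group isomorphism.

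The only real work is the bookkeeping in the $5\times 5$ product, in particular checking that no cross term lands in the $(3,5)$-slot to break the constraint $(3,5)=(1,3)$, and that $v,w$ do not feed into the top row. I expect that verification, done carefully entry by entry, to be the main, though routine, obstacle.
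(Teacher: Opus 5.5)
Your proposal is correct and is essentially the paper's argument: the paper only says the lemma follows from a direct computation as in Lemma \ref{lemma:matrix_subgroup_isos_3to4_left}. That computation rests on the same product formula you derive, which checks out entry by entry, including $(2,4)=0$ and $(3,5)=x_1+x_2$. Where the paper verifies normality by explicit conjugation and obtains the isomorphism from a cardinality count, your projection $\pi$ with kernel $\WIIt$ handles both more cleanly. You are also right to take the domain of $\tauII_5$ to be the $16$-element subgroup $S\subseteq U_4(\F_2)$ of matrices of the displayed shape, since $U_4(\F_2)$ itself has order $64$ while $\TIIt/\WIIt$ has order $16$.
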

\begin{proof}
This follows from a direct computation as in the proof of Lemma \ref{lemma:matrix_subgroup_isos_3to4_left}. 
\end{proof}


\begin{notn}\label{notation:E_matrices_double}
Let $n\ge 3$. 
For $1 \le s_i < t_i \le n$ for $i=1,2$, 
we let $E^{s_2,t_2}_{s_1,t_1}$ denote the matrix in $U_n(\F_2)$ with only non-zero entries above the diagonal in positions $(s_1,t_1)$ and $(s_2,t_2)$.  
%
%
We note that it will be clear from the context which $n$ we consider and we therefore omit it from notation for the matrices $E^{s_2,t_2}_{s_1,t_1}$. 
\end{notn}


\begin{proposition}\label{prop:f2_comparison_4to5_type_II}
Let $\rrII_5 \colon G \to \TIIt$ be a continuous group homomorphism 
satisfying $\rrII_5(x_1) = E^{1,3}_{3,5}$, 
$\rrII_5(x_{2i}) = E_{1,4}$, 
$\rrII_5(x_{2i+1}) = E_{4,5}$, 
and $\rrII_5(x_k) = I_5$ for $k\ne 1,2i,2i+1$. 
Then 
\begin{align*}
e_{1,5} \circ \rrII_5 = f_2(\chi_{1,1} +  \chi_{2i+1,2i})   
\end{align*} 
as continuous maps $G \to \F_2$. 
\end{proposition}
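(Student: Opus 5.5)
The plan is to argue exactly as in Propositions \ref{prop:f2_comparison_3to4} and \ref{prop:f2_comparison_3to5}, now using Lemma \ref{lemma:matrix_subgroup_isos_4to5_type_II} in place of the $U_3$-comparison lemmas. Compose $\rrII_5 \colon G \to \TIIt$ with the quotient map $\TIIt \to \TIIt/\WIIt$ and with the inverse of the isomorphism $\tauII_5$. This gives a continuous homomorphism $\phi \colon G \to U_4(\F_2)$, landing in the subgroup of matrices with $(2,3)$-entry $0$ and $(2,4)$-entry equal to the $(1,2)$-entry.

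Next I compute $\phi$ on the generators by reading off the entries of $\rrII_5(x_k)$ that $\tauII_5$ remembers. These are the entries in positions $(1,3),(1,4),(1,5),(3,5),(4,5)$, which correspond to positions $(1,2),(1,3),(1,4),(2,4),(3,4)$ in $U_4(\F_2)$.
\begin{itemize}
\item $E^{1,3}_{3,5}$ has $x=1$ and all other remembered entries $0$, so $\phi(x_1)=A$, the matrix used to define $\rrII_{1,1'+'2i+1,2i}$ in Section \ref{subsec:construction_kappa_type_II}.
\item $E_{1,4}$ has $y=1$, so $\phi(x_{2i})=E_{1,3}=C$.
\item $E_{4,5}$ has $z=1$, so $\phi(x_{2i+1})=E_{3,4}=B$.
\item $\phi(x_k)=I_4$ for all other $k$.
\end{itemize}

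At this point the assignment looks inconsistent with the construction: there $x_{2i}\mapsto B$ and $x_{2i+1}\mapsto C$, whereas here $x_{2i}\mapsto C$ and $x_{2i+1}\mapsto B$. Since $\chi_{2i+1}$ should sit at position $(1,3)$ and $\chi_{2i}$ at $(3,4)$, as in $\brrII_{1,1'+'2i+1,2i}$, I will recheck which convention is intended. This bookkeeping is the main obstacle. The cleanest route is to verify directly that $\phi$ agrees with $\rrII_{1,1'+'2i+1,2i}$ on every generator. If the labels really are swapped, I expect the intended statement uses $E_{1,4}$ for $x_{2i+1}$ and $E_{4,5}$ for $x_{2i}$, or equivalently the transposed convention. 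In either reading the matching is purely mechanical, because $A^2=[B,C]=E_{1,4}$ is symmetric enough that the relevant relation is satisfied.

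Once $\phi$ and $\rrII_{1,1'+'2i+1,2i}$ agree on the generators $x_1,\ldots,x_d$, they are equal as continuous homomorphisms, since both are continuous and $G$ is topologically generated by the $x_k$. Finally, $\tauII_5$ identifies the $(1,5)$-entry in $\TIIt/\WIIt$ with the $(1,4)$-entry in $U_4(\F_2)$. That entry is well defined on the quotient because $\WIIt$ only alters positions $(2,3)$ and $(2,5)$. Hence $e_{1,5}\circ\rrII_5 = e_{1,4}\circ\phi = e_{1,4}\circ \rrII_{1,1'+'2i+1,2i} = f_2(\chi_{1,1}+\chi_{2i+1,2i})$. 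I would also check that this entry identification holds at the level of functions on $G$ and not only up to coboundaries; it does, since $\tauII_5$ is a group isomorphism that preserves the $(1,5)$- versus $(1,4)$-coordinate pointwise.
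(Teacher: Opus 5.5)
Your strategy is the paper's own. You pass to $\TIIt/\WIIt$ and pull back along $\tauII_5$, which you rightly restrict to the subgroup of $U_4(\F_2)$ with $(2,3)$-entry $0$ and $(2,4)$-entry equal to the $(1,2)$-entry. You then compare with $\rrII_{1,1'+'2i+1,2i}$ on generators and read off the corner entry. Your computation of $\phi$ is correct. So is your suspicion that the printed hypotheses swap $x_{2i}$ and $x_{2i+1}$ relative to the definition of $f_2$ in Section~\ref{subsec:construction_kappa_type_II}.

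What is wrong is the hedge that ``in either reading the matching is purely mechanical''. Read literally, the statement is false. For any homomorphism into $\TIIt$, the $(1,5)$-entry $t$ satisfies $\delta t = x\cup x + y\cup z$, where $x,y,z$ are the $(1,3)$-, $(1,4)$- and $(4,5)$-entries. Under the printed hypotheses this gives $\delta(e_{1,5}\circ\rrII_5)=\chi_1\cup\chi_1+\chi_{2i}\cup\chi_{2i+1}$. On the other hand, $\delta f_2(\chi_{1,1}+\chi_{2i+1,2i})=\chi_1\cup\chi_1+\chi_{2i+1}\cup\chi_{2i}$, and these cochains differ at $(x_{2i},x_{2i+1})$. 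Concretely, for $g=x_{2i}x_{2i+1}$ you get $e_{1,5}(E_{1,4}E_{4,5})=1$ but $e_{1,4}(BC)=e_{1,4}(E_{3,4}E_{1,3})=0$. The identity $A^2=[B,C]=[C,B]=E_{1,4}$ only shows that your swapped $\phi$ is a well-defined homomorphism. It is a different homomorphism from $\rrII_{1,1'+'2i+1,2i}$, and its $(1,4)$-entry is a different function.

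So your argument proves the corrected statement, with the roles exchanged: $\rrII_5(x_{2i+1})=E_{1,4}$ and $\rrII_5(x_{2i})=E_{4,5}$. It is best phrased modulo $\WIIt$, and this is the version the paper actually uses in the proof of Proposition~\ref{prop:kappa_3_vanishes_type_II}. There, the lower-right $5\times 5$ block of $\rr$ sends $x_1\mapsto E^{1,3}_{3,5}$ and $x_{2i+1}\mapsto E_{1,4}$. In the $\TIIw$ case it sends $x_{2i}\mapsto E^{2,3}_{4,5}$, and in the $\EII$ case it sends $x_{2i+2}\mapsto E_{2,3}$. Both agree with the hypotheses only modulo $\WIIt$, and your quotient argument covers them without change. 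State the corrected hypothesis outright and drop the appeal to a ``transposed convention''; with that, your proof is complete.
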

\begin{proof}
This follows from the definition of $f_2$ and Lemma \ref{lemma:matrix_subgroup_isos_4to5_type_II}. 
\end{proof}


\subsection{Computation of the canonical class -- Demushkin groups of type II}\label{sec:computing_kappa3_type_II}

Now we compute the values of the map representing the canonical class of $G$. 

\begin{proposition}\label{prop:kappa_3_vanishes_type_II}
The map $\kII_3 \colon K_3^3(\Hb)\to H^2$ vanishes on all elements in $\BII_3$ 
except, possibly, for the elements of the form 
$\chi_{2i} \ot \chi_{1} \ot \chi_{1} + \chi_{2i} \ot \chi_{2i+1} \ot \chi_{2i} + \chi_{2i+1} \ot \chi_{2i} \ot \chi_{2i}$ 
and $\chi_{1} \ot \chi_{1} \ot \chi_{2i+1} + \chi_{2i+1} \ot \chi_{2i} \ot \chi_{2i+1} + \chi_{2i+1} \ot \chi_{2i+1} \ot \chi_{2i}$
in $\TIIw$. 
\end{proposition}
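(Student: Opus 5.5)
We follow the strategy of Proposition \ref{prop:kappa_3_vanishes_on_S_D_T}. For each basis element $\xi$ of $\BII_3$ outside the two exceptional families in $\TIIw$, we regard the data $\chi_a$, $f_2(\cdot)$ occurring in $\PsiII_3(\xi)$ as a defining system of a suitable $n$-fold Massey product. This product has some zero entries, as for $\DDD$ and $\TTT$ in type I. By Dwyer's Theorem \ref{thm:Dwyer}, the associated homomorphism $\brr \colon G \to \bU_{n+1}(\F_2)$ has $e_{1,n+1}$-obstruction equal to $\kII_3(\xi)$. So $\kII_3(\xi)=0$ as soon as we exhibit a lift $\rr \colon G\to U_{n+1}(\F_2)$. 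By Lemma \ref{lemma:matrix_relations}, such a lift amounts to choosing matrices $M_1,\dots,M_d$ with $M_1^2M_2^{2^f}[M_2,M_3]\cdots[M_{d-1},M_d]=I$ in $U_{n+1}(\F_2)$. We must then check, via the compatibility results Propositions \ref{prop:f2_comparison_3to4}, \ref{prop:f2_comparison_3to5} and the new Proposition \ref{prop:f2_comparison_4to5_type_II}, that the off-diagonal blocks of $\rr$ reproduce exactly the chosen cochains $f_2$. Since all matrices used are sums of few $E_{s,t}$ with $M^4=I$, the factor $x_2^{2^f}$ imposes no condition.

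We proceed set by set. For $\SII$, we use $U_4(\F_2)$ with $B_{\varepsilon_1,\varepsilon_2,\varepsilon_3}$ as in type I. The only new feature is the index $1$, which appears squared in the relation. For $\chi_1$ we therefore use $B_{1,0,0}$, $B_{0,1,0}$ or $B_{0,0,1}$, whose squares are $I_4$. Elements with two consecutive indices equal to $i\ne 1$ use $B_{1,1,0}$ or $B_{0,1,1}$. The element $\chi_{1,1,1}$ does not lie in $\SII$, and $\chi_{i,i,i}$ with $i\neq 1$ uses $B_{1,1,1}$. For $\DII$, we build $5\times 5$ lifts as for $\DDD$. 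For the sums $\chi_{2i,2i+1}+\chi_{2i+1,2i}$, we reuse the commuting pairs $A,B$ and $C,D$ from type I. For $\chi_1\ot(\chi_{1,1}+\chi_{2i+1,2i})$-type elements, we embed the matrices $A,B,C$ defining $f_2(\chi_{1,1}+\chi_{2i+1,2i})$ into a $5\times5$ block and add $E_{1,2}$ or $E_{4,5}$ for $\chi_j$. The needed identity is then that the square of the image of $x_1$ cancels $[\,\cdot,\cdot\,]$ at position $(1,5)$ or $(2,5)$. For $\TII$, we copy the $6\times6$ matrices $S,T$ of type I for the pair $(2i,2i+1)$, since this pair enters the relation only through a commutator.

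The hard part is $\EII$ and the first family of $\TIIw$, namely $\chi_{1,1,1}+\chi_{1,2i+1,2i}+\chi_{2i+1,2i,1}$. Here the relevant Massey products are higher, with sizes $6$ to perhaps $8$, and $\chi_1$ enters through $x_1^2$. We must find a matrix $M_1$ whose square realises the required $(1,n+1)$-entry together with the intermediate $f_2(\chi_{1,1}+\chi_{2i+1,2i})$ blocks. This is exactly where Proposition \ref{prop:f2_comparison_4to5_type_II} identifies the $(1,5)$ entry of a $\TIIt$-valued homomorphism with $f_2(\chi_{1,1}+\chi_{2i+1,2i})$. For $\TIIw$, we would first try a lift $\rr\colon G\to U_6(\F_2)$ in which $x_1\mapsto$ a matrix with entries on the two chains $(1,2),(2,\cdot)$ and $(\cdot,6)$, and $x_{2i},x_{2i+1}\mapsto$ sums $E^{s_2,t_2}_{s_1,t_1}$. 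For $\EII$, we decompose using linearity, as in the third relation of Proposition \ref{prop:kappa_3_on_DDDw}. Writing the $\EII$ element minus the corresponding $\TIIw$ element, we get $\chi_{1,2i+1,2i}+\chi_{1,2i+3,2i+2}+\chi_{2i+1,2i,1}+\chi_{2i+3,2i+2,1}$. This equals $\chi_1\ot(\chi_{2i+1,2i}+\chi_{2i+3,2i+2})$ plus the symmetric term, and each summand is a sum of $\DII$-type elements plus already treated pieces. Hence vanishing on $\EII$ reduces to vanishing on $\TIIw$ and $\DII$. Verifying the commutator identities in $U_6(\F_2)$ for the first $\TIIw$ family is the step we expect to be the main obstacle.
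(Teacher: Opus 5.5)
Your framework is the paper's: generalized defining systems, Dwyer's Theorem \ref{thm:Dwyer}, Lemma \ref{lemma:matrix_relations}, and the compatibility propositions. For $\SII$, $\TII$ and the $\DII$-elements built on $\chi_{2i,2i+1}+\chi_{2i+1,2i}$, your lifts are the paper's. The gaps are in the remaining cases.

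\textbf{The reduction for $\EII$ cannot work, and the first $\TIIw$ family is left open.} Since $\BII_3$ is a basis, $e_i=\chi_{1,1,1}+\chi_{1,2i+1,2i}+\chi_{2i+3,2i+2,1}$ is not in the span of $\TIIw\cup\DII$. So no linear rearrangement can deduce $\kII_3(e_i)=0$ from vanishing there.

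Concretely, put $t_i=\chi_{1,1,1}+\chi_{1,2i+1,2i}+\chi_{2i+1,2i,1}$. Your four-term sum is $t_i+t_{i+1}$ and does not involve $e_i$ at all. Its two pieces are $\chi_1\ot(\chi_{2i+1,2i}+\chi_{2i+3,2i+2})=e_i+t_{i+1}$ and $(\chi_{2i+1,2i}+\chi_{2i+3,2i+2})\ot\chi_1=e_i+t_i$. Neither is a combination of $\DII$-elements; for instance $(\chi_{1,1}+\chi_{2i+1,2i})\ot\chi_1\notin K^3_3(\Hb)$. As in type I, linearity only trades one element for another whose value still needs its own lift.

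The paper lifts the $6\times 6$ system of $e_i$ directly:
\begin{itemize}
\item $x_1\mapsto T$, with off-diagonal entries at $(1,2),(2,4),(4,6)$;
\item $x_{2i}\mapsto E_{5,6}$, $x_{2i+1}\mapsto E_{2,5}$, $x_{2i+2}\mapsto E_{3,4}$, $x_{2i+3}\mapsto E_{1,3}$;
\item this works because $T^2=E^{1,4}_{2,6}=[E_{5,6},E_{2,5}][E_{3,4},E_{1,3}]$.
\end{itemize}
The same $T$ settles the first $\TIIw$ family, which you leave open. Take $x_{2i}\mapsto E^{3,4}_{5,6}$ and $x_{2i+1}\mapsto E^{1,3}_{2,5}$; then $[E^{3,4}_{5,6},E^{1,3}_{2,5}]=E^{1,4}_{2,6}=T^2$. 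Propositions \ref{prop:f2_comparison_3to4}, \ref{prop:f2_comparison_3to5} and \ref{prop:f2_comparison_4to5_type_II} identify $e_{3,6}$, $e_{1,5}$, $e_{2,6}$ with the prescribed $f_2$-values.

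\textbf{Your $\DII$ recipe fails for $j=2i+1$, and here the statement itself is false.} The element $\chi_{2i+1}\ot(\chi_{1,1}+\chi_{2i+1,2i})$ lies in $\DII$, since only $j=1,2i$ are excluded. The $\{1,2,3\}$-, $\{1,2,4\}$- and $\{2,3,4,5\}$-blocks of any lift must equal $\rrII_{2i+1,1}$, $\rrII_{2i+1,2i+1}$ and $\rrII_{1,1'+'2i+1,2i}$. Hence the $(1,3),(1,4),(2,5)$ entries of all generator images vanish. Modulo $Z_5(\F_2)$ this forces $x_1\mapsto E^{3,5}_{2,3}$, $x_{2i}\mapsto E_{4,5}$ and $x_{2i+1}\mapsto E^{2,4}_{1,2}$.

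Now $(E^{3,5}_{2,3})^2=E_{2,5}$, but $[E_{4,5},E^{2,4}_{1,2}]=E^{1,5}_{2,5}$. The extra entry comes from the $(1,4)$-entry of $(E^{2,4}_{1,2})^{-1}$. So the relation maps to $E_{1,5}\neq I_5$, no lift exists, and $\kII_3$ is non-zero on this element.

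As a consistency check, replace $f_2(\chi_{2i+1}\ot\chi_{2i+1})$ by $f_2(\chi_{2i+1}\ot\chi_{2i+1})+\chi_{2i+1}$. This changes $\kII_3$ by $\partial\sigma^{2i+1}_{2i+1,2i+1}$, which is non-zero on this element, and with the modified choice a lift does exist.

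The paper's proof has the same blind spot, because it assigns $x_j$ and $x_{2i+1}$ independently. So the statement must list this element among the exceptions. This does not affect Theorem \ref{thm:type_II_pro-2_Demushkin_groups_are_A_3-formal}. A computation as in Lemma \ref{lemma:image_of_partial_sigmas_TIIw} gives $(\chi_{2i+1}\ot(\chi_{1,1}+\chi_{2i+1,2i}))^*=\partial\sigma^{2i+1}_{2i+1,2i+1}+\partial\sigma^{2i}_{2i,2i+1 '+' 2i+1,2i}$, which is a coboundary.
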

\begin{proof}
We use the same strategy and notation as in the proof of Proposition \ref{prop:kappa_3_vanishes_on_S_D_T}. 
In fact, for elements in $\SII$ and $\TII$ we can use the same homomorphisms as in the proof of Proposition \ref{prop:kappa_3_vanishes_on_S_D_T} 
since the required matrix relations hold 
where we note that $\chi_{1,1,1} \notin \SII$ and $2i, 2i+1 \ge 2$ for indices in $\TII$. 
This shows that $\kII_3$ vanishes on $\SII$ and $\TII$. 

Now we consider the set $\DII$. 
First, let $i \in J/2$ and $j \ne 2i, 2i+1$. 
We consider elements of the form 
$\chi_j \ot (\chi_{2i} \ot \chi_{2i+1} + \chi_{2i+1} \ot \chi_{2i})$. 
We need to show that 
the continuous homomorphism 
$\brr \colon G \to \bU_5(\F_2)$ given by 
\begin{align*}
\brr = 
\begin{psmallmatrix}
 1 &  \chi_j & f_2(\chi_{j,2i})  & f_2(\chi_{j,2i+1}) &  * \\
  & 1& \chi_{2i} & \chi_{2i+1} &  f_2(\chi_{2i,2i+1} + \chi_{2i+1,2i}) \\
  & & 1 & 0 &  \chi_{2i+1} \\
  & &  & 1 &  \chi_{2i} \\   
  & & & & 1 
\end{psmallmatrix}
\end{align*} 
lifts to a homomorphism 
$\rr \colon G \to U_5(\F_2)$. 
We have $(E_{1,2})^2 [E^{2,3}_{4,5}, E^{2,4}_{3,5}] = I_5$. 
Thus, by Lemma \ref{lemma:matrix_relations}, 
we can define a continuous group homomorphism $\rr\colon G \to U_5(\F_2)$ 
by the assignment 
\begin{align*}
x_j \mapsto E_{1,2}, ~ 
x_{2i}  \mapsto E^{2,3}_{4,5}, ~ 
x_{2i+1} \mapsto E^{2,4}_{3,5}, 
\end{align*}
and $\rr(x_k) = I_5$ for $k \ne j, 2i, 2i+1$. 
We use Proposition \ref{prop:f2_comparison_3to4} 
to identify $e_{1,4} \circ \rr$ with $f_2(\chi_{j,2i+1})$.  
By Dwyer's Theorem \ref{thm:Dwyer}, the existence of $\rr$ 
implies $\kII_3(\chi_j \ot (\chi_{2i} \ot \chi_{2i+1} + \chi_{2i+1} \ot \chi_{2i})) = 0$. 

Next, we consider elements of the form 
$(\chi_{2i} \ot \chi_{2i+1} + \chi_{2i+1} \ot \chi_{2i}) \ot \chi_j$. 
We need to show that 
the continuous homomorphism 
$\brr \colon G \to \bU_5(\F_2)$ given by 
\begin{align*}
\brr = 
\begin{psmallmatrix}
 1 &  \chi_{2i} & \chi_{2i+1}  & f_2(\chi_{2i,2i+1} + \chi_{2i+1,2i})&  * \\
  & 1& 0 & \chi_{2i+1} &  f_2(\chi_{2i+1,j})\\
  & & 1 & \chi_{2i} &  f_2(\chi_{2i,j}) \\
  & &  & 1 &  \chi_{j} \\   
  & & & & 1 
\end{psmallmatrix}
\end{align*} 
lifts to a homomorphism 
$\rr \colon G \to U_5(\F_2)$. 
We have $(E_{4,5})^2 [E^{1,2}_{3,4},E^{1,3}_{2,4}] = I_5$. 
Thus, by Lemma \ref{lemma:matrix_relations}, 
we can define a continuous group homomorphism $\rr \colon G \to U_5(\F_2)$ 
by the assignment 
\begin{align*}
x_j \mapsto E_{4,5}, ~ 
x_{2i}  \mapsto E^{1,2}_{3,4}, ~ 
x_{2i+1} \mapsto E^{1,3}_{2,4}, 
\end{align*}
and $\rr(x_k) = I_5$ for $k \ne j, 2i, 2i+1$. 
We use Proposition \ref{prop:f2_comparison_3to4} 
to identify $e_{1,4} \circ \rr$ with $f_2(\chi_{2i+1,j})$.  
By Dwyer's Theorem \ref{thm:Dwyer}, the existence of $\rr$  
implies $\kII_3((\chi_{2i} \ot \chi_{2i+1} + \chi_{2i+1} \ot \chi_{2i})\ot \chi_j) = 0$.

Now, let $i \in J/2$ and $j \ne 1,2i$. 
We consider elements of the form 
$\chi_j \ot (\chi_{1} \ot \chi_{1} + \chi_{2i+1} \ot \chi_{2i})$. 
We want to show that  
the continuous homomorphism 
$\brr \colon G \to \bU_5(\F_2)$ given by 
\begin{align*}
\brr = 
\begin{psmallmatrix}
 1 &  \chi_j & f_2(\chi_{j,1})  & f_2(\chi_{j,2i+1}) &  * \\
  & 1& \chi_{1} & \chi_{2i+1} &  f_2(\chi_{1,1} + \chi_{2i+1,2i}) \\
  & & 1 & 0 &  \chi_1 \\
  & &  & 1 &  \chi_{2i} \\   
  & & & & 1 
\end{psmallmatrix}
\end{align*} 
lifts to a homomorphism 
$\rr \colon G \to U_5(\F_2)$. 
We have $(E_{2,3}^{3,5})^2 [E_{4,5},E_{2,3}] = I_5$. 
Thus, by Lemma \ref{lemma:matrix_relations}, 
we can define a continuous group homomorphism $\rr \colon G \to U_5(\F_2)$ 
by the assignment 
\begin{align*}
x_1 \mapsto E_{2,3}^{3,5}, ~ 
 x_{2i}  \mapsto E_{4,5}, ~ 
x_{2i+1} \mapsto E_{2,3}, ~
x_j \mapsto E_{1,2}, 
\end{align*}
and $\rr(x_k) = I_5$ for $k \ne 1,2i, 2i+1,j$. 
We use Proposition \ref{prop:f2_comparison_3to4} 
to identify $e_{1,4} \circ \rr$ with $f_2(\chi_{j,2i+1})$.  
By Dwyer's Theorem \ref{thm:Dwyer}, the existence of $\rr$ 
implies $\kII_3(\chi_j \ot (\chi_{1} \ot \chi_{1} + \chi_{2i+1} \ot \chi_{2i})) = 0$. 

%
%
Now, let $i \in J/2$ and $j \ne 1,2i+1$. 
We consider elements of the form 
$(\chi_{1} \ot \chi_{1} + \chi_{2i+1} \ot \chi_{2i}) \ot \chi_j$. 
We want to show that 
the continuous homomorphism 
$\brr \colon G \to \bU_5(\F_2)$ given by 
\begin{align*}
\brr = 
\begin{psmallmatrix}
 1 &  \chi_1 & \chi_{2i+1}  & f_2(\chi_{1,1} + \chi_{2i+1,2i})&  * \\
  & 1& 0 & \chi_{1} &  f_2(\chi_{1,j})\\
  & & 1 & \chi_{2i} &  f_2(\chi_{2i,j}) \\
  & &  & 1 &  \chi_{j} \\   
  & & & & 1 
\end{psmallmatrix}
\end{align*} 
lifts to a homomorphism 
$\rr \colon G \to U_5(\F_2)$. 
We have $(E_{1,2}^{2,4})^2 [E_{3,4},E_{1,3}] = I_5$. 
Thus, by Lemma \ref{lemma:matrix_relations}, 
we can define the continuous homomorphism $\rr \colon G \to U_5(\F_2)$ 
by the assignment 
\begin{align*}
x_1 \mapsto E_{1,2}^{2,4}, ~ 
 x_{2i}  \mapsto E_{3,4}, ~ 
x_{2i+1} \mapsto E_{1,3}, ~
x_j \mapsto E_{4,5}, 
\end{align*}
and $\rr(x_k) = I_5$ for $k \ne 1,2i, 2i+1,j$. 
We use Proposition \ref{prop:f2_comparison_3to4} 
to identify $e_{2,5} \circ \rr$ with $f_2(\chi_{1,j})$.  
By Dwyer's Theorem \ref{thm:Dwyer}, the existence of $\rr$ 
implies $\kII_3((\chi_{1} \ot \chi_{1} + \chi_{2i+1} \ot \chi_{2i}) \ot \chi_j) = 0$. 
This finishes the proof for the set $\DII$. 
%
%

Next, we consider the set $\TIIw$. 
Let $1 \le i \le (d-1)/2$. 
We only need to consider elements of the form 
$\chi_{1} \ot \chi_{1} \ot \chi_{1} + \chi_1 \ot \chi_{2i+1} \ot \chi_{2i} + \chi_{2i+1} \ot \chi_{2i} \ot \chi_{1}$. 
We want to show that 
the continuous homomorphism 
$\brr \colon G \to \bU_6(\F_2)$ given by 
\begin{align*}
\brr = 
\begin{psmallmatrix}
 1 &  \chi_1 & \chi_{2i+1} & f_2(\chi_{1,1} + \chi_{2i+1,2i}) & f_2(\chi_{1,2i+1})  &  * \\
  & 1& 0 & \chi_1 & \chi_{2i+1} &  f_2(\chi_{1,1} + \chi_{2i+1,2i}) \\
  & & 1 & \chi_{2i} & 0 & f_2(\chi_{2i,1}) \\
  & &  & 1 & 0 & \chi_{1} \\   
    & &  &  & 1 & \chi_{2i} \\   
  & & & & & 1 
\end{psmallmatrix}
\end{align*} 
lifts to a homomorphism 
$\rr \colon G \to U_6(\F_2)$. 
Consider the matrix  
$T \coloneqq 
\begin{psmallmatrix}
 1 &  1 & 0 & 0& 0 & 0 \\
  & 1& 0 & 1 &  0 & 0  \\
  & & 1 & 0 & 0 & 0 \\
  & & & 1 & 0 & 1 \\  
    & & &  & 1 & 0 \\   
  & & & & & 1 
\end{psmallmatrix}$ 
in $U_6(\F_2)$. 
We have  $T^2 [E^{3,4}_{5,6},E^{1,3}_{2,5}] = I_6$ in $U_6(\F_2)$. 
(Note, however, that $T^2 = [E^{3,4}_{5,6},E^{1,3}_{2,5}]  = E^{1,4}_{2,6} \ne I_6$ in $U_6(\F_2)$.)
Thus, by Lemma \ref{lemma:matrix_relations}, 
we can define a continuous group homomorphism $\rr \colon G \to U_6(\F_2)$ 
by the assignment 
\begin{align*}
x_1 \mapsto T, ~ 
x_{2i}  \mapsto E^{3,4}_{5,6}, ~ 
x_{2i+1} \mapsto E^{1,3}_{2,5}, 
\end{align*}
and $\rr(x_k) = I_6$ for $k \ne 1,2i, 2i+1$. 
We use Convention \ref{convention:embedding} and 
Proposition \ref{prop:f2_comparison_3to5}
to identify $e_{1,5} \circ \rr$ with $f_2(\chi_{1,2i+1})$,  
Proposition \ref{prop:f2_comparison_3to4} to identify 
$e_{3,6} \circ \rr$  
with $f_2(\chi_{2i,1})$, 
and Proposition \ref{prop:f2_comparison_4to5_type_II}
to identify $e_{2,6} \circ \rr$ with $f_2(\chi_{1,1} + \chi_{2i+1,2i})$. 
By Dwyer's Theorem \ref{thm:Dwyer}, the existence of $\rr$ 
implies $\kII_3(\chi_{1,1,1} + \chi_{1,2i+1,2i} + \chi_{2i+1,2i,1}) = 0$.  
This proves the assertion for the set $\TIIw$. 
%
%

Finally, we consider the set $\EII$. 
Let $1 \le i \le (d-3)/2$. 
We need  to show that 
the continuous homomorphism 
$\brr \colon G \to \bU_6(\F_2)$ given by 
\begin{align*}
\brr = 
\begin{psmallmatrix}
 1 &  \chi_1 & \chi_{2i+3} & f_2(\chi_{1,1} + \chi_{2i+3,2i+2}) & f_2(\chi_{1,2i+1})  &  * \\
  & 1& 0 & \chi_1 & \chi_{2i+1} &  f_2(\chi_{1,1} + \chi_{2i+1,2i}) \\
  & & 1 & \chi_{2i+2} & 0 & f_2(\chi_{2i+2,1}) \\
  & &  & 1 & 0 & \chi_{1} \\   
    & &  &  & 1 & \chi_{2i} \\   
  & & & & & 1 
\end{psmallmatrix}
\end{align*} 
lifts to a homomorphism 
$\rr \colon G \to U_6(\F_2)$. 
We consider again the matrix  
$T =
\begin{psmallmatrix}
 1 &  1 & 0 & 0& 0 & 0 \\
  & 1& 0 & 1 &  0 & 0  \\
  & & 1 & 0 & 0 & 0 \\
  & & & 1 & 0 & 1 \\  
    & & &  & 1 & 0 \\   
  & & & & & 1 
\end{psmallmatrix}$ 
in $U_6(\F_2)$. 
We have $T^2 [E_{5,6},E_{2,5}] [E_{3,4}, E_{1,3}] = I_6$ in $U_6(\F_2)$. 
(Note that $T^2 = E^{1,4}_{2,6}$, $[E_{5,6},E_{2,5}] =E_{2,6}$, and $[E_{3,4}, E_{1,3}] = E_{1,4}$ in $U_6(\F_2)$.)
Thus, by Lemma \ref{lemma:matrix_relations},  
we can define a continuous group homomorphism $\rr \colon G \to U_6(\F_2)$ 
by the assignment 
\begin{align*}
x_1 \mapsto T, ~ 
x_{2i}  \mapsto E_{5,6}, ~ 
x_{2i+1} \mapsto E_{2,5}, ~
x_{2i+2}  \mapsto E_{3,4}, ~ 
x_{2i+3} \mapsto E_{1,3}, 
\end{align*}
and $\rr(x_k) = I_6$ for $k \ne 1,2i, 2i+1, 2i+2, 2i+3$. 
We use Convention \ref{convention:embedding} and 
Proposition \ref{prop:f2_comparison_3to5}
to identify $e_{1,5} \circ \rr$ with $f_2(\chi_{1,2i+1})$,  
Proposition \ref{prop:f2_comparison_3to4} to identify 
$e_{3,6} \circ \rr$  
with $f_2(\chi_{2i+2,1})$, 
and Proposition \ref{prop:f2_comparison_4to5_type_II}
to identify $e_{2,6} \circ \rr$ with $f_2(\chi_{1,1} + \chi_{2i+1,2i})$. 
By Dwyer's Theorem \ref{thm:Dwyer}, the existence of $\rr$ 
implies 
\begin{align*}
\kII_3(\chi_{1} \ot \chi_{1} \ot \chi_{1} + \chi_{1} \ot \chi_{2i+1} \ot \chi_{2i} + \chi_{2i+3} \ot \chi_{2i+2} \ot \chi_1) = 0
\end{align*}
and proves the assertion for the set $\EII$. 
%
%
This proves the proposition. 
\end{proof}


\begin{remark}
In fact, the map $\kII_3$ also vanishes on elements of the form 
$\chi_{1} \ot \chi_{1} \ot \chi_{2i+1} + \chi_{2i+1} \ot \chi_{2i} \ot \chi_{2i+1} + \chi_{2i+1} \ot \chi_{2i+1} \ot \chi_{2i}$ in $\TIIw$. 
However, $\kII_3$ is non-trivial on elements of the form 
$\chi_{2i} \ot \chi_{1} \ot \chi_{1} + \chi_{2i} \ot \chi_{2i+1} \ot \chi_{2i} + \chi_{2i+1} \ot \chi_{2i} \ot \chi_{2i}$ 
in $\TIIw$. 
This follows from the fact that 
the continuous homomorphism 
$\brr \colon G \to \bU_6(\F_2)$ given by 
\begin{align*}
\brr = 
\begin{psmallmatrix}
 1 &  \chi_{2i} & \chi_{2i+1} & f_2(\chi_{2i,2i+1} + \chi_{2i+1,2i}) & f_2(\chi_{2i,1})  &  * \\
  & 1& 0 & \chi_{2i+1} & \chi_{1} &  f_2(\chi_{1,1} + \chi_{2i+1,2i}) \\
  & & 1 & \chi_{2i} & 0 & f_2(\chi_{2i,2i}) \\
  & &  & 1 & 0 & \chi_{2i} \\   
    & &  &  & 1 & \chi_{1} \\   
  & & & & & 1 
\end{psmallmatrix}
\end{align*} 
does not lift to a continuous homomorphism 
$G \to U_6(\F_2)$. 
For, let  
$T' \coloneqq
\begin{psmallmatrix}
 1 &  1 & 0 & 0& 0 & 0 \\
  & 1 & 0 & 0 &  0 & 0  \\
  & & 1 & 1 & 0 & 0 \\
  & & & 1 & 0 & 1 \\  
    & & &  & 1 & 0 \\   
  & & & & & 1 
\end{psmallmatrix}$ 
in $U_6(\F_2)$. 
We have $(E^{2,5}_{5,6})^2 = E_{2,6}$ and $[T',E^{1,3}_{2,4}] = E^{1,6}_{2,6}$ 
and hence $(E^{2,5}_{5,6})^2  [T',E^{1,3}_{2,4}] = I_6$ in $\bU_6(\F_2)$, 
but $(E^{2,5}_{5,6})^2  [T',E^{1,3}_{2,4}] = E_{1,6} \ne I_6$ in $U_6(\F_2)$. 
Thus, by Lemma \ref{lemma:matrix_relations}, 
we can define a continuous homomorphism $\brr \colon G \to \bU_6(\F_2)$ 
by the assignment 
\begin{align*}
x_1 \mapsto E^{2,5}_{5,6} , ~ 
x_{2i}  \mapsto T', ~ 
x_{2i+1} \mapsto E^{1,3}_{2,4}, 
\end{align*}
and $\brr(x_k) = I_6$ for $k \ne 1,2i, 2i+1$, 
but $\brr$ does not lift to a continuous homomorphism $G \to U_6(\F_2)$. 
By Dwyer's Theorem \ref{thm:Dwyer}, this implies that, for all $i \in J/2$, 
we have 
$\kII_3(\chi_{2i,1,1} + \chi_{2i,2i+1,2i} + \chi_{2i+1,2i,2i}) \ne 0$. 
\end{remark}


\begin{proof}[Proof of Theorem \ref{thm:type_II_pro-2_Demushkin_groups_are_A_3-formal}] 
The assertion now follows from 
Propositions \ref{prop:image_of_partial_sigmas_TIIw} 
and \ref{prop:kappa_3_vanishes_type_II}. 
\end{proof}


\section{Demushkin groups of type III and IV}\label{sec:type_III}

In this section, we consider the remaining cases of pro-$2$ Demushkin groups. 
In fact, the cases of defining relation \eqref{eq:type_III_relation_general} or \eqref{eq:type_IV_relation_general} 
can be treated at the same time. 
Our goal is to prove the following theorem  
which, by the classification of pro-$2$ Demushkin groups recalled in Section \ref{subsec:classification}, 
concludes the proof of  Theorem \ref{thm:Demushkin_groups_A3_formality_intro}. 

\begin{theorem}\label{thm:type_III+IV_general_kappa_3_is_trivial}
Let $f \ge 2$ and $d \ge 2$ be an even number. 
Assume that $G$ is the pro-$2$ group generated by $x_1, \ldots, x_d$ 
subject to either the single relation \eqref{eq:type_III_relation_general} or the single relation \eqref{eq:type_IV_relation_general}, 
where we allow $f =\infty$ for relation \eqref{eq:type_III_relation_general} and assume $d\ge 4$ for relation \eqref{eq:type_IV_relation_general}. 
Then the canonical class of $G$ is trivial, and $G$ is $A_3$-formal. 
\end{theorem}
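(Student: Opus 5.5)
By \cite[Theorem 5.2]{MPQT}, $\Hb$ is Koszul, so Proposition \ref{prop:kappa3_is_canonical_class} reduces the theorem to showing that the class of $\kIII_3 \colon K_3^3(\Hb)\to H^2$ vanishes in the cohomology of \eqref{eq:Koszul_complex_3,-1}. I follow the template of Sections \ref{sec:torsion-free_p=2_Demushkin} and \ref{sec:type_II}.

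\textbf{Step 1: quadratic algebra and bases.} By Lemma \ref{lemma:relations_in_H2_all_groups}, $R$ has a basis $\BIII_2$ consisting of
\begin{itemize}
\item the $\chi_i\ot\chi_j$ with $(i,j)\notin\{(1,1),(2r-1,2r),(2r,2r-1)\}$;
\item $\chi_1\ot\chi_1+\chi_{2r}\ot\chi_{2r-1}$ for $r\in J/2$;
\item $\chi_{2r-1}\ot\chi_{2r}+\chi_{2r}\ot\chi_{2r-1}$ for $r\in J/2$.
\end{itemize}
Note that, unlike type II, $\chi_1$ itself takes part in a hyperbolic pair with $\chi_2$. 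I then write a basis $\BIII_3=\SIII\cup\DIII\cup\TIII\cup\TIIIw\cup\EIII$ of $K_3^3(\Hb)$, modelled on Proposition \ref{prop:type_II_basis_for_Demushkin_odd_generators}. It has single terms, double sums $\chi_j\ot(\cdot)$ and $(\cdot)\ot\chi_j$, and triple sums. The triple sums include those involving $\chi_{1,1,1}$, $\chi_{1,2,1}$ and $\chi_{2,1,1}$, which arise from the overlap of the pair $(1,2)$ with the relation $\chi_1\ot\chi_1$. I check that the basis is independent by the same triangular elimination as in type II, and count $\dim K_3^3=d^3-2d$ via \cite[Lemma 5.10]{PQ3}.

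\textbf{Step 2: construction of $f_2$.} I define $f_2$ through explicit lifts to $U_3(\F_2)$ and $U_4(\F_2)$, using Lemma \ref{lemma:matrix_relations}.
\begin{itemize}
\item For a single tensor $\chi_i\ot\chi_j$, use $E_{1,2},E_{2,3}$, or $A_{11}$ when $i=j\neq1$.
\item For $\chi_1\ot\chi_1+\chi_{2r}\ot\chi_{2r-1}$, mimic the type II matrices $A,B,C$ with $A^2=[B,C]=E_{1,4}$. For $r=1$ one needs a single matrix for $x_1$ that carries both the $\chi_1$ entries and the $\chi_2$-commutator partner.
\item For the symmetric pair sums, use the commuting matrices $D,E$.
\end{itemize}
The extra factor $x_1^{2^f}$ (type III) or $x_3^{2^f}$ (type IV) causes no trouble: every unipotent matrix over $\F_2$ of size at most $8$ satisfies $M^{8}=I$, so $M^{2^f}=I$ once $f\ge3$. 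The case $f=2$ needs a check that the chosen matrices have order at most $4$, which I will arrange by choosing matrices with square central or trivial. I also need compatibility results analogous to Propositions \ref{prop:f2_comparison_3to4}, \ref{prop:f2_comparison_3to5} and \ref{prop:f2_comparison_4to5_type_II}, showing that the embedded blocks reproduce $f_2$.

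\textbf{Step 3: vanishing on most of $\BIII_3$.} I show that $\kIII_3$ vanishes on all of $\BIII_3$ except a small family, analogous to the two families in Proposition \ref{prop:kappa_3_vanishes_type_II}. For each basis element I read the data as a defining system of a Massey product of length $3$ to $5$, written as a homomorphism $\brr\colon G\to\bU_n(\F_2)$. I then exhibit a lift $\rr\colon G\to U_n(\F_2)$ by assigning matrices to the $x_i$ so that the defining relation holds in $U_n(\F_2)$. Dwyer's Theorem \ref{thm:Dwyer} then gives vanishing.

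\textbf{Step 4: the exceptional values are coboundaries.} I compute $\partial\sigma$ for $\sigma$ dual to the pair relations and show, as in Lemma \ref{lemma:image_of_partial_sigmas_TIIw}, that the duals of the exceptional elements lie in $\Imm(\partial)$. If some exceptional values come out correlated instead, as in type I, I take sums of duals as in Proposition \ref{prop:image_of_partial_sigmas_DDDw}. This gives $[\kIII_3]=0$.

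\textbf{Main obstacle.} I expect the hardest part to be the elements involving both $\chi_1\ot\chi_1$ and the pair $(1,2)$, such as $\chi_{1,1,1}+\chi_{1,2,1}+\chi_{2,1,1}$-type sums. There the single generator $x_1$ must simultaneously produce the square $x_1^{2}$ and the commutator $[x_1,x_2]$ inside $U_6(\F_2)$. I would first try a matrix like $T$ from type II augmented by an $E_{3,4}$-block, and verify by hand that $x_1^{2+2^f}[x_1,x_2]$ equals the identity in $U_6$. If that fails, I would instead prove that the resulting nonzero values coincide with values of $\partial\sigma^{s}_{1,1'+'2,1}$ on those same elements.
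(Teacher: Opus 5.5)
Your template (Koszulity, explicit bases, $f_2$ from lifts, Dwyer's theorem for vanishing, coboundaries for the rest) is the paper's. But the plan fails exactly at the point you single out, and neither of your two remedies works.

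With $f_2(\chi_{1,2}+\chi_{2,1})$ built from the commuting matrices $D,E$, as you propose, $\kIII_3$ is \emph{nonzero} on $\chi_{1,1,2}+\chi_{1,2,1}+\chi_{2,1,1}+\chi_{2,1,2}$ and on $\chi_{1,1,1}+\chi_{1,2,1}$. On any lift of the corresponding homomorphisms to $\bU_6(\F_2)$ and $\bU_4(\F_2)$, the defining relation evaluates to $E_{1,6}$, resp.\ $E_{1,4}$. This defect is central and independent of which matrices you pick, so no lift exists. Other choices of $f_2$ do not help either. Since $\chi_1\cup x=x\cup\chi_1$ and $\chi_2\cup x=x\cup\chi_2$ for all $x\in H^1$, replacing $f_2$ by $f_2+\sigma$ changes the value on $\chi_{1,1,2}+\chi_{1,2,1}+\chi_{2,1,1}+\chi_{2,1,2}$ only by $\chi_2\cup\sigma(\chi_{1,2}+\chi_{2,1})$. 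In particular your fallback fails: $\partial\sigma^s_{1,1'+'2,1}$ vanishes on both elements, and for $d=2$ it is identically zero. The relevant cochain is $\sigma^1_{1,2'+'2,1}$, dual to the hyperbolic-pair relation $\chi_{1,2}+\chi_{2,1}$. For $d=2$, which has a different basis of $K^3_3$ and which you do not treat separately, this suffices: $\kIII_3=\partial\sigma^1_{1,2'+'2,1}$ once $\kIII_3$ vanishes on $\chi_{2,2,2}$ and $\chi_{2,2,1}+\chi_{2,1,2}+\chi_{1,2,2}$.

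For $d\ge4$, however, $\partial\sigma^1_{1,2'+'2,1}$ is also nonzero on every element $\chi_{2,1,2}+\chi_{2,2i,2i-1}+\chi_{2i,2i-1,2}$ of $\TIIIw$, $2\le i\le d/2$, and on further elements of $\XIII$ and $\EIII$. So, contrary to your Step 3, $\kIII_3$ does not vanish off a family with individually exact duals as in type II. The missing ingredient is a correlation statement: $\kIII_3$ takes the same value on $\chi_{1,1,2}+\chi_{1,2,1}+\chi_{2,1,1}+\chi_{2,1,2}$ and on each $\chi_{2,1,2}+\chi_{2,2i,2i-1}+\chi_{2i,2i-1,2}$. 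You also need the $\partial\sigma$ computations showing that this, together with vanishing on $\SIII$, $\DIII$, $\TIII$ and on $\chi_{1,1,1}+\chi_{1,2i,2i-1}+\chi_{2i,2i-1,1}$, suffices. This is the heart of the paper's proof, and your plan has no mechanism for it.

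Evaluating directly on the $\TIIIw$ element is awkward, because it involves $f_2(\chi_{1,2}+\chi_{2i,2i-1})$, a sum of three basis values. The paper instead evaluates on $\chi_{2,1,1}+\chi_{2,2i,2i-1}+\chi_{1,2,1}+\chi_{1,1,2}+\chi_{2i,2i-1,2}$. For this element $\PsiIII_3$ only involves $f_2$ on the basis elements $\chi_{1,2}+\chi_{2,1}$, $\chi_{1,1}+\chi_{2i,2i-1}$, $\chi_{2,2i}$ and $\chi_{2i-1,2}$. It is therefore a defining system in $\bU_8(\F_2)$, lifted explicitly by $x_1\mapsto E^{(3,4);(4,8)}_{(1,2);(2,7)}$, $x_2\mapsto E^{(7,8)}_{(1,3);(2,4)}$, $x_{2i-1}\mapsto E^{6,8}_{5,7}$, $x_{2i}\mapsto E^{3,6}_{1,5}$. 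This uses a new compatibility lemma for $\VIIIt\subset U_7(\F_2)$. Your Massey products ``of length $3$ to $5$'' do not reach this.

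Two smaller points. Your basis lacks the two- and four-term elements of $\XIII$. And restricting to matrices with central or trivial square would exclude exactly this lift, since the image of $x_1$ has square $E^{3,8}_{1,7}$ (likewise $T^2=E^{1,4}_{2,6}$ in type II). For $f=2$ all you need is $(M-I)^4=0$.
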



In this section, 
we let $G$ be a pro-$2$ group as in Theorem \ref{thm:type_III+IV_general_kappa_3_is_trivial}. 
We use the same notation for $\Cb$, $\Zh^{\bbb}$, $\Hb$, and $\chi_i$ as in Section \ref{sec:torsion-free_p=2_Demushkin}. 
We write again $J \coloneqq \{1, \ldots, d\}$, $J/2 \coloneqq \{1, \ldots, d/2 \}$, 
and $\Jw \coloneqq (J/2) \setminus \{1\} = \{2, \ldots, d/2\}$.


\subsection{The Koszul--Hochschild complex -- type III and IV}\label{subsec:Koszul_complex_p=2_type_III}

Again, by \cite[Theorem 5.2]{MPQT}, 
the graded $\F_2$-cohomology algebra of $G$ is a Koszul algebra. 
We now provide a basis for $R$ and $K_3^3(\Hb)$. 


\begin{proposition}\label{prop:type_III_R_basis_for_Demushkin_odd_generators}
The $\F_2$-vector subspace $R \subset H^1 \ot H^1$ such that $\Hb = T(H^1)/(R)$ 
admits the set $\BIII_2$ as a basis where  
\begin{align*}
\BIII_2 \coloneqq \{ & \chi_i \ot \chi_j ~ \text{for all} ~ (i,j) \notin \{(1,1), (2r-1,2r), (2r,2r-1) : r \in J/2 \}; \\ 
& \chi_1 \ot \chi_1 + \chi_{2i} \ot \chi_{2i-1}, \chi_{2i-1} \ot \chi_{2i} + \chi_{2i} \ot \chi_{2i-1} ~ \text{for all} ~ i \in J/2 \}.   
\end{align*}
\end{proposition}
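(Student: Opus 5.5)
We read off $R$ as the kernel of the cup-product map $\mu\colon H^1\ot H^1\to H^2$, $\chi_i\ot\chi_j\mapsto \chi_i\cup\chi_j$. Since $\Hb$ is quadratic, $R=\ker\mu$ in degree two. By Definition \ref{def:Demushkin}, $\dim H^2=1$ and the cup product is non-degenerate, so $\mu$ is surjective. Hence $\dim_{\F_2} R = d^2-1$. It therefore suffices to show that every element of $\BIII_2$ lies in $\ker\mu$, that $\BIII_2$ has exactly $d^2-1$ elements, and that $\BIII_2$ is linearly independent.

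For membership we use the third bullet of Lemma \ref{lemma:relations_in_H2_all_groups}, which covers both relations \eqref{eq:type_III_relation_general} and \eqref{eq:type_IV_relation_general}. It gives $\chi_j\cup\chi_k=0$ for all $(j,k)$ outside $\{(1,1),(2r-1,2r),(2r,2r-1)\}$, so the single tensors lie in $R$. It also gives $\chi_1\cup\chi_1=\chi_{2i-1}\cup\chi_{2i}=\chi_{2i}\cup\chi_{2i-1}$, and the common value is the nonzero generator of $H^2$. Since we work over $\F_2$, this shows that $\chi_1\ot\chi_1+\chi_{2i}\ot\chi_{2i-1}$ and $\chi_{2i-1}\ot\chi_{2i}+\chi_{2i}\ot\chi_{2i-1}$ lie in $\ker\mu$.

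For the count, the excluded index set $\{(1,1),(2r-1,2r),(2r,2r-1): r\in J/2\}$ has $1+d$ elements, because the pairs for different $r$ are distinct and differ from $(1,1)$. So there are $d^2-d-1$ single tensors. The two families of sums contribute $d/2$ elements each, which is $d$ more, for a total of $d^2-1$.

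For linear independence, the single tensors are standard basis vectors of $H^1\ot H^1$ whose indices avoid the excluded set. Each sum involves only excluded indices. The term $\chi_{2i-1}\ot\chi_{2i}$ appears only in $\chi_{2i-1}\ot\chi_{2i}+\chi_{2i}\ot\chi_{2i-1}$. Once those coefficients vanish, $\chi_{2i}\ot\chi_{2i-1}$ appears only in $\chi_1\ot\chi_1+\chi_{2i}\ot\chi_{2i-1}$, so these coefficients vanish as well.

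There is no real obstacle here; the only point needing care is the case $i=1$. There the two sums are $\chi_1\ot\chi_1+\chi_2\ot\chi_1$ and $\chi_1\ot\chi_2+\chi_2\ot\chi_1$, and the argument above still works because $\chi_1\ot\chi_2$ occurs only in the second of them. This is exactly why $i=1$ is not excluded, unlike the type I case, where the index set $\Jw/2$ had to avoid $i=1$.
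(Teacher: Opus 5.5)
Your proposal is correct and takes the same approach as the paper. The paper's proof simply reads off $R$ as the kernel of the cup product from Lemma \ref{lemma:relations_in_H2_all_groups}. You additionally spell out the count $\dim_{\F_2} R = d^2-1$, which the paper records only in the type I analogue (Proposition \ref{prop:type_I_basis_2_for_Demushkin_even_generators}), and the linear-independence argument; both are carried out correctly.
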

\begin{proof}
This follows from the description of the cup product on $\Hb(G,\F_2)$ in 
Lemma \ref{lemma:relations_in_H2_all_groups}.  
\end{proof}


Next, we need to find a basis for $K^3_3(\Hb)$. 

\begin{proposition}\label{prop:type_III_K3_basis_for_Demushkin_odd_generators_d>2}
For $d =2$, 
the $\F_2$-vector space $K^3_3(\Hb)$ admits the set   
\begin{align*}
\BIII_3 \coloneqq  
\{ & \chi_2 \ot \chi_2 \ot \chi_2, 
\chi_1 \ot \chi_1 \ot \chi_1 + \chi_1 \ot \chi_2 \ot \chi_1, \\
 & ~  \chi_2 \ot \chi_2 \ot \chi_1 + \chi_2 \ot \chi_1 \ot \chi_2 + \chi_1 \ot \chi_2 \ot \chi_2, \\
 & ~  \chi_1 \ot \chi_1 \ot \chi_2 + \chi_1 \ot \chi_2 \ot \chi_1 + \chi_2 \ot \chi_1 \ot \chi_1 + \chi_2 \ot \chi_1 \ot \chi_2 \}
\end{align*}
as a basis. 
For $d \ge 4$, 
$K^3_3(\Hb)$ admits the set 
$\BIII_3 \coloneqq \SIII \cup \DIII \cup \TIII \cup \TIIIw \cup \EIII \cup \XIII$ 
as a basis 
where 
\begin{align*}
\SIII & = \left\{ \chi_i \ot \chi_j \ot \chi_k 
~ \text{with}~ \begin{cases}
i, k \ne 1, 2 &  \text{if} ~ j =1 \\
i, k \ne 2r &  \text{if} ~ j = 2r-1 \\
i, k \ne 2r-1 &  \text{if} ~ j = 2r 
\end{cases} 
\right\}, 
\end{align*}
\begin{align*}
\DIII =  \{ & \chi_j \ot (\chi_{2i-1} \ot \chi_{2i} + \chi_{2i} \ot \chi_{2i-1}), \\
& ~ ~ ~ (\chi_{2i-1} \ot \chi_{2i} + \chi_{2i} \ot \chi_{2i-1}) \ot \chi_j ~ \text{for} ~ i \in J/2, ~ j \ne 2i-1, 2i;  \\
& ~ ~ ~  \chi_j \ot (\chi_{1} \ot \chi_{1} + \chi_{2i} \ot \chi_{2i-1}), ~ \text{for} ~ i \in J/2, ~ j \ne 1, 2, 2i-1 \\
& ~ ~ ~ (\chi_{1} \ot \chi_{1} + \chi_{2i} \ot \chi_{2i-1}) \ot \chi_j ~ \text{for} ~ i \in J/2, ~ j \ne 1, 2, 2i 
\}, 
\end{align*}
\begin{align*}
\TIII =  \{ & \chi_{2i} \ot \chi_{2i-1} \ot \chi_{2i-1} + \chi_{2i-1} \ot \chi_{2i} \ot \chi_{2i-1} + \chi_{2i-1} \ot \chi_{2i-1} \ot \chi_{2i} ~ \text{for} ~ i \in  \Jw/2,  \\ 
& ~ ~\chi_{2i-1} \ot \chi_{2i} \ot \chi_{2i} + \chi_{2i} \ot \chi_{2i-1} \ot \chi_{2i} + \chi_{2i} \ot \chi_{2i} \ot \chi_{2i-1} ~ \text{for} ~ i \in  J/2 
\}, 
\end{align*}
\begin{align*}
\TIIIw = \{ & \chi_{1} \ot \chi_{1} \ot \chi_{1} + \chi_1 \ot \chi_{2i} \ot \chi_{2i-1} + \chi_{2i} \ot \chi_{2i-1} \ot \chi_{1}, \\
& ~ ~ \chi_{2} \ot \chi_{1} \ot \chi_{2} + \chi_2 \ot \chi_{2i} \ot \chi_{2i-1} + \chi_{2i} \ot \chi_{2i-1} \ot \chi_{2},  \\ 
& ~ ~ \chi_{2i-1} \ot \chi_{1} \ot \chi_{1} + \chi_{2i-1} \ot \chi_{2i} \ot \chi_{2i-1} + \chi_{2i} \ot \chi_{2i-1} \ot \chi_{2i-1},  \\
& ~ ~ \chi_{1} \ot \chi_{1} \ot \chi_{2i} + \chi_{2i} \ot \chi_{2i-1} \ot \chi_{2i} + \chi_{2i} \ot \chi_{2i} \ot \chi_{2i-1}, ~  \text{for} ~ i \in \Jw/2 
\},  
\end{align*}
\begin{align*}
\EIII =  \{ & \chi_{1} \ot \chi_{1} \ot \chi_{1} + \chi_{1} \ot \chi_{2i} \ot \chi_{2i-1} + \chi_{2i+2} \ot \chi_{2i+1} \ot \chi_1, \\
& ~  \chi_{2} \ot \chi_{1} \ot \chi_{2} + \chi_{2} \ot \chi_{2i} \ot \chi_{2i-1} + \chi_{2i+2} \ot \chi_{2i+1} \ot \chi_2 ~ \text{for} ~ 2 \le i \le d/2 - 1  
\}, 
\end{align*}
\begin{align*}
\XIII = \{ & \chi_{1} \ot \chi_{1} \ot \chi_1 + \chi_1 \ot \chi_{2} \ot \chi_1, \\
& ~~ \chi_1 \ot \chi_1 \ot \chi_2 + \chi_1 \ot \chi_2 \ot \chi_1 + \chi_2 \ot \chi_1 \ot \chi_1 + \chi_2 \ot \chi_1 \ot \chi_2, \\
& ~~ \chi_1 \ot \chi_1 \ot \chi_2 + \chi_1 \ot \chi_2 \ot \chi_2 + \chi_1 \ot \chi_1 \ot \chi_1 + \chi_4 \ot \chi_3 \ot \chi_1, \\
& ~~ \chi_2 \ot \chi_1 \ot \chi_2 + \chi_1 \ot \chi_2 \ot \chi_2 + \chi_2 \ot \chi_4 \ot \chi_3 
\}, 
\end{align*}
where $\EIII$ is defined to be empty if $d=4$.  
\end{proposition}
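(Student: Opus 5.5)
The plan follows the pattern of Proposition \ref{prop:type_II_basis_for_Demushkin_odd_generators}: check the dimension, show every listed element lies in $K^3_3(\Hb)$, and prove linear independence. By \cite[Lemma 5.10]{PQ3}, $\dim_{\F_2} K^3_3(\Hb) = d^3-2d$, because $\Hb$ is Koszul with $\dim H^1 = d$ and $\dim H^2 = 1$. For $d=2$ this dimension is $4$, which equals the number of listed elements.

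\textbf{Membership.} An element lies in $K^3_3(\Hb) = (R\ot H^1)\cap(H^1\ot R)$ exactly when both of its ``contractions'' vanish. Concretely, applying the cup product to the first two factors must give zero in $H^2\ot H^1$, and applying it to the last two factors must give zero in $H^1\ot H^2$. By Lemma \ref{lemma:relations_in_H2_all_groups}, the only non-zero cup products are
\[
\chi_1\cup\chi_1=\chi_{2r-1}\cup\chi_{2r}=\chi_{2r}\cup\chi_{2r-1}=\omega .
\]
So for each tensor $\chi_{a,b,c}$ one records a coefficient $\omega\ot\chi_c$ whenever $(a,b)$ is a non-zero pair, and $\chi_a\ot\omega$ whenever $(b,c)$ is one. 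Membership is then a mechanical cancellation check.

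\begin{itemize}
\item For $\SIII$, both contractions vanish by the stated index conditions. Note that for $j=1$ one must exclude $i,k\in\{1,2\}$, since both $\chi_1\chi_1$ and $\chi_2\chi_1$ are non-zero.
\item For $\DIII$, the inner two-term sum already lies in $R$. The outer index avoids the indices that pair non-trivially with the adjacent factors.
\item For the triple and quadruple sums in $\TIII,\TIIIw,\EIII,\XIII$, one checks that the contributions pair off. For example, in $\chi_{1,1,1}+\chi_{1,2,1}$ the left contractions give $\omega\ot\chi_1$ twice, and the right contractions give $\chi_1\ot\omega$ twice.
\end{itemize}

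\textbf{Counting.} Next I count $\#\BIII_3$ and verify that it equals $d^3-2d$.

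\begin{itemize}
\item $\#\SIII$: for $j=1$ there are $(d-2)^2$ choices, and for each $j\ge2$ there are $(d-1)^2$, giving $(d-2)^2+(d-1)^3$.
\item $\#\DIII$: the first two families give $2\cdot\frac d2(d-2)$. The $(\chi_{1,1}+\chi_{2i,2i-1})$ families have $d-3$ choices of $j$ each when $i\ge2$, and $d-2$ when $i=1$.
\item $\#\TIII = d-1$ and $\#\TIIIw = 4(d/2-1)$.
\item $\#\EIII = 2(d/2-2)$ and $\#\XIII = 4$.
\end{itemize}

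If the sum is off, I would recheck the index restrictions in $\DIII$ for $i=1$. These are where the overlap with $\XIII$ and $\TIIIw$ is absorbed, and they are the most likely place for a miscount.

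\textbf{Linear independence.} This is the main obstacle. I would argue as in type II, by peeling off coefficients using monomials that occur in only one basis element.

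\begin{enumerate}
\item Monomials in $\SIII$ occur nowhere else, so their coefficients vanish.
\item Monomials such as $\chi_{2i-1,1,1}$ and $\chi_{1,1,2i}$ for $i\ge2$ isolate two of the four families in $\TIIIw$.
\item Then $\chi_{2i-1,2i,2i-1}$ and its analogues kill $\TIII$.
\item Then the $\DIII$ elements are killed via monomials $\chi_{2i-1,2i,j}$, $\chi_{j,2i-1,2i}$, $\chi_{j,2i,2i-1}$, and $\chi_{2i,2i-1,j}$.
\item What remains is a chain linking $\TIIIw$ (first two families), $\EIII$, and $\XIII$ through the monomials $\chi_{2i+2,2i+1,1}$, $\chi_{1,2i,2i-1}$, $\chi_{2i+2,2i+1,2}$, and $\chi_{2,2i,2i-1}$. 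I would run the same induction on $i$ as in type II, starting at $i=2$ with $\chi_{4,3,1}$ and $\chi_{2,4,3}$, which tie to the third and fourth elements of $\XIII$.
\item Finally, the block $\{\chi_{1,1,1}+\chi_{1,2,1},\ \ldots\}$ involving only $\chi_1,\chi_2$ is handled by inspecting the monomials $\chi_{2,1,2}$, $\chi_{2,1,1}$, $\chi_{1,2,2}$, and $\chi_{1,1,1}$ directly.
\end{enumerate}

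The $d=2$ case is a direct $4\times 8$ coefficient check.

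The delicate part is step 5: the ordering must be chosen so that at each stage some monomial appears in exactly one surviving element. I would tabulate, for the indices $1,2,2i-1,2i$, which basis elements contain each of the finitely many relevant monomials, and read off the elimination order from that table.
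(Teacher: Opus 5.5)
Your overall strategy matches the paper's, and your explicit membership check via the two contractions is a real improvement, since the paper leaves it implicit. The strategy is:
\begin{itemize}
\item take $\dim_{\F_2}K^3_3(\Hb)=d^3-2d$ from \cite[Lemma 5.10]{PQ3};
\item count $\BIII_3$;
\item prove linear independence by peeling off coefficients with monomials that occur in only one surviving element.
\end{itemize}
Your counts are also right; they do sum to $d^3-2d$.

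The gap is in the elimination order you actually write down, which fails at two points.
\begin{itemize}
\item In step~2, $\chi_{2i-1,1,1}$ is not isolated while $\DIII$ is still present. It also occurs in $\chi_{2i-1}\ot(\chi_1\ot\chi_1+\chi_{2k}\ot\chi_{2k-1})\in\DIII$ for every $k\neq i$ (e.g.\ $k=1$). Likewise $\chi_{1,1,2i}$ occurs in $(\chi_1\ot\chi_1+\chi_{2k}\ot\chi_{2k-1})\ot\chi_{2i}$.
\item Step~3 then fails as well. The monomials $\chi_{2i-1,2i,2i-1}$ and $\chi_{2i,2i-1,2i}$ each lie both in an element of $\TIII$ and in the third, respectively fourth, family of $\TIIIw$, so they isolate nothing unless step~2 has already succeeded.
\item In step~5, the induction cannot start at $i=2$ while $\XIII$ is still present, and you postpone $\XIII$ to step~6. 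The monomial $\chi_{4,3,1}$ lies both in $\chi_{1,1,1}+\chi_{1,4,3}+\chi_{4,3,1}$ and in the third element of $\XIII$. Similarly, $\chi_{2,4,3}$ lies both in $\chi_{2,1,2}+\chi_{2,4,3}+\chi_{4,3,2}$ and in the fourth element of $\XIII$.
\end{itemize}

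An order that does work, and is the one the paper uses, is the following.
\begin{enumerate}
\item Kill $\SIII$ as you do.
\item Kill $\TIII$ using $\chi_{2i-1,2i-1,2i}$ and $\chi_{2i-1,2i,2i}$ for $i\ge 2$, and $\chi_{2,2,1}$ for $i=1$. Each of these occurs in exactly one element of $\BIII_3$.
\item Now $\chi_{2i-1,2i,2i-1}$ and $\chi_{2i,2i-1,2i}$ isolate the third and fourth families of $\TIIIw$.
\item Handle $\DIII$ with your monomials. Use $\chi_{j,2i-1,2i}$ and $\chi_{2i-1,2i,j}$ first, since they occur in no other basis element, and only then $\chi_{j,2i,2i-1}$ and $\chi_{2i,2i-1,j}$.
\item Remove $\XIII$ via $\chi_{2,1,1}$, $\chi_{1,1,2}$, $\chi_{1,2,2}$, $\chi_{1,2,1}$, in that order.
\item Only then run the alternating induction $\chi_{4,3,1},\chi_{1,4,3},\chi_{6,5,1},\chi_{1,6,5},\dots$ through $\TIIIw$ and $\EIII$. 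Run the same chain with $\chi_2$ in place of $\chi_1$ in the first or last slot, i.e.\ $\chi_{4,3,2},\chi_{2,4,3},\chi_{6,5,2},\dots$.
\end{enumerate}
With that ordering your argument becomes a complete proof.
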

\begin{proof}
By \cite[Lemma 5.10]{PQ3}, we know $\dim_{\F_2} K^3_3(\Hb) = d^3-2d$. 
Hence it suffices to count the number of elements in $\BIII_3$ and to show that the set is linearly independent. 
The case $d=2$ follows immediately. 
We therefore assume $d\ge 4$ from now on. 
To compute the number of elements in $\SIII$, 
we observe that we have $d$ choices for the middle index $j$. 
For $j \ge 2$, we then have $d-1$ choices for $i$ and $k$. 
This yields $(d-1)^3$ elements. 
For $j=1$, we have $d-2$ choices for $i$ and $k$, 
which yields $(d-2)^2$ elements. 
Hence we have $\# \SIII = (d-1)^3 + (d-2)^2$.  
We have $\# \DIII = 2[d/2(d-2) + (d-2) + (d/2-1)(d-3)]  = (2d-1)(d-2)$  
since we have first $d/2 $ choices for the index $i$ with for each choice exactly two values for $j$ excluded, 
and then for $i=1$ there are two values for $j$ excluded, and for $i \ge 2$ there are three values for $j$ excluded; 
for each pair we then get two elements. 
We have $\# \TIII  = d-1$  
as there is one element for $i=1$ and then 
there are $d/2-1$ choices for the index $i \ge 2$ each furnishing two triple sums. 
Similarly, we have $\# \TIIIw = 4(d/2-1) = 2d-4$, 
and $\# \EIII = 2(d/2-2) = d - 4$ 
since we have $d/2 - 2$ choices for the index $i$ each furnishing two elements. 
Thus, in total we have 
\begin{align*}
\# \BIII_3  = & ~~  \# \SIII + \# \DIII 
+ \# \TIII + \#\TIIIw + \#\EIII  + \# \XIII \\
= & ~~  [(d-1)^3 + (d-2)^2] + (2d-1)(d-2)  \\ 
& ~~ ~~ + (d-1) + (2d-4) + (d-4) + 4  
=   d^3 - 2d.  
\end{align*}
%
%
It remains to show that $\BIII_3$ is linearly independent. 
Assume we write the zero vector in $K_3^3(\Hb)$ as an $\F_2$-linear combination of elements in $\BIII_3$. 
Since the terms in $\SIII$ only occur in $\SIII$, their coefficients must be zero. 
For $i \in \Jw/2$, 
terms of the form $\chi_{2i-1} \ot \chi_{2i-1} \ot \chi_{2i}$ 
and $\chi_{2i-1} \ot \chi_{2i} \ot \chi_{2i}$ 
only occur once in sums in  $\TIII$. 
In addition, for $i=1$, 
the term $\chi_2 \ot \chi_2 \ot \chi_1$ only occurs once in a sum in $\TIII$. 
For $i \in \Jw/2$, 
terms of the form $\chi_{2i-1} \ot \chi_{2i} \ot \chi_{2i-1}$ 
and $\chi_{2i} \ot \chi_{2i-1} \ot \chi_{2i}$ 
now only occur in sums in  $\TIIIw$.  
This implies that the coefficients of the elements 
$\chi_{2i-1} \ot \chi_{1} \ot \chi_{1} + \chi_{2i-1} \ot \chi_{2i} \ot \chi_{2i-1} + \chi_{2i} \ot \chi_{2i-1} \ot \chi_{2i-1}$ 
and 
$\chi_{1} \ot \chi_{1} \ot \chi_{2i} + \chi_{2i} \ot \chi_{2i-1} \ot \chi_{2i} + \chi_{2i} \ot \chi_{2i} \ot \chi_{2i-1}$ 
must be zero. 
For $i \in J/2$, terms of the form $\chi_j \ot \chi_{2i-1} \ot \chi_{2i}$ 
and
$\chi_{2i-1} \ot \chi_{2i} \ot \chi_j$  
only occur in sums in $\DIII$. 
Hence the coefficients of  
$\chi_j \ot (\chi_{2i-1} \ot \chi_{2i} + \chi_{2i} \ot \chi_{2i-1})$ 
and 
$(\chi_{2i-1} \ot \chi_{2i} + \chi_{2i} \ot \chi_{2i-1}) \ot \chi_j$ 
for $i \in J/2$ and $j\ne 1,2$ 
must be zero. 
Then, for $i \in J/2$ and $j\ne 1,2$, 
terms of the form 
$\chi_j \ot \chi_{2i} \ot \chi_{2i-1}$ 
and  
$\chi_{2i} \ot \chi_{2i-1} \ot \chi_j$ 
only occur in the sums 
$\chi_j \ot (\chi_{1} \ot \chi_{1} + \chi_{2i} \ot \chi_{2i-1})$ 
and 
$(\chi_{1} \ot \chi_{1} + \chi_{2i} \ot \chi_{2i-1}) \ot \chi_j$ 
in $\DIII$. 
The coefficients of these sums therefore must be zero as well. 
This shows that the coefficients of all terms in $\DIII$ are zero. 
Next, we consider elements in $\XIII$. 
The only sum which contains the term $\chi_2 \ot \chi_1 \ot \chi_1$ 
is the sum 
$\chi_1 \ot \chi_1 \ot \chi_2 + \chi_1 \ot \chi_2 \ot \chi_1 + \chi_2 \ot \chi_1 \ot \chi_1 + \chi_2 \ot \chi_1 \ot \chi_2$. 
Thus, 
the coefficient of the latter must be zero as well. 
Now the only remaining sum which contains the term $\chi_1 \ot \chi_1 \ot \chi_2$ 
is the sum 
$\chi_1 \ot \chi_1 \ot \chi_2 + \chi_1 \ot \chi_2 \ot \chi_2 + \chi_1 \ot \chi_1 \ot \chi_1 + \chi_4 \ot \chi_3 \ot \chi_1$. 
Thus, 
the coefficient of the latter must be zero, too. 
Then  the only remaining sum which contains the term $\chi_1 \ot \chi_2 \ot \chi_2$ 
is the sum 
$\chi_2 \ot \chi_1 \ot \chi_2 + \chi_1 \ot \chi_2 \ot \chi_2 + \chi_2 \ot \chi_4 \ot \chi_3$. 
Thus, 
the coefficient of the latter must be zero. 
Finally, for $\XIII$, 
the only remaining sum which contains the term $\chi_1 \ot \chi_2 \ot \chi_1$ 
is the sum 
$\chi_1 \ot \chi_1 \ot \chi_1 + \chi_1 \ot \chi_2 \ot \chi_1$. 
Thus, 
the coefficient of the latter must be zero. 
This shows that the coefficients of all terms in $\XIII$ are zero. 
The only elements with potentially non-zero coefficients remaining are the elements in $\EIII$ 
and the sums 
$\chi_{1} \ot \chi_{1} \ot \chi_{1} + \chi_1 \ot \chi_{4} \ot \chi_{3} + \chi_{4} \ot \chi_{3} \ot \chi_{1}$ 
and 
$\chi_{2} \ot \chi_{1} \ot \chi_{2} + \chi_2 \ot \chi_{2i} \ot \chi_{2i-1} + \chi_{2i} \ot \chi_{2i-1} \ot \chi_{2}$ 
in $\TIIIw$. 
We will now show that the coefficients of these terms also must be zero by induction on $i$. 
For $i=2$, the only remaining sum which contains the term $\chi_{4} \ot \chi_{3} \ot \chi_{1}$ 
is the sum 
$\chi_{1} \ot \chi_{1} \ot \chi_{1} + \chi_1 \ot \chi_{4} \ot \chi_{3} + \chi_{4} \ot \chi_{3} \ot \chi_{1}$ 
in $\TIIIw$. 
Thus, the coefficient of the latter sum must be zero. 
Then, for $i \ge 2$, knowing that the coefficient of 
$\chi_{1} \ot \chi_{1} \ot \chi_{1} + \chi_1 \ot \chi_{2i} \ot \chi_{2i-1} + \chi_{2i} \ot \chi_{2i-1} \ot \chi_{1}$ 
in $\TIIIw$ is zero, 
implies that the coefficient of 
$\chi_{1} \ot \chi_{1} \ot \chi_{1} + \chi_1 \ot \chi_{2i} \ot \chi_{2i-1} + \chi_{2i+2} \ot \chi_{2i+1} \ot \chi_{1}$ 
in $\EIII$ is zero, too. 
This implies that the coefficient of 
$\chi_{1} \ot \chi_{1} \ot \chi_{1} + \chi_1 \ot \chi_{2i+2} \ot \chi_{2i+1} + \chi_{2i+2} \ot \chi_{2i+1} \ot \chi_{1}$ 
in $\TIIIw$ is zero as well. 
Proceeding by induction, this shows that the coefficients of 
$\chi_{1} \ot \chi_{1} \ot \chi_{1} + \chi_1 \ot \chi_{2i} \ot \chi_{2i-1} + \chi_{2i} \ot \chi_{2i-1} \ot \chi_{1}$ 
in $\TIIIw$ 
and of 
$\chi_{1} \ot \chi_{1} \ot \chi_{1} + \chi_1 \ot \chi_{2i} \ot \chi_{2i-1} + \chi_{2i+2} \ot \chi_{2i+1} \ot \chi_{1}$ 
in $\EIII$ are zero. 
Using a similar induction for 
$\chi_{2} \ot \chi_{1} \ot \chi_{2} + \chi_2 \ot \chi_{2i} \ot \chi_{2i-1} + \chi_{2i} \ot \chi_{2i-1} \ot \chi_{2}$ 
in $\TIIIw$ 
and 
$\chi_{2} \ot \chi_{1} \ot \chi_{2} + \chi_{2} \ot \chi_{2i} \ot \chi_{2i-1} + \chi_{2i+2} \ot \chi_{2i+1} \ot \chi_2$
in $\EIII$ 
we then get that the coefficients of all elements in $\TIIIw$ and $\EIII$ are zero. 
This finishes the proof. 
\end{proof}


%
%

\subsection{The differential in the Koszul--Hochschild complex -- type III and IV}\label{subsec:Koszul_complex_p=2_type_III_differential}

Next, we compute the effect of 
$\partial \colon \Hom_{\F_2}(K_2^2(\Hb),H^1)  \to \Hom_{\F_2}(K^3_3(\Hb),H^2)$ 
in the complex \eqref{eq:Koszul_complex_3,-1} 
which computes the Hochschild cohomology of $\Hb$. 
In the following, we use notation analogous to the one introduced in Notation \ref{notn:sigma} and \ref{notn:dual_notation_type_I} 
in Section \ref{subsec:Koszul_complex_p=2_trosionfree_differential}. 
The following identities of linear maps in $\Hom_{\F_2}(K^3_3(\Hb),H^2)$ follow from direct computations analogous to the arguments in the 
proofs of Lemmas \ref{lemma:image_of_partial_sigmas_DDDw} and \ref{lemma:image_of_partial_sigmas_TIIw}. 
First, we consider the case $d=2$. 

\begin{lemma}\label{lemma:image_of_partial_sigmas_d=2_type_III}
For $d=2$, we have the identity 
\begin{align*}
\partial \sigma^1_{1,2'+'2,1} 
= (\chi_{1,1,1}+ \chi_{1,2,1})^* + (\chi_{1,1,2} + \chi_{1,2,1} + \chi_{2,1,1} + \chi_{2,1,2})^* 
\end{align*}
in $\Hom_{\F_2}(K^3_3(\Hb),H^2)$. 
\end{lemma}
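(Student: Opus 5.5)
The plan is to evaluate $\partial \sigma^1_{1,2'+'2,1}$ directly on each of the four elements of $\BIII_3$ listed in Proposition \ref{prop:type_III_K3_basis_for_Demushkin_odd_generators_d>2} for $d=2$. Since $\Hom_{\F_2}(K^3_3(\Hb),H^2)$ has the dual basis $\BBB_3^*$, the identity holds exactly when $\partial\sigma^1_{1,2'+'2,1}$ is non-zero on $\chi_{1,1,1}+\chi_{1,2,1}$ and on $\chi_{1,1,2}+\chi_{1,2,1}+\chi_{2,1,1}+\chi_{2,1,2}$, and zero on the other two basis elements.

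First I record the ingredients. For $d=2$ (only type III occurs), Proposition \ref{prop:type_III_R_basis_for_Demushkin_odd_generators} gives the basis $\{\chi_{2,2},\ \chi_{1,1}+\chi_{2,1},\ \chi_{1,2}+\chi_{2,1}\}$ of $R$. The map $\sigma=\sigma^1_{1,2'+'2,1}$ sends $\chi_{1,2}+\chi_{2,1}$ to $\chi_1$ and kills the other two basis elements. By Lemma \ref{lemma:relations_in_H2_all_groups},
\[
\chi_1\cup\chi_1=\chi_1\cup\chi_2=\chi_2\cup\chi_1\neq 0 \quad\text{and}\quad \chi_2\cup\chi_2=0 .
\]
For $\kappa=\sum_t a_t\ot b_t\ot c_t\in K^3_3$, I compute
\[
\partial\sigma(\kappa)=\sum_t a_t\cup\sigma(b_t\ot c_t)+\sum_t\sigma(a_t\ot b_t)\cup c_t .
\]
In each sum I group the terms by the outer letter so that the inner tensors are sums lying in $R$. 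I then rewrite those sums in the basis of $R$; for instance $\chi_{1,1}+\chi_{1,2}=(\chi_{1,1}+\chi_{2,1})+(\chi_{1,2}+\chi_{2,1})$.

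The four evaluations are as follows.
\begin{itemize}
\item On $\chi_{2,2,2}$ both terms involve $\sigma(\chi_{2,2})=0$, so the value is $0$.
\item On $\chi_{1,1,1}+\chi_{1,2,1}$, the left grouping is $\chi_1\ot(\chi_{1,1}+\chi_{2,1})$, which contributes $0$. The right grouping is $(\chi_{1,1}+\chi_{1,2})\ot\chi_1$, which contributes $\chi_1\cup\chi_1\neq0$.
\item On $\chi_{2,2,1}+\chi_{2,1,2}+\chi_{1,2,2}$, the left grouping gives $\chi_2\cup\chi_1$ and the right grouping gives $\chi_1\cup\chi_2$. These are equal and nonzero, so they cancel and the value is $0$.
\item On the four-term element, the left grouping is $\chi_1\ot(\chi_{1,2}+\chi_{2,1})+\chi_2\ot(\chi_{1,1}+\chi_{1,2})$. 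It yields $\chi_1\cup\chi_1+\chi_2\cup\chi_1=0$. The right grouping is $(\chi_{1,1}+\chi_{2,1})\ot\chi_2+(\chi_{1,2}+\chi_{2,1})\ot\chi_1$. It yields $\chi_1\cup\chi_1\neq 0$.
\end{itemize}
Together these give the claimed identity.

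The only real obstacle is bookkeeping. The groupings must produce elements of $R$ (they do, since the element lies in $K^3_3$), and the rewriting in the basis $\BIII_2$ must be done correctly. The cancellation in the third case relies on $\chi_1\cup\chi_2=\chi_2\cup\chi_1$, so I would double-check it against Lemma \ref{lemma:relations_in_H2_all_groups}.
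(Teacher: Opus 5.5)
Your proposal is correct and takes essentially the paper's approach. The paper justifies this lemma only as a direct computation, namely evaluating $\partial\sigma^1_{1,2'+'2,1}$ on the four elements of $\BIII_3$ after grouping terms so that the inner tensors lie in $R$, which is exactly what you do. Your four values ($0$, nonzero, $0$, nonzero) check out using $\chi_1\cup\chi_1=\chi_1\cup\chi_2=\chi_2\cup\chi_1\neq 0$ and $\chi_2\cup\chi_2=0$.
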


\begin{proposition}\label{prop:image_of_partial_sigmas_type_III_d=2}
Let $d=2$ and $\kappa \in \Hom_{\F_2}(K^3_3(\Hb), H^2)$. 
Assume that $\kappa$ vanishes on 
$\chi_2 \ot \chi_2 \ot \chi_2$ and $\chi_2 \ot \chi_2 \ot \chi_1 + \chi_2 \ot \chi_1 \ot \chi_2 + \chi_1 \ot \chi_2 \ot \chi_2$, 
and assume that $\kappa$ has the same value on 
$\chi_1 \ot \chi_1 \ot \chi_1 + \chi_1 \ot \chi_2 \ot \chi_1$ and 
$\chi_1 \ot \chi_1 \ot \chi_2 + \chi_1 \ot \chi_2 \ot \chi_1 + \chi_2 \ot \chi_1 \ot \chi_1 + \chi_2 \ot \chi_1 \ot \chi_2$. 
Then $[\kappa] = 0$ in $\HH^{3,-1}(\Hb)$. 
\end{proposition}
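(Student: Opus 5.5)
The statement is a direct consequence of Lemma \ref{lemma:image_of_partial_sigmas_d=2_type_III}, in the same way that Proposition \ref{prop:image_of_partial_sigmas_TIIw} follows from Lemma \ref{lemma:image_of_partial_sigmas_TIIw}. Since $H^3=0$, every $\kappa\in\Hom_{\F_2}(K^3_3(\Hb),H^2)$ is a cocycle in \eqref{eq:Koszul_complex_3,-1}. Hence $[\kappa]=0$ as soon as $\kappa$ lies in $\Imm(\partial)$. We identify $H^2\cong\F_2$ via its unique non-zero element $\chi_1\cup\chi_1$ and expand $\kappa$ in the dual basis of the four-element basis $\BIII_3$ for $d=2$ from Proposition \ref{prop:type_III_K3_basis_for_Demushkin_odd_generators_d>2}.

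By hypothesis, $\kappa$ vanishes on the first and third basis elements, namely $\chi_{2,2,2}$ and $\chi_{2,2,1}+\chi_{2,1,2}+\chi_{1,2,2}$. It also takes a common value $\varepsilon\in\F_2$ on the other two, namely $\chi_{1,1,1}+\chi_{1,2,1}$ and $\chi_{1,1,2}+\chi_{1,2,1}+\chi_{2,1,1}+\chi_{2,1,2}$. Therefore
\[
\kappa=\varepsilon\bigl((\chi_{1,1,1}+\chi_{1,2,1})^*+(\chi_{1,1,2}+\chi_{1,2,1}+\chi_{2,1,1}+\chi_{2,1,2})^*\bigr).
\]
If $\varepsilon=0$ there is nothing to prove. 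If $\varepsilon=1$, Lemma \ref{lemma:image_of_partial_sigmas_d=2_type_III} gives $\kappa=\partial\sigma^1_{1,2'+'2,1}$, so $\kappa$ is a coboundary and $[\kappa]=0$ in $\HH^{3,-1}(\Hb)$.

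The only real content is the lemma itself. If it has to be verified, one computes $\partial\sigma$ on each of the four basis elements, with $\sigma=\sigma^1_{1,2'+'2,1}$. The key step is to rewrite tensors such as $\chi_{2,1}$ or $\chi_{1,1}$ in terms of $\BIII_2$; for example, $\chi_{1,1}=(\chi_{1,1}+\chi_{2,1})+\chi_{2,1}$ is not itself a basis element. Then one applies the cup-product relations of Lemma \ref{lemma:relations_in_H2_all_groups}, which for $d=2$ give $\chi_1\cup\chi_1=\chi_1\cup\chi_2=\chi_2\cup\chi_1\neq0$ and $\chi_2\cup\chi_2=0$. This bookkeeping is the main (mild) obstacle. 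For the proposition as stated, however, the lemma may be assumed.
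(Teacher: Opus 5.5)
Your proposal is correct and follows the paper's proof exactly: you expand $\kappa$ in the dual basis of $\BIII_3$ and apply Lemma~\ref{lemma:image_of_partial_sigmas_d=2_type_III} to get $\kappa=\partial\sigma^1_{1,2'+'2,1}$, and treating a common value of $0$ separately tidies up the paper's wording. One minor slip in your optional aside: $\chi_{1,1}$ and $\chi_{2,1}$ are not in $R$, so $\sigma$ is not defined on them; what actually gets rewritten in $\BIII_2$ are the $R$-components of the basis elements, e.g.\ $\chi_{1,1}+\chi_{1,2}=(\chi_{1,1}+\chi_{2,1})+(\chi_{1,2}+\chi_{2,1})$ in $(\chi_{1,1}+\chi_{1,2})\ot\chi_1$.
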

\begin{proof}
This follows from the form of the basis $\BIII_3$ in Proposition \ref{prop:type_III_K3_basis_for_Demushkin_odd_generators_d>2} 
and from Lemma \ref{lemma:image_of_partial_sigmas_d=2_type_III} which implies $\kappa = \partial \sigma^1_{1,2'+'2,1}$. 
\end{proof}


Now we assume  $d\ge 4$ for the following lemmas. 

\begin{lemma}\label{lemma:image_of_partial_sigmas_TIIIw}
For every $i \ge 2$, we have 
\begin{align*}
\partial \sigma^{2i-1}_{2i-1,2i '+' 2i,2i-1} = (\chi_{1,1,2i} + \chi_{2i,2i-1,2i} + \chi_{2i,2i,2i-1})^* \in (\TIIIw)^*, 
\end{align*}
and 
\begin{align*}
\partial \sigma^{2i}_{2i-1,2i '+' 2i,2i-1} = 
(\chi_{2i-1,1,1} + \chi_{2i-1,2i,2i-1} + \chi_{2i,2i-1,2i-1})^* \in (\TIIIw)^*. 
\end{align*}
\end{lemma}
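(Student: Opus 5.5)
The approach is direct evaluation, as in Lemma \ref{lemma:image_of_partial_sigmas_TIIw}. Fix $i \ge 2$ and write $\sigma = \sigma^{2i-1}_{2i-1,2i'+'2i,2i-1}$ (the case $\sigma^{2i}$ is symmetric). By definition, $\sigma$ sends the basis element $\chi_{2i-1,2i}+\chi_{2i,2i-1}$ of $\BIII_2$ to $\chi_{2i-1}$ and every other element of $\BIII_2$ to $0$. For a basis element $\sum \chi_{a,b,c}$ of $\BIII_3$ we have
\[
\partial\sigma\bigl(\textstyle\sum \chi_{a,b,c}\bigr)=\sum \chi_a\cup\sigma(\chi_{b,c})+\sum \sigma(\chi_{a,b})\cup\chi_c .
\]
Each grouped two-tensor (of the form $\sum_a \chi_a\ot(\ldots)$ or $\sum_c(\ldots)\ot\chi_c$) lies in $R$. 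To evaluate $\sigma$ on it, we expand it in $\BIII_2$. The only delicate expansions are the ones involving $\chi_{2i,2i-1}$ or $\chi_{1,1}$, and for these we use the identity
\[
\chi_{1,1}=(\chi_{1,1}+\chi_{2i,2i-1})+(\chi_{2i-1,2i}+\chi_{2i,2i-1})+\chi_{2i-1,2i},
\]
noting that $\chi_{2i-1,2i}$ alone is not in $R$ and so must be recombined with a companion term. The cup products are then read off from Lemma \ref{lemma:relations_in_H2_all_groups} (types III/IV): $\chi_1\cup\chi_1=\chi_{2r-1}\cup\chi_{2r}=\chi_{2r}\cup\chi_{2r-1}\neq 0$, and all others vanish.

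First, on $\chi_{1,1,2i}+\chi_{2i,2i-1,2i}+\chi_{2i,2i,2i-1}$ we group the terms as $\chi_1\ot\chi_{1,2i}+\chi_{2i}\ot(\chi_{2i-1,2i}+\chi_{2i,2i-1})$ on the left. Since $\sigma(\chi_{1,2i})=0$, this side contributes $\chi_{2i}\cup\chi_{2i-1}\neq0$. On the right we group as $\chi_{1,1}\ot\chi_{2i}+\chi_{2i,2i-1}\ot\chi_{2i}+\chi_{2i,2i}\ot\chi_{2i-1}$, i.e. $(\chi_{1,1}+\chi_{2i,2i-1})\ot\chi_{2i}+\chi_{2i,2i}\ot\chi_{2i-1}$. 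Here $\sigma$ kills $\chi_{1,1}+\chi_{2i,2i-1}$ and $\chi_{2i,2i}$, so the value is nonzero. Second, we check that $\partial\sigma$ vanishes on all other elements of $\BIII_3$. On $\SIII$, $\sigma$ vanishes on all the two-tensors involved. On $\TIII$ and $\XIII$, we use the symmetry argument (every sum there pairs the relation either with $\chi_1$ on the outside or in a way that gives products like $\chi_{2i-1}\cup\chi_{2i-1}=0$). On $\DIII$, the only possibly nonzero terms are $\chi_j\cup\chi_{2i-1}$ or $\chi_{2i-1}\cup\chi_j$ with $j\neq 2i-1,2i$, and these vanish. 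For $\chi_j\ot(\chi_{1,1}+\chi_{2i,2i-1})$ we get $\sigma=0$ directly. On $\TIIIw$ and $\EIII$, we expand each sum whose two-tensor parts contain $\chi_{2i,2i-1}$ or $\chi_{2i-1,2i}$; the resulting products are things like $\chi_{2i-1}\cup\chi_{2i-1}$, $\chi_1\cup\chi_{2i-1}$, $\chi_{2i-1}\cup\chi_1$ and $\chi_{2i-1}\cup\chi_2$, all of which are zero for $i\ge2$. On the companion element $\chi_{2i-1,1,1}+\chi_{2i-1,2i,2i-1}+\chi_{2i,2i-1,2i-1}$ the result is $\chi_{2i-1}\cup\chi_{2i-1}=0$.

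The main obstacle is the bookkeeping in the $\TIIIw$ and $\EIII$ cases together with the four-term elements of $\XIII$ (which contain $\chi_{4,3}$ and are relevant when $i=2$). For these, we first rewrite every grouped two-tensor explicitly in the basis $\BIII_2$ using the identity above, and only then apply $\sigma$. In each such case at most one summand picks up $\chi_{2i-1}$, and the cup product of that summand is zero because its partner index is $1$, $2$ or $2i-1$ and $i\ge2$. The second identity follows by the same argument with $\chi_{2i-1}$ and $\chi_{2i}$ interchanged in $\sigma$'s value: on $\chi_{2i-1,1,1}+\chi_{2i-1,2i,2i-1}+\chi_{2i,2i-1,2i-1}$ we obtain $\chi_{2i-1}\cup\chi_{2i}\neq0$, and on everything else we obtain zero.
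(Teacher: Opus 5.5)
Your proposal is correct and matches the paper's approach. The paper gives no details for this lemma; it only says the identities follow from direct computations analogous to Lemmas~\ref{lemma:image_of_partial_sigmas_DDDw} and~\ref{lemma:image_of_partial_sigmas_TIIw}, and that is exactly your evaluation of $\partial\sigma$ on each element of $\BIII_3$ via its left and right groupings. Two minor bookkeeping corrections: on the element of $\TIII$ indexed by $i$ where both groupings contribute, the terms $\chi_{2i}\cup\chi_{2i-1}$ and $\chi_{2i-1}\cup\chi_{2i}$ are each nonzero and cancel (the symmetry argument, not a vanishing product); and on $\EIII$ and $\XIII$ the relevant $\sigma$ already kills every grouped two-tensor, because their $\BIII_2$-expansions go through $\chi_{1,2}+\chi_{2,1}$ and $\chi_{1,1}+\chi_{2,1}$ and never involve $\chi_{2i-1,2i}+\chi_{2i,2i-1}$, so your displayed identity for $\chi_{1,1}$ is never actually needed.
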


%
%

\begin{lemma}\label{lemma:type_III_partial_on_sigma_1221_is_non_trivial}
We have 
\begin{align*}
\partial \sigma^1_{1,2'+'2,1} 
= & ~ (\chi_{1,1,1} + \chi_{1,2,1})^* + (\chi_{2,1,2} + \chi_{1,2,2} + \chi_{2,4,3})^* \in (\XIII)^* \\
&  + (\chi_{1,1,2} + \chi_{1,2,1} + \chi_{2,1,1} + \chi_{2,1,2})^* \in (\XIII)^* \\
&  + \sum_{2 \le i \le d/2} (\chi_{2,1,2} + \chi_{2,2i,2i-1} + \chi_{2i,2i-1,2})^* \in (\TIIIw)^*\\
&  + \sum_{2 \le i \le d/2-1} (\chi_{2,1,2} + \chi_{2,2i,2i-1} + \chi_{2i+2,2i+1,2})^* \in (\EIII)^*, 
\end{align*} 
and
\begin{align*}
\partial \sigma^2_{1,2'+'2,1} 
= & ~ (\chi_{1,1,1} + \chi_{1,2,1})^* 
+ (\chi_{1,1,2} + \chi_{1,2,2} + \chi_{1,1,1} + \chi_{4,3,1})^* \in (\XIII)^*.  
\end{align*} 
\end{lemma}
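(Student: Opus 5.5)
The plan is to compute $\partial\sigma$ directly from the formula
\[
\partial\sigma(\chi_a\ot\chi_b\ot\chi_c)=\chi_a\cup\sigma(\chi_b\ot\chi_c)+\sigma(\chi_a\ot\chi_b)\cup\chi_c,
\]
extended linearly to sums in $K^3_3(\Hb)$, and to evaluate it on every element of $\BIII_3$. We take $\sigma=\sigma^1_{1,2'+'2,1}$ (resp.\ $\sigma^2_{1,2'+'2,1}$). These are the maps $R\to H^1$ sending $\chi_{1,2}+\chi_{2,1}$ to $\chi_1$ (resp.\ $\chi_2$) and all other elements of $\BIII_2$ to $0$.

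The first step is to record $\sigma$ on each tensor $\chi_{a,b}$ occurring in an element of $\BIII_3$, by expressing $\chi_{a,b}$, or the relevant partial sum, in the basis $\BIII_2$. Pure tensors $\chi_{a,b}$ lying in $\BIII_2$ go to $0$. The only non-trivial expansions needed are the following identities:
\begin{align*}
\chi_{2,1}+\chi_{1,1}&=(\chi_{1,2}+\chi_{2,1})+(\chi_1\ot\chi_1+\chi_2\ot\chi_1)+\chi_{1,2}+\chi_{2,1}, \\
\chi_{2i,2i-1}+\chi_{1,1}&\in\BIII_2, \\
\chi_{1,2}+\chi_{2,1}&\in\BIII_2.
\end{align*}
From them one reads off that $\sigma(\chi_{1,1}+\chi_{2i,2i-1})=0$, $\sigma(\chi_{2i-1,2i}+\chi_{2i,2i-1})=0$ for $i\ge2$, and $\sigma(\chi_{1,2}+\chi_{2,1})=\chi_s$ with $s\in\{1,2\}$. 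For sums such as $\chi_{2,1}+\chi_{2i,2i-1}$, which arise when splitting an element like $\chi_{2,1,2}+\chi_{2,2i,2i-1}$ as $\chi_2\ot(\cdots)$, I would rewrite them as combinations of basis elements of $R$. For example,
\[
\chi_{2,1}+\chi_{2i,2i-1}=(\chi_{1,2}+\chi_{2,1})+\chi_{1,2}+(\chi_{1,1}+\chi_{2i,2i-1})+\chi_{1,1}.
\]
This is only formal, since $\chi_{1,2}$ and $\chi_{1,1}$ individually lie outside $R$, so the actual computation must be organized so that each grouping is genuinely in $R$.

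The second step is the case analysis over $\BIII_3$. Elements of $\SIII$ vanish, because every consecutive pair lies in $\BIII_2$ as a pure tensor. For $\DIII$, $\TIII$, and the $\chi_{2i-1,1,1}$ and $\chi_{1,1,2i}$ type elements of $\TIIIw$, the only possibly non-zero contributions are $\chi_a\cup\chi_s$ or $\chi_s\cup\chi_c$. Lemma \ref{lemma:relations_in_H2_all_groups} then kills these unless the indices pair as $(1,1)$, $(1,2)$, $(2,1)$, and the index restrictions ($j\ne1,2$, $i\ge2$) exclude those pairings.

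The real content is the $\XIII$, $\EIII$, and $\TIIIw$ elements built from $\chi_1$ and $\chi_2$ in the outer positions. There I would compute, for instance, for $s=1$:
\[
\partial\sigma(\chi_{2,1,2}+\chi_{2,2i,2i-1}+\chi_{2i,2i-1,2})=\chi_2\cup\sigma(\chi_{1,2}+\chi_{2i,2i-1})+\sigma(\chi_{2,1}+\chi_{2i,2i-1})\cup\chi_2 .
\]
Each bracket must be reduced using the basis and the identities above. For $s=1$ one expects $\chi_2\cup\chi_1\ne0$ to appear, while for $s=2$ the analogous term is $\chi_2\cup\chi_2=0$. This explains why the $\TIIIw$ and $\EIII$ contributions occur only for $\sigma^1$.

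The main obstacle is this bookkeeping on the elements mixing $\chi_1\ot\chi_1$ with $\chi_{2i}\ot\chi_{2i-1}$, namely the $\XIII$ elements and the $\chi_{4,3}$ terms. Here the split $(a,b)\ot c$ versus $a\ot(b,c)$ must be written as sums in $R\ot H^1$ and $H^1\ot R$ respectively, and an error in the decomposition flips a value. I would handle these by writing each such element explicitly in both forms, using the three identities above, and checking each coefficient of $\chi_s$ before cupping. Finally, I would verify that $\partial\sigma$ vanishes on the remaining basis elements, so that the stated dual-basis expansions hold exactly.
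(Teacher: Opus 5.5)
\textbf{The gap: the first identity fails on one basis element.} Your method, evaluating $\partial\sigma$ on every element of $\BIII_3$, is the one the paper intends; it gives no proof beyond ``direct computations''. But your proposal defers exactly the evaluations that carry content and then asserts that the stated expansion holds. It does not: the first identity is false as stated.

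Write $\sigma^s$ for $\sigma^s_{1,2'+'2,1}$. The efficient bookkeeping runs as follows:
\begin{itemize}
\item $\chi_1\ot\chi_2$ occurs in exactly one element of $\BIII_2$, namely $\chi_{1,2}+\chi_{2,1}$. So for $x\in R$, $\sigma^s(x)$ is $\chi_s$ times the coefficient of $\chi_{1,2}$ in $x$.
\item Combining this with Lemma~\ref{lemma:relations_in_H2_all_groups}, for $w\in K^3_3(\Hb)$, $\partial\sigma^1(w)$ is $\chi_1\cup\chi_1$ times the sum of the coefficients in $w$ of $\chi_{1,1,2},\chi_{2,1,2},\chi_{1,2,1},\chi_{1,2,2}$.
\item Likewise $\partial\sigma^2(w)$ counts only $\chi_{1,1,2}$ and $\chi_{1,2,1}$.
\end{itemize}
Reading this off on $\BIII_3$ confirms the second identity. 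It also confirms every term of the first identity except $(\chi_{2,1,2}+\chi_{1,2,2}+\chi_{2,4,3})^*$. Explicitly,
\begin{align*}
\partial\sigma^1(\chi_{2,1,2}+\chi_{1,2,2}+\chi_{2,4,3})
&= \chi_2\cup\sigma^1(\chi_{1,2}+\chi_{4,3})+\chi_1\cup\sigma^1(\chi_{2,2})
+\sigma^1(\chi_{2,1}+\chi_{1,2})\cup\chi_2+\sigma^1(\chi_{2,4})\cup\chi_3\\
&= \chi_2\cup\chi_1+\chi_1\cup\chi_2=0 .
\end{align*}
Here we used $\chi_{1,2}+\chi_{4,3}=(\chi_{1,2}+\chi_{2,1})+(\chi_{1,1}+\chi_{2,1})+(\chi_{1,1}+\chi_{4,3})$, and that $\chi_{2,2}$ and $\chi_{2,4}$ are pure basis elements. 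So the last step of your plan, checking that the stated dual-basis expansion holds exactly, cannot be completed. What the computation actually proves is the first identity with this term deleted.

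\textbf{Two intermediate claims are also off.} First, $\chi_{1,2,2}+\chi_{2,1,2}+\chi_{2,2,1}\in\TIII$ is not killed by an index restriction. The pairings $(2,1)$ and $(1,2)$ both occur, and the contributions $\chi_2\cup\chi_1$ and $\chi_1\cup\chi_2$ cancel. This is exactly the mechanism on the $\XIII$ element above. It also matches Lemma~\ref{lemma:image_of_partial_sigmas_d=2_type_III} for $d=2$, where the analogous element is absent.

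Second, your list of ``needed'' identities is not what the computation uses. The first one is a tautology, since $\chi_{2,1}+\chi_{1,1}$ is itself a basis element. The expansions actually required are
\[
\chi_{1,2}+\chi_{2i,2i-1}=(\chi_{1,2}+\chi_{2,1})+(\chi_{1,1}+\chi_{2,1})+(\chi_{1,1}+\chi_{2i,2i-1})
\]
and
\[
\chi_{2,1}+\chi_{2i,2i-1}=(\chi_{1,1}+\chi_{2,1})+(\chi_{1,1}+\chi_{2i,2i-1}).
\]

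\textbf{Downstream effect.} This particular error is harmless. By Lemma~\ref{lemma:type_III_partial_on_sigma_1121_is_non_trivial}, the spurious term $(\chi_{2,1,2}+\chi_{1,2,2}+\chi_{2,4,3})^*$ equals $\partial\sigma^1_{1,1'+'2,1}$, so it lies in $\Imm(\partial)$ regardless.
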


%
%

\begin{lemma}\label{lemma:type_III_partial_on_sigma_1121_is_non_trivial}
We have 
\begin{align*}
\partial \sigma^1_{1,1'+'2,1} 
= (\chi_{2,1,2} + \chi_{1,2,2} + \chi_{2,4,3})^* \in (\XIII)^*, 
\end{align*} 
and
\begin{align*}
\partial \sigma^2_{1,1'+'2,1} 
= (\chi_{1,1,2} + \chi_{1,2,2} + \chi_{1,1,1} + \chi_{4,3,1})^* \in (\XIII)^*.
\end{align*} 
\end{lemma}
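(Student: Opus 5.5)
The plan is a direct computation of $\partial\sigma^s_{1,1'+'2,1}$ for $s=1,2$ on every element of the basis $\BIII_3$ from Proposition \ref{prop:type_III_K3_basis_for_Demushkin_odd_generators_d>2}, in the same way as in Lemmas \ref{lemma:image_of_partial_sigmas_DDDw} and \ref{lemma:image_of_partial_sigmas_TIIw}. The first step is to make $\sigma \coloneqq \sigma^s_{1,1'+'2,1}$ explicit as a linear functional on $R$. We expand an arbitrary $r \in R$ in the basis $\BIII_2$ of Proposition \ref{prop:type_III_R_basis_for_Demushkin_odd_generators} and compare coefficients of $\chi_{2i,2i-1}$ and $\chi_{2i-1,2i}$. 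Only $\chi_{2i-1,2i}+\chi_{2i,2i-1}$ contains $\chi_{2i-1,2i}$, and only $\chi_{1,1}+\chi_{2i,2i-1}$ and $\chi_{2i-1,2i}+\chi_{2i,2i-1}$ contain $\chi_{2i,2i-1}$. Hence the coefficient of $\chi_{1,1}+\chi_{2,1}$ in $r$ is $r_{1,2}+r_{2,1}$, where $r_{i,j}$ denotes the coefficient of $\chi_{i,j}$ in $r$. So $\sigma(r)=(r_{1,2}+r_{2,1})\chi_s$. This formula is the restriction to $R$ of a linear map $H^1\ot H^1\to H^1$, so we may compute $\partial\sigma$ term by term on tensors, even though individual tensors need not lie in $R$ or $K_3^3(\Hb)$.

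With this, for $x\in K^3_3(\Hb)$ we get $\partial\sigma(x)=\sum x_{a,b,c}\bigl([\{b,c\}=\{1,2\}]\,\chi_a\cup\chi_s+[\{a,b\}=\{1,2\}]\,\chi_s\cup\chi_c\bigr)$, where $[\cdot]$ is $1$ if the condition holds and $0$ otherwise, and $x_{a,b,c}$ is the coefficient of $\chi_{a,b,c}$ in $x$. Lemma \ref{lemma:relations_in_H2_all_groups} now gives the following.
\begin{itemize}
\item For $s=1$, both $\chi_a\cup\chi_1$ and $\chi_1\cup\chi_c$ are nonzero exactly when the index is $1$ or $2$. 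So $\partial\sigma^1(x)$ counts, mod $2$, the tensors $\chi_{a,b,c}$ in $x$ with all indices in $\{1,2\}$ whose last two or first two indices are $\{1,2\}$, each counted with multiplicity.
\item For $s=2$, $\chi_a\cup\chi_2\neq 0$ iff $a=1$, and $\chi_2\cup\chi_c\neq0$ iff $c=1$. The count is similar but with these stricter conditions.
\end{itemize}

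The third step is to run through $\BIII_3$ with this criterion. Elements of $\SIII$ contain no tensor $\chi_{a,b,c}$ with $\{b,c\}=\{1,2\}$ or $\{a,b\}=\{1,2\}$, since $\SIII$ excludes neighbours $1,2$ around the middle index $1$ and the pairing $(2r-1,2r)$; so $\partial\sigma$ vanishes there. For $\DIII$ and $\TIII$, we check which tensors have all indices in $\{1,2\}$. For example, $(\chi_{1,2}+\chi_{2,1})\ot\chi_j$ has $j\neq1,2$, so its tensors are killed by the cup-product condition. The element $\chi_{2,2,1}+\chi_{2,1,2}+\chi_{1,2,2}$ of $\TIII$ contributes an even number of nonzero terms. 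In $\TIIIw$ and $\EIII$, the tensors involving indices $\ge3$ contribute zero. The remaining pieces, for instance the tensors $\chi_{1,1,1}$ and $\chi_{2,1,2}$, contribute pairs that cancel or do not satisfy the conditions. Finally, we evaluate on the four elements of $\XIII$ term by term. For $s=1$, we expect exactly $\chi_{2,1,2}+\chi_{1,2,2}+\chi_{2,4,3}$ to give a nonzero value. For $s=2$, we expect exactly $\chi_{1,1,2}+\chi_{1,2,2}+\chi_{1,1,1}+\chi_{4,3,1}$.

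The main obstacle is the bookkeeping in this last step. Several basis elements of $\XIII$, $\TIIIw$ and $\TIII$ consist of tensors with all indices in $\{1,2\}$. Each such tensor can contribute through both the left and the right factor, so one must track parities carefully. For instance, $\chi_{1,2,1}$ contributes $\chi_1\cup\chi_s+\chi_s\cup\chi_1$, which vanishes for both values of $s$ by symmetry of the cup product on $H^1$. I would tabulate the contribution of each of the eight tensors $\chi_{a,b,c}$ with $a,b,c\in\{1,2\}$ once, for $s=1$ and $s=2$, and then sum these over each basis element.
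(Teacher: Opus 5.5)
Your proposal is correct and is essentially the direct computation the paper intends (the paper gives no details beyond referring to the arguments in Lemmas \ref{lemma:image_of_partial_sigmas_DDDw} and \ref{lemma:image_of_partial_sigmas_TIIw}), and writing $\sigma^s_{1,1'+'2,1}(r)=(r_{1,2}+r_{2,1})\chi_s$ as the restriction of a linear map on $H^1\ot H^1$ is a clean way to organise it. Carrying out your tabulation confirms your expectation: the only contributing tensors are $\chi_{1,1,2},\chi_{1,2,2},\chi_{2,1,1},\chi_{2,2,1}$ for $s=1$ and $\chi_{1,1,2},\chi_{2,1,1}$ for $s=2$, and the only element of $\BIII_3$ containing an odd number of them is $\chi_{2,1,2}+\chi_{1,2,2}+\chi_{2,4,3}$ for $s=1$, respectively $\chi_{1,1,2}+\chi_{1,2,2}+\chi_{1,1,1}+\chi_{4,3,1}$ for $s=2$.
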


%
%

\begin{lemma}\label{lemma:type_III_partial_on_sigma_1143_is_non_trivial} 
We have 
\begin{align*}
\partial \sigma^1_{1,1'+'4,3} 
= & ~ (\chi_{1,1,2} + \chi_{1,2,2} + \chi_{1,1,1} + \chi_{4,3,1})^* \in (\XIII)^* \\
&  + (\chi_{2,1,2} + \chi_{1,2,2} + \chi_{2,4,3})^* \in (\XIII)^* \\
&  + (\chi_{1,1,1} + \chi_{1,4,3} + \chi_{6,5,1})^* \in (\EIII)^*, i=2, \\
&  + (\chi_{2,1,2}  + \chi_{2,4,3} + \chi_{6,5,2})^*  \in (\EIII)^*, i=2, 
\end{align*} 
and
\begin{align*}
\partial \sigma^2_{1,1'+'4,3} 
= & ~ (\chi_{1,1,2} + \chi_{1,2,2} + \chi_{1,1,1} + \chi_{4,3,1})^* \in (\XIII)^* \\
&  + (\chi_{1,1,1} + \chi_{1,4,3} + \chi_{6,5,1})^* \in (\EIII)^*, i=2, 
\end{align*} 
where the summands from $(\EIII)^*$ do not occur if $d=4$. 
\end{lemma}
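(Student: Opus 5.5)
The computation is direct and follows the pattern of Lemma \ref{lemma:image_of_partial_sigmas_DDDw}. For $\sigma \in \{\sigma^1_{1,1'+'4,3}, \sigma^2_{1,1'+'4,3}\}$ I would evaluate
\[
\partial\sigma(\chi_{a,b,c}) = \chi_a \cup \sigma(\chi_{b,c}) + \sigma(\chi_{a,b}) \cup \chi_c
\]
on every element of the basis $\BIII_3$ from Proposition \ref{prop:type_III_K3_basis_for_Demushkin_odd_generators_d>2}, and record which elements give a non-zero value. Before doing this, each pair appearing in an element of $\BIII_3$ must be rewritten in the basis $\BIII_2$ of Proposition \ref{prop:type_III_R_basis_for_Demushkin_odd_generators}. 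Only the coefficient of $\chi_{1,1}+\chi_{4,3}$ affects $\sigma$. Pure tensors $\chi_{i,j}$ in $\BIII_2$ contribute nothing. The pairs that need rewriting are $\chi_{1,1}$, $\chi_{4,3}$, $\chi_{2r,2r-1}$, $\chi_{2r-1,2r}$ and $\chi_{1,2}$, $\chi_{2,1}$, and these are not individually in $R$. For them I would use identities such as
\[
\chi_{1,1}+\chi_{2r,2r-1} = (\chi_{1,1}+\chi_{4,3}) + (\chi_{1,1}+\chi_{2r,2r-1}) + (\chi_{1,1}+\chi_{4,3}),
\]
applied to the sums as they actually occur. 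The analogue of \eqref{eq:relation_2,1+2i-1,2i} here reads
\[
\chi_{2,1}+\chi_{2r-1,2r} = (\chi_{1,2}+\chi_{2,1}) + (\chi_{2r-1,2r}+\chi_{2r,2r-1}) + (\chi_{1,1}+\chi_{2,1}) + (\chi_{1,1}+\chi_{2r,2r-1}).
\]
From these, $\sigma$ is non-zero exactly on those length-two sums whose expansion contains $\chi_{1,1}+\chi_{4,3}$ with odd coefficient.

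Next I would sort the basis elements by set. On $\SIII$ both pairs $\chi_{a,b}$ and $\chi_{b,c}$ lie in $\BIII_2$ as pure tensors different from $\chi_{1,1}$, so $\partial\sigma$ vanishes there. On $\DIII$ and $\TIII$ the only relevant pairs are $\chi_{1,1}+\chi_{4,3}$ (through $\chi_j\ot(\chi_1\ot\chi_1+\chi_4\ot\chi_3)$ and its mirror) and $\chi_{3,4}+\chi_{4,3}$. For the $\DIII$ terms the surviving value is $\chi_j\cup\chi_s$ or $\chi_s\cup\chi_j$ with $s\in\{1,2\}$ and $j\ne 1,2$. By Lemma \ref{lemma:relations_in_H2_all_groups} these cup products vanish. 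For $\TIII$ the computation is symmetric and gives zero. The remaining sets $\TIIIw$, $\EIII$ and $\XIII$ would be checked one by one:
\begin{itemize}
\item elements involving the index pair $(4,3)$ together with $\chi_1$ or $\chi_2$ can pick up $\sigma=\chi_s$;
\item the outcome is then decided by Lemma \ref{lemma:relations_in_H2_all_groups}, using $\chi_1\cup\chi_1=\chi_1\cup\chi_2=\chi_2\cup\chi_1\ne 0$ and $\chi_1\cup\chi_2$-type products vanishing otherwise.
\end{itemize}
For $s=1$, this check should produce non-zero values on the two listed $\XIII$-elements and on the two $\EIII$-elements with $i=2$. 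For $s=2$, the factor $\chi_2$ is multiplied only by $\chi_1$ on one side, which should kill exactly the elements starting or ending with $\chi_2$.

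The main obstacle is the bookkeeping of the $\TIIIw$ and $\EIII$ elements with $i=2$ and $i=3$. Their end terms $\chi_{4,3,1}$, $\chi_{6,5,1}$, $\chi_{1,4,3}$ and $\chi_{2,4,3}$ decompose through several relations in $\BIII_2$, and a cancellation can easily be missed. I would tabulate the coefficient of $\chi_{1,1}+\chi_{4,3}$ in each needed pair first, and only then combine the results with the cup-product table. For $d=4$ the set $\EIII$ is empty, so those summands simply drop out.
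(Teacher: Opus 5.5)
Your plan is the same direct evaluation of $\partial\sigma$ on the basis $\BIII_3$ that the paper relies on. The paper states this lemma without a written proof, as a computation analogous to Lemmas \ref{lemma:image_of_partial_sigmas_DDDw} and \ref{lemma:image_of_partial_sigmas_TIIw}. Carried out, it gives exactly the stated formulas.

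One correction: your second displayed identity is false. Its right-hand side sums to $\chi_{1,2}+\chi_{2r-1,2r}$, not $\chi_{2,1}+\chi_{2r-1,2r}$. The correct expansion is $\chi_{2,1}+\chi_{2r-1,2r}=(\chi_{1,1}+\chi_{2,1})+(\chi_{1,1}+\chi_{2r,2r-1})+(\chi_{2r-1,2r}+\chi_{2r,2r-1})$. That pair never occurs in $\BIII_3$ anyway. The mixed pairs that do occur are:
\[
\chi_{1,2}+\chi_{2r,2r-1}=(\chi_{1,2}+\chi_{2,1})+(\chi_{1,1}+\chi_{2,1})+(\chi_{1,1}+\chi_{2r,2r-1}),
\]
\[
\chi_{2,1}+\chi_{2r,2r-1}=(\chi_{1,1}+\chi_{2,1})+(\chi_{1,1}+\chi_{2r,2r-1}).
\]
Your first display is a tautology and does no work.

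The slip is harmless, because only the coefficient of $\chi_{1,1}+\chi_{4,3}$ matters, and the spurious summand is $\chi_{1,2}+\chi_{2,1}$. More simply, for $r=\sum c_{ij}\chi_{i,j}\in R$ that coefficient is just $c_{4,3}+c_{3,4}$. Writing $x_{abc}$ for the coefficient of $\chi_{a,b,c}$ in $x\in K^3_3(\Hb)$, this gives
\[
\partial\sigma^s_{1,1'+'4,3}(x)=\sum_a(x_{a43}+x_{a34})\,\chi_a\cup\chi_s+\sum_c(x_{43c}+x_{34c})\,\chi_s\cup\chi_c .
\]
This formula removes the bookkeeping you flag as the main obstacle. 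It shows directly that on the two $i=2$ elements of $\TIIIw$ the contributions $\chi_a\cup\chi_s+\chi_s\cup\chi_a$, with $a\in\{1,2\}$, cancel. It also shows that the only non-zero values are on the two listed elements of $\XIII$ and the two $i=2$ elements of $\EIII$.
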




\begin{lemma}\label{lemma:type_III_partial_on_sigma_112i2i-1_is_non_trivial} 
For $i\ge 3$, we have 
\begin{align*}
\partial \sigma^1_{1,1'+'2i,2i-1} 
= &  ~ (\chi_{1} \ot \chi_{1} \ot \chi_{1} + \chi_1 \ot \chi_{2i} \ot \chi_{2i-1} + \chi_{2i+2} \ot \chi_{2i+1} \ot \chi_{1})^* \in (\EIII)^* \\
& + (\chi_{2} \ot \chi_{1} \ot \chi_{2} + \chi_2 \ot \chi_{2i} \ot \chi_{2i-1} + \chi_{2i+2} \ot \chi_{2i+1} \ot \chi_{2})^*  \in (\EIII)^*, 
\end{align*} 
and
\begin{align*}
\partial \sigma^2_{1,1'+'2i,2i-1} 
= (\chi_{1} \ot \chi_{1} \ot \chi_{1} + \chi_1 \ot \chi_{2i} \ot \chi_{2i-1} + \chi_{2i+2} \ot \chi_{2i+1} \ot \chi_{1})^* \in (\EIII)^*. 
\end{align*} 
\end{lemma}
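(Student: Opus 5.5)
The plan is to evaluate $\partial\sigma$ for $\sigma=\sigma^s_{1,1'+'2i,2i-1}$, $s\in\{1,2\}$, $i\ge 3$, on every element of the basis $\BIII_3$ of Proposition \ref{prop:type_III_K3_basis_for_Demushkin_odd_generators_d>2}, using
\[
\partial\sigma(\textstyle\sum \chi_a\ot\chi_b\ot\chi_c)=\sum \chi_a\cup\sigma(\chi_{b,c})+\sum\sigma(\chi_{a,b})\cup\chi_c .
\]
The sums $\sum_{(b,c)}\chi_{b,c}$ and $\sum_{(a,b)}\chi_{a,b}$ are grouped by the fixed outer factor; each group lies in $R$. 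I first record, from Lemma \ref{lemma:relations_in_H2_all_groups}, when the relevant cup products are non-zero. For $s=1$, $\chi_a\cup\chi_1\neq 0$ exactly when $a\in\{1,2\}$, and symmetrically $\chi_1\cup\chi_c\ne0$ exactly when $c\in\{1,2\}$. For $s=2$, only $a=1$, respectively $c=1$, gives a non-zero product. Hence $\partial\sigma(v)$ can only be non-zero on basis elements $v$ in which some tensor has outer factor $\chi_1$ or $\chi_2$, and whose complementary two-factor group has a non-zero coefficient on $\chi_{1,1}+\chi_{2i,2i-1}$ when expanded in $\BIII_2$.

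The key bookkeeping step is computing these coefficients. Any pure tensor $\chi_{b,c}$ lying in $\BIII_2$ contributes $0$. The elements $\chi_{2k-1,2k}+\chi_{2k,2k-1}$ and $\chi_{1,1}+\chi_{2k,2k-1}$ with $k\ne i$ also contribute $0$. For sums of non-basis pairs, I use rewritings in the spirit of \eqref{eq:relation_2,1+2i-1,2i}, in particular
\[
\chi_{2k,2k-1}+\chi_{2i,2i-1}=(\chi_{1,1}+\chi_{2k,2k-1})+(\chi_{1,1}+\chi_{2i,2i-1}).
\]
With these in hand, I run through the blocks of $\BIII_3$ as follows.
\begin{enumerate}
\item On $\SIII$, $\sigma$ kills every pair.
\item On $\DIII$, the only candidates are $\chi_j\ot(\chi_{1,1}+\chi_{2i,2i-1})$ and its mirror. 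They require $j\in\{1,2\}$, which is excluded by the index conditions, so they vanish.
\item On $\TIII$, $\XIII$, and the $\TIIIw$ elements involving $\chi_{2k-1,1,1}$ or $\chi_{1,1,2k}$, the outer factors do not pair correctly, and the computation is analogous to Lemma \ref{lemma:image_of_partial_sigmas_TIIw}. The elements of $\XIII$ only involve indices $1,\dots,4$, and $i\ge3$ means $\chi_{2i,2i-1}$ never appears there.
\item The $\TIIIw$ elements $\chi_{1,1,1}+\chi_{1,2i,2i-1}+\chi_{2i,2i-1,1}$ and $\chi_{2,1,2}+\chi_{2,2i,2i-1}+\chi_{2i,2i-1,2}$ contain the full relation $\chi_{1,1}+\chi_{2i,2i-1}$ on both sides. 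So each receives two equal contributions, which cancel mod $2$.
\end{enumerate}

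The remaining and decisive case is $\EIII$. I treat it in two halves.
\begin{enumerate}
\item Consider the element $\chi_{1,1,1}+\chi_{1,2i,2i-1}+\chi_{2i+2,2i+1,1}$ with index $i$. The left group with outer factor $\chi_1$ is exactly $\chi_{1,1}+\chi_{2i,2i-1}$, giving $\chi_1\cup\chi_s$. The right-hand group is $\chi_{1,1}+\chi_{2i+2,2i+1}$, on which $\sigma$ vanishes. The value is therefore $\chi_1\cup\chi_s\ne0$ for both $s=1,2$.
\item The element with index $i-1$, namely $\chi_{1,1,1}+\chi_{1,2i-2,2i-3}+\chi_{2i,2i-1,1}$, has right-hand group $\chi_{1,1}+\chi_{2i,2i-1}$ and contributes $\sigma(\ldots)\cup\chi_1=\chi_s\cup\chi_1$. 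This is non-zero, so I must check carefully whether this element also receives the left contribution and cancels. In fact the left group there is $\chi_{1,1}+\chi_{2i-2,2i-3}$ (when $i-1\ge2$), and this must be reconciled with the claimed formula.
\item Likewise, the $\chi_2$-version $\chi_{2,1,2}+\chi_{2,2i,2i-1}+\chi_{2i+2,2i+1,2}$ contributes $\chi_2\cup\chi_s$, which is non-zero only for $s=1$. This explains why that dual appears only in $\partial\sigma^1$.
\end{enumerate}

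I expect the main obstacle to be this $\EIII$ bookkeeping: tracking which $\EIII$ and $\TIIIw$ elements with neighbouring indices $i-1$, $i$, $i+1$ pick up the term $\chi_{2i,2i-1}$, and whether these contributions cancel. Because $\BIII_3$ is not a dual-friendly basis (the same pure tensor appears in several basis elements), I would double-check the result by evaluating $\partial\sigma$ directly on the pure-tensor expansions. I would then verify that the claimed sum of duals agrees with $\partial\sigma$ on every element of $\BIII_3$, including the index-$(i-1)$ elements of $\EIII$ and $\TIIIw$.
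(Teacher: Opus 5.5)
You are following the direct computation the paper has in mind: it gives no argument beyond "direct computations analogous to" the earlier lemmas. Your bookkeeping on $\SIII$, $\DIII$, $\TIII$, $\XIII$, $\TIIIw$ and on the index-$i$ elements of $\EIII$ is correct.

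The gap is the step you left open. It cannot be closed, because nothing cancels there. Write $u_k:=\chi_{1,1,1}+\chi_{1,2k,2k-1}+\chi_{2k+2,2k+1,1}$ and $w_k:=\chi_{2,1,2}+\chi_{2,2k,2k-1}+\chi_{2k+2,2k+1,2}$ for the elements of $\EIII$, where $2\le k\le d/2-1$. For $i\ge 3$ the element $u_{i-1}$ exists, and
\[
u_{i-1}=\chi_1\ot(\chi_{1,1}+\chi_{2i-2,2i-3})+\chi_{2i}\ot\chi_{2i-1,1}=(\chi_{1,1}+\chi_{2i,2i-1})\ot\chi_1+\chi_{1,2i-2}\ot\chi_{2i-3}.
\]
The left group $\chi_{1,1}+\chi_{2i-2,2i-3}$ is a \emph{different} vector of $\BIII_2$ because $i-1\ge 2$, so $\sigma$ kills every left group. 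Hence $\partial\sigma^s_{1,1'+'2i,2i-1}(u_{i-1})=\chi_s\cup\chi_1\neq 0$ for both $s=1,2$.

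The same happens for $w_{i-1}$. Its right group at $\chi_2$ is
\[
\chi_{2,1}+\chi_{2i,2i-1}=(\chi_{1,1}+\chi_{2,1})+(\chi_{1,1}+\chi_{2i,2i-1}),
\]
while its left groups are killed. So $\partial\sigma^s_{1,1'+'2i,2i-1}(w_{i-1})=\chi_s\cup\chi_2$, which is non-zero for $s=1$.

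So the identity as stated is false, and no completion of your plan can prove it. For example, take $d=8$ and $i=3$. The claimed right-hand side $u_3^*$ of the second identity vanishes on $u_2=\chi_{1,1,1}+\chi_{1,4,3}+\chi_{6,5,1}$. But $\partial\sigma^2_{1,1'+'6,5}(u_2)=\chi_2\cup\chi_1\neq 0$.

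Here is what the computation actually gives. If $r=\sum c_{ab}\chi_{a,b}\in R$, then the coefficient of $\chi_{1,1}+\chi_{2i,2i-1}$ in the expansion of $r$ in $\BIII_2$ is $c_{2i,2i-1}+c_{2i-1,2i}$. So $\partial\sigma^s_{1,1'+'2i,2i-1}(v)$ is the sum of the coefficients of $v$ on $\chi_{a,2i,2i-1}$, $\chi_{a,2i-1,2i}$, $\chi_{2i,2i-1,a}$, $\chi_{2i-1,2i,a}$, with $a\in\{1,2\}$ for $s=1$ and $a=1$ for $s=2$. Scanning $\BIII_3$ then yields, for $3\le i\le d/2$,
\[
\partial\sigma^1_{1,1'+'2i,2i-1}=u_i^*+w_i^*+u_{i-1}^*+w_{i-1}^*,\qquad \partial\sigma^2_{1,1'+'2i,2i-1}=u_i^*+u_{i-1}^*.
\]
Here $u_i^*,w_i^*$ are absent when $i=d/2$; the printed formula does not even make sense in that case. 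This matches the pattern of Lemma \ref{lemma:type_III_partial_on_sigma_1143_is_non_trivial}. There, the two $\XIII$ elements containing $\chi_{4,3,1}$ and $\chi_{2,4,3}$ play exactly the role of $u_{i-1}$ and $w_{i-1}$.

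The application in Proposition \ref{prop:image_of_partial_sigmas_type_III} still goes through, by telescoping:
\[
u_i^*=\partial\sigma^2_{1,1'+'2,1}+\sum_{k=2}^{i}\partial\sigma^2_{1,1'+'2k,2k-1},\qquad w_i^*=\partial\sigma^1_{1,1'+'2,1}+\sum_{k=2}^{i}\bigl(\partial\sigma^1_{1,1'+'2k,2k-1}+\partial\sigma^2_{1,1'+'2k,2k-1}\bigr).
\]
Only the lemma itself, as printed, needs to be corrected.
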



The following result will help us computing the canonical class of $G$ later. 

\begin{proposition}\label{prop:image_of_partial_sigmas_type_III}
Let  $\kappa \in \Hom_{\F_2}(K^3_3(\Hb), H^2)$. 
Assume $d\ge 4$, and assume that $\kappa$ vanishes on all the elements 
in the sets $\SIII$, $\DIII$, $\TIII$ in $\BIII_3$, 
that $\kappa$ vanishes on the elements $\chi_{1,1,1} + \chi_{1,2i, 2i-1} + \chi_{2i,2i-1,1}$ in $\TIIIw$ for every $i \ge 2$, 
and, for every $i \ge 2$, the value of $\kappa$ on the element  
$\chi_{2,1,2} + \chi_{2,2i, 2i-1} + \chi_{2i,2i-1,2}$ in $\TIIIw$ 
agrees with the value of $\kappa$ on the element
$\chi_{1,1,2} + \chi_{1,2,1} + \chi_{2,1,1} + \chi_{2,1,2}$ in $\XIII$. 
%
Then $[\kappa] = 0$ in $\HH^{3,-1}(\Hb)$. 
\end{proposition}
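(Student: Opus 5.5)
We show that $\kappa$ lies in the image of $\partial$ by writing it explicitly as $\partial\sigma$, with $\sigma$ a linear combination of the maps $\partial\sigma^s_{\ldots}$ computed in Lemmas \ref{lemma:image_of_partial_sigmas_TIIIw}--\ref{lemma:type_III_partial_on_sigma_112i2i-1_is_non_trivial}. Since $H^2\cong\F_2$, the map $\kappa$ is a sum of duals $b^*$ over the basis elements $b\in\BIII_3$ with $\kappa(b)\neq 0$. By hypothesis none of these lie in $\SIII$, $\DIII$ or $\TIII$, and none are of the form $\chi_{1,1,1}+\chi_{1,2i,2i-1}+\chi_{2i,2i-1,1}$ in $\TIIIw$. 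So $\kappa$ is supported on three groups of elements:
\begin{itemize}
\item the two remaining families in $\TIIIw$ that are handled by Lemma \ref{lemma:image_of_partial_sigmas_TIIIw};
\item the family $\chi_{2,1,2}+\chi_{2,2i,2i-1}+\chi_{2i,2i-1,2}$;
\item the elements of $\EIII$ and $\XIII$.
\end{itemize}

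The first step removes the easy part. By Lemma \ref{lemma:image_of_partial_sigmas_TIIIw}, the duals of $\chi_{1,1,2i}+\chi_{2i,2i-1,2i}+\chi_{2i,2i,2i-1}$ and of $\chi_{2i-1,1,1}+\chi_{2i-1,2i,2i-1}+\chi_{2i,2i-1,2i-1}$ are coboundaries. Subtracting them, we may assume $\kappa$ vanishes on these elements too.

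The second step handles $\EIII$ by a descending induction on $i$. For $i\ge 3$, Lemma \ref{lemma:type_III_partial_on_sigma_112i2i-1_is_non_trivial} gives two relations:
\begin{itemize}
\item $\partial\sigma^2_{1,1'+'2i,2i-1}$ is the dual of the $\chi_1$-type element of $\EIII$ with index $i$;
\item $\partial\sigma^1_{1,1'+'2i,2i-1}+\partial\sigma^2_{1,1'+'2i,2i-1}$ is the dual of the $\chi_2$-type element with index $i$.
\end{itemize}
Subtracting these kills all $\EIII$-components with index $i\ge 3$. For the index $i=2$, we use Lemma \ref{lemma:type_III_partial_on_sigma_1143_is_non_trivial}. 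Combining $\partial\sigma^1_{1,1'+'4,3}$ and $\partial\sigma^2_{1,1'+'4,3}$ with the two maps from Lemma \ref{lemma:type_III_partial_on_sigma_1121_is_non_trivial}, we obtain each $i=2$ element of $\EIII$ as a coboundary, at the cost of adding only $\XIII$-terms. Lemma \ref{lemma:type_III_partial_on_sigma_1121_is_non_trivial} itself already shows that the duals of $\chi_{2,1,2}+\chi_{1,2,2}+\chi_{2,4,3}$ and $\chi_{1,1,2}+\chi_{1,2,2}+\chi_{1,1,1}+\chi_{4,3,1}$ are coboundaries. So after this step $\kappa$ is supported on three types of element:
\begin{itemize}
\item $x:=\chi_{1,1,1}+\chi_{1,2,1}$;
\item $y:=\chi_{1,1,2}+\chi_{1,2,1}+\chi_{2,1,1}+\chi_{2,1,2}$;
\item the elements $t_i:=\chi_{2,1,2}+\chi_{2,2i,2i-1}+\chi_{2i,2i-1,2}$ for $2\le i\le d/2$.
\end{itemize}
The subtractions only ever involved other basis elements, so $\kappa(y)=\kappa(t_i)$ is preserved.

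The final step uses Lemma \ref{lemma:type_III_partial_on_sigma_1221_is_non_trivial}. Subtract $\partial\sigma^2_{1,2'+'2,1}$, and then, if needed, the dual of $\chi_{1,1,2}+\chi_{1,2,2}+\chi_{1,1,1}+\chi_{4,3,1}$ via Lemma \ref{lemma:type_III_partial_on_sigma_1121_is_non_trivial}. This kills the $x$-component. Then, if $\kappa(y)=\kappa(t_i)=1$, subtract $\partial\sigma^1_{1,2'+'2,1}$. This removes $y^*+\sum_i t_i^*$ and leaves only
\begin{itemize}
\item terms $x^*$ and $(\chi_{2,1,2}+\chi_{1,2,2}+\chi_{2,4,3})^*$, and
\item $\EIII$-terms of the $\chi_2$-type.
\end{itemize}
These are cleared exactly as in the second and third steps. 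This is precisely where the hypothesis that $\kappa$ takes the same value on $y$ and on all $t_i$ is used: $\partial\sigma^1_{1,2'+'2,1}$ is the only available map that hits $y$ and the $t_i$, and it hits all of them simultaneously.

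The main obstacle is the bookkeeping. The corrections must be performed in an order such that later subtractions do not reintroduce support on elements already cleared. The natural order is: $\TIIIw$ first, then $\EIII$ by descending $i$, then $\partial\sigma^1_{1,2'+'2,1}$, then re-clearing $\EIII$ and $\XIII$. The case $d=4$, where $\EIII$ is empty and the $(\EIII)^*$ summands are absent, has to be checked separately but is simpler.
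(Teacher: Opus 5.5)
Your proposal is correct and takes essentially the paper's route. The paper likewise uses the listed lemmas to show that the following duals lie in $\Imm(\partial)$: the two remaining $\TIIIw$ families, all of $(\EIII)^*$, and all of $(\XIII)^*$ except $(\chi_{1,1,2}+\chi_{1,2,1}+\chi_{2,1,1}+\chi_{2,1,2})^*$. It then writes this last dual plus $\sum_{2\le i\le d/2}(\chi_{2,1,2}+\chi_{2,2i,2i-1}+\chi_{2i,2i-1,2})^*$ as $\partial\sigma^1_{1,2'+'2,1}$ plus those coboundary corrections. Since each of these duals is individually a coboundary, the ordering and ``re-clearing'' bookkeeping (and the descending induction) you worry about is not actually needed.
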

\begin{proof}
By Lemma \ref{lemma:image_of_partial_sigmas_TIIIw}, 
we know that the elements 
$(\chi_{1,1,2i} + \chi_{2i,2i-1,2i} + \chi_{2i,2i,2i-1})^*$ 
and  
$(\chi_{2i-1,1,1} + \chi_{2i-1,2i,2i-1} + \chi_{2i,2i-1,2i-1})^*$ 
in $(\TIIIw)^*$ are in the image of $\partial$. 
By Lemma \ref{lemma:type_III_partial_on_sigma_112i2i-1_is_non_trivial}, 
we have, for $i \ge 3$, 
\begin{align*}
(\chi_{1} \ot \chi_{1} \ot \chi_{1} + \chi_1 \ot \chi_{2i} \ot \chi_{2i-1} + \chi_{2i+2} \ot \chi_{2i+1} \ot \chi_{1})^*  = \partial \sigma^2_{1,1'+'2i,2i-1},  
\end{align*}
and 
\begin{align*}
 & (\chi_{2} \ot \chi_{1} \ot \chi_{2} + \chi_2 \ot \chi_{2i} \ot \chi_{2i-1} + \chi_{2i+2} \ot \chi_{2i+1} \ot \chi_{2})^* \\
 & = \partial \sigma^1_{1,1'+'2i,2i-1} + \partial \sigma^2_{1,1'+'2i,2i-1}.
\end{align*}
By Lemmas \ref{lemma:type_III_partial_on_sigma_1121_is_non_trivial} and \ref{lemma:type_III_partial_on_sigma_1143_is_non_trivial}, 
we have, for $d\ge 6$,  
\begin{align*}
& (\chi_{1} \ot \chi_{1} \ot \chi_{1} + \chi_1 \ot \chi_{4} \ot \chi_{3} + \chi_{6} \ot \chi_{5} \ot \chi_{1})^* \\ 
= & ~ \partial \sigma^2_{1,1'+'4,3} + \partial \sigma^2_{1,1'+'2,1}, 
\end{align*}
and
\begin{align*} 
&  (\chi_{2} \ot \chi_{1} \ot \chi_{2} + \chi_2 \ot \chi_{4} \ot \chi_{3} + \chi_{6} \ot \chi_{5} \ot \chi_{2})^* \\
= & ~ \partial \sigma^1_{1,1'+'4,3} + \partial \sigma^2_{1,1'+'4,3} + \partial \sigma^1_{1,1'+'2,1}.
\end{align*}
This shows that $(\EIII)^*$ is contained in the image of $\partial$. 
By the second parts of Lemmas \ref{lemma:type_III_partial_on_sigma_1221_is_non_trivial} 
and \ref{lemma:type_III_partial_on_sigma_1121_is_non_trivial}, 
we have 
\begin{align*}
(\chi_{1,1,1} + \chi_{1,2,1})^* = \partial \sigma^2_{1,2'+'2,1} +  \partial \sigma^2_{1,1'+'2,1}.
\end{align*} 
By Lemma \ref{lemma:type_III_partial_on_sigma_1121_is_non_trivial}, 
we have 
\begin{align*}
(\chi_{1,1,2} + \chi_{1,2,2} + \chi_{1,1,1} + \chi_{4,3,1})^* & = \partial \sigma^2_{1,1'+'2,1}, ~ \text{and} \\
(\chi_{2,1,2} + \chi_{1,2,2} + \chi_{2,4,3})^* & = \partial \sigma^1_{1,1'+'2,1}. 
\end{align*}
This shows that $(\XIII)^*$ is in the image of $\partial$, except possibly for the element 
$(\chi_{1,1,2} + \chi_{1,2,1} + \chi_{2,1,1} + \chi_{2,1,2})^*$. 
%
As a consequence of the above and Lemma \ref{lemma:type_III_partial_on_sigma_1221_is_non_trivial}, 
we then get 
%
\begin{align*}
& (\chi_{1,1,2} + \chi_{1,2,1} + \chi_{2,1,1} + \chi_{2,1,2})^* 
  + \sum_{2 \le i \le d/2} (\chi_{2,1,2} + \chi_{2,2i,2i-1} + \chi_{2i,2i-1,2})^* \\
= & ~  \partial \sigma^1_{1,2'+'2,1}  + \partial \sigma^2_{1,2'+'2,1} + \partial \sigma^2_{1,1'+'2,1} 
 + \partial \sigma^1_{1,1'+'4,3} + \partial \sigma^2_{1,1'+'4,3}   \\
& ~~ + \sum_{3 \le i \le d/2-1} (\partial \sigma^1_{1,1'+'2i,2i-1} + \partial \sigma^2_{1,1'+'2i,2i-1}).   
\end{align*}
This shows all the required relations. 
\end{proof}

%

\subsection{Construction of the canonical class -- type III and IV}\label{subsec:construction_kappa_type_III}

We will now construct the canonical class of $G$.  
Let $\Hom(G,\F_2)$ denote the $\F_2$-vector space of continuous group homomorphisms. 
We choose $f_1$ to be the identity $H^1 \to  \Zh^1 = \Hom(G,\F_2)$ and omit it from the notation. 
%
We construct an $\F_2$-linear map $f_2 \colon R \to \Ch^1$  by defining it on each element of $\BIII_2$ and then extend it $\F_2$-linearly. 

Let $A_{10}
= \begin{psmallmatrix} 
1 & 1 & 0  \\
0 & 1 & 0  \\
0 &  0 & 1  \\
\end{psmallmatrix}$,  
$A_{01}
= \begin{psmallmatrix} 
1 & 0 & 0  \\
0 & 1 & 1  \\
0 &  0 & 1  \\
\end{psmallmatrix}$, 
and 
$A_{11}
= \begin{psmallmatrix} 
1 & 1 & 0  \\
0 & 1 & 1  \\
0 &  0 & 1  \\
\end{psmallmatrix}$  
denote the matrices used in the proof of Lemma \ref{lemma:relations_in_H2_all_groups}.  
Let $i \ne j$ and $(i,j) \ne (2k-1,2k), (2k,2k-1)$. 
We define the continuous homomorphism $\rrIII_{i,j} \colon G \to U_3(\F_2)$ 
by the assignment $x_i \mapsto A_{10}$, 
$x_{j} \mapsto A_{01}$, 
and $x_s \mapsto I_3$ for $s \ne i, j$. 
We define the continuous map $f_2(\chi_i \ot \chi_j) \colon G \to \F_2$  
by setting $f_2(\chi_i \ot \chi_j) \coloneqq  e_{1,3} \circ \rrIII_{i,j}$. 
%
For $i \ne 1$, 
we define the continuous homomorphism $\rrIII_{i,i} \colon G \to U_3(\F_2)$ 
by the assignment $x_i \mapsto A_{11}$, 
and $x_j \mapsto I_3$ for $j \ne i$. 
We define the continuous map $f_2(\chi_i \ot \chi_i) \colon G \to \F_2$  
by setting $f_2(\chi_i \ot \chi_i) \coloneqq  e_{1,3} \circ \rrIII_{i,i}$. 
Next, 
we define a continuous homomorphism $\rrIII_{1,1'+'2,1} \colon G \to U_4(\F_2)$ 
which lifts the continuous homomorphism
$\brrIII_{1,1'+'2,1}   \colon G \to \bU_4(\F_2)$ given by 
$\brrIII_{1,1'+'2,1}  = 
\begin{psmallmatrix}
 1 & \chi_{1} & \chi_{2} &  * \\
  & 1& 0 & \chi_{1}  \\
  & & 1 &  \chi_{1} \\
  & & & 1 
\end{psmallmatrix}$.  
To construct the desired lift, 
consider the matrices 
\begin{align*}
S \coloneqq  
\begin{psmallmatrix} 
1 & 1 & 0 & 0  \\
0 & 1 & 0 & 1 \\
0 & 0 & 1 & 1 \\
0 & 0 & 0 & 1 \\
\end{psmallmatrix}, 
~ \text{and} ~ 
T \coloneqq
\begin{psmallmatrix} 
1 & 0 & 1 & 0  \\
0 & 1 & 0 & 0  \\
0 & 0 & 1 & 0 \\
0 & 0 & 0 & 1 \\
\end{psmallmatrix}.
\end{align*}
%
We have $S^2 = [S,T] = E_{1,4}$ and hence $S^2[S,T] = I_4$ in $U_4(\F_2)$. 
Moreover, $S^4 = T^4 = I_4$  in $U_4(\F_2)$. 
Thus, by Lemma \ref{lemma:matrix_relations}, 
we can define  $\rrIII_{1,1'+'2,1} \colon G \to U_4(\F_2)$   
by the assignment 
$x_{1} \mapsto S$,   
$x_{2} \mapsto T$,  
and $x_s \mapsto I_4$ for $s \ne 1, 2$. 
We define the desired continuous map $G \to \F_2$  
by setting 
\[
f_2(\chi_{1,1} + \chi_{2,1}) \coloneqq  e_{1,4} \circ \rrIII_{1,1'+'2,1}. 
\] 
%
Now, let $i \in \Jw/2$. 
We define a continuous group homomorphism $\rrIII_{1,1'+'2i,2i-1} \colon G \to U_4(\F_2)$ 
which lifts the homomorphism
$\brrIII_{1,1'+'2i,2i-1}   \colon G \to \bU_4(\F_2)$ given by 
$\brrIII_{1,1'+'2i,2i-1}  = 
\begin{psmallmatrix}
 1 & \chi_{1} & \chi_{2i} &  * \\
  & 1& 0 & \chi_{1}  \\
  & & 1 &  \chi_{2i-1} \\
  & & & 1 
\end{psmallmatrix}$.  
To construct the desired lift, 
consider the matrices 
\begin{align*}
A \coloneqq  
\begin{psmallmatrix} 
1 & 1 & 0 & 0  \\
0 & 1 & 0 & 1 \\
0 & 0 & 1 & 0 \\
0 & 0 & 0 & 1 \\
\end{psmallmatrix}, ~ 
B \coloneqq
\begin{psmallmatrix} 
1 & 0 & 0 & 0  \\
0 & 1 & 0 & 0  \\
0 & 0 & 1 & 1 \\
0 & 0 & 0 & 1 \\
\end{psmallmatrix}, 
~ \text{and} ~ 
C \coloneqq
\begin{psmallmatrix} 
1 & 0 & 1 & 0  \\
0 & 1 & 0 & 0  \\
0 & 0 & 1 & 0 \\
0 & 0 & 0 & 1 \\
\end{psmallmatrix}.
\end{align*}
%
We have $A^2 = [B,C] = E_{1,4}$ and hence $A^2[B,C] = I_4$ in $U_4(\F_2)$. 
Moreover, $A^4 = B^4 = I_4$  in $U_4(\F_2)$. 
Thus, by Lemma \ref{lemma:matrix_relations}, 
we can define  $\rrIII_{1,1'+'2i,2i-1} \colon G \to U_4(\F_2)$   
by the assignment 
$x_{1} \mapsto A$,   
$x_{2i-1} \mapsto B$,  
$x_{2i} \mapsto C$,  
and $x_s \mapsto I_4$ for $s \ne 1, 2i-1, 2i$. 
We define the desired continuous map $G \to \F_2$  
by setting 
\[
f_2(\chi_{1,1} + \chi_{2i,2i-1}) \coloneqq  e_{1,4} \circ \rrIII_{1,1'+'2i,2i-1}. 
\] 
%
%
Finally,  let $i \in J/2$. 
We define a continuous group homomorphism $\rrIII_{2i-1,2i'+'2i,2i-1}\colon G \to U_4(\F_2)$ 
which lifts the continuous group homomorphism
$\brrIII_{2i-1,2i'+'2i,2i-1} \colon G \to \bU_4(\F_2)$ given by 
$\brrIII_{2i-1,2i'+'2i,2i-1}= 
\begin{psmallmatrix}
 1 & \chi_{2i-1} & \chi_{2i} &  * \\
  & 1& 0 & \chi_{2i}  \\
  & & 1 &  \chi_{2i-1} \\
  & & & 1 
\end{psmallmatrix}$.  
To construct the desired lift, 
consider the matrices 
\begin{align*}
D \coloneqq  
\begin{psmallmatrix} 
1 & 1 & 0 & 0  \\
0 & 1 & 0 & 0  \\
0 & 0 & 1 & 1 \\
0 & 0 & 0 & 1 \\
\end{psmallmatrix}, ~ \text{and} ~ 
E \coloneqq
\begin{psmallmatrix} 
1 & 0 & 1 & 0  \\
0 & 1 & 0 & 1  \\
0 & 0 & 1 & 0 \\
0 & 0 & 0 & 1 \\
\end{psmallmatrix}.
\end{align*}
We have $D^4=E^4 = I_4$ and $[D,E] = I_4$ in $U_4(\F_2)$. 
Thus, by Lemma \ref{lemma:matrix_relations}, 
we can define $\rrIII_{2i-1,2i'+'2i,2i-1}$  
by the assignment 
$x_{2i-1} \mapsto D$,   
$x_{2i} \mapsto E$,  
and $x_s \mapsto I_4$ for $s \ne 2i-1, 2i$. 
We define the desired continuous map $G \to \F_2$  
by setting 
\[
f_2(\chi_{2i-1} \ot \chi_{2i} + \chi_{2i} \ot \chi_{2i-1}) \coloneqq  e_{1,4} \circ \rrIII_{2i-1,2i'+'2i,2i-1}. 
\] 
%
%
We now define the map $f_2 \colon R \to \Ch^1$ on all of $R$ by extending it $\F_2$-linearly from $\BIII_2$ to $R$. 
We define the map $\PsiIII_3 \colon K_3^3(\Hb) \to \Zh^2$ 
as in \eqref{eq:def_of_Psi3_construction}. 
Taking the cohomology class of $\PsiIII_3$ defines an $\F_2$-linear map $\kIII_3 \colon K^3_3(\Hb) \to H^2$ 
which represents the canonical class of $G$ in $\HH^{3,-1}(\Hb)$. 
%


\subsection{One more compatibility result}\label{subsec:compatibility_type_III}

For the computation of the canonical class we need another compatibility result. 

\begin{lemma}\label{lemma:matrix_subgroup_isos_4to7_type_III} 
Let $\VIIIt$ be the subset of $U_7(\F_2)$ consisting of matrices of the form 
$\begin{psmallmatrix} 
1 & x & a & b &  z & c & t  \\
0 & 1 & 0 & a & 0  &0 & x \\
0 &  0 & 1 & x & 0 & z & 0 \\
0 &  0 & 0 & 1 & 0 & 0 & 0 \\
0 & 0 & 0 & 0 & 1 & 0 & y \\
0 & 0 & 0 & 0 & 0 & 1 & 0 \\
0 & 0 & 0 & 0 & 0 & 0 & 1 \\
\end{psmallmatrix}$  
and let $\WIIIt$ denote the subset of matrices of the form 
$\begin{psmallmatrix} 
1 & 0 & a & b &  0 & c & 0 \\
0 & 1 & 0 & a & 0  &0 & 0 \\
0 &  0 & 1 & 0 & 0 & 0& 0 \\
0 &  0 & 0 & 1 & 0 & 0 & 0 \\
0 & 0 & 0 & 0 & 1 & 0 & 0 \\
0 & 0 & 0 & 0 & 0 & 1 & 0 \\
0 & 0 & 0 & 0 & 0 & 0 & 1 \\
\end{psmallmatrix}$.  
Then $\VIIIt$ 
is a subgroup of $U_7(\F_2)$, 
and $\WIIIt$ is a normal subgroup of $\VIIIt$. 
Moreover, 
the map $\tauIII_7 \colon U_4(\F_2) \to \VIIIt/\WIIIt$ 
defined by 
$\tauIII_7 \colon  
\begin{psmallmatrix} 
1 & x & y & t  \\
0 & 1 & 0 & x  \\
0 &  0 & 1 & z \\
0 &  0 & 0 & 1 \\
\end{psmallmatrix}
\mapsto 
\begin{psmallmatrix} 
1 & x & * & * &  z & * & t  \\
0 & 1 & 0 & * & 0  &0 & x \\
0 &  0 & 1 & x & 0 & z & 0 \\
0 &  0 & 0 & 1 & 0 & 0 & 0 \\
0 & 0 & 0 & 0 & 1 & 0 & y \\
0 & 0 & 0 & 0 & 0 & 1 & 0 \\
0 & 0 & 0 & 0 & 0 & 0 & 1 \\
\end{psmallmatrix}$
is an isomorphism of groups. 
\end{lemma}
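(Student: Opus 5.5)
The plan is to argue exactly as in the proof of Lemma \ref{lemma:matrix_subgroup_isos_3to4_left}, by an explicit multiplication formula. Write a general element of $\VIIIt$ as $M(x,y,z,a,b,c,t)$, with free entries $x$ at $(1,2),(2,7),(3,4)$; $a$ at $(1,3),(2,4)$; $b$ at $(1,4)$; $z$ at $(1,5),(3,6)$; $c$ at $(1,6)$; $t$ at $(1,7)$; and $y$ at $(5,7)$. The first step is to multiply two such matrices. Row $1$ of the product picks up the cross terms $x_1a_2+a_1x_2$ at $(1,4)$, $a_1z_2$ at $(1,6)$, and $x_1x_2+z_1y_2$ at $(1,7)$. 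All other entries simply add, because each remaining pair of nonzero positions composes only into row $1$. So
\begin{align*}
M_1M_2 = M\bigl(&x_1+x_2,\, y_1+y_2,\, z_1+z_2,\, a_1+a_2,\, b_1+b_2+x_1a_2+a_1x_2,\\
&c_1+c_2+a_1z_2,\, t_1+t_2+x_1x_2+z_1y_2\bigr).
\end{align*}
In particular the repeated entries (for example $(1,2)=(2,7)=(3,4)$) stay equal. Hence $\VIIIt$ is closed under multiplication, and it is a subgroup since $U_7(\F_2)$ is finite.

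The same formula with $x_i=y_i=z_i=t_i=0$ shows that $\WIIIt$ is a subgroup. For normality, I will not conjugate by hand. Instead I will observe that the projection $\pi\colon \VIIIt\to\F_2^{\,3}\times\F_2$, $M\mapsto (x,y,z,t)$, is not a homomorphism. The clean way is to consider the map $\VIIIt\to U_4(\F_2)$ that keeps $x$ at $(1,2),(2,4)$, puts $y$ at $(1,3)$, $z$ at $(3,4)$, and $t$ at $(1,4)$. Then $\WIIIt$ is normal as the kernel of a homomorphism, provided the map is one.

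The key step is therefore to compare the formula above with multiplication in the subgroup of $U_4(\F_2)$ consisting of matrices with $(2,3)=0$ and $(2,4)=(1,2)$. That set is itself a subgroup, by the same kind of computation. There the product has $(1,4)$-entry $t_1+t_2+x_1x_2+y_1z_2$, while the formula above gives $t_1+t_2+x_1x_2+z_1y_2$ at $(1,7)$.

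This matching of the quadratic cross term is the main obstacle. The two expressions agree only after identifying the $(1,5)$ entry of $\VIIIt$ with the $(1,3)$ entry of $U_4$, and the $(5,7)$ entry with the $(3,4)$ entry. So I would first check carefully which labeling of $y,z$ in $\tauIII_7$ makes the map multiplicative, swapping the roles of $y$ and $z$ in the displayed assignment if necessary. The statement as written puts $z$ at $(1,5)$ and $y$ at $(5,7)$, which gives $z_1y_2$ rather than $y_1z_2$. With that identification fixed, the map $\VIIIt\to U_4(\F_2)$ is a homomorphism with kernel $\WIIIt$, since it forgets exactly $a,b,c$, and it is surjective onto that subgroup of $U_4(\F_2)$. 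Therefore $\WIIIt$ is normal, and the induced map on $\VIIIt/\WIIIt$ is inverse to $\tauIII_7$. Finally, counting $\#\VIIIt/\#\WIIIt=2^7/2^3=2^4$, the order of the source subgroup, confirms bijectivity.
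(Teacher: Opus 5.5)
Your argument is correct and follows the paper's route, which is just the direct multiplication-and-counting computation of Lemma \ref{lemma:matrix_subgroup_isos_3to4_left}. Obtaining normality of $\WIIIt$ as the kernel of the homomorphism $\VIIIt\to U_4(\F_2)$, rather than conjugating by hand, is a harmless streamlining.

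Your worry about the labeling is justified. As displayed, $\tauIII_7$ satisfies $\tauIII_7(M_1)\tauIII_7(M_2)=\tauIII_7(M_2M_1)$, so it is only an anti-homomorphism. Concretely, for $M_1=I_4+E_{1,3}$ and $M_2=I_4+E_{3,4}$, the $(1,7)$-entries of $\tauIII_7(M_1)\tauIII_7(M_2)$ and $\tauIII_7(M_1M_2)$ differ, and this entry is constant on cosets of $\WIIIt$. The correct isomorphism puts $y$ at $(1,5),(3,6)$ and $z$ at $(5,7)$. This is exactly the version used in Proposition \ref{prop:f2_comparison_4to7_type_III}, where $x_{2i}\mapsto E_{(1,5)}^{(3,6)}$ matches $C=E_{1,3}$ and $x_{2i-1}\mapsto E_{(5,7)}$ matches $B=E_{3,4}$.
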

\begin{proof}
This follows from a direct computation as in the proof of Lemma \ref{lemma:matrix_subgroup_isos_3to4_left}. 
\end{proof}


\begin{notn}\label{notation:E_matrices_quadruple}
Let $n\ge 3$. 
For $1 \le s_i < t_i \le n$ and $i =1,2,3,4$, 
let $E_{(s_1,t_1);(s_2,t_2)}^{(s_3,t_3);(s_4,t_4)}$ denote the unique matrix in $U_n(\F_2)$ with non-zero entries above the diagonal in positions $(s_i,t_i)$. 
When one of the $(s_i,t_i)$ is omitted in the notation, then this entry is zero.
For example, 
$E_{(1,2);(2,7)}^{(3,4);(4,8)}$ is the matrix in $U_n(\F_2)$ with non-zero entries above the diagonal in positions $(1,2)$, $(2,7)$, $(3,4)$, and $(4,8)$, 
and $E_{(1,3);(2,4)}^{(7,8)}$ is the matrix in $U_n(\F_2)$ with non-zero entries above the diagonal in positions $(1,3)$, $(2,4)$, and $(7,8)$. 
\end{notn}


\begin{proposition}\label{prop:f2_comparison_4to7_type_III}
Let $\rrIII_7 \colon G \to \VIIIt$ be a continuous group homomorphism 
satisfying $\rrIII_7(x_1) = E_{(1,2);(3,4)}^{(2,7)}$, 
$\rrIII_7(x_{2i-1}) = E_{(5,7)}$, 
$\rrIII_7(x_{2i}) = E_{(1,5)}^{(3,6)}$, 
and $\rrIII_7(x_k) = I_7$ for $k\ne 1,2i-1,2i$. 
Then 
\begin{align*}
e_{1,7} \circ \rrIII_7 = f_2(\chi_{1,1} +  \chi_{2i,2i-1})   
\end{align*} 
as continuous maps $G \to \F_2$. 
\end{proposition}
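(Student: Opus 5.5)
The plan follows the pattern of Propositions \ref{prop:f2_comparison_3to4}, \ref{prop:f2_comparison_3to5} and \ref{prop:f2_comparison_4to5_type_II}. I will compose $\rrIII_7$ with the quotient map $\VIIIt \to \VIIIt/\WIIIt$ and with the inverse of the isomorphism $\tauIII_7$ of Lemma \ref{lemma:matrix_subgroup_isos_4to7_type_III}. This gives a continuous homomorphism $\rr' \coloneqq (\tauIII_7)^{-1}\circ \pi \circ \rrIII_7 \colon G \to U_4(\F_2)$, with image in the subgroup of matrices whose $(2,3)$-entry vanishes and whose $(1,2)$- and $(2,4)$-entries agree. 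The entry $t$ in position $(1,7)$ is not touched by $\WIIIt$. Hence it is well defined on the quotient and corresponds under $\tauIII_7$ to the $(1,4)$-entry. Therefore $e_{1,7}\circ\rrIII_7 = e_{1,4}\circ \rr'$.

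Next I read off $\rr'$ on generators by locating the entries $x,y,z,t$ in the images. For $x_1$, the matrix $E_{(1,2);(3,4)}^{(2,7)}$ has $x=1$ (at $(1,2)$, $(2,7)$ and $(3,4)$), $y=z=t=0$, and the remaining entries lie in the $\WIIIt$-positions or are zero. So $\rr'(x_1)$ is the matrix with ones exactly at $(1,2)$ and $(2,4)$, which is the matrix $A$ used to define $\rrIII_{1,1'+'2i,2i-1}$. For $x_{2i-1}$, $E_{(5,7)}$ has only $y=1$, so $\rr'(x_{2i-1}) = E_{3,4}$, which is $B$. For $x_{2i}$, $E_{(1,5)}^{(3,6)}$ has $z=1$ at both $(1,5)$ and $(3,6)$, so $\rr'(x_{2i}) = E_{1,3}$, which is $C$. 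All other generators go to $I_4$.

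Since a continuous homomorphism from $G$ is determined by its values on the topological generators $x_1,\ldots,x_d$, it follows that $\rr' = \rrIII_{1,1'+'2i,2i-1}$. Then $e_{1,7}\circ \rrIII_7 = e_{1,4}\circ\rrIII_{1,1'+'2i,2i-1} = f_2(\chi_{1,1}+\chi_{2i,2i-1})$ by the definition in Section \ref{subsec:construction_kappa_type_III}.

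The only real work lies in Lemma \ref{lemma:matrix_subgroup_isos_4to7_type_III}, which may be assumed here. What needs care is checking that each given generator image actually lies in $\VIIIt$, i.e. that the paired entries match: $(1,2)=(2,7)$ for $x$, $(1,5)=(3,6)$ for $z$, and the $a$-entries consistent. I also need the indexing $i \in \Jw/2$ implicit in the statement. For $i=1$ the same argument would use the definition of $f_2(\chi_{1,1}+\chi_{2,1})$ via $S,T$ instead, since then $x_1$ and $x_{2i-1}$ coincide; if that case is intended, I would treat it separately by checking that the product of the two images in $U_4$ gives $S$.
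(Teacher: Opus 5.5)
Your proposal is correct and follows the paper's proof. The paper also pushes $\rrIII_7$ through $\VIIIt\to\VIIIt/\WIIIt$ and $(\tauIII_7)^{-1}$, matches the resulting $U_4(\F_2)$-valued homomorphism with $\rrIII_{1,1'+'2i,2i-1}$ on generators, and reads off the $(1,4)$-entry.

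One step needs a different justification. You identify $E_{(5,7)}$ (i.e.\ $y=1$) with $E_{3,4}$, and $E_{(1,5)}^{(3,6)}$ (i.e.\ $z=1$) with $E_{1,3}$. The displayed formula for $\tauIII_7$ in Lemma \ref{lemma:matrix_subgroup_isos_4to7_type_III} says the opposite: there $y$ sits in position $(1,3)$ and $z$ in position $(3,4)$ of the $4\times 4$ matrix. Quoted literally, the lemma would give $\rr'(x_{2i-1})=C$ and $\rr'(x_{2i})=B$, and the match with $\rrIII_{1,1'+'2i,2i-1}$ would fail.

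Your reading is nevertheless the right one, because the printed formula has $y$ and $z$ interchanged. In $\VIIIt$ the $(1,7)$-entry of a product $M_1M_2$ is $t_1+t_2+x_1x_2+z_1y_2$, with $z$ from position $(1,5)$ and $y$ from position $(5,7)$. In $U_4(\F_2)$ the $(1,4)$-entry of $N_1N_2$ is $t_1+t_2+x_1x_2+(N_1)_{1,3}(N_2)_{3,4}$. So the map is a homomorphism only if the $(1,3)$-entry goes to the $z$-slots $(1,5),(3,6)$ and the $(3,4)$-entry goes to the $y$-slot $(5,7)$. State this one-line check instead of citing the lemma verbatim. With it, $\rr'(x_1)=A$, $\rr'(x_{2i-1})=B$, $\rr'(x_{2i})=C$, and your proof is complete.

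The case $i=1$ does not need separate treatment. There the hypotheses would assign two different matrices to $x_1$, so no such homomorphism exists. The proposition is only used for $2\le i\le d/2$.
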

\begin{proof}
This follows from the definition of $f_2$ and Lemma \ref{lemma:matrix_subgroup_isos_4to7_type_III}. 
\end{proof}

%

\subsection{Computation of the canonical class -- type III and IV}\label{sec:computing_kappa3_type_III}

Now we determine the values of the map representing the canonical class of $G$. 
Again, we consider the cases $d=2$ and $d \ge 4$ separately. 

\begin{proposition}\label{prop:kappa3_agrees_on_XIII_d=2} 
Assume $d=2$. 
Then the map $\kIII_3$ has the same value on 
$\chi_1 \ot (\chi_1 + \chi_2) \ot \chi_1$ and on 
$\chi_1 \ot \chi_1 \ot \chi_2 + \chi_1 \ot \chi_2 \ot \chi_1 + \chi_2 \ot \chi_1 \ot \chi_1 + \chi_2 \ot \chi_1 \ot \chi_2$. 
\end{proposition}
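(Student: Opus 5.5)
The plan is to reduce the claim to the vanishing of a single triple Massey product. The key observation is that the sum of the two elements factors: over $\F_2$,
\[
\chi_{1,1,1}+\chi_{1,2,1} + \chi_{1,1,2}+\chi_{1,2,1}+\chi_{2,1,1}+\chi_{2,1,2} = (\chi_1+\chi_2)\ot\chi_1\ot(\chi_1+\chi_2).
\]
So, writing $a \coloneqq \chi_1+\chi_2$, the statement is equivalent to $\kIII_3(a\ot\chi_1\ot a)=0$. By Lemma \ref{lemma:relations_in_H2_all_groups} (with $d=2$, $\chi_1\cup\chi_1=\chi_1\cup\chi_2=\chi_2\cup\chi_1$), both $a\ot\chi_1=\chi_{1,1}+\chi_{2,1}$ and $\chi_1\ot a=\chi_{1,1}+\chi_{1,2}$ lie in $R$. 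In terms of $\BIII_2$ they decompose as
\[
a\ot\chi_1=\chi_{1,1}+\chi_{2,1}, \qquad \chi_1\ot a=(\chi_{1,1}+\chi_{2,1})+(\chi_{1,2}+\chi_{2,1}).
\]
Hence $\PsiIII_3(a\ot\chi_1\ot a)$ is exactly the Massey cocycle of $\langle a,\chi_1,a\rangle$ for the defining system
\[
\{a,\ \chi_1,\ a,\ f_2(\chi_{1,1}+\chi_{2,1}),\ f_2(\chi_{1,1}+\chi_{2,1})+f_2(\chi_{1,2}+\chi_{2,1})\}.
\]
By Dwyer's Theorem \ref{thm:Dwyer}, it then suffices to show that the corresponding homomorphism $\brr\colon G\to\bU_4(\F_2)$ lifts to $U_4(\F_2)$.

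The $(2,4)$-entry is a sum of two $f_2$-values, each of which is only known as a corner entry $e_{1,4}$ of the homomorphisms $\rrIII_{1,1'+'2,1}$ and $\rrIII_{1,2'+'2,1}$ into $U_4(\F_2)$. I would therefore not try to lift inside $U_4(\F_2)$ directly. Instead I would construct a homomorphism $\rr\colon G\to U_n(\F_2)$ for a larger $n$ (presumably $n=7$ or $8$) such that the following hold. The relevant block entries reproduce $a,\chi_1,a$ on the superdiagonal of a suitable $4$-index subset. The entries playing the role of $f_2(\chi_{1,1}+\chi_{2,1})$ are identified with $f_2$ through a subgroup-quotient isomorphism of the type in Lemma \ref{lemma:matrix_subgroup_isos_4to7_type_III} and Proposition \ref{prop:f2_comparison_4to7_type_III}. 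The entries playing the role of $f_2(\chi_{1,2}+\chi_{2,1})$ are identified via Propositions \ref{prop:f2_comparison_3to4} and \ref{prop:f2_comparison_3to5}, with Convention \ref{convention:embedding} used for the embeddings. A suitable linear combination of corner entries of $\rr$ then gives a cochain $\vartheta$ with $\delta\vartheta=\PsiIII_3(a\ot\chi_1\ot a)$. Equivalently, one checks directly that $\delta$ of this combination equals $\Psi_3$, using that the sum of two $f_2$-values is realized by adding rows, i.e.\ by a block-diagonal "sum" of the two $U_4$-representations.

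Concretely, I would assign $x_1\mapsto M$ and $x_2\mapsto N$ for explicit matrices $M,N$, modelled on $S$, $T$, $D$, $E$ from Section \ref{subsec:construction_kappa_type_III}, and verify the relation $M^{2+2^f}[M,N]=I_n$ in $U_n(\F_2)$ using Lemma \ref{lemma:matrix_relations}. Since $f\ge 2$ and all matrices involved should satisfy $M^4=I_n$, the factor $M^{2^f}$ is harmless. The relation thus reduces to $M^2=[M,N]$, mirroring the identity $S^2=[S,T]=E_{1,4}$.

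The main obstacle is finding $M,N$ that simultaneously encode $\chi_1$ and $a$ in the right positions and make $M^2$ and $[M,N]$ agree, including in the top-right corner. I expect a first attempt to fail by exactly $E_{1,n}$. In that case both $\brr$'s fail to lift, so I would instead compute $\kIII_3$ on each of the two elements separately, using two non-lifting homomorphisms as in the proof of Proposition \ref{prop:kappa_3_on_DDDw}, and compare the two values.
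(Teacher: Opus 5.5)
Your factorization and reduction are correct, and they are more economical than the paper's argument. The two elements sum to $a\ot\chi_1\ot a$ with $a=\chi_1+\chi_2$. Writing $\eta\coloneqq f_2(\chi_{1,1}+\chi_{2,1})$ and $\zeta\coloneqq f_2(\chi_{1,2}+\chi_{2,1})$, one has $\PsiIII_3(a\ot\chi_1\ot a)=a\cup(\eta+\zeta)+\eta\cup a$. So the claim is equivalent to the vanishing of a single triple Massey product with an explicit defining system. The paper instead proves that each of the two values is non-zero, using a non-liftable $\bU_4(\F_2)$-representation $x_1\mapsto C$, $x_2\mapsto D$ and a non-liftable $\bU_6(\F_2)$-representation.

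\textbf{The gap.} Your proposal stops exactly where the proof has to start: no homomorphism is ever written down, so nothing is proved. In addition, your reason for not lifting inside $U_4(\F_2)$ is a misconception. A continuous $1$-cochain with prescribed coboundary is determined by its values on $x_1,x_2$, because two such cochains differ by a continuous homomorphism $G\to\F_2$. So you do not need to realize $\eta$ and $\eta+\zeta$ as corner entries of auxiliary representations into $U_7(\F_2)$ or $U_8(\F_2)$. The $(1,4)$-entries of $S,T,D,E$ are $0$, so $\eta$ and $\zeta$ vanish on $x_1,x_2$. Hence any homomorphism $\rr\colon G\to U_4(\F_2)$ with superdiagonal $(a,\chi_1,a)$ and with $(1,3)$- and $(2,4)$-entries vanishing on $x_1,x_2$ automatically has $e_{1,3}\circ\rr=\eta$ and $e_{2,4}\circ\rr=\eta+\zeta$. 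The coboundaries agree on both sides: they are $a\cup\chi_1$ and $\chi_1\cup a$ respectively.

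\textbf{How to close it.} With this observation the missing step is short. Take $X_1=\begin{psmallmatrix}1&1&0&0\\0&1&1&0\\0&0&1&1\\0&0&0&1\end{psmallmatrix}$ and $X_2=\begin{psmallmatrix}1&1&0&0\\0&1&0&0\\0&0&1&1\\0&0&0&1\end{psmallmatrix}$. Their superdiagonals encode $a(x_k),\chi_1(x_k),a(x_k)$, and all other off-diagonal entries are zero. A direct computation gives $X_1^2=[X_1,X_2]=E^{2,4}_{1,3}$ and $(E^{2,4}_{1,3})^2=I_4$. Hence $X_1^4=I_4$ and $X_1^{2+2^f}[X_1,X_2]=I_4$, both for $f\ge 2$ and for $f=\infty$. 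By Lemma \ref{lemma:matrix_relations} this defines $\rr\colon G\to U_4(\F_2)$. Then $\vartheta\coloneqq e_{1,4}\circ\rr$ satisfies $\delta\vartheta=a\cup(\eta+\zeta)+\eta\cup a=\PsiIII_3(a\ot\chi_1\ot a)$. So $\kIII_3(a\ot\chi_1\ot a)=0$, which is the claim.

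\textbf{On your expectations.} Your expectation that a first attempt fails by $E_{1,n}$ is therefore wrong. Your fallback is also not a coherent alternative. The corner entries of a lift are central and do not affect the relation, so one failed lift of the $\brr$ attached to your defining system means no lift exists. In that case $\kIII_3(a\ot\chi_1\ot a)\neq 0$ and the statement would be false, so computing the two values separately could only confirm that they differ. For that route, too, you supply none of the required explicit representations.
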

\begin{proof}
First, we determine the value of $\kIII_3$ on $\chi_{1,1,2} + \chi_{1,2,1} + \chi_{2,1,1} + \chi_{2,1,2}$. 
%
We define the matrices 
\begin{align*}
A = \begin{psmallmatrix}
 1 & 0 & 1 & 0 & 0 & 0 \\
  & 1& 0 & 1 &  1 & 0  \\
  & & 1 & 0 & 1 & 0 \\
  & & & 1 & 0 & 1 \\  
    & & &  & 1 &  0 \\   
  & & & & & 1 
\end{psmallmatrix}
~ \text{and} ~ 
B = \begin{psmallmatrix}
 1 &  1 & 0 & 0 & 0 &  0 \\
  & 1& 0 & 0 &  0 & 0  \\
  & & 1 & 1 & 0 & 0 \\
  & & & 1 & 0 & 0 \\  
    & & &  & 1 & 1 \\   
  & & & & & 1 
\end{psmallmatrix}. 
\end{align*} 
We have $A^4=I_6$ and $A^2 \cdot [A,B] = E_{1,6}$ 
in $U_6(\F_2)$. 
Thus, we have $A^2 \cdot [A,B] = I_6$ in $\bU_6(\F_2)$,  
but $A^2 \cdot [A,B] \ne I_6$ in $U_6(\F_2)$. 
This shows that the assignment $x_1 \mapsto A$ and $x_2 \mapsto B$ 
defines a continuous group homomorphism $\brr \colon G \to \bU_6(\F_2)$ 
which does not lift to  a continuous group homomorphism $\rr \colon G \to U_6(\F_2)$. 
By Dwyer's Theorem \ref{thm:Dwyer}, 
this implies 
$\kIII_3(\chi_{1,1,2} + \chi_{1,2,1} + \chi_{2,1,1} + \chi_{2,1,2}) \ne 0$. 
%
%
Second, we determine the value of $\kappa_3$ on  
$\chi_{1,1,1} + \chi_{1,2,1}$. 
Let 
$C = \begin{psmallmatrix} 
1 & 1 & 0 & 0 \\
0 & 1 & 1 & 0 \\
0 &  0 & 1 & 1 \\
0 & 0 & 0 & 1 
\end{psmallmatrix}$ 
and $D = \begin{psmallmatrix} 
1 & 0 & 0 & 0 \\
0 & 1 & 1 & 0 \\
0 &  0 & 1 & 0 \\
0 & 0 & 0 & 1 
\end{psmallmatrix}$  
in $U_4(\F_2)$. 
We have $C^2 \cdot [C, D] = E_{1,4}$ 
in $U_4(\F_2)$. 
%
%
In particular, $C^2 \cdot [C, D] = I_4$ in $\bU_4(\F_2)$, 
but $C^2 \cdot [C, D] \ne I_4$ in $U_4(\F_2)$. 
Thus, the assignment $x_1 \mapsto C$ and  $x_2 \mapsto D$ 
defines a continuous group homomorphism $G \to \bU_4(\F_2)$ 
which does not lift to a group homomorphism $G \to U_4(\F_2)$. 
By Dwyer's Theorem \ref{thm:Dwyer}, 
this shows  
$\kIII_3(\chi_{1,1,1} + \chi_{1,2,1}) \ne 0$. 
Since there is a unique non-zero element in $H^2$, 
this proves the claim. 
\end{proof}

 
\begin{proposition}\label{prop:kappa3_agrees_on_XIII_and_TIIIw}
Assume $d\ge 4$. 
For every $2 \le i \le d/2$, 
the values of $\kIII_3$ on the element 
$\chi_{2} \ot \chi_{1} \ot \chi_{2} + \chi_2 \ot \chi_{2i} \ot \chi_{2i-1} + \chi_{2i} \ot \chi_{2i-1} \ot \chi_{2}$ in $\TIIIw$ 
and on the element
$\chi_1 \ot \chi_1 \ot \chi_2 + \chi_1 \ot \chi_2 \ot \chi_1 + \chi_2 \ot \chi_1 \ot \chi_1 + \chi_2 \ot \chi_1 \ot \chi_2$ in $\XIII$ 
agree. 
\end{proposition}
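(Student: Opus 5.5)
Since $H^2\cong\F_2$, it suffices to show that both values are non-zero. The approach is the same as in Propositions \ref{prop:kappa3_agrees_on_XIII_d=2} and \ref{prop:kappa_3_on_DDDw}: each value is interpreted as a higher Massey product with an explicit defining system built from $f_2$. By Dwyer's Theorem \ref{thm:Dwyer}, that value is non-zero exactly when the corresponding homomorphism $\brr\colon G\to\bU_n(\F_2)$ does not lift to $U_n(\F_2)$. By Lemma \ref{lemma:matrix_relations}, this reduces to checking whether the defining relation \eqref{eq:type_III_relation_general} or \eqref{eq:type_IV_relation_general}, evaluated on chosen matrices, equals $I_n$ only modulo the centre.

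For $\chi_{1,1,2}+\chi_{1,2,1}+\chi_{2,1,1}+\chi_{2,1,2}$, I would reuse the $d=2$ computation. I assign $x_1\mapsto A$, $x_2\mapsto B$ (the $6\times 6$ matrices from the proof of Proposition \ref{prop:kappa3_agrees_on_XIII_d=2}) and all other $x_k\mapsto I_6$. The factors $[x_{2j-1},x_{2j}]$ for $j\ge 2$ are trivial, and $x_3^{2^f}$ in type IV is trivial as well. The factor $x_1^{2^f}$ in type III is trivial because $A^4=I_6$. The relation therefore evaluates to $A^2[A,B]=E_{1,6}$, so $\brr$ exists but does not lift. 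One must check that the entries of $\brr$ are the $f_2$-values of the defining system. This uses Propositions \ref{prop:f2_comparison_3to4} and \ref{prop:f2_comparison_3to5}, together with the construction of $f_2(\chi_{1,1}+\chi_{2,1})$ via $S,T$ (and $f_2(\chi_{1,2}+\chi_{2,1})$ via $D,E$). This is identical to the $d=2$ case because only $x_1,x_2$ are involved. Hence this value is non-zero.

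For $\chi_{2,1,2}+\chi_{2,2i,2i-1}+\chi_{2i,2i-1,2}$ with $2\le i\le d/2$, I would first write down the $\bU_6(\F_2)$-valued defining system. This is the analogue of $\brr$ in the $\TIIw$ and $\EII$ computations of Proposition \ref{prop:kappa_3_vanishes_type_II}. Its first row is $\chi_2,\chi_{2i},f_2(\chi_{2,1}+\ldots)$, and its entries involve $f_2(\chi_{1,1}+\chi_{2i,2i-1})$ in both corners, together with $f_2(\chi_{2,2i})$, $f_2(\chi_{2i-1,2})$ and $\chi_1,\chi_{2i-1}$ below. I would then choose a matrix $T$ for $x_2$ of the same shape as the matrix $T$ in type II. I would choose $x_1$ to carry the pattern $E_{(1,2);(3,4)}^{(2,7)}$-style entries needed for the $\chi_1\otimes\chi_1$ part, and let $x_{2i-1},x_{2i}$ be elementary matrices as in the $\EII$ case. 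I would compute the relation word $x_1^{2+2^f}[x_1,x_2][x_{2i-1},x_{2i}]$ (respectively the type IV word) and aim for the result $E_{1,6}$. That gives a non-lift and hence a non-zero value, agreeing with the first element.

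The main obstacle is identifying the $f_2(\chi_{1,1}+\chi_{2i,2i-1})$ entries, since that cochain was defined via a $U_4$ homomorphism with $x_1\mapsto A$ having $A^2\neq I$. This is exactly the purpose of Lemma \ref{lemma:matrix_subgroup_isos_4to7_type_III} and Proposition \ref{prop:f2_comparison_4to7_type_III}, which suggests working in $U_7(\F_2)$ rather than $U_6$, or embedding the $6\times6$ system into $\VIIIt$. I would first try the $7\times 7$ matrices suggested by Proposition \ref{prop:f2_comparison_4to7_type_III}, namely $x_1\mapsto E_{(1,2);(3,4)}^{(2,7)}$ enlarged by the entries for $\chi_2$, then verify the relation by direct multiplication. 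The case $i=1$ is not needed, and the case $x_3^{2^f}$ in type IV with $i=2$ is handled since all matrices used have order dividing $4$.
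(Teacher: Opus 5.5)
Reducing to showing that both values are non-zero is legitimate, since $H^2\cong\F_2$. The $\XIII$ element is also correctly handled by pulling back the $d=2$ computation: the other generators go to $I_6$, and $A^4=I_6$ absorbs $x_1^{2^f}$.

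\textbf{The gap.} The proposition rests entirely on the $\TIIIw$ element $w=\chi_{2,1,2}+\chi_{2,2i,2i-1}+\chi_{2i,2i-1,2}$, and for it you give only a search plan, with no homomorphism. You also misplace the obstacle. Since
$w=\chi_2\ot(\chi_{1,2}+\chi_{2i,2i-1})+\chi_{2i}\ot\chi_{2i-1,2}=(\chi_{2,1}+\chi_{2i,2i-1})\ot\chi_2+\chi_{2,2i}\ot\chi_{2i-1}$,
the defining system has corner entries $f_2(\chi_{2,1}+\chi_{2i,2i-1})$ and $f_2(\chi_{1,2}+\chi_{2i,2i-1})$. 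These are $f_2$ of sums of two, respectively three, elements of $\BIII_2$. None of the compatibility results identifies a matrix entry with such a sum; Proposition \ref{prop:f2_comparison_4to7_type_III} only handles $f_2(\chi_{1,1}+\chi_{2i,2i-1})$ alone. So the tools you cite do not close the argument. This is exactly why the paper does not evaluate the two elements separately. It adds them and rewrites the sum as $\chi_1\ot(\chi_{1,2}+\chi_{2,1})+\chi_2\ot(\chi_{1,1}+\chi_{2i,2i-1})+\chi_{2i}\ot\chi_{2i-1,2}$, in which every $R$-factor lies in $\BIII_2$. It then shows the resulting $8\times 8$ system lifts, so $\kIII_3$ vanishes on the sum and the two values agree without either being computed.

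\textbf{How your route can be completed.} You need one extra observation. Two cochains in $\Ch^1$ with the same coboundary differ by a continuous homomorphism, so they coincide as soon as they agree on $x_1,\dots,x_d$. Every $f_2(r)$ with $r\in R$ vanishes on all $x_k$, because every matrix used to define $f_2$ has zero $(1,3)$, respectively $(1,4)$, entry. By induction on $j-i$, a homomorphism whose generator matrices carry the prescribed $\chi$-entries and zeros elsewhere therefore has exactly the required $f_2$-entries. Take
$x_1\mapsto E_{2,4}$, $x_2\mapsto E^{4,6}_{1,2}$, $x_{2i-1}\mapsto E^{5,6}_{3,4}$, $x_{2i}\mapsto E^{2,5}_{1,3}$, and $x_k\mapsto I_6$ otherwise.
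All four matrices square to $I_6$. One gets $[x_1,x_2]\mapsto E^{(1,6)}_{(1,4);(2,6)}$ and $[x_{2i-1},x_{2i}]\mapsto E^{2,6}_{1,4}$. Hence both \eqref{eq:type_III_relation_general} and \eqref{eq:type_IV_relation_general} evaluate to $E_{1,6}$, so $\brr$ exists in $\bU_6(\F_2)$ but does not lift, and $\kIII_3(w)\neq 0$. The same observation is also needed for the $(2,6)$ entry $f_2(\chi_{1,1}+\chi_{1,2})$ of the $d=2$ system you reuse. That entry is also a sum and is not covered by Propositions \ref{prop:f2_comparison_3to4} and \ref{prop:f2_comparison_3to5}.
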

\begin{proof}
Let $i \ge 2$. 
It suffices to show that $\kIII_3$ vanishes on the sum of the two elements. 
That is, after removing $\chi_{2,1,2}$ which appears twice, we want to show that 
\begin{align}\label{eq:kappa3_agrees_on_XIII_and_TIIIw}
\kIII_3(\chi_{2,1,1} + \chi_{2,2i,2i-1} + \chi_{1,2,1} + \chi_{1,1,2} + \chi_{2i,2i-1,2}) = 0.   
\end{align}
To do so, we will show that the continuous group homomorphism $\brr \colon G \to \bU_8(\F_2)$ given by 
\begin{align*}
\brr  = 
\begin{psmallmatrix}
 1 &  \chi_1 & \chi_{2}  & f_2(\chi_{1,2} + \chi_{2,1})  & \chi_{2i} & f_2(\chi_{2,2i} ) &  f_2(\chi_{1,1} + \chi_{2i,2i-1})   & * \\
  & 1& 0 & \chi_{2} & 0 & 0 & \chi_{1} & f_2(\chi_{1,2} + \chi_{2,1})  \\
   & & 1& \chi_1 & 0 & \chi_{2i} & 0 & f_2(\chi_{1,1} + \chi_{2i,2i-1})  \\
    & & & 1& 0 & 0 & 0 & \chi_{1}  \\
 & &  & & 1 & 0 & \chi_{2i-1} & f_2(\chi_{2i-1,2})  \\
 & & & & & 1 & 0 & \chi_{2i-1} \\  
    & & & &  & & 1 &  \chi_{2} \\   
  & & & & & & & 1 
\end{psmallmatrix}
\end{align*} 
lifts to a continuous group homomorphism $\rr \colon G \to U_8(\F_2)$. 
Let $A\coloneqq E_{(1,2);(2,7)}^{(3,4);(4,8)}$, 
and $B \coloneqq E_{(1,3);(2,4)}^{(7,8)}$. 
%
Let $C \coloneqq E_{5,7}^{6,8}$ and $D \coloneqq E_{1,5}^{3,6}$. 
We have $A^4=C^4 = I_8$ 
and $A^2[A,B][C,D] = I_8$ in $U_8(\F_2)$. 
(Note, however, that $A^2 = [C,D] = E_{1,7}^{3,8} \ne I_8$ and $[A,B] = I_8$ in $U_8(\F_2)$.) 
Thus, by Lemma \ref{lemma:matrix_relations},  
we can define a continuous group homomorphism $\rr \colon G \to U_8(\F_2)$ 
by the assignment 
\begin{align*}
x_1 \mapsto A, ~ 
x_2 \mapsto B, ~ 
x_{2i-1}  \mapsto C, ~ 
x_{2i} \mapsto D, 
\end{align*}
and $\rr(x_k) = I_8$ for $k \ne 1,2,2i-1, 2i$. 
We use Convention \ref{convention:embedding} and 
Proposition \ref{prop:f2_comparison_4to7_type_III} 
to identify $e_{1,7} \circ \rr$ with $f_2(\chi_{1,1} + \chi_{2i,2i-1})$, 
and we use an analogue of Proposition \ref{prop:f2_comparison_4to7_type_III} 
to identify $e_{3,8} \circ \rr$ with $f_2(\chi_{1,1} + \chi_{2i,2i-1})$. 
Moreover, we use analogues of 
Propositions \ref{prop:f2_comparison_3to5} and \ref{prop:f2_comparison_4to5_type_II}
to identify $e_{1,5} \circ \rr$ with $f_2(\chi_{2,2i})$, 
$e_{2,8} \circ \rr$ with $f_2(\chi_{1,2} + \chi_{2,1})$, 
and 
$e_{5,8} \circ \rr$ with $f_2(\chi_{2i-1,2})$. 
By Dwyer's Theorem \ref{thm:Dwyer}, the existence of $\rr$  
implies \eqref{eq:kappa3_agrees_on_XIII_and_TIIIw}. 
\end{proof}



\begin{proposition}\label{prop:type_III_kappa_3_is_trivial}
The canonical class of $G$ is trivial. 
\end{proposition}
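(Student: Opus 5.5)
We split into the cases $d=2$ and $d\ge 4$. In both cases the vanishing of $[\kIII_3]$ is reduced, via the criteria of Section \ref{subsec:Koszul_complex_p=2_type_III_differential}, to the explicit values of $\kIII_3$ computed with Dwyer's Theorem \ref{thm:Dwyer}.

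\emph{The case $d=2$.} We verify the hypotheses of Proposition \ref{prop:image_of_partial_sigmas_type_III_d=2}.
\begin{enumerate}[(i)]
\item Proposition \ref{prop:kappa3_agrees_on_XIII_d=2} already shows that $\kIII_3$ takes the same value on $\chi_{1,1,1}+\chi_{1,2,1}$ and on $\chi_{1,1,2}+\chi_{1,2,1}+\chi_{2,1,1}+\chi_{2,1,2}$.
\item On $\chi_{2,2,2}$ we lift the defining system with the homomorphism $x_2\mapsto B_{1,1,1}$ and $x_1\mapsto I_4$, exactly as for $\SSS$ in Proposition \ref{prop:kappa_3_vanishes_on_S_D_T}. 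This works because $x_2$ occurs in the relation only inside a commutator with $x_1$.
\item On $\chi_{2,2,1}+\chi_{2,1,2}+\chi_{1,2,2}$ we reuse the matrices $S,T\in U_6(\F_2)$ from the $\TTT$-computation, with the roles of $x_1$ and $x_2$ chosen as there. We then check that the extra factor $x_1^{2}$ or $x_1^{2+2^f}$ maps to $I_6$, or is compensated.
\end{enumerate}
If the naive choice fails in (iii), we would modify the image of $x_1$ by an element of order $2$, so that $x_1^2$ becomes trivial while the superdiagonal characters remain $\chi_1$ on the correct entries.

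\emph{The case $d\ge 4$.} We verify the hypotheses of Proposition \ref{prop:image_of_partial_sigmas_type_III}.
\begin{enumerate}[(i)]
\item The condition comparing $\chi_{2,1,2}+\chi_{2,2i,2i-1}+\chi_{2i,2i-1,2}$ with the $\XIII$ element is exactly Proposition \ref{prop:kappa3_agrees_on_XIII_and_TIIIw}.
\item For $\SIII$, $\DIII$ and $\TIII$ we follow the proofs of Propositions \ref{prop:kappa_3_vanishes_on_S_D_T} and \ref{prop:kappa_3_vanishes_type_II}, constructing lifts $G\to U_5(\F_2)$ or $U_6(\F_2)$ via Lemma \ref{lemma:matrix_relations}. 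The generator $x_1$ now carries the factor $x_1^{2+2^f}$, or, in type IV, the factors $x_1^2$ and $x_3^{2^f}$. Since every matrix in play satisfies $M^4=I$, only the square $x_1^2$ matters. We choose the image of $x_1$ of order $2$ whenever $\chi_1$ appears in only one superdiagonal slot. When it appears in two linked slots we take a matrix like $T$ or $A$ of Section \ref{subsec:construction_kappa_type_II}, whose square cancels against a commutator, as in the type II $\DII$ computations. The identifications of the relevant $e_{s,t}\circ\rr$ with values of $f_2$ use Propositions \ref{prop:f2_comparison_3to4}, \ref{prop:f2_comparison_3to5} and analogues of \ref{prop:f2_comparison_4to5_type_II}.
\item For the elements $\chi_{1,1,1}+\chi_{1,2i,2i-1}+\chi_{2i,2i-1,1}$ in $\TIIIw$ we mimic the type II $\TIIw$ computation. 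We use a $U_6(\F_2)$-homomorphism with $x_1\mapsto T$, where $T^2$ is cancelled by a commutator of the images of $x_{2i-1},x_{2i}$. Additionally, $x_2$ must be sent so that $[x_1,x_2]$ stays trivial, for instance $x_2\mapsto I_6$, since $\chi_2$ does not occur in this element.
\end{enumerate}

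The main obstacle is the bookkeeping in step (ii) for the $\DIII$ elements $\chi_j\ot(\chi_{1,1}+\chi_{2i,2i-1})$ and $(\chi_{1,1}+\chi_{2i,2i-1})\ot\chi_j$ with $j\ge 3$. There the factor $[x_1,x_2]$ interacts with the choice for $x_1$, and the identification of $e_{2,5}\circ\rr$ or $e_{1,4}\circ\rr$ with $f_2(\chi_{1,1}+\chi_{2i,2i-1})$ requires a compatibility lemma in the spirit of Lemma \ref{lemma:matrix_subgroup_isos_4to7_type_III}. We would first try sending $x_2\mapsto I_5$ and $x_1\mapsto E_{2,3}^{3,5}$, copying the type II matrices, and then verify the relation with $x_1^{2+2^f}$. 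Once all hypotheses hold, Proposition \ref{prop:image_of_partial_sigmas_type_III} (resp.\ \ref{prop:image_of_partial_sigmas_type_III_d=2}) gives $[\kIII_3]=0$, and Proposition \ref{prop:kappa3_is_canonical_class} shows that the canonical class of $G$ is trivial.
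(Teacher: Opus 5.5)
Your reduction is the same as the paper's sketch, and your treatment of $d=2$ and of the $\SIII$, $\TIII$, $\TIIIw$ checks is fine. The gap is in step (ii), in exactly the $\DIII$ family you flag as the main obstacle. The index condition $j\neq 1,2,2i-1$ allows $j=2i$, and for $2\le i\le d/2$ the map $\kIII_3$ does \emph{not} vanish on $\chi_{2i}\ot(\chi_{1,1}+\chi_{2i,2i-1})$.

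In this case every value of $f_2$ on the generators that enters the defining system is $0$. So, modulo the centre, the defining system forces
\[
x_1\mapsto I_5+E_{2,3}+E_{3,5},\quad x_{2i}\mapsto I_5+E_{1,2}+E_{2,4},\quad x_{2i-1}\mapsto I_5+E_{4,5},
\]
and $x_k\mapsto I_5$ otherwise. Then the factor $x_1^{2+2^f}$ (resp.\ $x_1^2$) maps to $I_5+E_{2,5}$, and $[x_{2i-1},x_{2i}]$ maps to $I_5+E_{1,5}+E_{2,5}$. Hence the type III or IV relator maps to $I_5+E_{1,5}\neq I_5$.

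Every generator has even exponent sum in the relator, so changing the lifts by central elements changes nothing, and no choice of matrices can succeed. (For $j\neq 2i$ the type II choice does work.) Thus $\kIII_3$ itself violates the hypothesis of Proposition \ref{prop:image_of_partial_sigmas_type_III}. The paper's one-line transplant of the type II argument has the same blind spot; in type II the overlooked case is $j=2i+1$.

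The missing step is to remove this value with a coboundary rather than a lift. The tensor $\chi_{2i}\ot\chi_{2i}$ occurs only in the basis element $\chi_{2i,2i}$ of $\BIII_2$. Hence $\partial\sigma^{2i}_{2i,2i}(x)$ equals $\chi_1\cup\chi_1$ times the sum of the coefficients of $\chi_{2i-1,2i,2i}$ and $\chi_{2i,2i,2i-1}$ in $x$. The $\TIII$ element contains both terms, so it contributes zero, and on $\BIII_3$ one gets
\[
\partial\sigma^{2i}_{2i,2i} = (\chi_{2i}\ot(\chi_{1,1}+\chi_{2i,2i-1}))^* + (\chi_{1,1,2i}+\chi_{2i,2i-1,2i}+\chi_{2i,2i,2i-1})^* .
\]
By Lemma \ref{lemma:image_of_partial_sigmas_TIIIw}, the second summand equals $\partial\sigma^{2i-1}_{2i-1,2i '+' 2i,2i-1}$. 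Therefore $\kIII_3+\sum_{i=2}^{d/2}(\chi_{2i}\ot(\chi_{1,1}+\chi_{2i,2i-1}))^*$ is cohomologous to $\kIII_3$. It vanishes on all of $\DIII$, and your reduction via Propositions \ref{prop:image_of_partial_sigmas_type_III} and \ref{prop:kappa3_agrees_on_XIII_and_TIIIw} then applies to it.
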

\begin{proof}
For $d=2$, Propositions \ref{prop:image_of_partial_sigmas_type_III_d=2} and \ref{prop:kappa3_agrees_on_XIII_d=2} imply that it suffices 
to show that $\kIII_3$ vanishes on $\chi_{2,2,2}$ and 
$\chi_{2,2,1} + \chi_{2,1,2} + \chi_{1,2,2}$. 
For $d\ge 4$, Propositions \ref{prop:image_of_partial_sigmas_type_III} 
and Proposition \ref{prop:kappa3_agrees_on_XIII_and_TIIIw} 
reduce the task to showing that $\kIII_3$ vanishes on $\SIII$, $\DIII$, and $\TIII$, 
and that $\kIII_3$ vanishes 
on the elements $\chi_{1,1,1} + \chi_{1,2i, 2i-1} + \chi_{2i,2i-1,1}$ in $\TIIIw$ for every $i \ge 2$. 
%
Both claims follow from the arguments used in the proof of 
Propositions \ref{prop:kappa_3_vanishes_on_S_D_T} and \ref{prop:kappa_3_vanishes_type_II}, 
where we only need to replace the assignments of $x_{2i}$ and $x_{2i+1}$ 
with assignments of $x_{2i-1}$ and $x_{2i}$, respectively. 
Here we note that every matrix $M \in U_n(\F_2)$ we use in 
Sections \ref{sec:torsion-free_p=2_Demushkin}, \ref{sec:type_II}, and \ref{sec:type_III} 
satisfies $M^4 = I_n$ in $U_n(\F_2)$. 
Hence an additional factor $M^{2^f}$ in a matrix relation of the form  \eqref{eq:matrix_commutator_relation}, 
corresponding to the additional factor $x_1^{2^f}$ or $x_3^{2^f}$ with $f\ge 2$ 
in the defining relation for $G$,  
does not impose an obstruction for defining a continuous group homomorphism $G \to U_n(\F_2)$. 
\end{proof}

\begin{proof}[Proof of Theorem \ref{thm:type_III+IV_general_kappa_3_is_trivial}] 
The assertion follows from 
Propositions \ref{prop:kappa3_is_canonical_class} and \ref{prop:type_III_kappa_3_is_trivial}. 
\end{proof}

%

\end{document}